\documentclass[11pt]{elsarticle}

\usepackage{amsmath,amssymb,amsthm,mathtools,longtable,booktabs,array}
\usepackage{lmodern}
\usepackage[a4paper,left=3cm,right=3cm,top=3.5cm,bottom=3.5cm]{geometry}
\usepackage{hyperref}
\newtheorem{thm}{Theorem}[section]
\newtheorem{lem}[thm]{Lemma}
\newtheorem{pro}[thm]{Proposition}
\newtheorem{cor}[thm]{Corollary}

\numberwithin{equation}{section}

\begin{document}

\begin{frontmatter}
\title{A sharp lower bound for $h$-fold restricted sumsets of integers\tnoteref{mytitlenote}}
\tnotetext[mytitlenote]{This work was supported by the National Natural Science Foundation of China (Grant No. 12101007) and the Natural Science Foundation of Anhui Province (Grant No. 2008085QA06).}
\author[mymainaddress]{Yujie Wang\corref{mycorrespondingauthor}}
\ead{wangyujie9291@126.com}
\cortext[mycorrespondingauthor]{Corresponding author}
\address[mymainaddress]{School of Mathematics and Statistics, Anhui Normal University, Wuhu 241002, P. R. China}
\begin{abstract}
Let $A\subseteq[0, L]$ be a set of $k\geqslant 9$ integers such that $0, L\in A$ and $\gcd(A)=1$.
Define the $h$-fold restricted sumset
$$h^{\wedge}A=\{a_1+\cdots+a_h: a_i\in A,\ 1\leqslant i\leqslant h, \ \ a_i\neq a_j\ {\rm for}\ i\neq j\}. $$
In this paper, we prove that, for every $1\leqslant h\leqslant k-1$, we have
$$|h^{\wedge}A|\geqslant h(k-h)-k+2+\min\{L, h(k-h)-\mathbf{1}_h\}, $$
where $\mathbf{1}_h=1$ for $h\in\{2, k-2\}$ and $\mathbf{1}_h=0$ otherwise.
The bound is sharp for every $1\leqslant h\leqslant k-1$ and the cardinality threshold $k\geqslant 9$ cannot be decreased.

We also give a lower bound for the union of two consecutive restricted sumsets in a cyclic group and characterize equality.
\end{abstract}
\begin{keyword}
restricted sumsets; Freiman--Lev conjecture; cyclic groups
\MSC[2020] 11B13
\end{keyword}
\end{frontmatter}

\section{Introduction}
Let $A$ be a finite nonempty set of integers and let $l(A)$ denote the difference of the largest and the smallest elements of $A$.
For any positive integer $h$, define
the $h$-fold sumset
$$hA=\{a_1+\cdots+a_h:a_i\in A,\ 1\leqslant i\leqslant h\}$$
and the $h$-fold restricted sumset
$$h^{\wedge}A=\{a_1+\cdots+a_h: a_i\in A,\ 1\leqslant i\leqslant h, \ \ a_i\neq a_j\ {\rm for}\ i\neq j\}. $$
Here, $h^{\wedge}A=\emptyset$ if $|A|<h$ and $0^{\wedge}A=\{0\}$.

For an integer set $X$ with at least two elements, write
$$d(X)=\gcd(X-X). $$
We call $X$ {\it primitive} if $d(X)=1$ and {\it normalized} if in addition $\min X=0$.

For integers $a, b$, write
$$[a, b]=\{x\in\mathbb Z: a\leqslant x\leqslant b\}, $$
thus, $[a, b]=\emptyset$ if $a>b$. For a real number $x$, write $x_+=\max\{0, x\}$.
For a dilation we use
$$d\cdot X=\{dx: x\in X\}. $$

Sumsets and restricted sumsets are central objects of study in additive number theory.
The following standard result is given in Nathanson \cite[Theorems 1.3 and 1.6]{Nathanson1996}.
\begin{thm}
Let $A$ be a set of $k\geqslant 3$ integers and $h\geqslant 2$. Then
$$|hA|\geqslant hk-h+1. $$
Moreover, equality holds if and only if $A$ is a $k$-term arithmetic progression.
\end{thm}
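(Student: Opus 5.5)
I would argue by induction on $h$, using the elementary sumset-growth inequality $|X+Y|\geqslant |X|+|Y|-1$ for finite nonempty sets of integers. Write $A=\{a_1<a_2<\cdots<a_k\}$. The inequality comes from an explicit chain: if $X=\{x_1<\cdots<x_m\}$ and $Y=\{y_1<\cdots<y_n\}$, then
$$x_1+y_1<x_1+y_2<\cdots<x_1+y_n<x_2+y_n<\cdots<x_m+y_n$$
lists $m+n-1$ distinct elements of $X+Y$. Applying this with $X=(h-1)A$ and $Y=A$, and using $hA=(h-1)A+A$, gives $|hA|\geqslant |(h-1)A|+k-1$. Since $|A|=k$ covers the case $h=1$, induction yields $|hA|\geqslant k+(h-1)(k-1)=hk-h+1$.

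For the equality case, I would first check the easy direction. If $A=\{a+jd:0\leqslant j\leqslant k-1\}$ is an arithmetic progression, then $hA=\{ha+jd:0\leqslant j\leqslant h(k-1)\}$, which has exactly $hk-h+1$ elements.

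For the converse, suppose $|hA|=hk-h+1$. Then every step of the induction must be tight, and in particular $|(h-1)A+A|=|(h-1)A|+|A|-1$. The key step, and the main obstacle, is showing that equality in $|X+Y|\geqslant|X|+|Y|-1$ with $|X|,|Y|\geqslant2$ forces $X$ and $Y$ to be arithmetic progressions with the same difference. I would handle this by exhibiting a second chain. For each $1\leqslant i\leqslant m-1$ we have $x_i+y_2<x_{i+1}+y_1$ unless equality holds, and alternative chains such as
$$x_1+y_1<x_2+y_1<x_2+y_2<\cdots$$
produce an extra element unless $x_{i+1}+y_j=x_i+y_{j+1}$ for all admissible $i,j$. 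This forces $x_{i+1}-x_i=y_{j+1}-y_j$ for all $i,j$, a common constant difference.

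To avoid the general case, it is cleaner to specialize directly to $h=2$. Equality $|2A|=2k-1$ means that the two chains
$$a_1+a_1<a_1+a_2<a_2+a_2<a_2+a_3<\cdots<a_k+a_k$$
and
$$a_1+a_1<a_1+a_2<a_1+a_3<a_2+a_3<a_2+a_4<\cdots$$
both have length $2k-1$ and therefore coincide term by term. Comparing the third terms gives $a_2+a_2=a_1+a_3$. Repeating the comparison with the analogous chains shifted along the index gives $2a_i=a_{i-1}+a_{i+1}$ for every $2\leqslant i\leqslant k-1$, so $A$ is an arithmetic progression.

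For general $h\geqslant2$, equality in the last induction step requires equality at every earlier step, including $|2A|=2k-1$. So the $h=2$ argument applies and $A$ is an arithmetic progression. The hypothesis $k\geqslant3$ is what makes the statement nontrivial, since every set with $k\leqslant2$ is already an arithmetic progression.
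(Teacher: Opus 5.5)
Your proof is correct. The paper gives no proof of this statement; it quotes it from \cite[Theorems 1.3 and 1.6]{Nathanson1996}. Your argument is the standard one: the chain bound $|X+Y|\geqslant|X|+|Y|-1$, induction on $h$, and then equality forced at every step down to $|2A|=2k-1$.

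The book, as I recall it, reaches the equality case through the more general inverse theorem for $|A+B|=|A|+|B|-1$, where both summands are progressions with a common difference. You instead specialize to $2A$ and compare two maximal chains, which is shorter and fully rigorous. A single term-by-term comparison at the odd positions already gives $2a_{i+1}=a_i+a_{i+2}$ for all $1\leqslant i\leqslant k-2$, so the ``shifted chains'' are unnecessary.

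You should delete the paragraph sketching the general two-set case. Its dichotomy ``$x_i+y_2<x_{i+1}+y_1$ unless equality holds'' is false as stated: take $X=\{0,1\}$ and $Y=\{0,5\}$, where $x_1+y_2=5>1=x_2+y_1$. Since your final argument never uses it, it does not create a gap.
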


For restricted sumsets, Nathanson \cite[Theorems 1 and 2]{Nathanson1} proved the following result.

\begin{thm}\label{thm:nathanson-restricted-minimum}
Let $A$ be a set of $k$ integers and $0\leqslant h\leqslant k$. Then
$$|h^{\wedge}A|\geqslant h(k-h)+1.$$
Moreover, if $k\geqslant 5$ and $2\leqslant h\leqslant k-2$, then equality holds if and only if $A$ is a $k$-term arithmetic progression.
\end{thm}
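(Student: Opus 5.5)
We prove the lower bound $|h^{\wedge}A|\geqslant h(k-h)+1$ by induction on $k$, arranged so that the inductive step also yields the structural information needed for the equality statement. Write $A=\{a_0<a_1<\cdots<a_{k-1}\}$. The cases $h=0$, $h=k$ and $h\in\{1,k-1\}$ are trivial. For $h=0$ and $h=k$ the right-hand side is $1$, and for $h=1$ and $h=k-1$ we have $|h^{\wedge}A|=k$, since $(k-1)^{\wedge}A=\sum A-A$. By the symmetry $h^{\wedge}A=\sum A-(k-h)^{\wedge}A$ we may also assume $h\leqslant k/2$ whenever convenient.

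For the lower bound we exhibit an explicit increasing chain of sums. Start from the minimal sum $s_0=a_0+\cdots+a_{h-1}$. Repeatedly move the largest index that can still be increased up by one: first push $a_{h-1}$ through $a_h,\dots,a_{k-1}$, then push $a_{h-2}$ through $a_{h-1},\dots,a_{k-2}$, and so on. Each move replaces some $a_i$ by $a_{i+1}$ with $a_{i+1}$ not already used, so it strictly increases the sum. In total we make $h(k-h)$ moves and end at the maximal sum $a_{k-h}+\cdots+a_{k-1}$. This produces $h(k-h)+1$ distinct elements of $h^{\wedge}A$.

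For equality with $k\geqslant5$ and $2\leqslant h\leqslant k-2$, an arithmetic progression clearly attains the bound, because $h^{\wedge}A$ is then contained in a progression with exactly $h(k-h)+1$ terms. For the converse we use several such chains. Besides the chain above, build a second chain that differs from it at one stage. For example, go from $a_0+\cdots+a_{h-2}+a_h$ via the alternative step that moves $a_{h-2}\to a_{h-1}$ before moving $a_h$ onwards. If $|h^{\wedge}A|=h(k-h)+1$, both chains exhaust $h^{\wedge}A$ in increasing order, so corresponding terms coincide. Comparing the two chains forces identities such as $a_{h}-a_{h-1}=a_{h-1}-a_{h-2}$, and more generally $a_{i+1}-a_i=a_i-a_{i-1}$ for the indices the chains pass through. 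Varying which pair of consecutive moves is swapped, and using the symmetry $h\leftrightarrow k-h$ together with the reflection $A\mapsto -A$, covers every index $i\in[1,k-2]$. This shows that $A$ is an arithmetic progression.

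The main obstacle is the equality case. We must choose the alternative chains so that every consecutive difference is actually compared, which is delicate when $h=2$ or $h=k-2$ because there is little room to swap moves. Here the hypothesis $k\geqslant5$ is essential: for $k=4$, $h=2$ the set $\{0,1,2,4\}$ gives $2^{\wedge}A=\{1,2,3,4,5,6\}$, of size $5$, which would not be extremal, but small degenerate cases of this kind do occur. So we would first verify the case $h=2$ directly. If $|2^{\wedge}A|=2k-3$, then the sums $a_0+a_i$ and $a_{i-1}+a_{k-1}$ interleave with the sums $a_{i-1}+a_{i+1}$, which forces constant gaps. Then we would induct on $k$ by deleting $a_{k-1}$ or $a_0$ for general $h$, using $h^{\wedge}A\supseteq h^{\wedge}(A\setminus\{a_{k-1}\})\cup\bigl(a_{k-1}+(h-1)^{\wedge}(A\setminus\{a_{k-1}\})\bigr)$ and counting the new elements above $\max h^{\wedge}(A\setminus\{a_{k-1}\})$.
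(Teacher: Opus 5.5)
The paper does not prove this statement; it quotes it from \cite[Theorems 1 and 2]{Nathanson1}, so your argument has to stand on its own. Your lower bound, the chain of $h(k-h)$ unit moves, is correct, and so is the observation that progressions attain it. The converse of the equality statement, however, rests on a false step. Suppose you compare two maximal chains obtained by swapping moves $j\to j+1$ and $l\to l+1$ made at the same index set $S$. The only identity you obtain is $a_{j+1}-a_j=a_{l+1}-a_l$. Such a swap needs $j,l\in S$ and $j+1,l+1\notin S$, hence $|j-l|\geqslant2$: the moves $j\to j+1$ and $j+1\to j+2$ are never available at the same time. So these comparisons never equate \emph{adjacent} gaps, contrary to your claim. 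Your own swap at $\{0,\ldots,h-2,h\}$ gives $a_{h+1}-a_h=a_{h-1}-a_{h-2}$, not $a_h-a_{h-1}=a_{h-1}-a_{h-2}$. As a result, your sketch never reaches the point where $k\geqslant5$ is needed. Your $k=4$ illustration is also off. The set $\{0,1,2,4\}$ has six restricted $2$-sums and is not extremal. A genuine non-progression extremal set is $\{0,1,3,4\}$, with $2^{\wedge}A=\{1,3,4,5,7\}$.

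Your idea can be repaired as follows. Put $d_i=a_{i+1}-a_i$ for $0\leqslant i\leqslant k-2$, and order the $h$-subsets of $[0,k-1]$ componentwise. The covering steps are exactly the unit moves, and the rank $\sum_{i\in S}i-\binom{h}{2}$ runs from $0$ to $h(k-h)$. So every index set lies on a maximal chain with $h(k-h)+1$ strictly increasing sums. Under equality, $\sum_{i\in S}a_i$ therefore depends only on the rank of $S$. Since $2\leqslant h\leqslant k-2$, for every $j$ and $l$ with $j+2\leqslant l\leqslant k-2$ there is an $h$-set containing $j,l$ and avoiding $j+1,l+1$, and this gives $d_j=d_l$. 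The graph on $\{0,\ldots,k-2\}$ joining indices at distance at least $2$ is connected once it has at least four vertices: $d_1=d_3=d_0=d_2$, and $d_i=d_0$ for $i\geqslant2$. This is exactly where $k\geqslant5$ enters, and it handles $h=2$ and $h=k-2$ with no special treatment.

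Your fallback induction can also be made to work, but as written it omits its essential points. First, replacing one element of the top $h$-set of $A\setminus\{a_{k-1}\}$ by $a_{k-1}$ gives $h$ new sums, so $|h^{\wedge}A|\geqslant|h^{\wedge}(A\setminus\{a_{k-1}\})|+h$. Hence equality passes to both $A\setminus\{a_{k-1}\}$ and $A\setminus\{a_0\}$ when $h\leqslant k-3$. Second, the two resulting progressions must be glued along their $k-2\geqslant3$ common elements. Third, the base case $h=2$ still needs an actual proof; your ``interleaving'' sentence is not one.
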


Besides the cardinality $|A|$, the diameter $l(A)$ provides further information about the cardinalities of sumsets and restricted sumsets.
In 1959, Freiman \cite{Freiman} proved that

\begin{thm}
Let $A\subseteq [0, l]$ be a set of $k\geqslant 3$ integers such that $0, l\in A$, $\gcd(A)=1$. Then
$$|2A|\geqslant \left\{\begin{array}{ll}
l+k &\text{ if } l\leqslant 2k-3, \\
3k-3 &\text{ if } l\geqslant 2k-2. \end{array}\right.$$
\end{thm}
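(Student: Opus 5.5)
The plan is to prove Freiman's $3k-4$ bound by induction on $k$, removing the largest element $l$. The idea is that each removal either costs few sums, in which case induction applies, or forces a long interval inside $2A$. The base case $k=3$ is a direct check. For $A=\{0,a,l\}$ with $\gcd(a,l)=1$, the six sums $0,a,2a,l,a+l,2l$ are distinct unless $l=2a$, which forces $a=1$, $l=2$ and gives $|2A|=5=l+k$. Otherwise $|2A|=6=3k-3$, and when $l=3\le 2k-3$ we also have $6\ge l+k$.

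For the inductive step, let $A'=A\setminus\{l\}$ and $l'=\max A'$. Consider first the case $d(A')=1$. The sums $l+l'$ and $2l$ exceed $\max 2A'=2l'$, so $|2A|\ge |2A'|+2$. If $l'\ge 2k-4$, the inductive hypothesis gives $|2A'|\ge 3k-6$, hence $|2A|\ge 3k-4$. To reach $3k-3$ I need one more new sum. I would look for $l+a$ with $a\in A'$ not lying in $2A'$; an integer in $[l'+1,2l']$ of this form must be produced by a short argument comparing the translates $l+A'$ and $A'+A'$. If $l'\le 2k-5$, the hypothesis gives $|2A'|\ge l'+k-1$, so $|2A|\ge l'+k+1$. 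I then need to count the new elements of $l+A'$ above $2l'$ more carefully. The elements $l+a$ with $a>2l'-l$ all exceed $2l'$ and are new, which gives $|2A|\ge l'+k-1+|\{a\in A': a>2l'-l\}|+1$. This should give $\min(l+k,3k-3)$ after splitting according to whether $l\le 2l'$.

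Now suppose $d(A')=d>1$. Then $A'\subseteq d\cdot[0,l'/d]$, $l\not\equiv 0\pmod d$, and $2A$ splits into the disjoint pieces $2A'$, $l+A'$ and $\{2l\}$ when $d\ge 3$. Here $|2A'|\ge 2(k-1)-1$ by the classical bound, $|l+A'|=k-1$, and one more sum gives $|2A|\ge 3k-3$. The delicate subcase is $d=2$, where $2l\equiv 0$ can collide with $2A'$. Even then $2l>2l'$, so the count $(2k-3)+(k-1)+1=3k-3$ still works.

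The main obstacle is the case $d(A')=1$, $l'\ge 2k-4$: the target is $3k-3$ but induction only gives $3k-4$. I would first try to avoid this case by choosing to delete either $0$ or $l$, whichever gives a better reduced set. By symmetry $x\mapsto l-x$ we may delete $0$ instead. If both deletions leave a primitive set with large diameter, I would argue that some element $l+a$ with $a\in A'$ falls in a gap of $2A'$. If no such element existed, $l+A'\subseteq 2A'\cup\{l+l',2l\}$ would force $A'$ to be nearly periodic, and hence near an arithmetic progression with difference greater than $1$, which contradicts primitivity together with $l'\ge 2k-4$. Making this rigorous is the step I expect to need the most care.
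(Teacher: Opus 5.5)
The paper gives no proof of this statement; it quotes it from Freiman's 1959 paper. So your argument has to stand on its own. Your base case and the case $d(A')>1$ are fine, and the worry about $d=2$ is unnecessary since $2l$ exceeds every other sum. Both subcases with $d(A')=1$, however, contain genuine gaps.

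\textbf{The case $l'\geqslant 2k-4$.} Here $l\geqslant 2k-3$, so the target is $3k-3$ throughout. Your induction hypothesis is a purely quantitative bound $|2A'|\geqslant 3k-6$, and you exhibit only two sums outside $2A'$, namely $l+l'$ and $2l$. The missing third sum is exactly where the difficulty of Freiman's theorem lies, and your sketch does not supply it.
\begin{itemize}
\item You give no argument that $l+(A'\setminus\{l'\})\subseteq 2A'$ forces near-periodicity.
\item Ruling out this containment in the extremal case $|2A'|=3k-6$ requires structural information about $A'$. An induction that carries only the inequality does not provide it.
\item The structure you anticipate is also not the right one. Extremal primitive sets with $|2X|=3|X|-3$ and large diameter include $[0,k-3]\cup\{m\}$ with $m\geqslant 2k-4$, and $[0,s]\cup[m-t,m]$ with $m$ large. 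Neither is close to a progression of difference greater than $1$, so ``nearly periodic, hence contradicting primitivity'' cannot be the mechanism.
\item Deleting $0$ instead of $l$ does not help in exactly the situation you flag, where both deletions leave primitive sets of large diameter.
\end{itemize}

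\textbf{The dense case $l'\leqslant 2k-5$ with $l\leqslant 2l'$.} Here your count is too weak; only the subcase $l>2l'$ works as written. Take $A=\{0,1,2,3,7,10\}$, so $k=6$ and $l=10=2k-2$.
\begin{itemize}
\item Then $A'=\{0,1,2,3,7\}$, $l'=7=2k-5$, and $2A'=[0,10]\cup\{14\}$ has exactly $l'+k-1=12$ elements, so the inductive bound has no slack.
\item The only $a\in A'$ with $a>2l'-l=4$ is $a=7$. Your estimate therefore gives $|2A|\geqslant 12+1+1=14<15=3k-3$.
\item In fact $2A=[0,14]\cup\{17,20\}$. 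The new sums $11,12,13$ lie below $2l'$, in gaps of $2A'$ filled by $l+A'$.
\end{itemize}
To capture such sums you need an argument that controls how $l+A'$ meets the gaps of $2A'$, not just a count of the sums exceeding $2l'$. One option is a count over residue classes modulo $l$, as in the pigeonhole proof of the range $l\leqslant 2k-3$, or the Lev--Smeliansky reduction to $\mathbb Z_l$ using Kneser's theorem.
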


In 1996, Lev \cite[Theorem 1]{Lev} gave the following result.

\begin{thm}
Let $h\geqslant 2$ and $k\geqslant 2$ be integers.
Let $A\subseteq [0, l]$ be a set of $k$ integers such that $0, l\in A$, $\gcd(A)=1$.
Then
$$|hA|\geqslant |(h-1)A|+\min\{l, h(k-2)+1\}. $$
\end{thm}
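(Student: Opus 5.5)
We argue by induction on $h$. For $h=1$ we have $|A|=k$, while $|0A|+\min\{l,k-1\}=1+\min\{l,k-1\}$. Since $A\subseteq[0,l]$ contains $k$ distinct integers, $l\geqslant k-1$, so the bound reads $k\geqslant k$ and holds. For $h\geqslant 2$ it suffices to show
$$|hA|\geqslant |(h-1)A|+\min\{l,\,h(k-2)+1\},$$
and the plan is to write $hA$ as a union of pieces whose overlaps are controlled.

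\textbf{Case $l\leqslant h(k-2)+1$ (target increment $l$).} Write $A=\{0=a_0<a_1<\dots<a_{k-1}=l\}$. Since $0\in A$, we have $(h-1)A\subseteq hA$. The translate $(h-1)A+l$ also lies in $hA$, and its elements of size greater than $(h-1)l$ are new. Grouping elements of $hA$ by residue modulo $l$, the task becomes to show that each residue class gains at least one new element in $hA$ compared with $(h-1)A$.

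For a residue $r$, let $m_r$ be the largest element of $(h-1)A$ that is $\equiv r \pmod l$. Then $m_r+l\in hA$ and $m_r+l\notin(h-1)A$, so each residue class of $(h-1)A$ contributes a new element. The residues that do not appear in $(h-1)A$ are handled with $\gcd(A)=1$: the residues occupied by $jA$ grow with $j$, so any residue newly present in $hA$ adds a further new element. This shows the increment is at least the number of residues occupied by $hA$ mod $l$. That number is $l$ once all classes are covered, which holds because $A\bmod l$ generates $\mathbb{Z}_l$ and the $j$-fold sumsets in $\mathbb{Z}_l$ grow by Kneser/Cauchy–Davenport-type increments until they cover everything.

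When $hA$ does not yet cover all residues, we instead obtain the count $|hA \bmod l|$. We then have to check that this is at least $h(k-2)+1$, or else show directly that $hA$ mod $l$ is full. This requires a lower bound for sumsets in $\mathbb{Z}_l$; we use Kneser's theorem, and the stabilizer case needs care.

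\textbf{Case $l>h(k-2)+1$.} Here we reduce to the previous case by compression: we replace $A$ by a set with smaller diameter whose sumsets are no larger, and then use monotonicity of the bound. Alternatively, we directly exhibit $h(k-2)+1$ elements of $hA\setminus(h-1)A$ near the top. These are the sums $(h-1)l+a_i$ together with chains of the form $jl+a_i+a_{k-2}\cdot(\dots)$, giving $h(k-2)+1$ distinct values larger than $\max(h-1)A$ minus gaps. This is the standard chain argument in the style of Freiman's $3k-3$.

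\textbf{Main obstacle.} The step I expect to be hardest is the mod-$l$ covering argument: proving that the number of residues covered by $hA$ mod $l$ is at least $\min\{l,h(k-2)+1\}$. The difficulty lies in the Kneser stabilizer case, where $A$ sits inside few cosets of a subgroup $H$. In that case I would use the fact that $0,l\in A$ and primitivity force at least one element outside $H$ beyond the endpoints, and then induct on the cosets.
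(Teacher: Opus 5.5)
The paper does not prove this statement; it quotes it as Theorem 1 of \cite{Lev}. Your argument therefore has to stand on its own, and its central step is false rather than merely delicate.

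Your reduction is correct. Since $0,l\in A$, every residue class mod $l$ met by $hA$ contains an element of $hA\setminus(h-1)A$: either the top element of $(h-1)A$ in that class plus $l$, or any element if the class is new. Hence $|hA|-|(h-1)A|\geqslant|\pi_l(hA)|$. But the bound $|\pi_l(hA)|\geqslant\min\{l,h(k-2)+1\}$ that you then need fails as soon as $\pi_l(A)$ is a union of cosets of a nontrivial subgroup.

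Take $A=\{0,1,5,6,10\}$ with $k=5$ and $h=3$. Then $l=10=h(k-2)+1$, so this is your first case. Since $\pi_{10}(A)=H\cup(1+H)$ with $H=\{0,5\}$, we get $\pi_{10}(3A)=\{0,1,2,3\}+H$, which has only $8$ elements and is not all of $\mathbb Z_{10}$. Nevertheless $|2A|=12$ and $|3A|=22$, so the theorem holds with equality. The two missing units come from the newly reached classes $3$ and $8$, each of which contains two elements of $3A$, namely $3,13$ and $8,18$.

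The same phenomenon occurs for $h=2$ with $A=\{0,1,10,11,20\}$, which falls in your second case. There $|\pi_{20}(2A)|=6$, while $|2A|-|A|=7=\min\{20,2(k-2)+1\}$. So no lower bound for $|h\,\pi_l(A)|$ in $\mathbb Z_l$ can finish the proof, however carefully the Kneser stabilizer case is treated. In that case one new element per residue class is provably not enough.

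What is missing is a count of multiplicities inside residue classes, not just of occupied residues. For instance, let $F$ be the fibre of $(h-1)A$ over a residue $r$ and $F'$ the fibre of $A$ over $s$. Then $F+F'\subseteq hA\cap\pi_l^{-1}(r+s)$, and $|F+F'|\geqslant|F|+|F'|-1$ because both fibres lie in progressions of difference $l$. In the periodic situation you must then show that enough classes receive at least two new elements.

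Your second case does not supply this either. The ``compression'' would have to be shown not to increase the difference $|hA|-|(h-1)A|$, not merely $|hA|$, and nothing in the proposal does this. The chain construction of $h(k-2)+1$ new elements near the top is only gestured at. Finally, the announced induction on $h$ is never used.
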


The first case of the following theorem is due to Lev \cite[Theorem 1]{Lev2}.
For the second case, the range $2k-4\leqslant l\leqslant 2k-3$ follows from Wang and Tang \cite[Theorem 1.5]{Wang}.
When $l\geqslant2k-2$ and the second largest element of $A$ is less than $2k-4$, the result follows from Wang and Tang \cite[Theorem 1.1]{WangRestricted}.
The remaining range, where $l\geqslant2k-2$ and the second largest element is at least $2k-4$, follows from \cite[Theorem 1.1]{Wang2026}.

\begin{thm}\label{thm:freiman-lev-twofold}
Let $A\subseteq [0, l]$ be a set of $k>7$ integers such that $0, l\in A$, $\gcd(A)=1$. Then
$$|2^{\wedge}A|\geqslant \left\{\begin{array}{ll}
l+k-2 &\text{ if } l\leqslant 2k-5, \\
3k-7 &\text{ if } l\geqslant 2k-4. \end{array}\right.$$
\end{thm}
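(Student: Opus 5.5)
Since Theorem~\ref{thm:freiman-lev-twofold} collects results from \cite{Lev2, Wang, WangRestricted, Wang2026}, the plan is to rebuild it from one mechanism: reduction modulo $l$, in Freiman's style, followed by a case analysis according to the size of $l$. Write $A=\{0=a_0<a_1<\cdots<a_{k-1}=l\}$ and let $\bar A\subseteq\mathbb Z/l\mathbb Z$ be the image of $A$. Since $0\equiv l$, we have $|\bar A|=k-1$, and $\gcd(A)=1$ means $\bar A$ generates $\mathbb Z/l\mathbb Z$.

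\textbf{Counting by residue classes.} Every residue $r\in 2^{\wedge}\bar A$ is hit by at least one element of $2^{\wedge}A\cap[0,2l]$. A residue hit by both some $x<l$ and by $x+l$ contributes two elements. The elements $a_i=a_0+a_i$ and $l+a_i=a_{k-1}+a_i$, for $1\leqslant i\leqslant k-2$, all lie in $2^{\wedge}A$. Their residues are $\bar a_i$, and each such residue is doubly represented. This gives
$$|2^{\wedge}A|\geqslant |2^{\wedge}\bar A|+(k-2)-\varepsilon,$$
where $\varepsilon$ is a small correction for coincidences at residue $0$. The next task is to bound $|2^{\wedge}\bar A|$ from below in the cyclic group.

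\textbf{Case $l\leqslant 2k-5$.} Here $|\bar A|=k-1>l/2+1$. A pigeonhole argument on the pairs $\{x,r-x\}$ shows that every residue $r$ has at least two representations $r=\bar a+\bar b$ with $\bar a\neq\bar b$. The only care needed is for the element $x$ with $2x=r$, which occurs at most twice, and only when $l$ is even. Hence $2^{\wedge}\bar A=\mathbb Z/l\mathbb Z$. Checking that the correction $\varepsilon$ vanishes then gives $|2^{\wedge}A|\geqslant l+k-2$.

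\textbf{Case $l\geqslant 2k-4$.} The aim is $3k-7$. I would induct on $k$ by deleting $a_{k-1}$, or, after the reflection $x\mapsto l-x$, the element $a_1$. The deleted element contributes at least $3$ new sums: namely $l+a_{k-2}$, $l+a_{k-3}$, and $l+a_{k-4}$, all exceeding $\max 2^{\wedge}(A\setminus\{l\})=a_{k-2}+a_{k-3}$. This works whenever $A\setminus\{l\}$ is still primitive and its diameter remains at least $2(k-1)-4$.
\begin{itemize}
\item If the diameter drops below that threshold, the first case applied to $A\setminus\{l\}$ yields $a_{k-2}+k-3$. This is enough once $a_{k-2}\geqslant 2k-6$, up to the gain of $3$.
\item If $A\setminus\{l\}$ has $d>1$, I would analyse $A$ as a subset of $d\cdot\mathbb Z$ plus one outlier, using Theorem~\ref{thm:nathanson-restricted-minimum} for the restricted sumset of the $d$-part together with its translate by $l$.
\end{itemize}
The base cases $k=8,9$ would be checked directly.

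\textbf{Main obstacle.} I expect the hardest step to be the regime $l\geqslant 2k-2$ in which the second largest element $a_{k-2}$ is at least $2k-4$. There, deleting an endpoint loses too much in the worst configurations (near-arithmetic progressions with one far element). I would first try a compression argument: split $A$ at a large gap and apply the first case to the dense part. Failing that, I would fall back on a detailed structural analysis as in \cite{Wang2026}.
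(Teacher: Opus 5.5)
Your first case is correct. For $l\leqslant 2k-5$, every residue $r$ has at least $2(k-1)-l\geqslant 3$ elements $x\in\bar A$ with $r-x\in\bar A$, and at most two of them satisfy $2x=r$, so $2^{\wedge}\bar A=\mathbb Z_l$. No correction $\varepsilon$ is needed, because the $k-2$ pairs $a_i,\ l+a_i$ give $|2^{\wedge}A|\geqslant(k-2)+|\bar A\cup 2^{\wedge}\bar A|$ directly. This is Lemma~\ref{lem:endpoint-lifting} with $p=1$, and your pigeonhole is the one in Lemma~\ref{lem:dense-strong-endpoint-deletion}(3). Your $d>1$ bullet is also fine: it gives $(2k-5)+(k-1)=3k-6$.

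The genuine gap is the rest of the second case. The paper does not prove this part either; it imports it from three separate works.

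\textbf{The inductive step fails.} It rests on the claim $l+a_{k-4}>a_{k-2}+a_{k-3}$, which holds only if $l-a_{k-2}>a_{k-3}-a_{k-4}$.
Take the sharpness example $A=[0,k-3]\cup\{L-1,L\}$ from the introduction, with $L\geqslant 2k-4$. Here $L+a_{k-4}=L+k-4=a_{k-2}+a_{k-3}$. Deleting $L$ creates exactly two new sums, $L+k-3$ and $2L-1$.
In general only $l+a_{k-2}$ and $l+a_{k-3}$ are guaranteed to be new. So the hypothesis $|2^{\wedge}(A\setminus\{l\})|\geqslant 3(k-1)-7$ yields only $3k-8$.
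In the example, the bound survives only because $|2^{\wedge}(A\setminus\{L\})|=3(k-1)-6$, one above the inductive bound. Closing the induction needs a structural statement: if the extreme element adds only two sums, then the smaller set is not extremal. That is precisely what the cited papers supply.

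\textbf{The fallbacks do not fill this.}
If $a_{k-2}\leqslant 2k-7$, the first case gives only $a_{k-2}+k-3$. You then need $2k-4-a_{k-2}\geqslant 3$ elements of $l+(A\setminus\{l\})$ outside $2^{\wedge}(A\setminus\{l\})$. Three new sums suffice only at $a_{k-2}=2k-7$. For smaller $a_{k-2}$ you must control that overlap, which is the content of the separately cited result for second largest element $<2k-4$.

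The residue count cannot carry the second case either. Take $A=d\cdot[0,h]\cup(x+d\cdot[0,h-1])$ with $k=2h+1$, $d\geqslant 4$, $0<x<d$, $\gcd(x,d)=1$ and $d\nmid 2x$.
Then $|2^{\wedge}A|=(2h-1)+2h+(2h-3)=3k-7$.
But $\bar A=H\cup(x+H)$ with $|H|=h$, so $(k-2)+|\bar A\cup 2^{\wedge}\bar A|=5h-1<3k-7$ for $h\geqslant 4$.
The missing elements are second lifts within a residue class, which is the $\rho$ of Corollary~\ref{lem:exact-endpoint-transfer}.

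\textbf{Other uncovered parts.}
At $l\in\{2k-4,2k-3\}$ the pigeonhole guarantees only $2$ resp.\ $1$ representation, possibly all of the form $x+x$. These values therefore rest entirely on the broken induction.
The base cases $k=8,9$ are not a finite check, since $l$ is unbounded.
The regime $l\geqslant 2k-2$, $a_{k-2}\geqslant 2k-4$ is explicitly deferred to \cite{Wang2026}.

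As it stands, the proposal proves only the case $l\leqslant 2k-5$.
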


For three-fold restricted addition in finite abelian groups, Lev \cite{LevThree} gave conditions on the density of a set and the number of elements of order two in the group under which its restricted sumset is the whole group, apart from specified coset configurations.
For integer sets and bounds involving the diameter, Tang and Wang \cite[Theorem 3.1]{TW} proved the following result.

\begin{thm}\label{T6}
Let $A\subseteq [0, l]$ be a set of $k\geqslant 5$ integers such that $0, l\in A$.
If $l\leqslant 2k-5$, then
$$|3^{\wedge}A|\geqslant 2k+l-7. $$
\end{thm}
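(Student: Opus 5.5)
The plan is to induct on $k$ by deleting an endpoint of $A$. Write $A=\{0=a_0<a_1<\cdots<a_{k-1}=l\}$. The hypothesis $l\leqslant 2k-5$ means $A$ is dense: $[0,l]\setminus A$ has $l+1-k\leqslant k-4$ elements.

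\emph{Base case $k=5$.} Here $l\leqslant 5$, so $A$ is either $[0,4]$ or $[0,5]$ with one interior point removed. A direct check gives $|3^{\wedge}A|\geqslant l+3$ in each case.

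\emph{Inductive step.} Let $B=A\setminus\{l\}$ and $l'=a_{k-2}$. Then
$$3^{\wedge}A\supseteq 3^{\wedge}B\ \cup\ \bigl(l+2^{\wedge}B\bigr).$$
Every element of $3^{\wedge}B$ is at most $M=l'+a_{k-3}+a_{k-4}$. So it suffices to find at least $(l-l')+2$ elements of $l+2^{\wedge}B$ exceeding $M$. Equivalently, we need at least $(l-l')+2$ elements of $2^{\wedge}B$ in the window $\bigl(a_{k-3}+a_{k-4}-(l-l'),\,l'+a_{k-3}\bigr]$. The top candidates $l'+a_{k-3}$, $l'+a_{k-4}$ and $a_{k-3}+a_{k-4}$ are always there. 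When $l-l'$ is larger, density supplies the rest: the number of elements of $A$ lying in $[l'-(l-l'),\,l']$ is controlled by $k-4\geqslant |[0,l]\setminus A|$.

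To apply the induction hypothesis to $B$ we need $l'\leqslant 2(k-1)-5$. By the symmetry $x\mapsto l-x$, we may delete whichever endpoint has the larger adjacent gap. The induction then goes through unless both end gaps equal $1$ and $l=2k-5$. If $l'\leqslant 2k-7$ fails but $l<2k-5$, we instead use $|3^{\wedge}B|\geqslant 3(k-4)+1$ from Theorem~\ref{thm:nathanson-restricted-minimum} and check that it suffices.

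\emph{Main obstacle.} The hard case is $l=2k-5$ with $1,\,l-1\in A$. Here I would remove both endpoints and set $C=A\setminus\{0,l\}\subseteq[1,l-1]$, which has diameter $2k-7$. Then
$$3^{\wedge}A\supseteq 3^{\wedge}C\ \cup\ \bigl(l+2^{\wedge}C\bigr)\ \cup\ \bigl(2^{\wedge}C\bigr)\ \cup\ \bigl(l+C\bigr).$$
The lowest part of $2^{\wedge}C$ lies below $\min 3^{\wedge}C$, and $l+2^{\wedge}C$ reaches above $\max 3^{\wedge}C$. Counting these new elements on both ends, together with a bound for $3^{\wedge}C$ from Theorem~\ref{thm:nathanson-restricted-minimum} or from induction (now with diameter $\leqslant 2(k-2)-3$), should give the required $2k+l-7=4k-12$. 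I expect this bookkeeping, and the small-$k$ edge cases where the inductive hypothesis is not available, to be the delicate part.
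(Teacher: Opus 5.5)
Your scheme has a genuine gap. The inductive step does not go through when the deleted end gap is at least $2$: density near $l'$ does not supply the extra sums, and the hard case cannot be finished by counting at the ends. Here is where each step fails.

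\textbf{The inductive step.} You claim that if you delete the endpoint with the larger gap $l-l'\geqslant 2$, then $2^{\wedge}B$ has at least $(l-l')+2$ elements in $(a_{k-3}+a_{k-4}-(l-l'),\,l'+a_{k-3}]$. This is false. Take $A=\{0,1,2,3,7,8,9,11\}$, so $k=8$ and $l=11=2k-5$, with end gaps $1$ and $2$.
\begin{itemize}
\item Deleting $11$ gives $B=\{0,1,2,3,7,8,9\}$ with $l'=9=2(k-1)-5$, so induction applies. But $2^{\wedge}B=[1,5]\cup[7,12]\cup\{15,16,17\}$ meets the window $(13,17]$ in only three points, since $14\notin 2^{\wedge}B$ because $5,6\notin A$. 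Your bound is then $16+3=19<20$.
\item Deleting $0$ instead leaves a $7$-element set of diameter $10>2\cdot 7-5$, outside the induction range.
\end{itemize}
So this set defeats the scheme even though it is not in your designated hard case. Note that $[l'-(l-l'),l']=[7,9]\subseteq A$ is full here; the holes that matter sit lower down.

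\textbf{The hard case.} It has two separate problems.
\begin{itemize}
\item Induction is unavailable for $C$, since $l(C)=2k-7>2(k-2)-5$.
\item Counting only elements of $2^{\wedge}C$ below $\min 3^{\wedge}C$ and of $l+2^{\wedge}C$ above $\max 3^{\wedge}C$ is too weak. For $A=\{0,1,2,3,8,9,10,11\}$ one gets $C=\{1,2,3,8,9,10\}$, $|3^{\wedge}C|=12$, and only $3+3$ new end elements. That totals at most $18<20=4k-12$. The missing elements $9,10,16,17,23,24$ lie inside $[\min 3^{\wedge}C,\max 3^{\wedge}C]$ and fill holes of $3^{\wedge}C$.
\end{itemize}

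\textbf{The missing idea: count modulo $l$, not beyond the extremes.} The paper only cites this theorem, but it is the case $m=3$ of Lemma~\ref{lem:dense-strong-endpoint-deletion}(3), and the argument is short. Start from the decomposition you already wrote with $C=A\setminus\{0,l\}$.
\begin{enumerate}
\item In each residue class mod $l$, the largest element of $3^{\wedge}A$ cannot lie in $2^{\wedge}C$; otherwise adding $l$ gives a larger element of $l+2^{\wedge}C\subseteq 3^{\wedge}A$. Hence $|3^{\wedge}A|\geqslant|2^{\wedge}C|+|\pi_l(3^{\wedge}A)|$, which is Lemma~\ref{lem:endpoint-lifting} with $p=2$.
\item Put $C'=\{0\}\cup\pi_l(C)$, so $|C'|=k-1$. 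Then $\pi_l(3^{\wedge}A)\supseteq\pi_l(C\cup 2^{\wedge}C)=2^{\wedge}C'$.
\item For each $z\in\mathbb Z_l$ we have $|C'\cap(z-C')|\geqslant 2(k-1)-l\geqslant 3$, and at most two $c$ satisfy $2c=z$. Hence $2^{\wedge}C'=\mathbb Z_l$.
\item Theorem~\ref{thm:nathanson-restricted-minimum} gives $|2^{\wedge}C|\geqslant 2(k-4)+1$.
\end{enumerate}
Together these give $|3^{\wedge}A|\geqslant 2k-7+l$ for every $k\geqslant 5$ in one step. No induction on $k$ is needed, and the case $l=2k-5$ needs no special treatment.
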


Tang and Wang \cite[Remark 4.1]{TW} also proved the following result
for higher orders.

\begin{thm}\label{thm:tang-wang-higher}
Let $A$ be a set of $k\geqslant5$ integers and let $h\geqslant3$ be an integer.
If $l(A)\leqslant2k-2h+1$, then
$$|h^{\wedge}A|\geqslant l(A)+h(k-h)-k+2.$$
\end{thm}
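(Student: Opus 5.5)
We would argue by induction on $h$, with Theorem~\ref{T6} as the base case. For $h=3$ the hypothesis $l(A)\leqslant 2k-5$ is exactly the hypothesis of Theorem~\ref{T6}. Its conclusion $2k+l-7$ equals $l+3(k-3)-k+2$, so the case $h=3$ holds.

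For the inductive step we would normalize $A\subseteq[0,l]$ with $0,l\in A$ and write $A=\{a_0<a_1<\cdots<a_{k-1}\}$, so $a_0=0$ and $a_{k-1}=l$. The hypothesis $l\leqslant 2k-2h+1$ says that $[0,l]\setminus A$ has at most $k-2h+2$ elements, so $A$ is very dense. Put $A'=A\setminus\{l\}$, with $l'=a_{k-2}$. We use the decomposition
$$h^{\wedge}A=h^{\wedge}A'\ \cup\ \bigl(l+(h-1)^{\wedge}A'\bigr).$$
The first piece will be handled by induction, and the second by a direct count.

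\textbf{Bounding $h^{\wedge}A'$.} If $h\leqslant k-1-h$ and $l(A')=l'\leqslant 2(k-1)-2h+1$, the induction hypothesis on $k$ with $h$ fixed gives $|h^{\wedge}A'|\geqslant l'+h(k-1-h)-(k-1)+2$. Alternatively we could run the induction on $h$ with $k$ fixed, but the natural bookkeeping is on $k$. The $k$-range must be anchored at $k=2h+\text{const}$, which is where $2k-2h+1$ is still at least $k-1$. The case $h>k/2$ is reduced to $k-h$ via the reflection $h^{\wedge}A=\sum A-(k-h)^{\wedge}A$.

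\textbf{Counting new elements above $\max h^{\wedge}A'$.} We have $\max h^{\wedge}A'=a_{k-2}+\cdots+a_{k-h-1}$. An element $l+a_{k-2}+\cdots+a_{k-h+1}+a_j$ (with $j\leqslant k-h$) exceeds this maximum exactly when $a_j>a_{k-h}+a_{k-h-1}-l$. More generally we can replace the top block by other $(h-2)$-subsets of the top elements. Density (few holes) should produce at least $(l-l')+h-1$ such distinct new sums. This is exactly the increment needed to pass from the bound for $(k-1,h)$ to the bound for $(k,h)$, because
$$\bigl[l+h(k-h)-k+2\bigr]-\bigl[l'+h(k-1-h)-(k-1)+2\bigr]=(l-l')+h-1.$$
We would produce these sums by letting $a_j$ run over a consecutive block of $A$ and shifting one top element at a time, in the style of Nathanson's proof of Theorem~\ref{thm:nathanson-restricted-minimum}.

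\textbf{Main obstacle.} We expect the hardest part to be the situation where the top gap $l-l'$ is large. Then $A'$ may fail the diameter hypothesis relative to $k-1$, or the inductive bound loses too much. In that situation we would instead delete $0$, using $h^{\wedge}A=h^{\wedge}(A\setminus\{0\})\cup(h-1)^{\wedge}(A\setminus\{0\})$ and counting new elements below the minimum. Alternatively we would delete whichever endpoint sits next to the smaller gap. Since the total number of holes is at most $k-2h+2$, at least one end should be dense enough for the counting argument to work. Verifying that this choice always makes the count of at least $(l-l')+h-1$ new sums go through is the delicate case analysis.
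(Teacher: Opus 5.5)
\textbf{There is a genuine gap.} Your inductive step does not close. The trouble is not only that the new-sum count is left at ``density should produce''; as stated, that count is false.

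Take $k=12$, $h=4$ and $A=[0,6]\cup[10,13]\cup\{17\}$, so $l=17=2k-2h+1$.
\begin{itemize}
\item Deleting the top gives $A'=[0,6]\cup[10,13]$ with $l'=13\leqslant 2\cdot 11-8+1=15$, so your induction applies to $A'$.
\item No three distinct elements of $A'$ sum to $32$: three from $[10,13]$ give at least $33$, and two from the top block plus one from the bottom give at most $31$. Hence $3^{\wedge}A'\cap[30,36]=\{30,31,33,34,35,36\}$.
\item So exactly $6$ elements of $17+3^{\wedge}A'$ exceed $\max 4^{\wedge}A'=46$, whereas you need $(l-l')+h-1=7$.
\end{itemize}
The reason is that the relevant top window of $(h-1)^{\wedge}A'$ has length $l-a_{k-h-1}$. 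This exceeds $(l-l')+h-1$ only by the number of holes of $A$ inside $(a_{k-h-1},a_{k-2})$. Holes of $A$ lower down (here $7,8,9$, below $a_7=10$) can puncture the top of $(h-1)^{\wedge}A'$ without being paid for.

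Deleting $0$ instead (the endpoint next to the smaller gap) gives diameter $16>15$, so the induction hypothesis does not apply either. The theorem does hold for this $A$, but only because $|4^{\wedge}A'|=38$ is well above the inductive value $32$. A scheme that uses the inductive bound as a black box throws that slack away, and certifies only $32+6=38<39$.

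You have also misdiagnosed the diameter issue. Passing from $(k,h)$ to $(k-1,h)$ lowers the allowed diameter by $2$. Deleting an endpoint lowers the diameter only by the adjacent gap, so the hypothesis fails when that gap is \emph{small}, not large. For example, with $A=\{0,1,3,4,5,6,8,9\}$, $k=8$, $h=4$, both $A\setminus\{0\}$ and $A\setminus\{9\}$ have diameter $8>7$. Relatedly, you announce an induction on $h$ with base case Theorem~\ref{T6}, but the induction you actually run is on $k$ at fixed $h$. That induction starts at $k=2h-2$, where $A$ is an interval, and Theorem~\ref{T6} plays no role in it.

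\textbf{The missing idea.} No induction is needed: $l\leqslant 2k-2h+1$ is exactly the threshold for a one-step pigeonhole argument.
\begin{itemize}
\item Normalize $A\subseteq[0,L]$ and delete both endpoints: $B=A\setminus\{0,L\}$, $C=\{0\}\cup\pi_L(B)$, so $|C|=k-1$. Note that $h\leqslant k-2$ is forced by $k-1\leqslant L$.
\item By Lemma~\ref{lem:endpoint-lifting} with $p=h-1$, we have $|h^{\wedge}A|\geqslant|(h-1)^{\wedge}B|+|U_{h-1}(C)|$. This holds because, in each residue class modulo $L$, the largest element of $h^{\wedge}A$ does not lie in $(h-1)^{\wedge}B$: adding $L$ would make it larger.
\item Fix any $h-3$ elements $S\subseteq C$ and put $T=C\setminus S$. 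Then $|T|=k-h+2$, and for every $w\in\mathbb Z_L$ we get $|T\cap(w-T)|\geqslant 2|T|-L\geqslant 3$. At most two $c$ satisfy $2c=w$, so $2^{\wedge}T=\mathbb Z_L$, and therefore $(h-1)^{\wedge}C\supseteq\sum S+2^{\wedge}T=\mathbb Z_L$.
\item Theorem~\ref{thm:nathanson-restricted-minimum} gives $|(h-1)^{\wedge}B|\geqslant(h-1)(k-h-1)+1$. Finally, $(h-1)(k-h-1)+1+L=L+h(k-h)-k+2$.
\end{itemize}
The paper only cites this theorem, but it contains it in Lemma~\ref{lem:dense-strong-endpoint-deletion}(2)--(3), which feeds Theorem~\ref{thm:dense-all-orders}, via the same endpoint-lifting mechanism. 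There the wider range $L\leqslant 2k-4$ requires Theorem~\ref{thm:r4-coarse-cyclic-all-orders} in place of this pigeonhole step.
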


In this paper, we obtain the following result.

\begin{thm}\label{thm:main}
Let $A\subseteq[0, L]$ be a set of $k\geqslant9$ integers with $0, L\in A$ and $\gcd(A)=1$.
For every $1\leqslant h\leqslant k-1$, we have
$$|h^{\wedge}A|\geqslant h(k-h)-k+2+\min\{L, h(k-h)-\mathbf{1}_h\}, $$
where $\mathbf{1}_h=1$ for $h\in\{2, k-2\}$ and $\mathbf{1}_h=0$ otherwise.
\end{thm}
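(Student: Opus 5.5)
We would prove Theorem~\ref{thm:main} by induction on $h$, using the symmetry $h^{\wedge}A = \sum A - (k-h)^{\wedge}A$. This symmetry gives $|h^{\wedge}A|=|(k-h)^{\wedge}A|$, and the right-hand side of the bound is invariant under $h\mapsto k-h$ (note $\mathbf{1}_h=\mathbf{1}_{k-h}$). So it suffices to treat $1\leqslant h\leqslant k/2$.

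\textbf{Base cases.} For $h=1$ the bound reads $|A|=k\geqslant 1+\min\{L,k-1\}$, which holds because $L\geqslant k-1$. For $h=2$ the bound is $2k-2+\min\{L,2k-5\}-k+2$, that is, $k+L-2$ for $L\leqslant 2k-5$ and $3k-7$ otherwise. This is exactly Theorem~\ref{thm:freiman-lev-twofold}, valid since $k\geqslant 9>7$.

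\textbf{Inductive step.} For $3\leqslant h\leqslant k/2$ we would split $A=A'\cup\{L\}$ with $A'=A\setminus\{L\}$, and similarly $A''=A\setminus\{0\}$. The idea is the Lev-type decomposition
$$h^{\wedge}A\supseteq h^{\wedge}A' \cup \bigl(L+(h-1)^{\wedge}A'\bigr).$$
The elements of $L+(h-1)^{\wedge}A'$ exceeding $\max h^{\wedge}A'$ form an interval-like tail, and we would count them via the second-largest element $L'$ of $A$. Applying induction on $k$ (for $k-1\geqslant 9$) to $A'$ after normalization, together with Theorem~\ref{thm:nathanson-restricted-minimum} for the top part, should produce an increment of roughly $(k-h)+\min\{L-L',\cdot\}$. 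Summing these increments reproduces the quadratic term $h(k-h)-k+2$ plus the diameter term.

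\textbf{Key difficulty.} When we delete an endpoint, $A'$ may fail to be primitive, and then induction on $A'$ loses the diameter term. We would handle this as in Lev's proof: if $d(A')=d>1$, then $A'\subseteq d\cdot\mathbb Z$ while $L\not\equiv0\pmod d$. Consequently $h^{\wedge}A'$ and $L+(h-1)^{\wedge}A'$ lie in distinct residue classes and are disjoint, so
$$|h^{\wedge}A|\geqslant (h(k-1-h)+1)+((h-1)(k-h)+1)$$
by Theorem~\ref{thm:nathanson-restricted-minimum}. This quantity is at least $2h(k-h)-k+2$, which covers the saturated case.

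\textbf{Small diameter.} For $L\leqslant 2k-2h+1$ the claim is Theorem~\ref{thm:tang-wang-higher}. In the intermediate range, and especially when $L\geqslant h(k-h)$, we need the full saturated bound $2h(k-h)-k+2$. This requires a genuine induction on $h$: from the inequality for $h-1$, add elements $L+\cdot$ above the maximum and elements near $0$ below, in the spirit of Lev's theorem $|hA|\geqslant|(h-1)A|+\min\{l,h(k-2)+1\}$.

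\textbf{Expected main obstacle.} Proving the restricted analogue
$$|h^{\wedge}A|\geqslant |(h-1)^{\wedge}A|+\min\{\cdot\}$$
with exactly the right increment $k-2h$ plus the diameter contribution. Restriction destroys the clean structure of Lev's argument, and the exceptional value $\mathbf{1}_h$ at $h=2$ shows that the boundary cases are delicate. We would first try to establish this increment via the union $(h-1)^{\wedge}A\cup h^{\wedge}A$ projected to $\mathbb Z/L\mathbb Z$. This is suggested by the abstract's cyclic-group result: a lower bound on $|(h-1)^{\wedge}\bar A\cup h^{\wedge}\bar A|$ in $\mathbb Z/L\mathbb Z$, with its equality characterization, would handle the extremal configurations (near arithmetic progressions with one outlier) that make $k\geqslant 9$ necessary.
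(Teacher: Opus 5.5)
There is a genuine gap. Your proposal establishes only the parts that are already available or easy: the reduction to $h\leqslant k/2$, the cases $h=1,2$ (the latter by Theorem~\ref{thm:freiman-lev-twofold}), the range $L\leqslant 2k-2h+1$ (Theorem~\ref{thm:tang-wang-higher}), and the case $d(A\setminus\{L\})>1$. In that last case your residue-class count $|h^{\wedge}A|\geqslant 2h(k-h)-k+2$ is correct. The actual content of the theorem is the case $3\leqslant h\leqslant k/2$ with $L\geqslant 2k-2h+2$ and $A\setminus\{L\}$ primitive. That is exactly the step you call the ``expected main obstacle'', and you never prove it.

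The mechanisms you sketch for that step also cannot be completed as stated.
\begin{itemize}
\item \textbf{The induction on $k$ has no base.} For $k=9$, $h=4$ you would apply the statement at order $4$ to the $8$-element set $A\setminus\{L\}$. There it is false: the paper's example $[0,3]\cup[13,16]$ shows this. You give no direct argument for $k=9$.
\item \textbf{The tail count is not shown to suffice.} In the unsaturated range the required increment over the bound for $A'$ is $h-1+L-L'$. Nothing in your sketch shows that the elements of $L+(h-1)^{\wedge}A'$ above $\max h^{\wedge}A'$ supply that many. In general part of the increment must come from elements landing in holes of $h^{\wedge}A'$ below its maximum.
\item \textbf{The count already falls short at $h=2$.} Take $A=[0,5]\cup\{7,8,11\}$. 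Then $A'=[0,5]\cup\{7,8\}$ meets its own bound with equality, since $2^{\wedge}A'=[1,13]\cup\{15\}$ has $14$ elements. Four new sums are needed, but only $16,18,19$ lie above $15$. The fourth new sum, $14$, fills a hole below the maximum.
\item \textbf{The unit $\mathbf{1}_2$ is not recovered.} Any induction on $h$ starting from the $h=2$ bound must regain this unit at $h=3$. Your sketch does not address it.
\end{itemize}

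The paper deletes both endpoints instead of one. Put $J=A\setminus\{0,L\}$ with $|J|=N$, $C=\{0\}\cup\pi_L(J)$ and $h=p+1$. Lemma~\ref{lem:endpoint-lifting} gives $|(p+1)^{\wedge}A|\geqslant|p^{\wedge}J|+|U_p(C)|$. So what is needed is
$$|U_p(C)|+e_p(J)+D_p(J)\geqslant\min\{L,(p+1)(N-p+1)\}.$$
The projection to $\mathbb Z_L$ that you mention is only the easy half. Theorem~\ref{thm:r4-coarse-cyclic-all-orders} gives merely $|U_p(C)|\geqslant\min\{L,2N\}$, far below $(p+1)(N-p+1)$.

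Proposition~\ref{pro:span-to-cardinality} closes that difference by controlling $e_p(J)+D_p(J)$, using three ingredients:
\begin{itemize}
\item a structure result when $e_p(J)+D_p(J)<N/2$ (Theorem~\ref{thm:r3-uniform-dense-small-excess});
\item the inverse Theorem~\ref{thm:r4-coarse-cyclic-inverse}, which upgrades the cyclic bound to $2N+1$ when $J$ is critical (Theorem~\ref{thm:strict-critical-cyclic-surplus});
\item $D_p(J)\geqslant0$ for critical $J$ (Proposition~\ref{pro:positive-slack-span-transition}), itself an induction on the order that carries the auxiliary inequality $e_r\leqslant 2D_r$.
\end{itemize}
A direct count of $p^{\wedge}J\cup(L+p^{\wedge}J)$ then finishes. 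The case $h=3$ needs a separate result, Theorem~\ref{thm:threefold-all-cardinalities}, with ad hoc small configurations. None of these ingredients has a counterpart in your plan.
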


The bound is sharp for every $1\leqslant h\leqslant k-1$ and every diameter $L\geqslant k-1$.
For $3\leqslant h\leqslant k-3$, let
$$A=[0, k-2]\cup\{L\}, \ \ L\geqslant k-1, \ \ 3\leqslant h\leqslant k-3. $$
Then
$$|h^{\wedge}A|=h(k-h)-k+2+\min\{L, h(k-h)\}. $$
For $h=2$ and $k-1\leqslant L\leqslant2k-5$, the same set satisfies
$$2^{\wedge}A=[1,2k-5]\cup[L,L+k-2]=[1,L+k-2].$$
For $h=2$ and $L\geqslant2k-5$, take $A=[0,k-3]\cup\{L-1,L\}$. Then
$$2^{\wedge}A=[1,2k-7]\cup[L-1,L+k-3]\cup\{2L-1\},$$
where the three sets are disjoint, so $|2^{\wedge}A|=3k-7$.
Complementation gives equality for $h=k-2$, while equality for $h=1,k-1$ holds for every admissible $A$.

The lower threshold in Theorem \ref{thm:main} cannot be replaced by $k\geqslant 8$.
Indeed, put $A=[0, 3]\cup[13, 16]$. Then
$$4^{\wedge}A=\{6\}\cup[16, 22]\cup[28, 36]\cup[42, 48]\cup\{58\}, $$
thus, $|4^{\wedge}A|=25$. For $k=8$, $h=4$ and $L=16$, the right-hand side of the asserted bound is $26$.

For $B\subseteq\mathbb Z_N$ and $0\leqslant m\leqslant|B|-1$, put
$$U_m(B)=m^{\wedge}B\cup(m+1)^{\wedge}B.$$
For a finite set $X$ with $c=|X|$ and $0\leqslant j\leqslant c-1$, complementation gives
$$U_j(X)=\sum_{x\in X}x-U_{c-j-1}(X).$$

The key tools is the following direct and inverse theorems in cyclic groups.

\begin{thm}\label{thm:r4-coarse-cyclic-all-orders}
Let $0\in B\subseteq\mathbb Z_N$ be a set of $b\geqslant 6$ such that $\langle B\rangle=\mathbb Z_N$. Then
$$|U_m(B)|\geqslant\min\{N, 2b-2\}, \ \ 2\leqslant m\leqslant \left\lfloor\frac{b-1}{2}\right\rfloor. $$
\end{thm}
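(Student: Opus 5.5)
We argue by induction on $m$, using the complementation identity $U_j(X)=\sum_{x\in X}x-U_{c-j-1}(X)$ together with a Kneser-type argument in $\mathbb Z_N$. Since $U_m(B)$ and $U_{b-m-1}(B)$ are reflections of one another, it is enough to handle $m\leqslant\lfloor (b-1)/2\rfloor$. We may also assume $N\geqslant 2b-1$, since otherwise the target is $N$ and we need $U_m(B)=\mathbb Z_N$.

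The key step is a growth lemma. Write $B'=B\setminus\{x\}$ for a suitable $x\in B$. Then
$$U_m(B)\supseteq U_m(B')\cup\bigl(x+U_{m-1}(B')\bigr),$$
because $m^{\wedge}B\supseteq m^{\wedge}B'\cup(x+(m-1)^{\wedge}B')$, and similarly for $m+1$. We would like $x$ such that $x+U_{m-1}(B')\not\subseteq U_m(B')$, and we want to gain at least two new elements when passing from $B'$ to $B$. Equivalently, removing one element from $B$ should cost at least $2$ in $|U_m|$. The plan is to set this up as an induction on $b$, keeping $m$ fixed.

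\emph{Base case.} The natural base is $m=2$, $b=6$, where $U_2(B)=2^{\wedge}B\cup 3^{\wedge}B$. To bound it, we combine the bound $|3^{\wedge}B|$ (via complementation, $3^{\wedge}B$ for $b=6$ is a reflection of $3^{\wedge}B$ itself) with the standard Dias da Silva--Hamidoune / Erdős--Heilbronn-type bounds in $\mathbb Z_N$ for prime $N$. Since $N$ is general, we use that $0\in B$ gives $2^{\wedge}B\supseteq B\setminus\{0\}$ shifted. More precisely, $U_m(B)\supseteq m^{\wedge}(B\setminus\{0\})\cup(m+1)^{\wedge}(B\setminus\{0\})$, and $m^{\wedge}B=m^{\wedge}B^*\cup(m-1)^{\wedge}B^*$ with $B^*=B\setminus\{0\}$. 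Hence $U_m(B)=U_{m-1}(B^*)\cup U_m(B^*)$, a union of three consecutive restricted sumsets of $B^*$. This identity is what makes the union natural.

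\emph{Induction step.} Choose $x\in B^*$ and $B'=B\setminus\{x\}$, which still contains $0$. If $\langle B'\rangle$ is a proper subgroup $H$, then $B'\subseteq H$ and $x\notin H$, and $U_m(B)$ meets the two cosets $H$ and $x+H$. Applying the inductive bound inside $H$ to $U_m(B')$ and $x+U_{m-1}(B')$ then gives the count, as long as $|H|$ is handled by the $\min$ (stabilizer bookkeeping). If $\langle B'\rangle=\mathbb Z_N$, induction gives $|U_m(B')|\geqslant\min\{N,2b-4\}$. It then suffices to show
$$\bigl|\bigl(x+U_{m-1}(B')\bigr)\setminus U_m(B')\bigr|\geqslant 2$$
for some choice of $x$, unless $U_m(B')$ is already large. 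Suppose this fails for every $x\in B^*$. Then $x+U_{m-1}(B')$ is almost contained in $U_m(B')$ for all $x$. Summing over $x$ shows that $U_m(B')$ is nearly invariant under translation by many elements. A Kneser argument, applied to $U_m(B')+B$ against its stabilizer, then forces $U_m(B')$ to be close to a union of cosets of a nontrivial subgroup, or forces $B$ to be an arithmetic progression. Both cases are checked directly: for an arithmetic progression $U_m$ is an interval of length $\min\{N,2m(b-m)+\dots\}\geqslant 2b-2$, using $m\geqslant2$.

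The main obstacle is this near-periodic case. Making the gain-two claim rigorous requires a careful choice of $x$, say an endpoint relative to a suitable ordering, or the element whose removal keeps $B'$ generating. It also requires handling $N$ composite, where restricted sums can collapse onto cosets. I would try first to reduce, via the identity above, to the case $m=2$ for $B^*$ and $B$, and to do the small cases $b\in\{6,7\}$ by hand.
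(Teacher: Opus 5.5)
There is a genuine gap, and it sits at the step you flag yourself. The ``gain-two'' claim in the generating case is the whole content of the theorem, and the mechanism you propose for it does not work.

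You want some $x$ with $|(x+U_{m-1}(B'))\setminus U_m(B')|\geqslant 2$ for $B'=B\setminus\{x\}$. You argue that if this fails for every $x$, then summing over $x$ makes $U_m(B')$ nearly translation invariant. But $B'$ changes with $x$, so no single set's invariance is being measured. Even for one fixed $x$, near-containment of $x+U_{m-1}(B')$ in $U_m(B')$ compares sumsets of different orders. It gives no information on $|(U_m(B')+x)\setminus U_m(B')|$. The later Kneser step, applied to $U_m(B')+B$ whose size you do not control, and the dichotomy ``near-periodic or arithmetic progression'' therefore have nothing to stand on. Arithmetic progressions are in fact far from extremal here. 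The sets attaining $2b-2$ are $H\cup\{x\}$ with $|H|=b-1$. When $|U_m(B')|=2b-4$ is tight, you would need to know what $B'$ looks like, i.e.\ an inverse statement, and your plan never formulates one.

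The paper's fix is to delete two elements at once. Take $e,f\in B\setminus\{0\}$ with $X=B\setminus\{e,f\}$ still generating. Then both $e+U_{m-1}(X)$ and $f+U_{m-1}(X)$ lie in $U_m(B)$, hence $e+U_{m-1}(X)\subseteq U_m(B)\cap(U_m(B)+e-f)$. So the \emph{same} set $U_m(B)$ is nearly invariant under every difference $d=e-f\in\mathcal D$. If $|U_m(B)|\leqslant 2b-2$, this gives $|(U_m(B)+d)\setminus U_m(B)|\leqslant 3$.

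That bound of $3$ needs the strict estimate $|U_{m-1}(X)|\geqslant 2b-5$ rather than $2b-6$. The paper gets it by running the induction (on $b$, with $m$ dropping to $m-1$) simultaneously with the inverse theorem, and by a subgroup-exclusion step that rules out the equality case $X=K\cup\{x\}$. Then $\langle\mathcal D\rangle=\mathbb Z_N$, $|\mathcal D|\geqslant 11$ and Olson's lemma force growth at least $13/4>3$, a contradiction.

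Even this only covers $m\geqslant 4$ and $N\geqslant 2b+4$. The remaining ranges need separate arguments:
\begin{itemize}
\item $m=2$ uses the popular-sum argument ($B+\mathcal P(B)\subseteq 3^{\wedge}B$ and Kneser) plus a separate periodic analysis.
\item $m\geqslant 3$ uses the pair-multiplicity dichotomy with padding, and a double count when $m=3$.
\item $N\leqslant 2b+3$ uses a probabilistic counting argument.
\end{itemize}
Your base case is also not established. The Dias da Silva--Hamidoune bound holds only for prime $N$, and you give no argument for composite $N$, which is exactly where restricted sums collapse onto cosets.
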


\begin{thm}\label{thm:r4-coarse-cyclic-inverse}
Let $0\in B\subseteq\mathbb Z_N$ be a set of $b\geqslant 7$ such that $\langle B\rangle=\mathbb Z_N$ and $N\geqslant 2b$.
For every $2\leqslant m\leqslant\left\lfloor\frac{b-1}{2}\right\rfloor$, we have
$$|U_m(B)|=2b-2$$
if and only if $B=H\cup\{x\}$ for a subgroup $H$ of order $b-1$.
\end{thm}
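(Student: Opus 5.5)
The ``if'' direction is a direct computation, and I would do it first. Suppose $B=H\cup\{x\}$ with $|H|=b-1$ and $x\notin H$. Since $\langle B\rangle=\mathbb Z_N$, the coset $x+H$ generates $\mathbb Z_N/H$. Because $N\geqslant 2b>b-1$, the index of $H$ is at least $2$, so $H\neq x+H$. Moreover $0\in B$ forces $0\in H$ (as $x\notin H$ and $0$ must lie in one of the two pieces), so $H$ is the subgroup itself. Splitting according to whether $x$ is used,
$$m^{\wedge}B=m^{\wedge}H\cup\bigl(x+(m-1)^{\wedge}H\bigr).$$
I would then use the standard fact that $j^{\wedge}H=H$ for a cyclic group $H$ of order $n=b-1\geqslant 6$ and $1\leqslant j\leqslant n-1$. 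I would prove this by taking a generator $g$ of $H$, taking $j$-subsets of consecutive multiples of $g$, and using complementation for large $j$. Since $2\leqslant m\leqslant\lfloor (b-1)/2\rfloor$, both $m^{\wedge}H$ and $(m-1)^{\wedge}H$ are equal to $H$ (the latter via $j=m-1\geqslant1$), and likewise for $m+1$. Hence $U_m(B)=H\cup(x+H)$, which has exactly $2b-2$ elements.

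For the ``only if'' direction I would argue by induction on $b$, using Theorem \ref{thm:r4-coarse-cyclic-all-orders} as the engine. Fix $y\in B\setminus\{0\}$ and put $B'=B\setminus\{y\}$, which still contains $0$. Splitting sums according to whether $y$ is used gives
$$U_m(B)\supseteq U_m(B')\cup\bigl(y+U_{m-1}(B')\bigr),$$
where $U_{m-1}(B')$ is interpreted via complementation when $m-1$ is small. There are two cases.

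\emph{Case 1: $B'$ generates $\mathbb Z_N$ for some choice of $y$.} If the relevant range of $m$ is still admissible for $b-1$, then Theorem \ref{thm:r4-coarse-cyclic-all-orders} gives $|U_m(B')|\geqslant 2b-4$, and the same bound holds for $U_{m-1}(B')$. Equality $|U_m(B)|=2b-2$ then says that translating by $y$ contributes at most two new elements. I would exploit this for every admissible $y$ simultaneously. The relation $|X\cup(y+X)|\leqslant|X|+2$, holding for many $y$, forces $X=U_m(B')$ to be a union of few arithmetic progressions with difference $y$. Comparing two different choices of $y$, and applying Kneser's theorem to $X+\{0,y\}$, should show that $X$ is, up to two elements, periodic with a period subgroup $H$. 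The boundary case $m=\lfloor (b-1)/2\rfloor$, where $b-1$ drops out of the admissible range, I would treat separately using the complementation identity $U_j(X)=\sum X-U_{c-j-1}(X)$.

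\emph{Case 2: every $B\setminus\{y\}$ generates a proper subgroup.} This is the extremal configuration. Here I would show that some $B'=B\setminus\{x\}$ lies in a proper subgroup $K$ with $|B'|=b-1$. Then
$$U_m(B)\supseteq m^{\wedge}B'\cup\bigl(x+(m-1)^{\wedge}B'\bigr),$$
and these two pieces lie in the distinct cosets $K$ and $x+K$, so they are disjoint. The lower bound $h(k-h)+1$-type estimates inside $K$ (Theorem \ref{thm:nathanson-restricted-minimum} lifted to $K$, or the direct theorem applied in $K$ when $B'$ is large) give $|U_m(B)|\geqslant 2(b-1)$. Equality then forces $m^{\wedge}B'=(m-1)^{\wedge}B'=K$ together with $|K|=b-1$, that is, $B'=K=H$, which is exactly the claimed structure.

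The main obstacle I expect is Case 1: turning the small-doubling information ``each translate adds at most two elements'' into genuine periodicity. My first attempt would be to choose $y$ to be an element whose removal keeps $B'$ generating, and to apply Kneser's theorem to the sumset $U_m(B')+\{0,y\}$. If the stabilizer there turns out to be trivial, I would aim for a contradiction with $N\geqslant 2b$ by a counting argument over all $b-1$ possible choices of $y$.
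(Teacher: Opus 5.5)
Your ``if'' direction is correct and matches the paper's computation in Lemma \ref{cyc:lem:uniform-zero-deletion}. Your Case~2 is essentially the rest of that lemma, although the tool needed there is Lemma \ref{cyc:lem:strict-cyclic-layer} ($|j^{\wedge}B'|\geqslant|B'|$, with equality only for cosets), not Nathanson's integer bound. The ``only if'' direction has a genuine gap: Case~1 is the whole difficulty, and the reduction you propose does not hold. From $U_m(B)=U_m(B')\cup(y+U_{m-1}(B'))$ and $|U_m(B')|\geqslant 2b-4$ you only get $|(y+U_{m-1}(B'))\setminus U_m(B')|\leqslant 2$. This compares two different sets, so it is not $|(y+X)\setminus X|\leqslant 2$ for a single set $X$. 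Moreover, $U_m(B\setminus\{y\})$ changes with $y$, so there is no common set to which several $y$ can be applied. Even for a fixed $X$, $|X\cup(y+X)|\leqslant|X|+2$ only makes $X$ a union of $\langle y\rangle$-cosets and at most two $y$-progressions, and Kneser's theorem yields no period. The missing idea is to fix $S=U_m(B)$ and delete \emph{two} elements $e,f\neq0$. For $X=B\setminus\{e,f\}$, both $e+U_{m-1}(X)$ and $f+U_{m-1}(X)$ lie in $S$, so $|(S+(e-f))\setminus S|\leqslant|S|-|U_{m-1}(X)|$. The paper then uses the inductive direct \emph{and} inverse statements at size $b-2$, together with an exclusion of subgroups of index $\geqslant3$ containing $b-3$ points of $B$, to get $|U_{m-1}(X)|\geqslant 2b-5$. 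Hence every $d$ in a symmetric generating set $\mathcal D$ of such differences, with $|\mathcal D|\geqslant 11$, satisfies $|(S+d)\setminus S|\leqslant 3$, which contradicts Olson's lemma (Lemma \ref{lem:olson-boundary}).

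Even with this trick, two regimes are out of reach of any translation argument, and your plan does not treat them. First, if $N\leqslant 2b+3$, the complement of $S$ has at most $5$ elements, so Olson's bound is at most $3$ and gives no contradiction. The paper instead splits on pair multiplicities: it uses padding when some $\mu_B(c)\geqslant\lfloor m/2\rfloor+2$ (Proposition \ref{cyc:pro:constant-pair-padding}), and otherwise a random-subset count showing $m^{\wedge}B=\mathbb Z_N$ (Proposition \ref{cyc:lem:uniform-low-pair-density}). Second, the small orders cannot come from induction on $b$. For $m=2$ the relevant set $U_1(B')=\{0\}\cup 2^{\wedge}B'$ lies outside the range of Theorem \ref{thm:r4-coarse-cyclic-all-orders}. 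It can have only $3(b-1)/2<2b-4$ elements: for odd $b$, take $B'$ to be a union of two cosets of a subgroup of order $(b-1)/2$. Your induction also has no base case at $b=7$. In the paper, $m=2$ and $m=3$ are proved directly for all $b$, via popular sums $\mathcal P(B)$, Kneser's theorem and Kemperman's structure theorem (Corollary \ref{cyc:cor:maximum-pair-triple-inverse}), plus the pair-density argument. This is most of Section~\ref{sec:cyclic-sums}, and nothing in your proposal replaces it.
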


\section{Preliminaries}
Throughout this paper, for an integer set $X$ with $|X|=n$ and $1\leqslant r\leqslant n$, put
$$
\begin{gathered}
e_r(X)=l(r^{\wedge}X)+1-|r^{\wedge}X|,\\
D_r(X)=2|r^{\wedge}X|-l(r^{\wedge}X)-r(n-r)-2,\\
a_r(X)=2r(n-r)-n+1-|r^{\wedge}X|.
\end{gathered}
$$
Then
$$e_r(X)+D_r(X)=|r^{\wedge}X|-r(n-r)-1, $$
$$l(r^{\wedge}X)=r(n-r)+2e_r(X)+D_r(X), $$
$$a_r(X)=r(n-r)-n-e_r(X)-D_r(X).$$
We call $X$ {\it critical} at order $r$ if $a_r(X)\geqslant0$, equivalently
$$|r^{\wedge}X|<2r(n-r)-n+2 \text{ or } e_r(X)+D_r(X)\leqslant r(n-r)-n.$$
It is {\it noncritical} at order $r$ if $a_r(X)<0$.

For every nonempty subset $X$ of an abelian group $G$, let
$$\operatorname{Stab}(X)=\{g\in G: g+X=X\}$$
be the stabilizer of $X$. We call $X$ \emph{periodic} if $\operatorname{Stab}(X)\neq\{0\}$ and \emph{aperiodic} otherwise.

\begin{lem}[Kneser's theorem, \cite{Kneser}]\label{thm:Kneser}
Let $X, Y$ be finite nonempty subsets of an abelian group $G$ and $H=\operatorname{Stab}(X+Y)$. Then
$$|X+Y|\geqslant |X+H|+|Y+H|-|H|.$$
\end{lem}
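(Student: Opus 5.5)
The statement to prove is Kneser's theorem itself (Lemma \ref{thm:Kneser}). Since it is a classical result quoted from \cite{Kneser}, the plan is to give the standard proof via the Dyson $e$-transform, with induction on $|Y|$.

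\emph{Reduction.} Set $H=\operatorname{Stab}(X+Y)$. Replacing $X,Y$ by $X+H$ and $Y+H$ leaves $X+Y$ unchanged, because $X+Y+H=X+Y$. Passing to $G/H$ then turns the claim into the aperiodic statement
$$|X+Y|\geqslant |X|+|Y|-1 \quad\text{whenever } X+Y \text{ is aperiodic.}$$
So it suffices to prove: for all finite nonempty $X,Y$ we have $|X+Y|\geqslant|X+H|+|Y+H|-|H|$, or else $X+Y$ is periodic with a stabilizer that the argument tracks.

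\emph{Induction.} Induct on $|Y|$; the case $|Y|=1$ is trivial. If $X+Y-Y=X$ holds after translating so that $0\in Y$, then $Y-Y\subseteq\operatorname{Stab}(X+Y)$. In that case $Y+H$ is a single coset of $H$, and the inequality follows immediately.

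Otherwise, choose $e\in X-Y$ (after translation) with $X\not\subseteq X+(Y-y)$ for the relevant $y$. Apply the $e$-transform
$$X'=X\cup(Y+e),\qquad Y'=Y\cap(X-e).$$
This satisfies $X'+Y'\subseteq X+Y$, $|X'|+|Y'|=|X|+|Y|$, and $1\leqslant|Y'|<|Y|$. By induction,
$$|X+Y|\geqslant|X'+Y'|\geqslant|X'+H'|+|Y'+H'|-|H'|,$$
where $H'=\operatorname{Stab}(X'+Y')$.

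\emph{Main obstacle.} The difficulty is lifting this bound from $H'$ back to $H$. The stabilizer $H'$ need not equal $H$, and the inequality obtained for the transformed pair may lose track of how $X$ and $Y$ fill the cosets of $H'$.

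The standard fix is to choose the transform minimizing $|Y'|$, or to run a secondary argument. If $X+Y\neq X+Y+H'$, compare $X+Y$ with $(X+H')+(Y+H')$ coset by coset. Each coset of $H'$ meeting $X+Y$ but not contained in it forces $X$ or $Y$ to be sparse there, which is enough to recover the inequality with $H'$. One then shows that minimality forces $H'\subseteq H$, which allows passing from $H'$ to $H$.

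I expect this coset-counting step to be the delicate part. Everything else is the routine bookkeeping of the $e$-transform.
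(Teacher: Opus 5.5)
The paper gives no proof of this lemma: it quotes it from \cite{Kneser} and uses it as a black box, so in context the citation is all that is needed. As a standalone proof, however, your outline has a genuine gap, and it sits exactly at the step you call the main obstacle.

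\textbf{What is correct.} The parts you actually carry out are fine:
\begin{itemize}
\item the reduction, via $G/H$, to $|X+Y|\geqslant|X|+|Y|-1$ for aperiodic $X+Y$;
\item the case $X+(Y-Y)=X$;
\item the identities $X'+Y'\subseteq X+Y$ and $|X'|+|Y'|=|X|+|Y|$.
\end{itemize}
One small slip: the side condition on $e$ should read $Y+e\not\subseteq X$. As written, $X\not\subseteq X+(Y-y)$ can never hold, since $0\in Y-y$.

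\textbf{The gap.} The inductive step only yields
\[
|X+Y|\geqslant|X'+Y'|\geqslant|X'|+|Y'|-|H'|=|X|+|Y|-|H'|,
\]
which settles the reduced statement precisely when $H'=\{0\}$. This is why the same scheme proves Cauchy--Davenport: in $\mathbb Z_p$, $H'\neq\{0\}$ forces $X+Y=\mathbb Z_p$, which is immediately contradictory. In a putative counterexample in a general abelian group, $H'$ is therefore forced to be nontrivial. Handling that case is the entire content of Kneser's theorem, and your proposal only describes it in words.

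\textbf{Why the sketch does not close it.}
\begin{itemize}
\item After your reduction $H=\{0\}$, so ``minimality forces $H'\subseteq H$'' is literally the conclusion being sought.
\item It cannot come from the choice of transform. In $G=\mathbb Z\times\mathbb Z_2$ take $X=\{(0,0),(0,1),(1,0)\}$ and $Y=\{(0,0),(1,0)\}$. Then $X+Y$ is aperiodic, but $e=(0,1)$ gives $Y'=\{(0,0)\}$, of the smallest possible size, while $X'+Y'=\{0,1\}\times\mathbb Z_2$ has stabilizer $\{0\}\times\mathbb Z_2$. A nontrivial $H'$ therefore has to be played off against the assumed deficiency $|X+Y|\leqslant|X|+|Y|-2$ by a genuinely new step.
\item It is not explained which inequality ``with $H'$'' is meant for the pair $(X,Y)$. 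The bound $|X+Y|\geqslant|X+H'|+|Y+H'|-|H'|$ is false for subgroups that do not stabilize $X+Y$ (take $X=Y=\{0\}$).
\item Nothing quantifies how sparse $X$ or $Y$ must be in the partially filled $H'$-cosets.
\end{itemize}
The complete proofs (Kneser's original, Kemperman's, or DeVos's short proof) all need a substantial additional argument exactly here, built on an extremal choice of the counterexample. Without it, your argument only covers situations where a nontrivial $H'$ is impossible or immediately contradictory, such as torsion-free groups or $\mathbb Z_p$.
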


\begin{lem}[Theorem 4.1, \cite{DSH1994}]\label{lem:group-restricted}
Let $p$ be prime and $B\subseteq\mathbb Z_p$ such that $|B|=n$. For every $0\leqslant j\leqslant n$, we have
$$|j^{\wedge}B|\geqslant\min\{p, j(n-j)+1\}. $$
\end{lem}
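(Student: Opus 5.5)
We would prove Lemma \ref{lem:group-restricted} by the polynomial method of Alon, Nathanson and Ruzsa, using the Combinatorial Nullstellensatz. The cases $j=0$ and $j=n$ are trivial, since then $|j^{\wedge}B|=1$. We may also assume $n\leqslant p$. Then, by complementation $j^{\wedge}B=\sum_{b\in B}b-(n-j)^{\wedge}B$, we may take $1\leqslant j\leqslant n/2$ whenever convenient. The engine is the following statement. Let $A_1,\dots,A_j\subseteq\mathbb Z_p$ have pairwise distinct sizes $b_1<\dots<b_j$ with $m:=\sum_{i}(b_i-1)-\binom{j}{2}<p$. Then the set $S$ of sums $a_1+\cdots+a_j$ with $a_i\in A_i$ and the $a_i$ pairwise distinct satisfies $|S|\geqslant m+1$.

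To prove the engine statement, suppose $|S|\leqslant m$. Choose $C\supseteq S$ with $|C|=m$, and put
$$f(x_1,\dots,x_j)=\prod_{1\leqslant i<l\leqslant j}(x_l-x_i)\prod_{c\in C}(x_1+\cdots+x_j-c).$$
Then $\deg f=\binom j2+m=\sum_i(b_i-1)$, and $f$ vanishes on all of $A_1\times\cdots\times A_j$. Indeed, at a point with two equal coordinates the Vandermonde factor is zero, and at any other point the sum lies in $S\subseteq C$.

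The coefficient of $\prod_i x_i^{b_i-1}$ in $f$ equals the corresponding coefficient in $\prod_{i<l}(x_l-x_i)(x_1+\cdots+x_j)^m$. Expanding the Vandermonde as a determinant and applying the multinomial theorem, this coefficient is
$$\frac{m!}{\prod_i(b_i-1)!}\prod_{i<l}(b_l-b_i).$$
It is nonzero mod $p$ because $m<p$ and $0<b_l-b_i<p$. The Combinatorial Nullstellensatz then yields a point of $\prod_i A_i$ where $f\neq0$, which is a contradiction.

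To apply the engine to $B$, take nested subsets $A_i\subseteq B$ with $|A_i|=b_i=n-j+i$ for $1\leqslant i\leqslant j$. Every sum of pairwise distinct $a_i\in A_i$ lies in $j^{\wedge}B$. A direct computation gives
$$\sum_i(b_i-1)-\binom j2=j(n-j).$$
So if $j(n-j)<p$, the engine gives $|j^{\wedge}B|\geqslant j(n-j)+1$. If $j(n-j)\geqslant p$, we lower the sizes one unit at a time, always decreasing the largest $b_i$ that can drop without colliding with the next smaller one. This keeps the sizes distinct and nested, and makes $m$ fall through every integer down to $p-1$. With $m=p-1$ the engine gives $|j^{\wedge}B|\geqslant p$.

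The step we expect to be the main obstacle is the coefficient computation, namely identifying the coefficient as the ballot-type number above and checking that it is nonzero mod $p$. We would handle it by writing $\prod_{i<l}(x_l-x_i)=\det(x_i^{\,l-1})$ and extracting the coefficient term by term. This gives $m!\det\bigl(1/(b_i-l)!\bigr)$ with entries read as $0$ when $b_i<l$, which reduces to the stated product.
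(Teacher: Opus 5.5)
Your proof is correct, and it is the Alon--Nathanson--Ruzsa polynomial-method proof. The individual checks all go through.
\begin{itemize}
\item The top homogeneous part of $f$ is $\prod_{i<l}(x_l-x_i)(x_1+\cdots+x_j)^m$.
\item The coefficient $m!\prod_{i<l}(b_l-b_i)/\prod_i(b_i-1)!$ is an integer. Multiplied by $\prod_i(b_i-1)!$ it gives a number prime to $p$, since $m<p$ and $b_l-b_i\leqslant p-1$ (because $b_l\leqslant p$ and $b_i\geqslant 1$). Hence the coefficient itself is nonzero mod $p$. Your one-line justification is terse on this point but sound.
\item The sizes $b_i=n-j+i$ give exactly $m=j(n-j)$.
\item Any size vector $1\leqslant b_1<\cdots<b_j$ other than $(1,\dots,j)$ admits a unit decrement that keeps the sizes distinct and positive: take the least $i$ with $b_i>i$. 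So $m$ can be stopped exactly at $p-1$.
\end{itemize}

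The paper itself gives no proof of this lemma; it imports it from Dias da Silva and Hamidoune, whose original argument works in the exterior algebra (cyclic spaces for Grassmann derivatives). Your route is elementary and self-contained. It also yields the stronger statement for different sets $A_1,\dots,A_j$ of pairwise distinct sizes, and that extra generality is what makes the case $j(n-j)\geqslant p$ clean. Shrinking $B$ alone moves $j(n-j)$ in steps of $j$ and could skip over $p-1$, whereas adjusting the individual $b_i$ lets $m$ land on $p-1$ exactly. The citation buys only brevity. For the paper's single use of the lemma ($p=13$, $n=7$, $j=4$ in Lemma \ref{lem:seminar-eight-weak-defect}), either source suffices.
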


\begin{lem}[Cyclic specialization of Theorem 1.1, \cite{GGH}]
\label{cyc:lem:strict-cyclic-layer}
Let $A\subseteq \mathbb{Z}_{N}$ be a set of $n\geqslant 2$ elements. Then
$$|j^{\wedge}A|\geqslant n, \ \ 1\leqslant j\leqslant n-1. $$
If $n\geqslant 5$, $2\leqslant j\leqslant n-2$ and $A$ is not a subgroup coset or $n\geqslant 4$, $2\leqslant j\leqslant n-2$ and $A$ is aperiodic, then
$$|j^{\wedge}A|\geqslant n+1. $$
In particular, if $A$ is a subgroup of $\mathbb{Z}_{N}$ of order $h$, then $j^{\wedge}A=A$ for all $1\leqslant j\leqslant h-1$.
\end{lem}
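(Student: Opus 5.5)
For the main inequality $|j^{\wedge}A|\geqslant n$, I would argue by induction on $n$, using the complementation identity $j^{\wedge}A=\sum_{a\in A}a-(n-j)^{\wedge}A$ to assume $j\leqslant n/2$. The case $j=1$ is trivial. For $j\geqslant2$, fix $a\in A$ and put $A'=A\setminus\{a\}$. Then
$$j^{\wedge}A=\bigl(a+(j-1)^{\wedge}A'\bigr)\cup j^{\wedge}A'.$$
By induction each piece has at least $n-1$ elements. So it suffices to show the union is not a single set of size $n-1$, i.e.\ that the two pieces cannot coincide for every choice of $a$.

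Suppose the two pieces coincide with a common set $S$, $|S|=n-1$, for every $a$. Then $S$ is invariant under the relevant differences $a-a''$. From this I would deduce that $\operatorname{Stab}(j^{\wedge}A)=:H$ contains $A-A$. That forces $A$ into a single $H$-coset with $|H|\leqslant n-1<|A|$, a contradiction.

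To make this step rigorous, I would use Kneser's theorem (Lemma \ref{thm:Kneser}) on a decomposition of $j^{\wedge}A$ as a sumset. Choose a fixed $(j-1)$-subset $X\subseteq A$ and compare $\sigma(X)+(A\setminus X)$ with translates $\sigma(X)-x+y$ obtained by exchanging one element. Every such exchange lies in $j^{\wedge}A$. So $j^{\wedge}A$ contains a union of translates of $A$ minus small sets, and Kneser applied to $A+T$ for a suitable $T$ gives $|j^{\wedge}A|\geqslant|A+H|\geqslant n$ unless $H$-periodicity collapses the count.

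For the strict bound $n+1$, I would run the same exchange argument and examine the equality case. If $|j^{\wedge}A|=n$ with $2\leqslant j\leqslant n-2$, the Kneser equality forces $j^{\wedge}A$ to be $H$-periodic with $A$ filling $H$-cosets almost completely. I would then use $n\geqslant5$, or aperiodicity of $A$ when $n\geqslant4$, to show that $A$ must be a single coset of a subgroup of order $n$. This is excluded by hypothesis.

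The subgroup statement follows directly. If $A=H$ has order $h$ and $1\leqslant j\leqslant h-1$, then $j^{\wedge}H\subseteq H$ by closure, and $|j^{\wedge}H|\geqslant h$ by the first part, so $j^{\wedge}H=H$.

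The main obstacle is the equality analysis via Kneser. Periodic configurations in composite $N$, for instance $A$ a union of cosets of a small subgroup, can make the exchange sets overlap heavily. There I would first try the reduction modulo $H$: pass to $A/H$, apply the induction hypothesis in the quotient group, and lift the bound back by a factor of $|H|$.
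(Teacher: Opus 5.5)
\paragraph{Verdict: there is a genuine gap.} Your treatment of $j\in\{1,n-1\}$ and of the subgroup statement is fine, but the core of the proof is missing. After the splitting $j^{\wedge}A=\bigl(a+(j-1)^{\wedge}A'\bigr)\cup j^{\wedge}A'$ and induction, you only know $|j^{\wedge}A|\geqslant n-1$. The whole content of the lemma is to exclude the configuration in which, for every $a\in A$, both pieces equal $j^{\wedge}A$ and have $n-1$ elements.

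You assert that this configuration forces $A-A\subseteq\operatorname{Stab}(j^{\wedge}A)$, but it does not. Take $G=\mathbb Z_2\times\mathbb Z_2$, $A=G$ and $j=2$. For every $a\in A$, both $a+(A\setminus\{a\})$ and $2^{\wedge}(A\setminus\{a\})$ equal $G\setminus\{0\}$. This set has $3=n-1$ elements and trivial stabilizer. So any correct argument must use something specific to $\mathbb Z_N$: essentially that it has at most one element of order two, hence no copy of $\mathbb Z_2\times\mathbb Z_2$. Your proposal never uses cyclicity.

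The Kneser sketch does not supply the missing unit either. The set $j^{\wedge}A$ is not a sumset of two sets you control. Suppose you apply Lemma \ref{thm:Kneser} to a genuine sub-sumset such as $(j-1)^{\wedge}A_1+A_2\subseteq j^{\wedge}A$ with $A=A_1\sqcup A_2$. The induction hypothesis as you state it then gives only $|A_1|+|A_2|-1=n-1$, even when the stabilizer is trivial. Going further needs the strict inequality for $A_1$, i.e.\ the inverse statement, which your induction does not carry. The quotient reduction helps only once you already know that $j^{\wedge}A$ is periodic and that $j<|\pi(A)|$. It says nothing in the aperiodic case, which is the main one.

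The strict bound has the same problem. The claim that ``Kneser equality forces $A$ to be a single coset'' is exactly the inverse half of Theorem~1.1 of \cite{GGH}, and you state it rather than prove it. It is also delicate: at $n=4$ it genuinely fails for periodic sets. For example, $A=\{0,1,4,5\}\subseteq\mathbb Z_8$ has $2^{\wedge}A=\{1,4,5,6\}$, of size $4=n$. Your sketch does not show where $n\geqslant5$ or aperiodicity would enter.

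The paper does not reprove any of this. It applies Theorem~1.1 of \cite{GGH} directly. It notes that the exceptional configurations there are built from elementary $2$-subgroups, which in $\mathbb Z_N$ have order at most $2<n$, so they cannot occur. It then reads off the equality cases: $A=x+H$, or $n=4$ and $A$ is a union of two cosets of a subgroup of order two. Both are periodic, and for $n\geqslant5$ only the coset case occurs, so the hypotheses exclude them. To make your route work, you would need an actual proof of that theorem in the cyclic case, with the uniqueness of the involution entering explicitly.
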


\begin{proof}
For $j=1$ or $j=n-1$, we have
$$1^{\wedge}A=A, \ \ (n-1)^{\wedge}A=\sum_{x\in A}x-A, \ \ |j^{\wedge}A|=n.$$
Suppose that $2\leqslant j\leqslant n-2$. Then $n\geqslant4$.
For every elementary $2$-subgroup $H$ of $\mathbb Z_N$ and $x\in\mathbb Z_N$, we have
$$|H|\leqslant2<n, \ \ A\neq x+H, \ \ A\neq(x+H)\setminus\{y\}, \ \ y\in x+H.$$
By \cite[Theorem 1.1]{GGH}, we have $|j^{\wedge}A|\geqslant n$.
If equality holds, then either $A=x+H$ for a subgroup $H$, or
$$n=4, \ \ A=(x+H)\cup(y+H), \ \ |H|=2, \ \ x+H\neq y+H.$$
If $n\geqslant5$ and $A$ is not a subgroup coset, neither case holds.
In both cases, we have
$$|H|\geqslant2, \ \ A+H=A, \ \ H\subseteq\operatorname{Stab}(A),$$
so neither case holds when $n\geqslant4$ and $A$ is aperiodic.
Thus, under either hypothesis, $|j^{\wedge}A|\geqslant n+1$.

If $A$ is a subgroup of order $h$, then, for $1\leqslant j\leqslant h-1$, we have
$$j^{\wedge}A\subseteq A, \ \ |j^{\wedge}A|\geqslant|A|,$$
thus, $j^{\wedge}A=A$.

This completes the proof of Lemma \ref{cyc:lem:strict-cyclic-layer}.
\end{proof}

\begin{lem}[Olson \cite{Olson}; Lemma 2.3 and (2), \cite{GaoHamidoune}]\label{lem:olson-boundary}
Let $Y$ be a nonempty proper subset of a finite abelian group $G$ and let $D\subseteq G\setminus\{0\}$ generate $G$.
Then there exists $d\in D$ such that
$$|(Y+d)\setminus Y|\geqslant\min\left\{\frac{|Y|+1}{2}, \frac{|G\setminus Y|+1}{2}, \frac{|D\cup(-D)|+2}{4}\right\}.$$
\end{lem}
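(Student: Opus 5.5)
The plan is to argue by contradiction with a double-counting argument on the Cayley graph of $G$ with respect to $S=D\cup(-D)$. Throughout, write
$$f(g)=|(Y+g)\setminus Y|, \qquad g\in G.$$

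\textbf{Step 1 (reduce to $S$).} Since $|Y+g|=|Y|$, we have
$$f(g)=|Y\setminus(Y+g)|=|(Y-g)\setminus Y|=f(-g).$$
So it is enough to find some $s\in S$ with $f(s)$ at least the stated minimum; the corresponding $d=\pm s$ lies in $D$. Suppose, for contradiction, that $f(s)<m$ for every $s\in S$, where $m$ is the right-hand side of the inequality.

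\textbf{Step 2 (double count).} Count the edges $(y,y+s)$ of the Cayley graph that leave $Y$. On one hand
$$\sum_{s\in S}f(s)=\sum_{y\in Y}|(y+S)\setminus Y|=\sum_{z\notin Y}|(z-S)\cap Y|,$$
and by assumption this is $<m|S|$. Let
$$Y_1=\{y\in Y: |(y+S)\setminus Y|\geqslant |S|/2\},\qquad Z_1=\{z\notin Y: |(z-S)\cap Y|\geqslant|S|/2\}.$$
Then $|Y_1|,|Z_1|<2m$. Since $m\leqslant(|Y|+1)/2$ and $m\leqslant(|G\setminus Y|+1)/2$, we get $|Y_1|\leqslant 2m-1\leqslant|Y|$, and similarly $|Z_1|\leqslant|G\setminus Y|$. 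I expect the strict inequality to make both $Y_2=Y\setminus Y_1$ and $Z_2=(G\setminus Y)\setminus Z_1$ nonempty, although the boundary case of equality may need separate handling.

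\textbf{Step 3 (connectivity forces a crossing).} For $y\in Y_2$, more than half of $y+S$ lies in $Y$. For $z\in Z_2$, more than half of $z-S$ lies outside $Y$. Because $S$ generates $G$ and $Y$ is a proper nonempty subset, the Cayley graph is connected, so there are edges between $Y$ and its complement. I will use the subadditivity $f(g+h)\leqslant f(g)+f(h)$ together with the third bound $m\leqslant(|S|+2)/4$ to show that the "inner" parts $Y_2$ and $Z_2$ must still be joined by a controlled number of steps. The aim is to produce at least $|S|/4$ crossing edges at a single vertex, or to exhibit a single $s$ with $f(s)\geqslant m$, contradicting the hypothesis.

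The main obstacle is Step 3: extracting the constant $1/4$ (the term $(|S|+2)/4$) from the interplay between the local majority conditions and the generation hypothesis. My first attempt would be a pigeonhole argument on a vertex $y\in Y_2$ adjacent (via some $s$) to $Z_2$. Comparing $y+S$ and $(y+s)-S$, which overlap in a structured way since $S=-S$, should give two disjoint sets that each carry more than $|S|/2$ vertices on opposite sides of $Y$. That would force $\sum_s f(s)$ past $m|S|$. If this direct approach fails, I would follow Olson's original argument as presented in Gao–Hamidoune.
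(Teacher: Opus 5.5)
The paper gives no proof of this lemma; it imports it from Olson and from Lemma~2.3 and (2) of Gao--Hamidoune. Your final fallback (``follow Olson's argument'') therefore coincides with what the paper does. As a self-contained proof, however, your proposal has a genuine gap.

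\textbf{Where the argument breaks.} Steps~1 and~2 are correct but routine. Even there, nonemptiness of $Y_2$ is not automatic: if $m=(|Y|+1)/2$, the bound $|Y_1|<2m$ only gives $|Y_1|\leqslant|Y|$. Step~3 has to carry the entire lemma, but it is only a hope, and the mechanism you describe cannot work.

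Everything you extract in Step~2 uses the hypothesis $f(s)<m$ only through its sum $\sum_{s\in S}f(s)<m|S|$. Step~3 then tries to contradict this using connectivity, subadditivity and local majorities. But the averaged inequality $\sum_{s\in S}f(s)\geqslant m|S|$ is false in general, even though the lemma is true. Take $G=\mathbb Z_h\times\mathbb Z_q$ with $h\geqslant4$, $q\geqslant3$, $Y=H=\mathbb Z_h\times\{0\}$ and $D=(H\setminus\{0\})\cup\{(0,1)\}$. Then:
\begin{itemize}
\item $|S|=h+1$ and $m=(h+3)/4$;
\item $f=0$ on $H\setminus\{0\}$ and $f(\pm(0,1))=h$, so $\sum_{s\in S}f(s)=2h<(h+1)(h+3)/4=m|S|$;
\item every vertex has at most two boundary edges, so $Y_1=Z_1=\emptyset$, and $Y_2=Y$, $Z_2=G\setminus Y$ are adjacent.
\end{itemize}
So all the structure of your Steps~2--3 is present while the sum stays far below $m|S|$. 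No argument that uses the hypothesis only through its sum, plus general facts like these, can reach a contradiction; the hypothesis must be used pointwise, for individual $s$.

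Your local step also fails on its own terms:
\begin{itemize}
\item $y+S$ and $y+s+S$ are typically far from disjoint.
\item Boundary edges at one vertex carry distinct labels, so they yield $f(s)\geqslant1$ for many $s$ but never $f(s)\geqslant m$ for a single $s$.
\item Subadditivity used backwards along a word loses a factor equal to the word length, which is unbounded.
\end{itemize}

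\textbf{What is missing.} You need a global counting input that does not depend on the diameter. For instance, write $f_Y$ for your $f$ and put $\ell=\max_{d\in D}f_Y(d)$. Restricting the identity $\sum_{x\in G}|Y\cap(Y+x)|=|Y|^2$ to $x\in S\cup\{0\}$ gives $|Y|+|S|(|Y|-\ell)\leqslant|Y|^2$. The same holds with $G\setminus Y$ in place of $Y$, because $f_{G\setminus Y}=f_Y$. This settles the lemma whenever $\min\{|Y|,|G\setminus Y|\}\leqslant 2\ell$; at exactly $2\ell$ it gives $|S|\leqslant4\ell-2$, i.e.\ $\ell\geqslant(|S|+2)/4$.

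When both $|Y|$ and $|G\setminus Y|$ are much larger than $2\ell$, this bound degenerates. Showing $|S|\leqslant4\ell-2$ in that regime requires exploiting non-locally that $D$ generates $G$. That is the actual content of the cited lemma, and nothing in your sketch addresses it.
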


For every positive integer $N$, let $\pi_N: \mathbb Z\longrightarrow\mathbb Z_N$ be reduction modulo $N$.
Let $h$ be a positive integer and $X\subseteq\mathbb Z$. If $\pi_N$ is injective on $X$, then
$$h^{\wedge}\pi_N(X)=\pi_N(h^{\wedge}X). $$

\begin{lem}\label{lem:endpoint-lifting}
Let $L\geqslant2$. Let $J\subseteq[1,L-1]$ be a set of $N$ integers
and $1\leqslant p\leqslant N-1$. Put
$$E=\{0,L\}\cup J, \ \ C=\{0\}\cup\pi_L(J).$$
Then
$$|(p+1)^{\wedge}E|\geqslant|p^{\wedge}J|+|U_p(C)|, \ \ l((p+1)^{\wedge}E)=L+l(p^{\wedge}J). $$
Moreover, $|C|=N+1$, $C$ generates $\mathbb Z_L$ whenever $E$ is primitive and
$$\pi_L((p+1)^{\wedge}E)=U_p(C)=\pi_L((p-1)^{\wedge}J\cup p^{\wedge}J\cup(p+1)^{\wedge}J).$$
\end{lem}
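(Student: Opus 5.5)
The plan is to read off every assertion from one decomposition of $(p+1)^{\wedge}E$, according to whether a $(p+1)$-subset of $E$ contains $0$, $L$, both, or neither. Since $J\cap\{0,L\}=\emptyset$, such a subset is one of: $p+1$ elements of $J$; $0$ and $p$ elements of $J$; $L$ and $p$ elements of $J$; or $0$, $L$ and $p-1$ elements of $J$. Hence
$$(p+1)^{\wedge}E=(p+1)^{\wedge}J\cup p^{\wedge}J\cup\bigl(L+p^{\wedge}J\bigr)\cup\bigl(L+(p-1)^{\wedge}J\bigr),$$
with the convention $0^{\wedge}J=\{0\}$. All four pieces are nonempty because $1\leqslant p\leqslant N-1$ gives $p+1\leqslant N$.

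\emph{Diameter.} The smallest element of $(p+1)^{\wedge}E$ is the sum of the $p+1$ smallest elements of $E$. Since $0$ is the minimum of $E$, this sum equals $\min p^{\wedge}J$. Symmetrically, $L$ is the maximum of $E$, so the largest element of $(p+1)^{\wedge}E$ is $L+\max p^{\wedge}J$. Subtracting gives $l((p+1)^{\wedge}E)=L+l(p^{\wedge}J)$.

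\emph{Facts about $C$.} The elements of $J$ lie in $[1,L-1]$, so they are pairwise incongruent modulo $L$ and all nonzero mod $L$. Therefore $\pi_L$ is injective on $J\cup\{0\}$, and $|C|=N+1$. If $E$ is primitive, then $\gcd(E)=1$; since $0\in E$, this is $\gcd(\{L\}\cup J)=1$. So the residues of $J$ generate $\mathbb Z_L$, and hence $C$ generates $\mathbb Z_L$.

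\emph{Image modulo $L$.} Translation by $L$ vanishes in $\mathbb Z_L$, so the decomposition gives
$$\pi_L((p+1)^{\wedge}E)=\pi_L\bigl((p-1)^{\wedge}J\cup p^{\wedge}J\cup(p+1)^{\wedge}J\bigr).$$
Because $\pi_L$ is injective on $\{0\}\cup J$, the remark preceding the lemma lets us compute restricted sums in $C$ by lifting. Splitting on whether $0$ is used,
$$p^{\wedge}C=\pi_L(p^{\wedge}J)\cup\pi_L((p-1)^{\wedge}J),\qquad (p+1)^{\wedge}C=\pi_L((p+1)^{\wedge}J)\cup\pi_L(p^{\wedge}J).$$
Their union $U_p(C)$ is therefore the same set.

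\emph{Cardinality bound.} I would count fibre by fibre over residue classes modulo $L$. For $r\in U_p(C)$, let $F_r$ be the set of elements of $(p+1)^{\wedge}E$ congruent to $r$, and let $P_r$ be the set of elements of $p^{\wedge}J$ congruent to $r$. By the previous step $F_r\neq\emptyset$, and $P_r\subseteq F_r$.
\begin{itemize}
\item If $P_r\neq\emptyset$, then $L+\max P_r$ lies in $L+p^{\wedge}J\subseteq(p+1)^{\wedge}E$ and is congruent to $r$. It exceeds every element of $P_r$, so $|F_r|\geqslant|P_r|+1$.
\item If $P_r=\emptyset$, then $|F_r|\geqslant1=|P_r|+1$ trivially.
\end{itemize}
Every residue of an element of $p^{\wedge}J$ lies in $U_p(C)$, so summing over $r\in U_p(C)$ gives $|(p+1)^{\wedge}E|\geqslant|p^{\wedge}J|+|U_p(C)|$. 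No step is a real obstacle; the only points needing care are the convention $0^{\wedge}J=\{0\}$ when $p=1$ and the injectivity of $\pi_L$ on $\{0\}\cup J$.
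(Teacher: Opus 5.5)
Your proposal is correct and matches the paper's proof: you use the same four-way decomposition of $(p+1)^{\wedge}E$, the same min/max computation for the diameter, and the same lifting of $p^{\wedge}C$ and $(p+1)^{\wedge}C$ via injectivity of $\pi_L$ on $\{0\}\cup J$. Your fibrewise count, based on $L+\max P_r\in F_r\setminus P_r$, is a rephrasing of the paper's observation that the maximum of each fibre of $(p+1)^{\wedge}E$ cannot lie in $p^{\wedge}J$, so that $(p+1)^{\wedge}E\setminus p^{\wedge}J$ still meets every residue class in $U_p(C)$.
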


\begin{proof}
Since $\pi_L$ is injective on $\{0\}\cup J$, we have
$$|C|=N+1.$$
If $E$ is primitive, then
$$\gcd(\{L\}\cup J)=1, \ \ \langle C\rangle=\mathbb Z_L.$$
Since $E=\{0,L\}\cup J$ and $J\subseteq[1,L-1]$, we have
$$
(p+1)^{\wedge}E
=(p+1)^{\wedge}J\cup p^{\wedge}J
\cup(L+p^{\wedge}J)\cup(L+(p-1)^{\wedge}J).
$$
Since $0\notin\pi_L(J)$, for $j\in\{p,p+1\}$, we have
$$j^{\wedge}C=j^{\wedge}\pi_L(J)\cup(0+(j-1)^{\wedge}\pi_L(J))=\pi_L((j-1)^{\wedge}J\cup j^{\wedge}J). $$
Thus,
$$\pi_L((p+1)^{\wedge}E)=\pi_L((p-1)^{\wedge}J\cup p^{\wedge}J\cup(p+1)^{\wedge}J)=U_p(C). $$

For every $c\in\pi_L((p+1)^{\wedge}E)$, put
$$s_c=\max\bigl((p+1)^{\wedge}E\cap\pi_L^{-1}(c)\bigr).$$
If $s_c\in p^{\wedge}J$, then
$$
s_c+L\in(L+p^{\wedge}J)\cap\pi_L^{-1}(c)
\subseteq(p+1)^{\wedge}E\cap\pi_L^{-1}(c),
\ \ s_c+L>s_c,
$$
a contradiction. Thus,
$$
\pi_L\bigl((p+1)^{\wedge}E\setminus p^{\wedge}J\bigr)
=\pi_L((p+1)^{\wedge}E)
=p^{\wedge}C\cup(p+1)^{\wedge}C.
$$
Since $p^{\wedge}J\subseteq(p+1)^{\wedge}E$, we have
$$
\begin{aligned}
|(p+1)^{\wedge}E|
&=|p^{\wedge}J|
+\left|(p+1)^{\wedge}E\setminus p^{\wedge}J\right|\\
&\geqslant|p^{\wedge}J|
+\left|\pi_L\bigl((p+1)^{\wedge}E\setminus p^{\wedge}J\bigr)\right|\\
&=|p^{\wedge}J|+|U_p(C)|.
\end{aligned}
$$

Since $0<\min J\leqslant\max J<L$, we have
$$
\min (p+1)^{\wedge}E=\min p^{\wedge}J,
\ \ \max (p+1)^{\wedge}E=L+\max p^{\wedge}J.
$$
Thus,
$$
l((p+1)^{\wedge}E)
=L+\max p^{\wedge}J-\min p^{\wedge}J
=L+l(p^{\wedge}J).
$$

This completes the proof of Lemma \ref{lem:endpoint-lifting}.
\end{proof}

\begin{cor}\label{lem:exact-endpoint-transfer}
Under the hypotheses and notation of Lemma \ref{lem:endpoint-lifting}, put
$$v=|U_p(C)|, \ \ \rho=|(p+1)^{\wedge}E|-|p^{\wedge}J|-v. $$
Then $\rho\geqslant0$ and
$$
\begin{aligned}
D_{p+1}(E)&=D_p(J)+2v+2\rho-L-N-1,\\
e_{p+1}(E)&=e_p(J)+L-v-\rho.
\end{aligned}
$$
\end{cor}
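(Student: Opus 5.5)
This corollary is pure bookkeeping on top of Lemma \ref{lem:endpoint-lifting}, so I would substitute that lemma's cardinality and diameter identities into the definitions of $e_r$ and $D_r$. Nonnegativity of $\rho$ is exactly the inequality $|(p+1)^{\wedge}E|\geqslant|p^{\wedge}J|+|U_p(C)|$ from the lemma. For the rest, I would fix the sizes: $|J|=N$ and $|E|=N+2$. It is convenient to write
$$|(p+1)^{\wedge}E|=|p^{\wedge}J|+v+\rho, \qquad l((p+1)^{\wedge}E)=L+l(p^{\wedge}J).$$

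For $e_{p+1}(E)$ I would apply the definition directly:
$$e_{p+1}(E)=l((p+1)^{\wedge}E)+1-|(p+1)^{\wedge}E|=L+l(p^{\wedge}J)+1-|p^{\wedge}J|-v-\rho=e_p(J)+L-v-\rho.$$

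For $D_{p+1}(E)$ I would first record the identity
$$(p+1)(N+2-p-1)=(p+1)(N+1-p)=p(N-p)+N+1.$$
Then
$$D_{p+1}(E)=2(|p^{\wedge}J|+v+\rho)-L-l(p^{\wedge}J)-p(N-p)-(N+1)-2.$$
Regrouping the terms that make up $D_p(J)=2|p^{\wedge}J|-l(p^{\wedge}J)-p(N-p)-2$ leaves $D_p(J)+2v+2\rho-L-N-1$, as claimed.

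The only point that needs care is the expansion of $(p+1)(N+1-p)$, because the whole $D$-formula depends on that product identity. Otherwise no step is an obstacle.
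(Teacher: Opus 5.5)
Your proposal is correct and follows the paper's proof essentially line for line. Like the paper, you take $\rho\geqslant0$ and $l((p+1)^{\wedge}E)=L+l(p^{\wedge}J)$ from Lemma \ref{lem:endpoint-lifting}, use $|E|=N+2$ and $(p+1)(N-p+1)=p(N-p)+N+1$, and substitute into the definitions of $D_{p+1}(E)$ and $e_{p+1}(E)$.
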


\begin{proof}
By Lemma \ref{lem:endpoint-lifting}, we have
$$\rho\geqslant0, \ \ l((p+1)^{\wedge}E)=L+l(p^{\wedge}J).$$
Since $|E|=N+2$ and
$$(p+1)(N-p+1)=p(N-p)+N+1,$$
by the definitions of $D_{p+1}(E)$ and $e_{p+1}(E)$, we have
$$
\begin{aligned}
D_{p+1}(E)
&=2(|p^{\wedge}J|+v+\rho)-L-l(p^{\wedge}J)
-(p+1)(N-p+1)-2\\
&=D_p(J)+2v+2\rho-L-N-1,\\
e_{p+1}(E)
&=L+l(p^{\wedge}J)+1-|p^{\wedge}J|-v-\rho\\
&=e_p(J)+L-v-\rho.
\end{aligned}
$$

This completes the proof of Corollary \ref{lem:exact-endpoint-transfer}.
\end{proof}

For every finite subset $X$ of an abelian group $G$ and $z\in G$, let
$$
\mu_X(z)=
\left|\left\{\{x,y\}\subseteq X:x\neq y,\ x+y=z\right\}\right|.
$$
Put
$$\mathcal P(X)=\{z\in G:\mu_X(z)\geqslant2\}.$$

\begin{lem}\label{lem:disjoint-pairs}
Let $B$ be a finite subset of an abelian group $G$ and $z\in G$. Then
$$
\mu_B(z)\leqslant\left\lfloor\frac{|B|}{2}\right\rfloor,
\ \ \sum_{z\in G}\mu_B(z)=\binom{|B|}{2}.
$$
Moreover,
$$B+\mathcal P(B)\subseteq3^{\wedge}B.$$
\end{lem}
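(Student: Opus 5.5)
The statement has three parts, and each can be handled separately.

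For the bound $\mu_B(z)\leqslant\lfloor |B|/2\rfloor$, I will use the fact that the unordered pairs $\{x,y\}$ with $x\neq y$ and $x+y=z$ are pairwise disjoint. Indeed, once $x$ is fixed, the partner $y=z-x$ is determined. So two distinct pairs with sum $z$ cannot share an element. Hence these pairs form a matching inside $B$, and a matching on $|B|$ vertices has at most $\lfloor |B|/2\rfloor$ edges.

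For the summation identity, every unordered pair $\{x,y\}\subseteq B$ with $x\neq y$ has exactly one sum $z=x+y$. Therefore summing $\mu_B(z)$ over all $z\in G$ counts each $2$-subset of $B$ exactly once. This gives $\sum_{z\in G}\mu_B(z)=\binom{|B|}{2}$.

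For the inclusion $B+\mathcal P(B)\subseteq 3^{\wedge}B$, take $b\in B$ and $z\in\mathcal P(B)$. Then there are two distinct pairs $\{x_1,y_1\}$ and $\{x_2,y_2\}$ with $x_i\neq y_i$ and $x_i+y_i=z$. By the disjointness observation above, these two pairs are disjoint. So $b$ lies in at most one of them, and I choose a pair $\{x,y\}$ among them with $b\notin\{x,y\}$. Then $b$, $x$, $y$ are three distinct elements of $B$, and $b+z=b+x+y\in 3^{\wedge}B$.

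No step here is a real obstacle. The one point needing care is the disjointness of pairs with a common sum. In a general abelian group a pair could in principle degenerate via $x=y$ with $2x=z$, but such pairs are excluded by the condition $x\neq y$ in the definition of $\mu_B$.
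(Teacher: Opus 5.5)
Your proposal is correct and follows essentially the same route as the paper. Disjointness of pairs with a common sum (a shared element $b$ forces both pairs to equal $\{b,z-b\}$) gives the bound $\lfloor |B|/2\rfloor$; partitioning the $2$-subsets of $B$ by their sums gives the identity; and picking, of two disjoint pairs with sum $z$, the one avoiding $b$ gives $B+\mathcal P(B)\subseteq 3^{\wedge}B$.
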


\begin{proof}
Let $A_1,A_2\subseteq B$ satisfy $|A_1|=|A_2|=2$ and
$$\sum_{x\in A_1}x=\sum_{x\in A_2}x=z.$$
If $b\in A_1\cap A_2$, then
$$A_1=A_2=\{b,z-b\}.$$
Thus, distinct pairs with sum $z$ are disjoint, and
$$
2\mu_B(z)
=\left|\bigcup_{\substack{A\subseteq B,\ |A|=2\\
\sum_{x\in A}x=z}}A\right|
\leqslant|B|.
$$
Hence,
$$\mu_B(z)\leqslant\left\lfloor\frac{|B|}{2}\right\rfloor.$$
Since
$$
\{A\subseteq B:|A|=2\}
=\bigsqcup_{z\in2^{\wedge}B}
\left\{A\subseteq B:|A|=2,\ \sum_{x\in A}x=z\right\},
$$
we have
$$
\sum_{z\in G}\mu_B(z)
=\sum_{z\in2^{\wedge}B}\mu_B(z)
=\binom{|B|}{2}.
$$

For every $b\in B$ and $z\in\mathcal P(B)$, choose
$A_1,A_2\subseteq B$ such that
$$
|A_1|=|A_2|=2,\ \ A_1\cap A_2=\emptyset,\ \
\sum_{x\in A_1}x=\sum_{x\in A_2}x=z.
$$
Since
$$|\{i\in\{1,2\}:b\in A_i\}|\leqslant1,$$
there exists $i\in\{1,2\}$ such that
$$A_i\subseteq B\setminus\{b\}, \ \
z=\sum_{x\in A_i}x\in2^{\wedge}(B\setminus\{b\}).$$
Thus,
$$\mathcal P(B)\subseteq2^{\wedge}(B\setminus\{b\}), \ \ b\in B,$$
and
$$
\begin{aligned}
B+\mathcal P(B)
&=\bigcup_{b\in B}\bigl(b+\mathcal P(B)\bigr)\\
&\subseteq\bigcup_{b\in B}
\bigl(b+2^{\wedge}(B\setminus\{b\})\bigr)
=3^{\wedge}B.
\end{aligned}
$$

This completes the proof of Lemma \ref{lem:disjoint-pairs}.
\end{proof}

For integers $a$, $n\geqslant1$ and $0\leqslant r\leqslant n$, we have
\begin{equation}\label{eq:unit-interval-restricted-ranks}
r^{\wedge}[a,a+n-1]
=\left[ra+\frac{r(r-1)}2,\ ra+\frac{r(2n-r-1)}2\right].
\end{equation}

\begin{pro}\label{pro:one-residue-class}
Let $A\subseteq[0,L]$ be a set of $k\geqslant7$ integers such that
$0,L\in A$ and $\gcd(A)=1$.
If $I=A\setminus\{0,L\}$ satisfies $d(I)>1$, then for every
$3\leqslant h\leqslant k-3$,
$$|h^{\wedge}A|\geqslant2h(k-h)-k+1.$$
Equality can hold only if $k=2h$, $h\geqslant4$ and $L=4h-4$.
Moreover,
$$
|h^{\wedge}A|
\geqslant h(k-h)-k+2+\min\{L,h(k-h)\}.
$$
\end{pro}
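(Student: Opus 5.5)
Put $I=A\setminus\{0,L\}$, $n=|I|=k-2\geqslant5$ and $d=d(I)>1$. Then $I\subseteq c+d\mathbb Z$ for some residue $c$. Since $\gcd(A)=1$, we have $\gcd(c,d,L)=1$.

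\textbf{Step 1: decomposition by residue class.} As in the proof of Lemma \ref{lem:endpoint-lifting},
$$h^{\wedge}A=h^{\wedge}I\cup(h-1)^{\wedge}I\cup\bigl(L+(h-1)^{\wedge}I\bigr)\cup\bigl(L+(h-2)^{\wedge}I\bigr).$$
These four pieces lie in the residue classes $hc$, $(h-1)c$, $L+(h-1)c$ and $L+(h-2)c$ modulo $d$. The condition $\gcd(c,d,L)=1$ limits which coincidences can occur:
\begin{itemize}
\item the first two classes coincide only if $d\mid c$, and then $\gcd(d,L)=1$, so the pieces containing $L$ lie in different classes;
\item $hc\equiv L+(h-1)c$ exactly when $L\equiv c$;
\item $(h-1)c\equiv L+(h-1)c$ exactly when $d\mid L$, which forces $\gcd(c,d)=1$.
\end{itemize}
So at most two pairs of pieces share a class, and they share it in a controlled way.

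\textbf{Step 2: all four classes distinct.} The pieces are then disjoint. Theorem \ref{thm:nathanson-restricted-minimum}, applied to $I$ with $|I|=k-2$, gives
$$|h^{\wedge}A|\geqslant h(k-2-h)+2(h-1)(k-1-h)+(h-2)(k-h)+4.$$
For $3\leqslant h\leqslant k-3$ this exceeds $2h(k-h)-k+1$ by a wide margin.

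\textbf{Step 3: two pieces share a class.} Write the two colliding pieces as $P$ and $L+Q$, where $P,Q$ are restricted sumsets of $I$. Both $P$ and $L+Q$ are contained in $d\cdot(\text{integers})$ after translation. For their union I use the max/min argument from Lemma \ref{lem:endpoint-lifting}:
$$|P\cup(L+Q)|\geqslant\max\{|P|,|Q|\}+\min\bigl\{|\,\cdot\,|,\ \lceil L/d\rceil\bigr\},$$
where $|\,\cdot\,|$ stands for the size of the smaller of $P,Q$. This holds because the elements of $L+Q$ above $\max P$ form the top part of a set with step $d$. Elements of $A$ are distinct integers and all of $I$ lies in one class modulo $d$ inside $[1,L-1]$, so $L/d$ is large: $L\geqslant d(k-3)+2$. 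Adding Theorem \ref{thm:nathanson-restricted-minimum} for the remaining non-colliding pieces should again give $\geqslant2h(k-h)-k+1$.

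\textbf{Main obstacle: the tight case.} I expect the hardest case to be $d\mid L$, where $(h-1)^{\wedge}I$ and $L+(h-1)^{\wedge}I$ collide. The bound there is only barely enough, which is where equality can occur. I would analyse it by forcing:
\begin{itemize}
\item equality in Theorem \ref{thm:nathanson-restricted-minimum} for every piece, so $I$ is an arithmetic progression of difference $d$ (using $n\geqslant5$);
\item the overlap term in Step 3 to be exactly tight.
\end{itemize}
With $I$ an arithmetic progression, \eqref{eq:unit-interval-restricted-ranks} makes everything explicit. The count should then pin down $d=2$, $k=2h$ and $L=4h-4$. The case $h=3$ should be excluded by the extra slack, since $h=3$ gives strictly more than needed.

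\textbf{Step 4: the final inequality.} If $L\geqslant h(k-h)$, the right-hand side equals $2h(k-h)-k+2$. Strict inequality in the first bound then gives the claim, since equality forces $L=4h-4<h^2\leqslant h(k-h)$. If $L<h(k-h)$, the target is $h(k-h)-k+2+L$. Outside the equality case this is $\leqslant2h(k-h)-k+1$ because $L\leqslant h(k-h)-1$. In the equality case $k=2h$, $L=4h-4$, the target is $h^2+2h-2$, and this is at most $2h^2-2h+1$ for $h\geqslant4$.
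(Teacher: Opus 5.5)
\textbf{There is a genuine gap in Step 3.} Your union bound
$|P\cup(L+Q)|\geqslant\max\{|P|,|Q|\}+\min\{\min\{|P|,|Q|\},\lceil L/d\rceil\}$
is false. The elements of $L+Q$ above $\max P$ are $L+\bigl(Q\cap(\max P-L,\infty)\bigr)$, and nothing forces this set to be a full step-$d$ progression of length $\min\{|Q|,\lceil L/d\rceil\}$.

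Two counterexamples:
\begin{itemize}
\item Take $A=\{0,1,3,5,7,9,11,13\}$, so $k=8$, $d=2$, $c=1$ and $L=13\equiv c$. For $h=3$, the pieces $P=3^{\wedge}I$ and $L+Q$ with $Q=2^{\wedge}I$ collide. Here $|P\cup(L+Q)|=|3^{\wedge}\{1,3,\ldots,13\}|=13$, but your formula predicts at least $10+\min\{9,7\}=17$.
\item The inequality also fails in the case you single out, $d\mid L$ with $P=Q=(h-1)^{\wedge}I$, when $I$ is clustered. For $A=\{0,1,3,5,35,37,39,40\}$ and $h=3$ we get $|2^{\wedge}I|=11$ but $|2^{\wedge}I\cup(40+2^{\wedge}I)|=20<22$.
\end{itemize}
Deducing the progression structure of $I$ from equality cannot rescue this. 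That structure is only available after the lower bound itself has been proved, so the argument would be circular.

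\textbf{What is needed in the case $d\mid L$.} For each residue class modulo $L$ met by $P=(h-1)^{\wedge}I$, the largest element of $P$ in that class, plus $L$, is a new element. Hence $|P\cup(L+P)|\geqslant|P|+|(h-1)^{\wedge}\pi_L(I)|$. You then need a cyclic-group input, Lemma \ref{cyc:lem:strict-cyclic-layer}, to get $|(h-1)^{\wedge}\pi_L(I)|\geqslant k-2$; your proposal has no substitute for this.

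\textbf{What is needed when $d\mid c$ or $L\equiv c$.} These are double collisions: $d\mid c$ merges pieces 1,2 and 3,4, and $L\equiv c$ merges pieces 1,3 and 2,4. Your one-collision Step 3 does not cover them. The right move is to absorb $0$ or $L$ into $I$ and apply Theorem \ref{thm:nathanson-restricted-minimum} to the resulting $(k-1)$-element set lying in one residue class. This gives $2h(k-h)-k+2$.

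\textbf{The equality analysis.}
\begin{itemize}
\item You first need the reduction to $h\leqslant k/2$ via $h^{\wedge}A=\sum_{a\in A}a-(k-h)^{\wedge}A$. In the case $d\mid L$, the bound is $|h^{\wedge}I|+|(h-1)^{\wedge}I|+k-2\geqslant2h(k-h)-2h+1$. This reaches the target only when $k\geqslant 2h$, and at equality it forces $k=2h$. Without the reduction the argument fails for $h>k/2$.
\item The case $d\geqslant3$ is excluded because $L+(h-2)^{\wedge}I$ then lies in a third residue class and contributes an extra element.
\item For $d=2$, equality makes $I$ a step-$2$ progression. Then $\min\{L/2,(h-1)^2+1\}=2h-2$, which gives $L=4h-4$.
\end{itemize}

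Your Steps 2 and 4 are fine.
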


\begin{proof}
Since
$$h^{\wedge}A=\sum_{a\in A}a-(k-h)^{\wedge}A,$$
we may assume that $3\leqslant h\leqslant\frac{k}{2}$.
Suppose that
$$|h^{\wedge}A|\leqslant2h(k-h)-k+1.$$
Put $d=d(I)$ and choose $a\in I$. Then
$$d\geqslant2, \ \ I\subseteq a+d\mathbb Z, \ \
d(A)=\gcd(A)=1.$$

If $a\equiv0\pmod d$ or $a\equiv L\pmod d$, put
$$C=A\cap(a+d\mathbb Z).$$
Since $\gcd(A)=1$ and $d\geqslant2$, we have
$$|\{0,L\}\cap(a+d\mathbb Z)|=1, \ \ |C|=k-1.$$
Write $A=C\mathbin{\dot\cup}\{x\}$. Then
$$
h^{\wedge}C\subseteq ha+d\mathbb Z,\ \
x+(h-1)^{\wedge}C\subseteq x+(h-1)a+d\mathbb Z,
\ \ x\not\equiv a\pmod d.
$$
Thus,
$$
h^{\wedge}A
=h^{\wedge}C\mathbin{\dot\cup}
\bigl(x+(h-1)^{\wedge}C\bigr).
$$
By Theorem \ref{thm:nathanson-restricted-minimum}, we have
$$
\begin{aligned}
|h^{\wedge}A|
&=|h^{\wedge}C|+|(h-1)^{\wedge}C|\\
&\geqslant h(k-h-1)+1+(h-1)(k-h)+1\\
&=2h(k-h)-k+2,
\end{aligned}
$$
a contradiction. Thus,
$$a\not\equiv0,L\pmod d.$$

If $L\not\equiv0\pmod d$, then $ha$, $(h-1)a$ and
$L+(h-1)a$ are pairwise distinct modulo $d$.
Thus,
$$
h^{\wedge}I\mathbin{\dot\cup}(h-1)^{\wedge}I
\mathbin{\dot\cup}\bigl(L+(h-1)^{\wedge}I\bigr)
\subseteq h^{\wedge}A.
$$
By Theorem \ref{thm:nathanson-restricted-minimum}, we have
$$
\begin{aligned}
|h^{\wedge}A|
&\geqslant|h^{\wedge}I|+2|(h-1)^{\wedge}I|\\
&\geqslant h(k-h-2)+1
+2\bigl((h-1)(k-h-1)+1\bigr)\\
&=2h(k-h)-k+2+(h-1)(k-h-3)\\
&\geqslant2h(k-h)-k+2,
\end{aligned}
$$
a contradiction. Thus, $d\mid L$.
Since $d=d(I)$ and $a\in I$, we have
$$\gcd(I)=\gcd(a,d),$$
and hence
$$\gcd(a,d)=\gcd(L,a,d)=\gcd(A)=1.$$

Put $C=\pi_L(I)$.
For every $c\in\pi_L((h-1)^{\wedge}I)$, put
$$t_c=\max\bigl((h-1)^{\wedge}I\cap\pi_L^{-1}(c)\bigr).$$
Since $t_c+L>t_c$ and $\pi_L(t_c+L)=c$, we have
$$
t_c+L\in
\bigl(L+(h-1)^{\wedge}I\bigr)\setminus(h-1)^{\wedge}I.
$$
Thus,
$$
\pi_L\bigl((L+(h-1)^{\wedge}I)\setminus(h-1)^{\wedge}I\bigr)
=\pi_L((h-1)^{\wedge}I).
$$
Since $\pi_L$ is injective on $I$, we have
$$|C|=k-2, \ \ 1\leqslant h-1\leqslant|C|-1.$$
By Lemma \ref{cyc:lem:strict-cyclic-layer}, we have
$$
\begin{aligned}
\left|(h-1)^{\wedge}I\cup(L+(h-1)^{\wedge}I)\right|
&=|(h-1)^{\wedge}I|
+\left|(L+(h-1)^{\wedge}I)\setminus(h-1)^{\wedge}I\right|\\
&\geqslant|(h-1)^{\wedge}I|
+|\pi_L((h-1)^{\wedge}I)|\\
&=|(h-1)^{\wedge}I|+|(h-1)^{\wedge}C|\\
&\geqslant|(h-1)^{\wedge}I|+k-2.
\end{aligned}
$$
Since
$$
h^{\wedge}I\subseteq ha+d\mathbb Z,\ \
(h-1)^{\wedge}I\cup(L+(h-1)^{\wedge}I)
\subseteq(h-1)a+d\mathbb Z,
\ \ a\not\equiv0\pmod d,
$$
we have
$$
h^{\wedge}I\mathbin{\dot\cup}
\bigl((h-1)^{\wedge}I\cup(L+(h-1)^{\wedge}I)\bigr)
\subseteq h^{\wedge}A.
$$
By Theorem \ref{thm:nathanson-restricted-minimum}, we have
$$
\begin{aligned}
2h(k-h)-k+1
&\geqslant|h^{\wedge}A|\\
&\geqslant|h^{\wedge}I|
+\left|(h-1)^{\wedge}I\cup(L+(h-1)^{\wedge}I)\right|\\
&\geqslant|h^{\wedge}I|+|(h-1)^{\wedge}I|+k-2\\
&\geqslant h(k-h-2)+1+(h-1)(k-h-1)+1+k-2\\
&=2h(k-h)-2h+1.
\end{aligned}
$$
Since $k\geqslant2h$ and $k\geqslant7$, we have
$$k=2h, \ \ h\geqslant4, \ \ |h^{\wedge}A|=2h^2-2h+1.$$

If $d\geqslant3$, then $ha$, $(h-1)a$ and $L+(h-2)a$
are pairwise distinct modulo $d$, and
$$\emptyset\neq L+(h-2)^{\wedge}I\subseteq h^{\wedge}A.$$
Thus,
$$
\begin{aligned}
|h^{\wedge}A|
&\geqslant|h^{\wedge}I|
+\left|(h-1)^{\wedge}I\cup(L+(h-1)^{\wedge}I)\right|
+|L+(h-2)^{\wedge}I|\\
&\geqslant|h^{\wedge}I|+|(h-1)^{\wedge}I|+2h-2+1\\
&\geqslant2h^2-2h+2,
\end{aligned}
$$
a contradiction. Hence, $d=2$.

Equality in the preceding bounds gives
$$
|(h-1)^{\wedge}I|=(h-1)^2+1,
$$
$$
\left|(h-1)^{\wedge}I\cup(L+(h-1)^{\wedge}I)\right|
=|(h-1)^{\wedge}I|+2h-2.
$$
Since
$$|I|=2h-2\geqslant6, \ \
2\leqslant h-1\leqslant|I|-2, \ \ d(I)=2,$$
by Theorem \ref{thm:nathanson-restricted-minimum}, we have
$$I=\min I+2[0,2h-3].$$
By \eqref{eq:unit-interval-restricted-ranks}, we have
$$(h-1)^{\wedge}I
=\min (h-1)^{\wedge}I+2[0,(h-1)^2].$$
Since $2\mid L$, we have
$$
\begin{aligned}
2h-2
&=\left|(h-1)^{\wedge}I\cup(L+(h-1)^{\wedge}I)\right|
-|(h-1)^{\wedge}I|\\
&=\min\left\{\frac L2,(h-1)^2+1\right\}.
\end{aligned}
$$
Together with
$$(h-1)^2+1-(2h-2)=(h-2)^2>0,$$
we have $L=4h-4$.
This proves the first assertion and its necessary equality conditions.

If equality holds in the first assertion, then
$$k=2h, \ \ h\geqslant4, \ \ L=4h-4\leqslant h^2.$$
Thus,
$$
|h^{\wedge}A|
-\bigl(h(k-h)-k+2+\min\{L,h(k-h)\}\bigr)
=(h-1)(h-3)\geqslant0.
$$
In every other case, we have
$$
|h^{\wedge}A|
\geqslant2h(k-h)-k+2
\geqslant h(k-h)-k+2+\min\{L,h(k-h)\}.
$$

This completes the proof of Proposition \ref{pro:one-residue-class}.
\end{proof}

\section{Consecutive restricted sumsets in cyclic groups}\label{sec:cyclic-sums}
For a disjoint partition $B=B_1\mathbin{\dot\cup}\cdots\mathbin{\dot\cup}B_t$, we have
\begin{equation}\label{cyc:eq:disjoint-layer}
j^{\wedge}B=\bigcup_{\substack{j_1,\ldots,j_t\geqslant0\\j_1+\cdots+j_t=j}}\left(j_1^{\wedge}B_1+\cdots+j_t^{\wedge}B_t\right).
\end{equation}

\begin{lem}\label{cyc:lem:full-coset-lifting}
Let $H$ be a subgroup of a finite cyclic group $G$ with $|H|\geqslant2$.
Let $\pi:G\to G/H$ be the quotient map, $\emptyset\neq T\subseteq G/H$ and $B\subseteq\pi^{-1}(T)$.
Put $\delta=|\pi^{-1}(T)\setminus B|$ and suppose that $\delta\leqslant|H|-2$. Then
$$\pi^{-1}(T+2^{\wedge}T)\subseteq3^{\wedge}B,$$
and equality holds for $|H|=2$.
\end{lem}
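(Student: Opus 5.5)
The plan is to prove the inclusion one coset at a time. I will use the fibres $B_t=B\cap\pi^{-1}(t)$ for $t\in T$ and write $\delta_t=|H|-|B_t|$. Then $\sum_{t\in T}\delta_t=\delta\leqslant|H|-2$, so every fibre satisfies $|B_t|\geqslant2$, and any two fibres satisfy $\delta_s+\delta_t\leqslant|H|-2$ whether or not $s=t$. Every element of $T+2^{\wedge}T$ has the form $t_1+t_2+t_3$ with $t_1,t_2,t_3\in T$ and $t_2\neq t_3$. So $t_1,t_2,t_3$ are either pairwise distinct or, after relabelling, satisfy $t_1=t_2\neq t_3$. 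Fix $g\in\pi^{-1}(t_1+t_2+t_3)$; the task is to write $g$ as a sum of three distinct elements of $B$.

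\emph{Case 1: $t_1,t_2,t_3$ pairwise distinct.} Fix any $y\in B_{t_3}$. Translate $B_{t_1}$ and $B_{t_2}$ into $H$. Their sizes add up to
$$|B_{t_1}|+|B_{t_2}|=2|H|-\delta_{t_1}-\delta_{t_2}\geqslant|H|+2>|H|.$$
By the pigeonhole principle, $B_{t_1}+B_{t_2}$ is therefore the whole coset $\pi^{-1}(t_1+t_2)$. This coset contains $g-y$, so $g-y=x_1+x_2$ with $x_i\in B_{t_i}$. The three summands $x_1,x_2,y$ lie in different cosets of $H$, so they are automatically distinct.

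\emph{Case 2: $t_1=t_2\neq t_3$.} Put $X=B_{t_1}$, $Y=B_{t_3}$, $m=|X|$ and $n=|Y|$. I will show that $g=x_1+x_2+y$ for some $x_1\neq x_2$ in $X$ and $y\in Y$; then $y$ is distinct from both $x_i$ because it lies in a different coset. For $z\in\pi^{-1}(2t_1)$, let $r(z)=|X\cap(z-X)|$, the number of ordered pairs $(x_1,x_2)\in X^2$ with $x_1+x_2=z$. The proof is a counting argument in three steps.
\begin{itemize}
\item Summing over the coset gives $\sum_z r(z)=m^2$.
\item Each $r(z)\leqslant m$, and only $|H|-n=\delta_{t_3}$ values of $z$ fail to lie in $g-Y$. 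Hence
$$\sum_{y\in Y}r(g-y)\geqslant m^2-\delta_{t_3}m.$$
\item The pairs with $x_1=x_2$ are the diagonal pairs $(x,x)$. There are exactly $m$ of them in total over all $z$ (each $x$ contributes one, to $z=2x$), so they account for at most $m$ of this count.
\end{itemize}
Combining these, the number of good triples $(x_1,x_2,y)$ with $x_1\neq x_2$ and $x_1+x_2+y=g$ is at least
$$m(m-\delta_{t_3}-1)=m\bigl(|H|-\delta_{t_1}-\delta_{t_3}-1\bigr)\geqslant m>0.$$
This count is the delicate point of the argument. The hypothesis $\delta\leqslant|H|-2$ is used exactly here, through the bound $\delta_{t_1}+\delta_{t_3}\leqslant|H|-2$, to beat the diagonal correction.

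\emph{Equality for $|H|=2$.} Let $b_1,b_2,b_3\in B$ be distinct. Their images cannot all be equal, because each fibre has at most $|H|=2$ elements. So after relabelling $\pi(b_2)\neq\pi(b_3)$, which gives
$$\pi(b_1+b_2+b_3)=\pi(b_1)+\bigl(\pi(b_2)+\pi(b_3)\bigr)\in T+2^{\wedge}T.$$
Thus $3^{\wedge}B\subseteq\pi^{-1}(T+2^{\wedge}T)$, and together with the inclusion already proved this gives equality.
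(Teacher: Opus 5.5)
Your proof is correct. Case 1 and the equality part for $|H|=2$ are essentially the paper's argument. The difference is in your Case 2, which the paper never treats separately. The paper writes each $s\in T+2^{\wedge}T$ as $t_1+t_2+t_3$ with $t_1\notin\{t_2,t_3\}$, so any repeated coset is $t_2=t_3$. It then fixes $c\in B_{t_3}$ and applies the same pigeonhole step to $B_{t_1}$ and $B_{t_2}\setminus\{c\}$ inside $\pi^{-1}(t_1)$. Their sizes sum to at least $2|H|-\delta-1\geqslant|H|+1$. Distinctness of the three summands is automatic, since $c$ was removed from $B_{t_2}$ and $B_{t_1}$ lies in another coset.

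In your notation: for $t_1=t_2\neq t_3$, fix $c\in X$ and note that $|Y|+|X\setminus\{c\}|\geqslant|H|+1$. Hence $Y+(X\setminus\{c\})$ is the whole coset $\pi^{-1}(t_1+t_3)$, which contains $g-c$. Your counting argument (ordered representations minus the at most $m$ diagonal ones) is valid. It even gives the quantitative bound $m(|H|-\delta_{t_1}-\delta_{t_3}-1)$ on the number of representations. But it is more work than needed: the step stops being delicate once the repeated coset is placed on the side of the fixed element.

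One small slip: you claim $\delta_s+\delta_t\leqslant|H|-2$ ``whether or not $s=t$''. This is false for $s=t$, since $2\delta_s$ can be as large as $2|H|-4$. You only ever apply it to distinct cosets, so nothing breaks.
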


\begin{proof}
For $t\in T$, put $B_t=B\cap\pi^{-1}(\{t\})$. Then
$$|B_t|\geqslant|H|-\delta\geqslant 2.$$
For $|T|=1$, we have
$$\pi^{-1}(T+2^{\wedge}T)=\emptyset\subseteq 3^{\wedge}B.$$
If $|H|=2$, then $|B|=2$ and $3^{\wedge}B=\emptyset$.

Suppose that $|T|\geqslant 2$. By $\delta\leqslant|H|-2$ we have $|B|\geqslant 4$.
For every $s\in T+2^{\wedge}T$, write
$$s=t_1+t_2+t_3, \ \ t_1,t_2,t_3\in T, \ \ t_1\notin\{t_2,t_3\}.$$
Choose $c\in B_{t_3}$.
For every $z\in\pi^{-1}(\{s\})$, we have
$$B_{t_1}, \ \ z-c-(B_{t_2}\setminus\{c\})
\subseteq\pi^{-1}(\{t_1\}).$$
Since $t_1\neq t_2$ and
$$(|H|-|B_{t_1}|)+(|H|-|B_{t_2}|)\leqslant\delta,$$
we have
$$
\begin{aligned}
|B_{t_1}\cap(z-c-(B_{t_2}\setminus\{c\}))|
&\geqslant|B_{t_1}|+|B_{t_2}\setminus\{c\}|-|H|\\
&\geqslant|H|-\delta-1\geqslant1.
\end{aligned}
$$
Since $B_{t_1}\cap(B_{t_2}\cup\{c\})=\emptyset$, we have
$$\pi^{-1}(\{s\})
=B_{t_1}+(B_{t_2}\setminus\{c\})+c
\subseteq3^{\wedge}B.$$
Thus,
$$\pi^{-1}(T+2^{\wedge}T)\subseteq3^{\wedge}B.$$

If $|H|=2$, then $|B_t|=2$ for every $t\in T$.
For every $X\subseteq B$ with $|X|=3$, we have $|\pi(X)|\geqslant2$, thus,
$$\pi(3^{\wedge}B)\subseteq T+2^{\wedge}T, $$
it follows that
$$3^{\wedge}B\subseteq\pi^{-1}(T+2^{\wedge}T).$$
Hence,
$$3^{\wedge}B=\pi^{-1}(T+2^{\wedge}T).$$

This completes the proof of Lemma \ref{cyc:lem:full-coset-lifting}.
\end{proof}

\begin{lem}\label{cyc:lem:mixed-pair}
Let $B\subseteq\mathbb Z_n$ be a set of $b\geqslant 3$ elements such that $0\in B$ and $\langle B\rangle=\mathbb Z_n$. Then
$$
|B+2^{\wedge}B|\geqslant
\begin{cases}
\min\{n, 5\}, &b=3,\\
\min\{n, 2b\}, &b\geqslant4.
\end{cases}
$$
\end{lem}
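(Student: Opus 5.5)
We need to prove $|B+2^{\wedge}B|\geqslant\min\{n,2b\}$ for $b\geqslant4$, and $\min\{n,5\}$ for $b=3$. Note that $B+2^{\wedge}B=\{x+y+z: y\neq z\}$, so it contains $3^{\wedge}B$ together with the sums $2x+y$ for $x\neq y$. The argument is by Kneser's theorem (Lemma \ref{thm:Kneser}) combined with the fact that $0\in B$.

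\emph{Step 1: reduce to a two-set sumset and apply Kneser.} Set $X=B$ and $Y=2^{\wedge}B$. By Lemma \ref{cyc:lem:strict-cyclic-layer}, $|Y|\geqslant b$ for $b\geqslant2$, and $|Y|\geqslant b+1$ once $b\geqslant5$ and $B$ is not a coset. Since $0\in B$, we have $B\setminus\{0\}\subseteq Y$. Let $H=\operatorname{Stab}(B+Y)$.

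\emph{Case $H=\{0\}$.} Kneser gives $|B+Y|\geqslant|B|+|Y|-1\geqslant 2b-1$ in general, and $\geqslant 2b$ whenever $|Y|\geqslant b+1$. The remaining subcases, where $|Y|=b$, are small. For $b\geqslant5$ this forces $B$ to be a subgroup coset; since $0\in B$ and $\langle B\rangle=\mathbb Z_n$, it follows that $B=\mathbb Z_n$, so $n=b$ and the bound $\min\{n,2b\}=n$ holds trivially. For $b=4$ the extra exceptional case from Lemma \ref{cyc:lem:strict-cyclic-layer} is $B=(x+K)\cup(y+K)$ with $|K|=2$. There $B$ is a union of two $K$-cosets generating $\mathbb Z_n$, so $n\leqslant 8$ up to structure, and one checks the claim directly. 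For $b=3$ we only need $5$: take $B=\{0,u,v\}$, so $2^{\wedge}B=\{u,v,u+v\}$ and $B+2^{\wedge}B\supseteq\{u,v,u+v,2u,2v,2u+v,u+2v\}$, and a short case analysis yields at least $\min\{n,5\}$ distinct elements.

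\emph{Case $H\neq\{0\}$ (the main obstacle).} Kneser gives $|B+Y|\geqslant|B+H|+|Y+H|-|H|$. If $B+Y=\mathbb Z_n$ we are done. Otherwise, write $\pi$ for the projection to $G/H$ and put $t=|\pi(B)|\geqslant2$; indeed $t\geqslant2$ because $B$ generates and $H$ is proper. Since $\pi(B)$ generates $G/H$ and contains $0$, apply the same argument inductively in the quotient to $\pi(B)+2^{\wedge}\pi(B)$, noting that $\pi(2^{\wedge}B)\supseteq 2^{\wedge}\pi(B)$. This yields $|B+Y|\geqslant |H|\cdot\min\{n/|H|,\,2t\}$ or a comparable bound. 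It remains to show $|H|\cdot 2t\geqslant 2b$; this holds because $b\leqslant t|H|$. When the quotient is small ($t\leqslant3$), I would instead use the crude bound $|B+H|+|Y+H|-|H|\geqslant (t+t'-1)|H|$, where $t'=|\pi(Y)|\geqslant t$ as $\pi(Y)\supseteq\pi(B)\setminus\{0\}$ plus $\pi$ of a sum. I would handle $t=2$ explicitly: there $B+Y$ is $H$-periodic and proper, so it is a union of $H$-cosets of size $\leqslant n-|H|$. One must show it already covers at least $2b/|H|$ cosets, using that $B$ meets both cosets $H$ and $g+H$, so $B+Y$ meets $H$, $g+H$, and $2g+H$ (via $0+(x+y)$ and related sums). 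Making this periodic bookkeeping clean, and especially excluding the configuration where $|Y+H|=|B+H|$ with the quotient tiny, is where I expect most of the effort to lie.
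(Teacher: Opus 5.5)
Your proposal has a genuine gap in the case $H\neq\{0\}$ with a small quotient, described in the second paragraph. First, some corrections. Your case $H=\{0\}$ is right in outcome but roundabout. Since $\operatorname{Stab}(B)\subseteq\operatorname{Stab}(B+2^{\wedge}B)=\{0\}$, the set $B$ is aperiodic, so Lemma~\ref{cyc:lem:strict-cyclic-layer} gives $|2^{\wedge}B|\geqslant b+1$ for every $b\geqslant4$, and Kneser finishes at once. For $b=3$, the bound $2b-1=5$ is already what is required, so no case analysis is needed. In particular your $b=4$ subcase $B=K\cup(y+K)$ never arises there. That is fortunate, because your claim ``$n\leqslant 8$'' is false: $\{0,n/2,2,2+n/2\}$ generates $\mathbb Z_n$ for every $n\equiv2\pmod 4$. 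Your quotient induction for $H\neq\{0\}$ is valid when $t=|\pi(B)|\geqslant4$. By induction on $n$,
$|B+2^{\wedge}B|=|H|\,|\pi(B+2^{\wedge}B)|\geqslant|H|\,|\pi(B)+2^{\wedge}\pi(B)|\geqslant\min\{n,2t|H|\}\geqslant\min\{n,2b\}$.

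The gap is $t\in\{2,3\}$ with $b\geqslant4$. There the induction gives nothing for $t=2$ and only $5|H|$ for $t=3$. Your crude bound $(t+t'-1)|H|$ with $t'\geqslant t$ gives only $(2t-1)|H|$, while $2b$ can reach $4|H|$ or $6|H|$. For instance, $B=\{0,5,2,7\}\subseteq\mathbb Z_{10}$ has $H=\{0,5\}$, $t=2$ and $|B+2^{\wedge}B|=8=4|H|$. Exhibiting the three cosets $H$, $g+H$, $2g+H$ therefore cannot suffice.

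The missing idea is a dichotomy on fibre sizes.
\begin{itemize}
\item If some fibre $B\cap\pi^{-1}(u)$ is a singleton, then $b\leqslant(t-1)|H|+1$. The crude bound then already gives $(2t-1)|H|\geqslant2b+|H|-2\geqslant2b$.
\item If every fibre has at least two elements, the doubles $2u$ are attained, so $\pi(2^{\wedge}B)=Q+Q$ with $Q=\pi(B)$. Hence $\pi(B+2^{\wedge}B)=Q+Q+Q$. Since $H$ is the full stabilizer of $B+2^{\wedge}B$, the set $Q+Q+Q$, and hence $Q+Q$, is aperiodic in $\mathbb Z_n/H$. Two applications of Kneser give $|Q+Q+Q|\geqslant3t-2\geqslant2t$, so $|B+2^{\wedge}B|\geqslant2t|H|\geqslant2b$.
\end{itemize}
This argument works uniformly for every $t\geqslant2$, and it is exactly how the paper proceeds. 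So with it the induction on $n$ becomes unnecessary.
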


\begin{proof}
If $B+2^{\wedge}B=\mathbb Z_n$, the assertion holds. Suppose that $B+2^{\wedge}B\neq\mathbb Z_n$ and put
$$H=\operatorname{Stab}(B+2^{\wedge}B). $$
If $H=\{0\}$, then
$$\operatorname{Stab}(B)\subseteq\operatorname{Stab}(B+2^{\wedge}B)=\{0\}.$$
By Lemmas \ref{thm:Kneser} and \ref{cyc:lem:strict-cyclic-layer}, we have
$$
|B+2^{\wedge}B|\geqslant |B|+|2^{\wedge}B|-1\geqslant
\begin{cases}
5, &b=3,\\
2b, &b\geqslant 4.
\end{cases}
$$

Suppose that $|H|\geqslant 2$.
Let $\pi:\mathbb Z_n\to\mathbb Z_n/H$ be the quotient map and put $Q=\pi(B)$. Then
\begin{equation}\label{e2}2^{\wedge}Q\subseteq\pi(2^{\wedge}B). \end{equation}
If $|Q|=1$, then by $0\in B$ we have
$$B\subseteq H, \ \ \mathbb Z_n=\langle B\rangle\subseteq H\subsetneq\mathbb Z_n,$$
a contradiction. Thus, $|Q|\geqslant 2$.

Now we shall show that
\begin{equation}\label{e3}|\pi(2^{\wedge}B)|\geqslant |Q|. \end{equation}
If $|Q|\geqslant 3$, then by Lemma \ref{cyc:lem:strict-cyclic-layer} and \eqref{e2}, we have
$$|\pi(2^{\wedge}B)|\geqslant |2^{\wedge}Q|\geqslant |Q|.$$
If $|Q|=2$, write $Q=\{u, v\}$.
Since $b\geqslant 3$, without loss of generality, assume that
$$|B\cap\pi^{-1}(\{u\})|\geqslant 2.$$
Then
$$\{2u, u+v\}\subseteq\pi(2^{\wedge}B), \ \ 2u\neq u+v,$$
thus,
$$|\pi(2^{\wedge}B)|\geqslant 2=|Q|.$$

If $|B\cap\pi^{-1}(\{u\})|=1$ for some $u\in Q$, then
\begin{equation}\label{e4}b\leqslant (|Q|-1)|H|+1. \end{equation}
By Lemma \ref{thm:Kneser}, we have
$$|B+2^{\wedge}B|\geqslant|B+H|+|2^{\wedge}B+H|-|H|=|H|(|Q|+|\pi(2^{\wedge}B)|-1). $$
By \eqref{e3} and \eqref{e4}, we have
$$|B+2^{\wedge}B|\geqslant (2|Q|-1)|H|\geqslant 2b+|H|-2\geqslant 2b. $$

Now we assume that $|B\cap\pi^{-1}(\{u\})|\geqslant 2$ for all $u\in Q$. Then
$$\pi(2^{\wedge}B)=2Q, \ \ \pi(B+2^{\wedge}B)=3Q.$$
Since $H=\operatorname{Stab}(B+2^{\wedge}B)$, we have
$$\operatorname{Stab}(2Q)\subseteq\operatorname{Stab}(3Q)=\{0\}.$$
By Lemma \ref{thm:Kneser} applied to $2Q$ and $3Q$, respectively, we have
$$
\begin{aligned}
|B+2^{\wedge}B|
&=|H||3Q|\geqslant |H|(|2Q|+|Q|-1)\\
&\geqslant |H|(3|Q|-2)\geqslant 2|H||Q|\geqslant 2b.
\end{aligned}
$$

This completes the proof of Lemma \ref{cyc:lem:mixed-pair}.
\end{proof}

\begin{pro}\label{cyc:pro:periodic-input-strict-pair-triple}
Let $0\in B\subseteq\mathbb Z_N$ be a set of $b\geqslant 7$ elements such that $\langle B\rangle=\mathbb Z_N$.
Suppose that $H\leqslant\mathbb Z_N$ satisfies
$$|H|\geqslant 2, \ \ \delta=|(B+H)\backslash B|\leqslant|H|-2.$$
If $N\geqslant2|B+H|$ or $a+H\subseteq B$ for some $a\in\mathbb Z_N$, then
$$|3^{\wedge}B|\geqslant\min\{N, 2b-1\}.$$
\end{pro}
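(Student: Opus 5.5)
The plan is to pass to the quotient by $H$ and use Lemma \ref{cyc:lem:full-coset-lifting} to obtain full cosets inside $3^{\wedge}B$. Let $\pi:\mathbb Z_N\to\mathbb Z_N/H$ be the quotient map and put $T=\pi(B)$, so that $B\subseteq\pi^{-1}(T)=B+H$ and $\delta=|\pi^{-1}(T)\setminus B|\leqslant|H|-2$.

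\textbf{Step 1.} Lemma \ref{cyc:lem:full-coset-lifting} gives
$$\pi^{-1}(T+2^{\wedge}T)\subseteq 3^{\wedge}B,\qquad\text{hence}\qquad |3^{\wedge}B|\geqslant |H|\,|T+2^{\wedge}T|.$$
Since $0\in B$ and $\langle B\rangle=\mathbb Z_N$, we have $0\in T$ and $\langle T\rangle=\mathbb Z_N/H$.

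\textbf{Step 2.} Put $t=|T|$ and $n=N/|H|$, and apply Lemma \ref{cyc:lem:mixed-pair} to $T$.
\begin{itemize}
\item If $t\geqslant4$, then $|T+2^{\wedge}T|\geqslant\min\{n,2t\}$. This gives $|3^{\wedge}B|\geqslant\min\{N,2t|H|\}$, and $2t|H|=2|B+H|\geqslant 2b$.
\item If $t=3$, the lemma gives $\min\{n,5\}$, so $|3^{\wedge}B|\geqslant\min\{N,5|H|\}$. Here $b\leqslant3|H|$, so we only get $5|H|\geqslant\frac53 b$, and this short case needs extra work.
\item If $t=2$, then $2^{\wedge}T$ is a single point and Step 1 gives only $|H|$, far too little.
\item If $t=1$, then $B\subseteq H$, so $H=\mathbb Z_N$. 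Then $|3^{\wedge}B|$ for $B$ of size $b\geqslant N-|H|+2=N-\delta$ is handled directly: a near-full group gives $3^{\wedge}B=\mathbb Z_N$ by a counting/pigeonhole argument as in the lemma.
\end{itemize}

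\textbf{Step 3.} The small-$t$ cases are the main obstacle. There I would compute $3^{\wedge}B$ through \eqref{cyc:eq:disjoint-layer} with the partition $B_u=B\cap\pi^{-1}(u)$, $u\in T$. For $t=2$, write $T=\{0,v\}$, where $v$ generates the quotient, so $n=|\langle v\rangle|$. Then $3^{\wedge}B$ contains
$$3^{\wedge}B_0,\qquad 2^{\wedge}B_0+B_v,\qquad B_0+2^{\wedge}B_v,\qquad 3^{\wedge}B_v,$$
lying over the cosets $0,v,2v,3v$. Each $B_u$ misses at most $|H|-2$ elements of its coset, so $|B_u|\geqslant2$. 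Sumsets of two large subsets of $H$ fill the coset when $|X|+|Y|>|H|$, and Lemma \ref{cyc:lem:strict-cyclic-layer} and Lemma \ref{lem:disjoint-pairs} control $2^{\wedge}B_u$ and $3^{\wedge}B_u$. Together these should show the middle cosets are nearly full, giving roughly $2|H|+(\text{contributions of } 3^{\wedge}B_u)$.

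This is where the disjunctive hypothesis enters. If $N\geqslant2|B+H|$, then $n\geqslant2t$, so the cosets $0,v,2v,3v$ are distinct, or at least the union is not all of $\mathbb Z_N$ with overlaps. If instead some full coset $a+H\subseteq B$, then that $B_u=a+H$ and $2^{\wedge}(a+H)$, $3^{\wedge}(a+H)$ are full cosets by the last clause of Lemma \ref{cyc:lem:strict-cyclic-layer}. This boosts the count to at least $2b-1$ as $b\leqslant 2|H|$. The $t=3$ case is treated similarly, using the extra cosets $3^{\wedge}$-reachable beyond $T+2^{\wedge}T$ (such as $3u$ via $3^{\wedge}B_u$) to push $5|H|$ up toward $6|H|-1\geqslant 2b-1$, or to $N$ when the quotient is small.

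I expect the $t=2$ bookkeeping to be the crux, specifically handling coincidences among the cosets $0,v,2v,3v$ when $n\leqslant3$. It is precisely there that the two alternative hypotheses are needed to exclude a counterexample.
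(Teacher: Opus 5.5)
Your overall framework matches the paper's: pass to $\mathbb Z_N/H$, get $\pi^{-1}(T+2^{\wedge}T)\subseteq 3^{\wedge}B$ from Lemma \ref{cyc:lem:full-coset-lifting}, and finish $|T|\geqslant4$ with Lemma \ref{cyc:lem:mixed-pair}. Your $|T|\geqslant4$ case is correct.

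\textbf{The case $|T|=1$ is wrong as written.} When $H=\mathbb Z_N$, the condition $\delta\leqslant|H|-2$ only says $b\geqslant2$. So $B$ need not be near-full, and no pigeonhole argument can give $3^{\wedge}B=\mathbb Z_N$. For example, take $B=K\cup\{x\}$ with $K$ a subgroup of order $b-1$ and $N\geqslant2b$. Then $\delta\leqslant|H|-2$ holds for $H=\mathbb Z_N$, yet $|3^{\wedge}B|=2b-2$. What saves this case is the disjunctive hypothesis: $N\geqslant2|B+H|=2N$ is impossible, so $a+\mathbb Z_N\subseteq B$, i.e.\ $B=\mathbb Z_N$.

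\textbf{In the case $|T|=2$ you undercount.} Here $T+2^{\wedge}T=\{v,2v\}$, so Step 1 already gives two full cosets, not one. The finish runs as follows.
\begin{itemize}
\item $|H|\geqslant4$, because $7\leqslant b\leqslant2|H|$.
\item If $n=2$, the two cosets cover $\mathbb Z_N$.
\item If $n=3$, then $N<2|B+H|$, which forces a full fibre $a+H$ with $3^{\wedge}(a+H)=H$.
\item If $n\geqslant4$, then $|3^{\wedge}B|\geqslant2|H|+|3^{\wedge}B_0|+|3^{\wedge}B_v|$. By Lemma \ref{cyc:lem:strict-cyclic-layer}, $|3^{\wedge}X|\geqslant|X|$ for $|X|\geqslant4$, and $|3^{\wedge}X|=|X|-2$ for $|X|\in\{2,3\}$. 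A small fibre forces $\delta\geqslant|H|-3\geqslant1$, giving $|3^{\wedge}B|\geqslant2b+\delta-2\geqslant2b-1$.
\end{itemize}

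\textbf{The substantive gap is $|T|=3$, which you only describe as ``treated similarly''.} When $n\geqslant6$ and $|T+2^{\wedge}T|=5$, you must know which fibres $3u$ escape $T+2^{\wedge}T$.
\begin{itemize}
\item The paper gets this from $2^{\wedge}T=\sum_{t\in T}t-T$, so $|T-T|=5$.
\item In a cyclic group this forces $T=\pi(a)+\{0,\pi(d),2\pi(d)\}$ with $\pi(d)$ of order $n\geqslant6$.
\item Then $T+2^{\wedge}T=3\pi(a)+\{\pi(d),\dots,5\pi(d)\}$, which misses both $3\pi(a)$ and $3\pi(a)+6\pi(d)$.
\end{itemize}
Adding $3^{\wedge}B_{\pi(a)}$, with $|3^{\wedge}B_{\pi(a)}|\geqslant|H|-\delta-2$, settles $\delta\geqslant1$. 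A full fibre with $|H|\geqslant4$ gives $6|H|=2b$.

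The remaining case $\delta=0$, $|H|=3$, $b=9$ is tight. You have $5|H|=15$ from $T+2^{\wedge}T$. But $3^{\wedge}$ of a full $3$-element coset is a single point, since $\sum_{z\in H}z=0$. So one escaped fibre yields only $16<17=2b-1$. You need both end fibres, contributing the elements $3a$ and $3a+6d$; these are distinct because $6d=0$ would give $2d\in H$, i.e.\ $2\pi(d)=0$.

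Your sketch (``push $5|H|$ toward $6|H|-1$ via $3u$'') supplies neither the arithmetic-progression structure of $T$ nor this second point. As it stands, the $|T|=3$ case is not established.
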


\begin{proof}
Let $\pi:\mathbb Z_N\to\mathbb Z_N/H$ be the quotient map and put
$$Q=\pi(B), \ \ B_t=B\cap\pi^{-1}(\{t\}), \ \ t\in Q.$$
Then
$$0\in Q, \ \ \langle Q\rangle=\mathbb Z_N/H, \ \
b=|H||Q|-\delta, \ \ |B_t|\geqslant|H|-\delta\geqslant2.$$
By Lemma \ref{cyc:lem:full-coset-lifting}, we have
$$\pi^{-1}(Q+2^{\wedge}Q)\subseteq3^{\wedge}B.$$

If $|Q|=1$, then
$$\mathbb Z_N=\langle B\rangle\subseteq H, \ \ |B+H|=N, $$
thus, $N<2|B+H|$, it follows that $a+H\subseteq B$ for some $a\in\mathbb Z_N$. Hence, $B=H=\mathbb Z_N$, so,
$$3^{\wedge}B=\mathbb Z_N.$$

If $|Q|\geqslant 4$, then by Lemma \ref{cyc:lem:full-coset-lifting} and Lemma \ref{cyc:lem:mixed-pair}, we have
$$
\begin{aligned}
|3^{\wedge}B|
&\geqslant|H|\min\left\{\frac{N}{|H|}, 2|Q|\right\}\\
&=\min\{N, 2|H||Q|\}\geqslant\min\{N, 2b-1\}.
\end{aligned}
$$

Suppose that $|Q|=2$. Then $7\leqslant b\leqslant 2|H|$, thus, $|H|\geqslant 4$.
Write $Q=\{0,x\}$. Since $\langle Q\rangle=\mathbb Z_N/H$, we have
$$\langle x\rangle=\mathbb Z_N/H.$$
Moreover,
$$Q+2^{\wedge}Q=\{x, 2x\}. $$
By Lemma \ref{cyc:lem:full-coset-lifting} we have
$$\pi^{-1}(\{x, 2x\})\subseteq 3^{\wedge}B.$$

If $N=2|H|$, then
$$\mathbb Z_N=\pi^{-1}(\{x,2x\})\subseteq3^{\wedge}B\subseteq\mathbb Z_N.$$

If $N=3|H|$, then $N<2|B+H|$, thus, $|B_y|=|H|$ for some $y\in Q$.
Since $3y=0$ in $\mathbb Z_N/H$, by Lemma \ref{cyc:lem:strict-cyclic-layer}, we have
$$3^{\wedge}B_y=H, \ \ \mathbb Z_N=H\cup\pi^{-1}(\{x, 2x\})\subseteq3^{\wedge}B.$$

Suppose that $N\geqslant 4|H|$. Since $0, x, 2x, 3x$ are distinct, we have
$$\pi^{-1}(\{x, 2x\})\mathbin{\dot\cup}3^{\wedge}B_0\mathbin{\dot\cup}3^{\wedge}B_x\subseteq 3^{\wedge}B.$$
By Lemma \ref{cyc:lem:strict-cyclic-layer}, we have
$$|3^{\wedge}B_t|\geqslant|B_t| \ \ (|B_t|\geqslant 4), \ \ |3^{\wedge}B_t|=|B_t|-2 \ \ (|B_t|\in\{2, 3\}).$$
If $|B_0|, |B_x|\geqslant 4$, then
$$|3^{\wedge}B|
\geqslant2|H|+|B_0|+|B_x|
=2b+\delta\geqslant2b.$$
Otherwise, since $|B_0|+|B_x|=b\geqslant7$, we have
$$\max\{|B_0|,|B_x|\}\geqslant4, \ \
\delta\geqslant|H|-3\geqslant1.$$
Thus,
$$
\begin{aligned}
|3^{\wedge}B|
&\geqslant2|H|+|3^{\wedge}B_0|+|3^{\wedge}B_x|\\
&\geqslant2|H|+|B_0|+|B_x|-2\\
&=2b+\delta-2\geqslant2b-1.
\end{aligned}
$$

It remains to consider $|Q|=3$. By Lemma \ref{cyc:lem:mixed-pair}, we have
$$|Q+2^{\wedge}Q|\geqslant\min\left\{\frac{N}{|H|},5\right\}.$$
If $Q+2^{\wedge}Q=\mathbb Z_N/H$, then
$$\mathbb Z_N=\pi^{-1}(Q+2^{\wedge}Q)\subseteq 3^{\wedge}B.$$
If $|Q+2^{\wedge}Q|\geqslant 6$, then
$$|3^{\wedge}B|\geqslant|H||Q+2^{\wedge}Q|\geqslant 6|H|\geqslant 2b.$$

Suppose that $N\geqslant 6|H|$ and $|Q+2^{\wedge}Q|=5$.
Since $2^{\wedge}Q=\sum_{t\in Q}t-Q$, we have
$$|Q-Q|=5.$$
Write $Q=\{0, u, v\}$, where $u, v\neq 0$ and $u\neq v$.
If $v\neq -u$, $v\neq 2u$ and $u\neq 2v$, then
$$\{u, -u\}, \ \ \{v, -v\}, \ \ \{u-v, v-u\}$$
are pairwise disjoint.
Since a cyclic group has at most one nonzero element of order two, we have
$$|Q-Q|=1+|\{u, -u\}|+|\{v, -v\}|+|\{u-v, v-u\}|\geqslant 6,$$
a contradiction.
Thus, $Q$ is a three-term arithmetic progression.
Choose $a, d\in\mathbb Z_N$ such that
$$Q=\pi(a)+\{0, \pi(d), 2\pi(d)\}.$$
Since $0\in Q$ and $\langle Q\rangle=\mathbb Z_N/H$, we have
$$\pi(a)\in\{0, -\pi(d), -2\pi(d)\}, \ \ \langle\pi(d)\rangle=\mathbb Z_N/H.$$
Moreover,
$$Q+2^{\wedge}Q=3\pi(a)+\{\pi(d), 2\pi(d), 3\pi(d), 4\pi(d), 5\pi(d)\}.$$
Since $\pi(d)$ has order $N/|H|\geqslant 6$, we have
$$3\pi(a)\notin Q+2^{\wedge}Q, \ \ \pi^{-1}(Q+2^{\wedge}Q)\mathbin{\dot\cup}3^{\wedge}B_{\pi(a)}\subseteq 3^{\wedge}B.$$

If $\delta\geqslant1$, then
$$|3^{\wedge}B_{\pi(a)}|\geqslant|B_{\pi(a)}|-2\geqslant|H|-\delta-2,$$
thus,
$$|3^{\wedge}B|\geqslant 5|H|+|H|-\delta-2=2b+\delta-2\geqslant 2b-1.$$

If $\delta=0$ and $|H|\geqslant 4$, then
$$B_{\pi(a)}=a+H, \ \ 3^{\wedge}B_{\pi(a)}=3a+H,$$
thus,
$$|3^{\wedge}B|\geqslant 6|H|=2b.$$

Suppose that $\delta=0$ and $|H|\leqslant 3$.
Since $b=3|H|\geqslant7$, we have
$$|H|=3, \ \ b=9, \ \ H=\{z\in\mathbb Z_N:3z=0\}, \ \ \sum_{z\in H}z=0.$$
Thus,
$$
\begin{aligned}
3^{\wedge}B_{\pi(a)}
&=3^{\wedge}(a+H)=\{3a\},\\
3^{\wedge}B_{\pi(a+2d)}
&=3^{\wedge}(a+2d+H)=\{3a+6d\}.
\end{aligned}
$$
Since $\pi(d)$ has order $\frac{N}{|H|}\geqslant6$, we have
$$\pi(\{3a, 3a+6d\})\cap (Q+2^{\wedge}Q)=\emptyset.$$
If $6d=0$, then
$$2d\in\{z\in\mathbb Z_N: 3z=0\}=H, \ \ 2\pi(d)=0,$$
a contradiction.
Hence,
$$\pi^{-1}(Q+2^{\wedge}Q)\mathbin{\dot\cup}\{3a, 3a+6d\}\subseteq 3^{\wedge}B,$$
and
$$|3^{\wedge}B|\geqslant 5\cdot3+2=17=2b-1.$$

This completes the proof of Proposition \ref{cyc:pro:periodic-input-strict-pair-triple}.
\end{proof}

\begin{lem}\label{cyc:lem:aperiodic-popular-endpoint}
Let $B$ be an aperiodic subset of a finite cyclic group $G$ such that $b=|B|\geqslant6$.
Then $|\mathcal P(B)|\geqslant b-1$ in each of the following cases:

(1) $b=6$ and $|2^{\wedge}B|\leqslant9$;

(2) $b\geqslant7$ and $|2^{\wedge}B|\leqslant2b-2$.
\end{lem}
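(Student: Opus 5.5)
The plan is to count representations. Write $s=|2^{\wedge}B|$, and split $2^{\wedge}B$ into
$$P=\mathcal P(B), \qquad S=2^{\wedge}B\setminus P,$$
so that $\mu_B(z)=1$ for $z\in S$. By Lemma \ref{lem:disjoint-pairs}, $\mu_B(z)\leqslant\lfloor b/2\rfloor$ and $\sum_z\mu_B(z)=\binom b2$. Hence
$$\binom b2\leqslant |S|+\Big\lfloor\frac b2\Big\rfloor|P| = s+\Big(\Big\lfloor\frac b2\Big\rfloor-1\Big)|P|.$$
This gives
$$|P|\geqslant\frac{\binom b2-s}{\lfloor b/2\rfloor-1}.$$

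Next we plug in the hypotheses.

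\emph{Case (2):} $b\geqslant7$ and $s\leqslant 2b-2$. Then $\binom b2-s\geqslant \frac{b(b-1)}2-2b+2=\frac{(b-1)(b-4)}2$.
For $b$ even, dividing by $b/2-1$ gives $|P|\geqslant\frac{(b-1)(b-4)}{b-2}$, which is $b-1$ times a factor less than $1$. So the crude bound falls short, by roughly $2(b-1)/(b-2)$, i.e.\ about $2$.
For $b$ odd the denominator is $(b-3)/2$, giving $|P|\geqslant (b-1)(b-4)/(b-3)$, again short by about $1$.

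So the pure counting bound must be sharpened. The refinement I would use is that an element with $\mu_B(z)=\lfloor b/2\rfloor$ forces $B$ to be symmetric about $z/2$, i.e.\ $B=z-B$. If two distinct sums $z\neq z'$ both have maximal multiplicity, then $B=z-B=z'-B$, so $z-z'\in\operatorname{Stab}(B)$. This contradicts aperiodicity when $b$ is even. For $b$ odd, symmetry leaves out one fixed point $x$ with $2x=z$, and a similar argument shows $B$ is invariant under translation by $z-z'$ minus a bounded defect; I would handle this carefully.

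Hence at most one element of $P$ attains $\lfloor b/2\rfloor$ (or, for odd $b$, the exceptional set is at most of size $1$), and all others have multiplicity at most $\lfloor b/2\rfloor-1$. Redoing the count gives
$$\binom b2\leqslant s+(\lfloor b/2\rfloor-2)(|P|-1)+(\lfloor b/2\rfloor-1),$$
i.e.
$$|P|\geqslant1+\frac{\binom b2-s-\lfloor b/2\rfloor+1}{\lfloor b/2\rfloor-2}.$$
With $s\leqslant2b-2$ I would check that this is at least $b-1$. For even $b=2m$ the numerator is $2m^2-6m+3$ against $(2m-2)(m-2)=2m^2-6m+4$, which is short by exactly $1$. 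So one more structural input is needed, most likely showing that two elements of multiplicity $\lfloor b/2\rfloor-1$ also force near-periodicity, or that $s\leqslant 2b-2$ combined with Lemma \ref{cyc:lem:strict-cyclic-layer} leaves room.

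Alternatively, I would argue via Kneser (Lemma \ref{thm:Kneser}): $|2B|\leqslant s+b\leqslant 3b-2$, and aperiodicity of $B$ together with small doubling gives $B$ an arithmetic-progression-like structure. For such structured sets the sums in the middle range have multiplicity $\geqslant2$, and one exhibits $b-1$ of them directly. I expect the genuine obstacle to be this last gap of $1$. I would first try combining counting with the symmetry constraint on the top two multiplicity levels. Failing that, I would fall back to the Kneser/Freiman-type structural description of $B$ and an explicit count.

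\emph{Case (1):} $b=6$ and $s\leqslant9$. Here $\binom62=15$ and $\lfloor b/2\rfloor=3$. By aperiodicity at most one $z$ has $\mu_B(z)=3$. Then $15\leqslant s+|P|+1$ gives $|P|\geqslant 5=b-1$ directly when $s\leqslant9$. This case is routine once the symmetry observation is in place.
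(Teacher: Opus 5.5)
Case (1) and the even case of (2) are fine. The even case in fact closes with what you already have. Your refined inequality $\binom{b}{2}\leqslant |2^{\wedge}B|+(b/2-2)|\mathcal P(B)|+1$ is the same one the paper uses. It gives $|\mathcal P(B)|\geqslant b-1-\frac{1}{b/2-2}$. Since $b/2-2\geqslant 2$ for even $b\geqslant 8$, integrality yields $|\mathcal P(B)|\geqslant b-1$. The ``short by exactly $1$'' you found is a deficit in the numerator before dividing by $b/2-2$; after dividing it is below $1$, and you dropped integrality.

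\textbf{The gap: odd $b$.} The real gap is odd $b=2t+1\geqslant 7$, exactly where you wrote ``I would handle this carefully.'' There the claim that at most one sum attains $\mu_B=t$ is false. Take $B=\{0,1,\dots,2t\}\subseteq\mathbb Z_N$ with $N$ large. It is aperiodic and $|2^{\wedge}B|=4t-1\leqslant 2b-2$, yet $2t-1$, $2t$ and $2t+1$ all have multiplicity $t$. Your ``fixed point with $2x=z$'' description also fails: for $z=2t-1$ the leftover point is $2t$. Without the premise, your count only gives $|\mathcal P(B)|\geqslant b-2$. With $F=\{f:\mu_B(f)=t\}$ the count reads
$$\binom{2t+1}{2}\leqslant|2^{\wedge}B|+(t-2)|\mathcal P(B)|+|F|,$$
which suffices exactly when $|F|\leqslant 2t-3$. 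So what must be ruled out is $|\mathcal P(B)|=2t-1$ with $|F|\in\{2t-2,2t-1\}$, and that is the bulk of the paper's proof.

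\textbf{What the paper does instead.}
\begin{itemize}
\item Each $f\in F$ makes $B\setminus\{u_f\}$ symmetric about $f$. This gives $\mu_B(2f-g)\geqslant t-1\geqslant 2$ for $f,g\in F$, hence $S=2\cdot F-F\subseteq\mathcal P(B)$ with $|S|\leqslant|F|+1$.
\item Kneser's theorem forces $S=f_0+H$ to be a single coset containing $F$, with $|H|\in\{|F|,|F|+1\}$. This step also uses that a cyclic group has at most one element of order two, and that $\{x:2x\in H\}$ contains $H$ with index at most two.
\item One then picks $x\in B$ outside the coset $u_{f_0}+H$, which contains every $u_f$. The set $A=F-x\subseteq B$ lies in one $H$-coset.
\item A contradiction follows from a pair count on $A$ if $2x\notin f_0+H$, or from the doubling map on $A$ if $2x\in f_0+H$.
\end{itemize}

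Neither of your fallbacks supplies this step. The ``invariant up to a bounded defect'' idea cannot work: the interval example is invariant under $+1$ up to one point and is still aperiodic. The route via Kneser on $2B$ with $|2B|\leqslant 3b-2$ is too vague to evaluate. There is no structure theorem at that doubling in $\mathbb Z_N$ that would hand you the required multiplicity count.
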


\begin{proof}
By Lemma \ref{lem:disjoint-pairs}, we have
$$\mu_B(s)\leqslant\left\lfloor\frac b2\right\rfloor, \ \ s\in G$$
and
$$\binom b2
=|2^{\wedge}B|+\sum_{s\in\mathcal P(B)}(\mu_B(s)-1).$$

Suppose that $b=2t$, where $t\geqslant3$.
For every $f\in G$, we have
$$\mu_B(f)=t\Longrightarrow f-B=B.$$
If $\mu_B(f)=\mu_B(g)=t$, then
$$B+(f-g)=f-(g-B)=f-B=B,$$
thus,
$$f-g\in\operatorname{Stab}(B)=\{0\}.$$
Hence,
$$|\{f\in G:\mu_B(f)=t\}|\leqslant1$$
and
$$\binom{2t}{2}
\leqslant|2^{\wedge}B|+(t-2)|\mathcal P(B)|+1.$$

If $t=3$, then $|2^{\wedge}B|\leqslant9$, thus,
$$15\leqslant9+|\mathcal P(B)|+1,$$
it follows that
$$|\mathcal P(B)|\geqslant5=b-1.$$

If $t\geqslant4$, then $|2^{\wedge}B|\leqslant4t-2$, thus,
$$
|\mathcal P(B)|
\geqslant\frac{\binom{2t}{2}-(4t-2)-1}{t-2}
=2t-1-\frac1{t-2}>2t-2.
$$
Hence,
$$|\mathcal P(B)|\geqslant2t-1=b-1.$$

It remains to consider $b=2t+1$, where $t\geqslant3$.
Then $|2^{\wedge}B|\leqslant4t$.
Suppose that $|\mathcal P(B)|\leqslant2t-1$.
Since
$$
\begin{aligned}
0
&\leqslant\sum_{s\in\mathcal P(B)}(t-\mu_B(s))\\
&=|2^{\wedge}B|+(t-1)|\mathcal P(B)|
-\binom{2t+1}{2}\\
&\leqslant1-(t-1)(2t-1-|\mathcal P(B)|),
\end{aligned}
$$
we have
$$0\leqslant2t-1-|\mathcal P(B)|\leqslant\frac1{t-1}<1.$$
Hence,
$$
|\mathcal P(B)|=2t-1, \ \
0\leqslant\sum_{s\in\mathcal P(B)}(t-\mu_B(s))\leqslant1.
$$

Put $F=\{f\in G:\mu_B(f)=t\}$. Then
$$
|F|\in\{2t-2,2t-1\}, \ \ |F|\geqslant4, \ \
|\mathcal P(B)|\leqslant|F|+1.
$$
For every $f\in F$, let $u_f$ be the unique point of $B$
outside the $t$ disjoint pairs of sum $f$. Then
$$
\begin{gathered}
u_f=\sum_{x\in B}x-tf, \ \
f-(B\setminus\{u_f\})=B\setminus\{u_f\},\\
\{x\in B:2x=f\}\subseteq\{u_f\}.
\end{gathered}
$$
For every $f,g\in F$, the bijection $x\mapsto f-x$
on $B\setminus\{u_f\}$ gives
$$
\mu_{B\setminus\{u_f\}}(2f-g)
=\mu_{B\setminus\{u_f\}}(g).
$$
Since distinct pairs of sum $g$ are disjoint, we have
$$0\leqslant\mu_B(g)-\mu_{B\setminus\{u_f\}}(g)\leqslant1.$$
Thus,
$$
\begin{aligned}
\mu_B(2f-g)
&\geqslant\mu_{B\setminus\{u_f\}}(2f-g)\\
&=\mu_{B\setminus\{u_f\}}(g)
\geqslant t-1\geqslant2.
\end{aligned}
$$
Put
$$2\cdot F=\{2f:f\in F\}, \ \ S=(2\cdot F)-F.$$
Then
$$S\subseteq\mathcal P(B), \ \ |S|\leqslant2t-1\leqslant|F|+1.$$

Put $H=\operatorname{Stab}(S)$.
By Lemma \ref{thm:Kneser}, we have
$$
|(2\cdot F)+H|
\leqslant|S|+|H|-|F+H|
\leqslant|H|+1.
$$
If $|H|=1$, then $|2\cdot F|\leqslant2$.
Put $K=\{x\in G:2x=0\}$.
Since $G$ is cyclic, we have $|K|\leqslant2$, thus,
$$
4\leqslant|F|\leqslant|F+K|
=|2\cdot F||K|\leqslant4.
$$
Hence,
$$|K|=2, \ \ F+K=F$$
and
$$S+K=(2\cdot F)-(F+K)=S, \ \ K\subseteq\operatorname{Stab}(S)=\{0\},$$
a contradiction. Thus, $|H|\geqslant2$.
Since
$$
|H|\mid|(2\cdot F)+H|, \ \
|H|\leqslant|(2\cdot F)+H|\leqslant|H|+1,
$$
we have
$$|(2\cdot F)+H|=|H|.$$

Fix $f_0\in F$ and put
$$H'=\{x\in G:2x\in H\}.$$
Since $G/H$ is cyclic, we have
$$[H':H]\leqslant2.$$
Moreover,
$$2\cdot F\subseteq2f_0+H, \ \ F\subseteq f_0+H'.$$
Since $f=2f-f\in S$ for every $f\in F$, we have
$$F\subseteq S, \ \ F+H\subseteq S\subseteq f_0+H'.$$
If $|F+H|>|H|$, then
$$
[H':H]=2, \ \
f_0+H'=F+H\subseteq S\subseteq f_0+H',
$$
thus,
$$H=\operatorname{Stab}(S)=H',$$
a contradiction.
Hence, $F\subseteq f_0+H$.
Since $S=(2\cdot F)-F$, $S+H=S$ and $S\neq\emptyset$, we have
$$
S=f_0+H, \ \
2t-2\leqslant|F|\leqslant|H|\leqslant2t-1.
$$

Put $C=u_{f_0}+H$. Since
$$u_f-u_{f_0}=-t(f-f_0)\in H, \ \ f\in F,$$
we have
$$\{u_f:f\in F\}\subseteq C.$$
Since $b>|H|=|C|$, choose $x\in B\setminus C$.
Then $x\neq u_f$ for every $f\in F$, thus,
$$f-x\in B, \ \ f\in F.$$
Put
$$A=F-x, \ \ E=f_0-x+H.$$
Then
$$A\subseteq B\cap E, \ \ |A|=|F|.$$

Suppose that $E+E\neq S$.
Since $E+E$ and $S$ are $H$-cosets, we have
$$
\mathcal P(A)
\subseteq(E+E)\cap\mathcal P(B)
\subseteq\mathcal P(B)\setminus S.
$$
Thus,
$$
|\mathcal P(A)|
\leqslant|\mathcal P(B)|-|S|
\leqslant|F|+1-|H|.
$$
Since
$$
|2^{\wedge}A|\leqslant|H|, \ \
|H|\in\{|F|,|F|+1\}, \ \ |F|\geqslant4,
$$
by Lemma \ref{lem:disjoint-pairs}, we have
$$
\begin{aligned}
\binom{|F|}{2}
&=|2^{\wedge}A|
+\sum_{s\in\mathcal P(A)}(\mu_A(s)-1)\\
&\leqslant|H|
+\left(\left\lfloor\frac{|F|}{2}\right\rfloor-1\right)
(|F|+1-|H|)\\
&\leqslant|F|+\left\lfloor\frac{|F|}{2}\right\rfloor-1\\
&<\binom{|F|}{2},
\end{aligned}
$$
a contradiction. Thus,
$$E+E=S=f_0+H.$$
Since $E=f_0-x+H$, we have
$$f_0-2x\in H, \ \ E=x+H.$$
Together with $x\notin C$, we have
$$E\cap C=\emptyset.$$

For every $y\in A$, we have $2y\in E+E=S$.
If $2y\in F$, then
$$y=u_{2y}\in C, \ \ y\in A\subseteq E,$$
contrary to $C\cap E=\emptyset$.
Thus,
$$2\cdot A\subseteq S\setminus F.$$
Since the doubling map on a cyclic group has fibres of size at most two,
we have
$$
2\leqslant\frac{|F|}{2}
=\frac{|A|}{2}
\leqslant|2\cdot A|
\leqslant|S\setminus F|
=|H|-|F|\leqslant1,
$$
a contradiction. Hence,
$$|\mathcal P(B)|\geqslant2t=b-1.$$

This completes the proof of Lemma \ref{cyc:lem:aperiodic-popular-endpoint}.
\end{proof}

\begin{pro}\label{cyc:pro:equality-popular-period}
Let $B\subseteq\mathbb Z_N$ be a set of $b\geqslant5$ elements such that $\langle B-B\rangle=\mathbb Z_N$.
Suppose that
$$|3^{\wedge}B|\leqslant 2b-2, \ \ |3^{\wedge}B|<N.$$

(1) If $b\geqslant 7$, then $B$ is aperiodic.

(2) If $B$ is aperiodic and $|\mathcal P(B)|\geqslant b-1$, and put $H=\operatorname{Stab}(B+\mathcal P(B))$. Then
$$B+\mathcal P(B)=3^{\wedge}B, \ \ |3^{\wedge}B|=2b-2, \ \ \mathcal P(B)+H=\mathcal P(B), \ \ |\mathcal P(B)|=b-1.$$
Moreover, if $B+\mathcal P(B)$ is periodic, let $\pi:\mathbb Z_N\to\mathbb Z_N/H$ be the quotient map.
Then there exists an integer $a\geqslant 1$ such that
$$b=a|H|+1, \ \ |\mathcal P(B)|=a|H|, \ \ |\pi(B)|=a+1.$$
\end{pro}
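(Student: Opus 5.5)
For part (1) I will argue by contradiction. Suppose $b\geqslant7$ and $K=\operatorname{Stab}(B)\neq\{0\}$. Put $H=K$. Then $\delta=|(B+H)\setminus B|=0\leqslant|H|-2$, and every $B_t$ is a full $H$-coset, so $a+H\subseteq B$ for some $a$. The hypothesis $\langle B-B\rangle=\mathbb Z_N$ is slightly weaker than the normalization $0\in B$, $\langle B\rangle=\mathbb Z_N$ used in Proposition \ref{cyc:pro:periodic-input-strict-pair-triple}. Since $|3^{\wedge}B|$ is unchanged up to size under translation ($3^{\wedge}(B-c)=3^{\wedge}B-3c$), I first translate so that $0\in B$; then $\langle B\rangle\supseteq\langle B-B\rangle=\mathbb Z_N$. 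Proposition \ref{cyc:pro:periodic-input-strict-pair-triple} now gives $|3^{\wedge}B|\geqslant\min\{N,2b-1\}$. Combined with $|3^{\wedge}B|<N$, this forces $|3^{\wedge}B|\geqslant2b-1$, contradicting $|3^{\wedge}B|\leqslant2b-2$. So $B$ is aperiodic.

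For part (2), Lemma \ref{lem:disjoint-pairs} gives $B+\mathcal P(B)\subseteq3^{\wedge}B$, so $|B+\mathcal P(B)|\leqslant 2b-2<N$. Put $H=\operatorname{Stab}(B+\mathcal P(B))$. Kneser's theorem (Lemma \ref{thm:Kneser}) gives
$$2b-2\geqslant|B+\mathcal P(B)|\geqslant|B+H|+|\mathcal P(B)+H|-|H|\geqslant|B+H|+b-1-|H|.$$
If $H=\{0\}$, this yields $|B+\mathcal P(B)|\geqslant b+|\mathcal P(B)|-1\geqslant2b-2$. Hence every inequality is an equality: $|\mathcal P(B)|=b-1$, $B+\mathcal P(B)=3^{\wedge}B$ and $|3^{\wedge}B|=2b-2$, while $\mathcal P(B)+H=\mathcal P(B)$ holds trivially.

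If $|H|\geqslant2$, I pass to the quotient $\pi:\mathbb Z_N\to\mathbb Z_N/H$. Write $|B+H|=|H|q$ and $|\mathcal P(B)+H|=|H|p$, where $q=|\pi(B)|$. Then $|H|(q+p-1)\leqslant 2b-2$. Since $B$ is aperiodic, $B+H\neq B$, so $b\leqslant|H|q-1$. Also $|H|p\geqslant|\mathcal P(B)|\geqslant b-1$. Combining these gives
$$|H|(q+p-1)\geqslant b+1+b-1-|H|,$$
so $2b-2\geqslant 2b-|H|$, which is not yet a contradiction. I therefore need the sharper count. The holes satisfy $\delta=|H|q-b\geqslant1$, and Kneser together with the upper bound forces $|H|(p-1)\leqslant b-2+\delta$. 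The aim is to show $\delta=|H|-1$, meaning $B$ misses all but one element of some coset, or equivalently $b=a|H|+1$ with $q=a+1$, and that $\mathcal P(B)$ is a union of exactly $a$ full $H$-cosets.

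The main obstacle is pinning down this structure. My first attempt will be this. Any coset $t$ with $|B_t|\geqslant2$ that is not full can be shown to contribute whole cosets to $3^{\wedge}B$, via a Lemma \ref{cyc:lem:full-coset-lifting}-type argument on sub-configurations with $\delta\leqslant|H|-2$. If two cosets were partially filled, or one coset had between $2$ and $|H|-1$ elements, then $3^{\wedge}B$ would contain $\pi^{-1}$ of more cosets than $B+\mathcal P(B)$ allows, pushing $|3^{\wedge}B|$ above $2b-2$. This forces exactly one coset to meet $B$ in a single point and all other cosets to be full, which gives $b=a|H|+1$ and $|\pi(B)|=a+1$. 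Then $2b-2=2a|H|$ together with Kneser gives $|H|p\leqslant 2a|H|-|H|(a+1)+|H|=a|H|$, so $|\mathcal P(B)|\leqslant a|H|=b-1$. Hence $\mathcal P(B)$ is exactly a union of $a$ cosets, which gives $\mathcal P(B)+H=\mathcal P(B)$ and all the equalities, as before.
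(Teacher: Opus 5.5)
Your part (1), the case $H=\{0\}$ of part (2), and your closing computation are correct and agree with the paper. (The closing computation: once $|B+H|=b+|H|-1$ is known, Kneser forces $|\mathcal P(B)+H|=|\mathcal P(B)|=b-1$ and $|B+\mathcal P(B)|=2b-2$.) One small slip: $2b-2<N$ is not a hypothesis. What you need is $|B+\mathcal P(B)|\leqslant|3^{\wedge}B|<N$.

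\emph{The gap is the periodic case $|H|\geqslant2$ of (2), which is the real content of the statement.}

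You already wrote $2b-2\geqslant|B+H|+b-1-|H|$, which says exactly $\delta=|B+H|-b\leqslant|H|-1$, but you did not extract it. Your later bound also has a sign error: it should read $|H|(p-1)\leqslant b-2-\delta$. With aperiodicity, the only thing left to exclude is $1\leqslant\delta\leqslant|H|-2$, and your sketch gives no working mechanism for that.

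Lemma \ref{cyc:lem:full-coset-lifting} needs the \emph{total} number of holes to be at most $|H|-2$; it is not a per-coset statement, so applying it to ``sub-configurations'' has no clear meaning. Moreover, turning $\pi^{-1}(Q+2^{\wedge}Q)\subseteq3^{\wedge}B$ into $|3^{\wedge}B|>2b-2$ is not a matter of counting cosets when $|Q|\in\{2,3\}$. It needs the case analysis of Proposition \ref{cyc:pro:periodic-input-strict-pair-triple}. That proposition requires $b\geqslant7$, and also either a full $H$-coset inside $B$ or $N\geqslant2|B+H|$. You verify none of these, and for $b\in\{5,6\}$ you have no tool at all.

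The missing idea is a divisibility argument. Suppose $1\leqslant\delta\leqslant|H|-2$.
\begin{itemize}
\item Then $|\mathcal P(B)+H|\geqslant b-1\geqslant|B+H|-|H|+1$. Since $|H|$ divides both $|\mathcal P(B)+H|$ and $|B+H|$, this gives $|\mathcal P(B)+H|\geqslant|B+H|$.
\item Kneser then yields $2|B+H|-|H|\leqslant|B+\mathcal P(B)|\leqslant2b-2$, i.e.\ $2\delta\leqslant|H|-2$, so $|H|\geqslant4$.
\item Using $|B+H|\geqslant2|H|$ (which holds because $0\in B$, $\langle B\rangle=\mathbb Z_N$ and $H\neq\mathbb Z_N$), this forces $b\geqslant7$.
\item Also $2|B+H|-|H|<N$ together with $|H|\mid N$ forces $N\geqslant2|B+H|$.
\end{itemize}
Only now does Proposition \ref{cyc:pro:periodic-input-strict-pair-triple} apply, giving $|3^{\wedge}B|\geqslant\min\{N,2b-1\}$, which is a contradiction.

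Finally, the structure you aim for is more than the statement asserts and is not needed. You aim for one coset meeting $B$ in a single point with all others full. But $\delta=|H|-1$ already gives $b=a|H|+1$ and $|\pi(B)|=a+1$, and your final Kneser step finishes from there. In the paper that finer structure is derived only afterwards, in Theorem \ref{cyc:thm:popular-small-triple-seed}, using this proposition as input. Your sketch gives no independent route to it.
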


\begin{proof}
For every $z\in B$ and $1\leqslant j\leqslant b$, we have
$$
\begin{aligned}
j^{\wedge}(B-z)&=j^{\wedge}B-jz,\\
\mathcal P(B-z)&=\mathcal P(B)-2z,\\
(B-z)+\mathcal P(B-z)&=B+\mathcal P(B)-3z.
\end{aligned}
$$
Since translations preserve stabilizers and quotient cardinalities, we assume that
$$0\in B, \ \ \langle B\rangle=\mathbb Z_N.$$

(1) For $b\geqslant 7$, assume that $B$ is periodic. By Proposition \ref{cyc:pro:periodic-input-strict-pair-triple}, we have
$$|3^{\wedge}B|\geqslant\min\{N, 2b-1\}>|3^{\wedge}B|,$$
a contradiction.

(2) Suppose that $B$ is aperiodic and $|\mathcal P(B)|\geqslant b-1$.
By Lemma \ref{lem:disjoint-pairs}, we have
\begin{equation}\label{e5}B+\mathcal P(B)\subseteq 3^{\wedge}B, \end{equation}
thus,
\begin{equation}\label{e7}|B+\mathcal P(B)|\leqslant|3^{\wedge}B|<N. \end{equation}
Put $H=\operatorname{Stab}(B+\mathcal P(B))$.
If $|B+H|=|H|$, then by $0\in B$ we have
$$B\subseteq H, \ \ \mathbb Z_N=\langle B\rangle\subseteq H,$$
thus,
$$B+\mathcal P(B)=\mathbb Z_N,$$
contrary to \eqref{e7}. Hence, $|B+H|\neq |H|$. Since $|H|\mid|B+H|$, we have
\begin{equation}\label{e6}|B+H|\geqslant 2|H|. \end{equation}

By Lemma \ref{thm:Kneser} and \eqref{e5}, we have
$$
\begin{aligned}
2b-2
&\geqslant|B+\mathcal P(B)|\\
&\geqslant|B+H|+|\mathcal P(B)+H|-|H|\\
&\geqslant|B+H|+b-1-|H|.
\end{aligned}
$$
Thus,
$$0\leqslant|B+H|-b\leqslant|H|-1.$$

Suppose that $|B+H|-b\leqslant |H|-2$.
Then $|H|\geqslant 2$.
Since $B$ is aperiodic, we have $B+H\neq B$, thus,
$$1\leqslant |B+H|-b\leqslant|H|-2.$$
Hence,
$$|\mathcal P(B)+H|\geqslant b-1\geqslant |B+H|-|H|+1. $$
Since $|H|$ divides both $|B+H|$ and $|\mathcal P(B)+H|$, we have
$$|\mathcal P(B)+H|\geqslant |B+H|.$$
By Lemma \ref{thm:Kneser}, we have
\begin{equation}\label{e8}2|B+H|-|H|\leqslant|B+\mathcal P(B)|\leqslant 2b-2, \end{equation}
thus,
$$2(|B+H|-b)\leqslant|H|-2, $$
it follows that $|H|\geqslant 4$. By \eqref{e6} we have
$$b\geqslant|B+H|-\frac{|H|-2}{2}\geqslant 2|H|-\frac{|H|-2}{2}\geqslant 7.$$

By \eqref{e7} and \eqref{e8}, we have
$$2|B+H|-|H|\leqslant|B+\mathcal P(B)|<N, $$
combining with $|H|\mid N$, we have
$$N\geqslant 2|B+H|.$$
By Proposition \ref{cyc:pro:periodic-input-strict-pair-triple}, we have
$$|3^{\wedge}B|\geqslant\min\{N,2b-1\}>|3^{\wedge}B|,$$
a contradiction.
Hence,
$$|B+H|-b=|H|-1.$$

By Lemma \ref{thm:Kneser}, we have
$$
\begin{aligned}
2b-2
&\geqslant|3^{\wedge}B|
\geqslant|B+\mathcal P(B)|\\
&\geqslant|B+H|+|\mathcal P(B)+H|-|H|\\
&=b-1+|\mathcal P(B)+H|\\
&\geqslant b-1+|\mathcal P(B)|
\geqslant2b-2.
\end{aligned}
$$
Thus,
$$|B+\mathcal P(B)|=|3^{\wedge}B|=2b-2, \ \
|\mathcal P(B)|=|\mathcal P(B)+H|=b-1.$$
Together with \eqref{e5} and $\mathcal P(B)\subseteq\mathcal P(B)+H$, we have
$$B+\mathcal P(B)=3^{\wedge}B, \ \
\mathcal P(B)+H=\mathcal P(B).$$

Assume that $|H|\geqslant 2$. Taking $a=\frac{|B+H|}{|H|}-1\geqslant 1$, we have
$$b=a|H|+1, \ \ |\mathcal P(B)|=a|H|, \ \ |\pi(B)|=a+1.$$

This completes the proof of Proposition \ref{cyc:pro:equality-popular-period}.
\end{proof}

\begin{cor}\label{cyc:cor:maximum-pair-triple}
Let $B\subseteq\mathbb Z_N$ be a set of $b\geqslant7$ elements such that $0\in B$ and $\langle B\rangle=\mathbb Z_N$. Then
$$\max\{|2^{\wedge}B|, |3^{\wedge}B|\}\geqslant\min\{N, 2b-2\}.$$
\end{cor}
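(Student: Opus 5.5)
We argue by contradiction. Suppose $|2^{\wedge}B|\leqslant 2b-3$, $|3^{\wedge}B|\leqslant 2b-3$ and $\min\{N,2b-2\}>\max\{|2^{\wedge}B|,|3^{\wedge}B|\}$. Then $|3^{\wedge}B|<N$ and $|3^{\wedge}B|\leqslant 2b-2$. Since $0\in B$ and $\langle B\rangle=\mathbb Z_N$, we have $\langle B-B\rangle=\mathbb Z_N$. Part (1) of Proposition \ref{cyc:pro:equality-popular-period} applies because $b\geqslant7$, so $B$ is aperiodic.

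Next we use the hypothesis on $2^{\wedge}B$. Since $b\geqslant7$ and $|2^{\wedge}B|\leqslant 2b-3\leqslant 2b-2$, case (2) of Lemma \ref{cyc:lem:aperiodic-popular-endpoint} gives $|\mathcal P(B)|\geqslant b-1$.

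Now $B$ is aperiodic and $|\mathcal P(B)|\geqslant b-1$, so part (2) of Proposition \ref{cyc:pro:equality-popular-period} yields $|3^{\wedge}B|=2b-2$. This contradicts $|3^{\wedge}B|\leqslant 2b-3$.

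Hence $\max\{|2^{\wedge}B|,|3^{\wedge}B|\}\geqslant\min\{N,2b-2\}$. The argument must keep both bounds $<N$ and $\leqslant 2b-3$ at once. The only delicate point is checking that the hypotheses of each cited result hold: $\langle B-B\rangle=\mathbb Z_N$, $b\geqslant7$, and $|3^{\wedge}B|<N$. All three follow directly from the assumptions.
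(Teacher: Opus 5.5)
Your proposal is correct and matches the paper's proof step for step. You get aperiodicity from Proposition \ref{cyc:pro:equality-popular-period}(1), then $|\mathcal P(B)|\geqslant b-1$ from Lemma \ref{cyc:lem:aperiodic-popular-endpoint}(2), and Proposition \ref{cyc:pro:equality-popular-period}(2) then forces $|3^{\wedge}B|=2b-2$, which contradicts the assumed strict inequality.
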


\begin{proof}
Suppose that
$$\max\{|2^{\wedge}B|, |3^{\wedge}B|\}<\min\{N, 2b-2\}.$$
By Proposition \ref{cyc:pro:equality-popular-period} (1), we have $B$ is aperiodic.
By Lemma \ref{cyc:lem:aperiodic-popular-endpoint}, we have
$$|\mathcal P(B)|\geqslant b-1.$$
By Proposition \ref{cyc:pro:equality-popular-period} (2), we have
$$2b-2=|3^{\wedge}B|<2b-2,$$
a contradiction.

This completes the proof of Corollary \ref{cyc:cor:maximum-pair-triple}.
\end{proof}

\begin{thm}\label{cyc:thm:pair-triple}
Let $B\subseteq\mathbb Z_N$ be a set of $b\geqslant6$ elements such that $0\in B$ and $\langle B\rangle=\mathbb Z_N$. Then
$$|2^{\wedge}B\cup3^{\wedge}B|
\geqslant\min\{N,2b-2\}.$$
\end{thm}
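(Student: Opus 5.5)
We argue by contradiction, following the structure of the proof of Corollary \ref{cyc:cor:maximum-pair-triple} but working with the union $U_2(B)=2^{\wedge}B\cup3^{\wedge}B$ instead of the maximum. Suppose that $|2^{\wedge}B\cup3^{\wedge}B|<\min\{N,2b-2\}$. Then both $|2^{\wedge}B|$ and $|3^{\wedge}B|$ are at most $2b-3$ and less than $N$. Since $0\in B$, we have $2^{\wedge}B\supseteq B\setminus\{0\}$, and the union contains both $2^{\wedge}B$ and $3^{\wedge}B$.

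For $b\geqslant7$, the contradiction is immediate from Corollary \ref{cyc:cor:maximum-pair-triple}:
$$|U_2(B)|\geqslant\max\{|2^{\wedge}B|,|3^{\wedge}B|\}\geqslant\min\{N,2b-2\}.$$
So the whole content lies in the case $b=6$, where the target is $\min\{N,10\}$.

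For $b=6$ we first treat the periodic case. Suppose $H=\operatorname{Stab}(B)\neq\{0\}$. Since $|B|=6$, we have $|H|\in\{2,3,6\}$, and $|H|=6$ forces $B=H=\mathbb Z_N$, in which case $3^{\wedge}B=\mathbb Z_N$ by Lemma \ref{cyc:lem:strict-cyclic-layer}.
\begin{itemize}
\item If $|H|=3$, then $B$ is the union of two $H$-cosets $H\cup(x+H)$ with $\langle x\rangle$ generating $\mathbb Z_N/H$. In this case $2^{\wedge}B\supseteq H\cup(x+H)\cup(2x+H)$, and $3^{\wedge}B$ adds $3x+H$. Counting cosets modulo $H$ gives at least $\min\{N,12\}$.
\item If $|H|=2$, then $B$ is the union of three $H$-cosets. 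Lemma \ref{cyc:lem:full-coset-lifting} with $\delta=0$ gives $\pi^{-1}(Q+2^{\wedge}Q)=3^{\wedge}B$, and $|Q+2^{\wedge}Q|\geqslant\min\{N/2,5\}$ by Lemma \ref{cyc:lem:mixed-pair}. This already yields $\min\{N,10\}$.
\end{itemize}

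Now assume $B$ is aperiodic. If $|2^{\wedge}B|\geqslant10$ we are done, so assume $|2^{\wedge}B|\leqslant9$. Lemma \ref{cyc:lem:aperiodic-popular-endpoint}(1) then gives $|\mathcal P(B)|\geqslant5=b-1$. Proposition \ref{cyc:pro:equality-popular-period}(2) applies to $B$ (note $b=6\geqslant5$, $|3^{\wedge}B|\leqslant 2b-3$ and $|3^{\wedge}B|<N$) and yields $|3^{\wedge}B|=2b-2=10$. This contradicts $|3^{\wedge}B|\leqslant9$.

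The main obstacle is the periodic case for $b=6$. Proposition \ref{cyc:pro:equality-popular-period}(1) is only available for $b\geqslant7$, so the periodic configurations must be handled by hand. Since $6$ has few divisors, the case list above should be complete, but each case needs the quotient to have enough elements. The delicate point is small $N/|H|$: I would verify directly that when $N\leqslant 2|B+H|$ the coset computation yields the whole group, much as in the $|Q|=1,2,3$ cases of Proposition \ref{cyc:pro:periodic-input-strict-pair-triple}.
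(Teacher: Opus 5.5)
Your structure is the paper's: reduce to $b=6$ via Corollary \ref{cyc:cor:maximum-pair-triple}, handle the aperiodic case with Lemma \ref{cyc:lem:aperiodic-popular-endpoint}(1) and Proposition \ref{cyc:pro:equality-popular-period}(2), and split the periodic case by $|H|\in\{2,3,6\}$. The aperiodic, $|H|=2$ and $|H|=6$ cases are correct. The $|H|=3$ case, however, rests on a false computation.

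Since $|x+H|=3$, the only $3$-subset of $x+H$ is the coset itself. The sum of the elements of the order-$3$ subgroup $H$ is $0$, so $3^{\wedge}(x+H)=\{3x\}$ and $3^{\wedge}H=\{0\}$. Together with $2^{\wedge}H+(x+H)=x+H$ and $H+2^{\wedge}(x+H)=2x+H$, this gives
$$3^{\wedge}B=\{0\}\cup(x+H)\cup(2x+H)\cup\{3x\}.$$
So $3^{\wedge}B$ contributes only the single new point $3x$, not the coset $3x+H$. When $N/3\geqslant4$ the union has exactly $10$ elements, so your claimed bound $\min\{N,12\}$ is false. For example, $N=12$ and $B=\{0,1,4,5,8,9\}$ give $|2^{\wedge}B\cup3^{\wedge}B|=10<12$.

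The repair is short and matches what the paper does.
\begin{itemize}
\item If $N/|H|=3$, then $2^{\wedge}B=H\cup(x+H)\cup(2x+H)=\mathbb Z_N$.
\item If $N/|H|\geqslant4$, the cosets $H,x+H,2x+H,3x+H$ are distinct. Hence $3x\in3^{\wedge}B\setminus2^{\wedge}B$ and $|2^{\wedge}B\cup3^{\wedge}B|\geqslant9+1=10$. This is exactly the required $\min\{N,10\}$, with equality, so this configuration shows the bound is sharp.
\end{itemize}

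Once this is fixed, your closing worry about small quotients goes away. For $|H|=2$, the term $\min\{N/2,5\}$ in Lemma \ref{cyc:lem:mixed-pair} already covers small $N$. For $|H|=3$, the case $N/|H|=2$ cannot occur, since then $B=\mathbb Z_6$ and $\operatorname{Stab}(B)$ would have order $6$.
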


\begin{proof}
If $b\geqslant 7$, then by Corollary \ref{cyc:cor:maximum-pair-triple}, we have
$$|2^{\wedge}B\cup3^{\wedge}B|\geqslant\max\{|2^{\wedge}B|, |3^{\wedge}B|\}\geqslant\min\{N, 2b-2\}.$$

Suppose that $b=6$.
If $B$ is aperiodic and
$|2^{\wedge}B\cup3^{\wedge}B|<\min\{N,10\}$, then
$$|2^{\wedge}B|\leqslant9, \ \ |3^{\wedge}B|\leqslant9, \ \
|3^{\wedge}B|<N.$$
By Lemma \ref{cyc:lem:aperiodic-popular-endpoint}, we have $|\mathcal P(B)|\geqslant 5$.
By Proposition \ref{cyc:pro:equality-popular-period} (2), we have $10=|3^{\wedge}B|\leqslant 9$, a contradiction.

Suppose that $B$ is periodic and put $H=\operatorname{Stab}(B)$.
Then $|H|\mid 6$, thus,
$$|H|\in\{2, 3, 6\}. $$

If $|H|=6$, then by $0\in B$ and $\langle B\rangle=\mathbb Z_N$, we have $B=H=\mathbb Z_N$.
By Lemma \ref{cyc:lem:strict-cyclic-layer}, we have $2^{\wedge}B=\mathbb Z_N$.

If $|H|=2$, let $\pi:\mathbb Z_N\to\mathbb Z_N/H$ be the quotient map and put $T=\pi(B)$.
Then
$$B=\pi^{-1}(T), \ \ |T|=3, \ \ 0\in T, \ \
\langle T\rangle=\mathbb Z_N/H.$$
By Lemmas \ref{cyc:lem:full-coset-lifting} and \ref{cyc:lem:mixed-pair}, we have
$$|3^{\wedge}B|=2|T+2^{\wedge}T|\geqslant 2\min\left\{\frac N2, 5\right\}=\min\{N, 10\}. $$

If $|H|=3$, write $B=H\cup(x+H)$, where $x\notin H$.
Then
$$\langle x+H\rangle=\mathbb Z_N/H.$$
By Lemma \ref{cyc:lem:strict-cyclic-layer}, we have
$$2^{\wedge}H=H, \ \ 2^{\wedge}(x+H)=2x+H, \ \
H+(x+H)=x+H,$$
thus,
$$2^{\wedge}B=H\cup(x+H)\cup(2x+H).$$
If $\frac N3\in\{2, 3\}$, then $2^{\wedge}B=\mathbb Z_N$.
If $\frac N3\geqslant 4$, then $H$, $x+H$, $2x+H$ and $3x+H$ are pairwise disjoint.
Since $\sum_{y\in H}y=0$, we have
$$3x=\sum_{y\in x+H}y\in3^{\wedge}B\setminus2^{\wedge}B, $$
thus,
$$|2^{\wedge}B\cup3^{\wedge}B|\geqslant|2^{\wedge}B|+1=10.$$

This completes the proof of Theorem \ref{cyc:thm:pair-triple}.
\end{proof}

\begin{pro}\label{cyc:pro:constant-pair-padding}
Let $B\subseteq\mathbb Z_N$ be a set of $b\geqslant7$ elements such that $0\in B$, $\langle B\rangle=\mathbb Z_N$
and $3\leqslant m\leqslant\left\lfloor\frac{b-1}{2}\right\rfloor$.
If $\mu_B(c)\geqslant\left\lfloor\frac{m}{2}\right\rfloor+2$ for some $c\in\mathbb Z_N$, then
$$|m^{\wedge}B\cup(m+1)^{\wedge}B|\geqslant\max\{|2^{\wedge}B|, |3^{\wedge}B|\}\geqslant\min\{N, 2b-2\}.$$
\end{pro}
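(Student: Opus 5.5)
The plan is to show that $m^{\wedge}B\cup(m+1)^{\wedge}B$ contains a translate of $2^{\wedge}B$ and a translate of $3^{\wedge}B$. The second inequality in the statement is then exactly Corollary \ref{cyc:cor:maximum-pair-triple}, whose hypotheses $b\geqslant7$, $0\in B$ and $\langle B\rangle=\mathbb Z_N$ hold here.

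Put $r=\mu_B(c)\geqslant\lfloor m/2\rfloor+2$. Fix pairwise distinct two-element sets $P_1,\dots,P_r\subseteq B$, each with element sum $c$. By Lemma \ref{lem:disjoint-pairs} (the first step of its proof), distinct pairs with the same sum are disjoint, so $P_1,\dots,P_r$ are pairwise disjoint. The key observation is that a set $X\subseteq B$ with $|X|=j$ meets at most $j$ of the $P_i$. Hence at least $r-j$ of the pairs are disjoint from $X$.

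Now take $x\in j^{\wedge}B$ for $j\in\{2,3\}$, and write $x=\sum_{y\in X}y$ with $X\subseteq B$ and $|X|=j$. Choose $q$ of the pairs disjoint from $X$, where $q\leqslant r-j$. Adjoining them to $X$ gives $j+2q$ distinct elements of $B$, because the pairs are disjoint from each other and from $X$. Their sum is $x+qc$. Therefore
$$qc+j^{\wedge}B\subseteq(j+2q)^{\wedge}B\quad\text{whenever } 0\leqslant q\leqslant r-j.$$
We then split by the parity of $m$.

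\emph{Case $m=2s$, so $s\geqslant2$ and $r\geqslant s+2$.}
\begin{itemize}
\item Take $j=2$ and $q=s-1\leqslant r-2$: this gives $(s-1)c+2^{\wedge}B\subseteq m^{\wedge}B$.
\item Take $j=3$ and $q=s-1\leqslant r-3$: this gives $(s-1)c+3^{\wedge}B\subseteq(m+1)^{\wedge}B$.
\end{itemize}

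\emph{Case $m=2s+1$, so $s\geqslant1$ and $r\geqslant s+2$.}
\begin{itemize}
\item Take $j=3$ and $q=s-1\leqslant r-3$: this gives $(s-1)c+3^{\wedge}B\subseteq m^{\wedge}B$.
\item Take $j=2$ and $q=s\leqslant r-2$: this gives $sc+2^{\wedge}B\subseteq(m+1)^{\wedge}B$.
\end{itemize}

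All orders used are at most $m+1\leqslant b$, so every restricted sumset involved is meaningful. In both cases, translation invariance of cardinality gives
$$|m^{\wedge}B\cup(m+1)^{\wedge}B|\geqslant\max\{|2^{\wedge}B|,|3^{\wedge}B|\}.$$
Combining this with Corollary \ref{cyc:cor:maximum-pair-triple} completes the argument.

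The only delicate point is the counting $q\leqslant r-j$ in each parity case. This is exactly where the hypothesis $\mu_B(c)\geqslant\lfloor m/2\rfloor+2$ is used, and it is tightest for $j=3$, where one needs $r\geqslant s+2$ when $m=2s$ and $r\geqslant s+2$ when $m=2s+1$. I do not expect any further obstacle.
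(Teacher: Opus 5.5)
Your proposal is correct and is essentially the paper's argument. The paper likewise pads a $j$-subset ($j\in\{2,3\}$) with $q=\lfloor(m+1-j)/2\rfloor$ pairwise disjoint pairs of sum $c$, which reproduces exactly your parity-case choices of $q$. It then concludes with Corollary \ref{cyc:cor:maximum-pair-triple}, as you do.
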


\begin{proof}
Write
$$\left\{A\subseteq B: |A|=2, \ \sum_{x\in A}x=c\right\}=\left\{A_1, \ldots, A_{\mu_B(c)}\right\}, $$
where $A_1, \ldots, A_{\mu_B(c)}$ are pairwise disjoint.
For $r\in\{2, 3\}$, put $k=\left\lfloor\frac{m+1-r}{2}\right\rfloor$. Then
$$0\leqslant k\leqslant\left\lfloor \frac{m}{2}\right\rfloor+2-r\leqslant\mu_B(c)-r, \ \ m\leqslant r+2k\leqslant m+1.$$
For every $X\subseteq B$ with $|X|=r$, we have
$$|\{i\in[1, \mu_B(c)]: A_i\cap X\neq\emptyset\}|\leqslant|X|=r.$$
Choose
$$K\subseteq\{i\in[1, \mu_B(c)]: A_i\cap X=\emptyset\}, \ \ |K|=k. $$
Then
$$\left|X\cup\bigcup_{i\in K}A_i\right|=r+2k, \ \ \sum_{x\in X}x+kc\in(r+2k)^{\wedge}B. $$
Thus,
$$kc+r^{\wedge}B\subseteq(r+2k)^{\wedge}B\subseteq m^{\wedge}B\cup(m+1)^{\wedge}B, \ \ r\in\{2, 3\}.$$
By Corollary \ref{cyc:cor:maximum-pair-triple}, we have
$$|m^{\wedge}B\cup(m+1)^{\wedge}B|\geqslant\max\{|2^{\wedge}B|, |3^{\wedge}B|\}\geqslant\min\{N, 2b-2\}.$$

This completes the proof of Proposition \ref{cyc:pro:constant-pair-padding}.
\end{proof}

\begin{pro}\label{cyc:lem:uniform-low-pair-density}
Let $B\subseteq\mathbb Z_N$ be a set of $b\geqslant 7$ elements and $3\leqslant m\leqslant\left\lfloor\frac{b-1}{2}\right\rfloor$.
Suppose that
$$\mu_B(s)\leqslant\left\lfloor\frac m2\right\rfloor+1, \ \ s\in\mathbb Z_N.$$
If $m=3$, $0\in B$ and $\langle B\rangle=\mathbb Z_N$, then
$$|3^{\wedge}B|\geqslant\min\{N, 2b-1\}.$$
If $m\geqslant 4$ and $N\leqslant 2b+3$, then
$$m^{\wedge}B=\mathbb Z_N.$$
\end{pro}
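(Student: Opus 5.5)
Both assertions come from the identity of Lemma \ref{lem:disjoint-pairs}: the hypothesis $\mu_B(s)\leqslant\lfloor m/2\rfloor+1$ caps every representation count, so $2^{\wedge}B$ must be large, and this largeness feeds into $3^{\wedge}B$ or into $m^{\wedge}B$ by a counting or pigeonhole argument.

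\emph{The case $m=3$.} Here $\mu_B(s)\leqslant2$ for all $s$, so Lemma \ref{lem:disjoint-pairs} gives
$$\binom b2=|2^{\wedge}B|+|\mathcal P(B)|.$$
We argue by contradiction and assume $|3^{\wedge}B|\leqslant2b-2$ and $|3^{\wedge}B|<N$.
\begin{itemize}
\item By Proposition \ref{cyc:pro:equality-popular-period}(1), $B$ is aperiodic.
\item By Lemma \ref{lem:disjoint-pairs}, $B+\mathcal P(B)\subseteq3^{\wedge}B$.
\item If $|\mathcal P(B)|\geqslant b-1$, Proposition \ref{cyc:pro:equality-popular-period}(2) gives $|3^{\wedge}B|=2b-2$ and $3^{\wedge}B=B+\mathcal P(B)$, with $\mathcal P(B)$ a union of $H$-cosets and $|\mathcal P(B)|=b-1$.
\item If $|\mathcal P(B)|\leqslant b-2$, then $|2^{\wedge}B|\geqslant\binom b2-b+2$, which is large. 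I then need a direct lower bound on $|3^{\wedge}B|$ in terms of $|2^{\wedge}B|$. The tool I would use is $(n-1)^{\wedge}X=\sum X-X$ on four-element subsets, or Kneser's theorem applied to $x+2^{\wedge}(B\setminus\{x\})$ for a suitable $x$.
\end{itemize}
In the equality case of the third bullet, I must rule out $|3^{\wedge}B|=2b-2$ exactly. With $\mu\leqslant2$ everywhere, the pair count is
$$\binom b2=|2^{\wedge}B|+b-1,$$
so $|2^{\wedge}B|=\binom b2-b+1\geqslant2b$ for $b\geqslant7$. The structure theorem then says $B$ is very nearly a coset configuration with $b=a|H|+1$. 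I expect that such a configuration forces many representations, i.e.\ some $\mu_B(s)\geqslant3$, contradicting the hypothesis. That contradiction yields $|3^{\wedge}B|\geqslant\min\{N,2b-1\}$.

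\emph{The case $m\geqslant4$.} Here $N\leqslant2b+3$, and I would show every $z\in\mathbb Z_N$ lies in $m^{\wedge}B$. Fix $z$ and count $m$-subsets. Choose an $(m-2)$-subset $Y$ and try to complete it with a pair $\{x,y\}\subseteq B\setminus Y$ satisfying
$$x+y=z-\textstyle\sum Y.$$
Such a pair exists as soon as $z-\sum Y\in2^{\wedge}(B\setminus Y)$. Since $b-(m-2)\geqslant b/2+1$ and $N\leqslant2b+3$, the set $B\setminus Y$ is dense. A Cauchy--Davenport/Kneser type count on $(B\setminus Y)+(B\setminus Y)$, combined with the cap on $\mu$ to control the diagonal, should make $2^{\wedge}(B\setminus Y)$ nearly all of $\mathbb Z_N$.

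Alternatively, and more cleanly, I would use pigeonhole directly: for a target $w$, the number of ordered pairs $(x,w-x)$ with both entries in $B'=B\setminus Y$ is at least $2|B'|-N$. The diagonal $x=w-x$ contributes at most $2$, since a cyclic group has at most two solutions of $2x=w$. So a valid pair exists once $2|B'|-N\geqslant3$. When this inequality fails, I would vary $Y$ over many choices, which changes the shift $\sum Y$, and show that some choice succeeds.

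The main obstacle is the $m=3$ equality configuration. Proposition \ref{cyc:pro:equality-popular-period}(2) only determines $H$ and a coset count, so I will have to show explicitly that $b=a|H|+1$ with $\mathcal P(B)$ $H$-periodic forces some $\mu_B(s)\geqslant3$. I would attempt this first by counting pairs inside two full $H$-cosets.
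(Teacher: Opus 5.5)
\textbf{The $m=3$ case.} You never close either subcase. For $|\mathcal P(B)|\leqslant b-2$ you only name possible tools. A single deletion with the trivial loss $|2^{\wedge}(B\setminus\{x\})|\geqslant|2^{\wedge}B|-(b-1)$ yields only $\binom b2-2b+3$, which is $10<13$ at $b=7$. For $|\mathcal P(B)|\geqslant b-1$ you merely expect the equality configuration to force some $\mu_B(s)\geqslant3$. But Proposition \ref{cyc:pro:equality-popular-period}(2) gives coset structure only when $B+\mathcal P(B)$ is periodic. If it is aperiodic, $H=\{0\}$ and $b=a|H|+1$ carries no information. You would then in effect need the Kemperman-based Theorem \ref{cyc:thm:popular-small-triple-seed}, which also assumes $N\geqslant2b$ and so misses $N=2b-1$.

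The missing idea is a double count that handles both subcases at once. Since $\mu_B\leqslant2$, a sum $s$ with $\mu_B(s)=1$ lies in $2^{\wedge}(B\setminus\{x\})$ for exactly $b-2$ choices of $x$, and a sum with $\mu_B(s)=2$ lies there for all $b$ choices. Together with $x+2^{\wedge}(B\setminus\{x\})\subseteq3^{\wedge}B$, this gives
$$b|3^{\wedge}B|\geqslant\sum_{x\in B}|2^{\wedge}(B\setminus\{x\})|=(b-2)|2^{\wedge}B|+2|\mathcal P(B)|=(b-2)\binom b2-(b-4)|\mathcal P(B)|.$$
If $|3^{\wedge}B|\leqslant2b-2$, this forces $|\mathcal P(B)|\geqslant\frac{b(b-1)(b-6)}{2(b-4)}\geqslant b$. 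Parts (1) and (2) of Proposition \ref{cyc:pro:equality-popular-period} force $|\mathcal P(B)|=b-1$, a contradiction.

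\textbf{The $m\geqslant4$ case.} Reducing to a single layer $2^{\wedge}(B\setminus Y)$ with $|Y|=m-2$ does not work. Here $|B\setminus Y|=b-m+2$ can be as small as $\frac{b+5}{2}$ while $N$ can be $2b+3$, so $2|B\setminus Y|-N$ can be as low as $2-b$. Already for $m=4$ it equals $2b-4-N$, which is below $3$ once $N\geqslant2b-6$. So $B\setminus Y$ is not dense and the pigeonhole count is void. No Cauchy--Davenport/Kneser bound, and no cardinality bound from the cap on $\mu$ that stays below $N$, guarantees that the specific target $z-\sum_{y\in Y}y$ is hit. Your remedy, ``varying $Y$'', just restates the goal, since
$$m^{\wedge}B=\bigcup_{Y}\Bigl(\sum_{y\in Y}y+2^{\wedge}(B\setminus Y)\Bigr).$$
Also, the hypothesis on $\mu_B$ never enters substantively: the diagonal is controlled by the group, not by $\mu$.

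The paper instead peels off only $m-4$ elements and works with $4^{\wedge}A$, where $A\subseteq B$ and $|A|=b-m+4$. A missing element now carries strong information. If $g\notin4^{\wedge}A$, every pair of sum $s$ meets every pair of sum $g-s$. Hence $\min\{\mu_A(s),3\}+\min\{\mu_A(g-s),3\}\leqslant3$ for all but at most one unordered pair $\{s,g-s\}$, and $\mu_A(s)\leqslant1$ when $2s=g$. This gives $W_3(A)=\sum_s\min\{\mu_A(s),3\}\leqslant3\lfloor N/2\rfloor+1\leqslant3b+4$. The cap on $\mu_B$ then makes $W_3(A)$ too large:
\begin{itemize}
\item for $m\in\{4,5\}$, $W_3(A)=\binom{b-m+4}{2}>3b+4$;
\item for $m\geqslant6$, a uniformly random $a$-subset with $\binom a2$ just above $4b+3$ has $\mathbb E\,W_3(A)>3b+4$, by a second-moment estimate using $\mu_B(s)-1\leqslant\frac{b-1}{4}$.
\end{itemize}
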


\begin{proof}
Suppose that $m=3$, $0\in B$ and $\langle B\rangle=\mathbb Z_N$. Then $\mu_B(s)\leqslant2$ for every $s\in\mathbb Z_N$, thus,
$$\binom b2=|2^{\wedge}B|+|\mathcal P(B)|.$$
For every $s\in 2^{\wedge}B$, distinct pairs of sum $s$ are disjoint, thus,
$$
\left|\{x\in B: s\in 2^{\wedge}(B\setminus\{x\})\}\right|=
\begin{cases}
b-2, &s\notin\mathcal P(B),\\
b, &s\in\mathcal P(B).
\end{cases}
$$
Since
$$x+2^{\wedge}(B\setminus\{x\})\subseteq3^{\wedge}B, \ \ x\in B,$$
we have
$$
\begin{aligned}
b|3^{\wedge}B|
&\geqslant\sum_{x\in B}|2^{\wedge}(B\setminus\{x\})|\\
&=(b-2)|2^{\wedge}B|+2|\mathcal P(B)|\\
&=(b-2)\binom b2-(b-4)|\mathcal P(B)|.
\end{aligned}
$$

Suppose that $|3^{\wedge}B|<\min\{N, 2b-1\}$. Then
$$|3^{\wedge}B|\leqslant 2b-2, \ \ |3^{\wedge}B|<N, $$
thus,
$$|\mathcal P(B)|\geqslant\frac{b(b-1)(b-6)}{2(b-4)}=b+\frac{b(b-7)(b-2)}{2(b-4)}\geqslant b. $$
By Proposition \ref{cyc:pro:equality-popular-period}, $B$ is aperiodic and $b\leqslant|\mathcal P(B)|=b-1$, a contradiction. Thus,
$$|3^{\wedge}B|\geqslant\min\{N, 2b-1\}.$$

Suppose that $m\geqslant4$ and $N\leqslant2b+3$.
For $A\subseteq B$, put
$$W_3(A)=\sum_{s\in\mathbb Z_N}\min\{\mu_A(s), 3\}.$$
We prove that
$$4^{\wedge}A\neq\mathbb Z_N\Longrightarrow W_3(A)\leqslant 3\left\lfloor\frac N2\right\rfloor+1.$$

Let $g\in\mathbb Z_N\setminus4^{\wedge}A$.
For $X, Y\subseteq A$ with $|X|=|Y|=2$, we have
$$\sum_{x\in X}x+\sum_{y\in Y}y=g\Longrightarrow X\cap Y\neq\emptyset, $$
thus,
$$2s=g\Longrightarrow\mu_A(s)\leqslant 1.$$

Suppose that $s+t=g$, $s\neq t$ and $\mu_A(s)\mu_A(t)>0$.
Choose $X\subseteq A$ with $|X|=2$ and $\sum_{x\in X}x=s$.
Since the pairs of sum $t$ are disjoint and each meets $X$, we have
$$
\mu_A(t)
\leqslant
\sum_{\substack{Y\subseteq A,\ |Y|=2\\ \sum_{y\in Y}y=t}}
|X\cap Y|
\leqslant|X|=2.
$$
Interchanging $s$ and $t$, we have $\mu_A(s)\leqslant2$.
If $\mu_A(s)\mu_A(t)=0$, then
$$\min\{\mu_A(s),3\}+\min\{\mu_A(t),3\}\leqslant3.$$
Thus, for $s+t=g$ with $s\neq t$, we have
$$\min\{\mu_A(s),3\}+\min\{\mu_A(t),3\}\leqslant3$$
unless $\mu_A(s)=\mu_A(t)=2$.

Suppose that $\mu_A(s)=\mu_A(t)=2$.
Write the two pairs of sum $s$ as $\{x_1,x_2\}$ and $\{x_3,x_4\}$.
Since each pair of sum $t$ meets both pairs of sum $s$, after interchanging $x_3,x_4$ if necessary, we have
$$
\begin{gathered}
|\{x_1,x_2,x_3,x_4\}|=4,\\
x_1+x_2=x_3+x_4=s, \ \ x_1+x_3=x_2+x_4=t.
\end{gathered}
$$
Thus,
$$2(x_2-x_3)=0, \ \ t-s=x_3-x_2\neq0.$$
If $N$ is odd, then $x_2=x_3$, a contradiction.
If $N$ is even, then
$$t=s+\frac N2, \ \ 2s=g-\frac N2.$$
For any two solutions $s,s'$ of the latter equation, we have
$$2(s'-s)=0, \ \ s'\in\left\{s,s+\frac N2\right\}.$$
Thus, they determine the same unordered pair $\{s,t\}$, and there is at most one exceptional unordered pair.

Put $f=|\{s\in\mathbb Z_N:2s=g\}|$.
If $N$ is odd, then $f=1$ and there is no exceptional pair, thus,
$$W_3(A)\leqslant3\frac{N-1}{2}+1.$$
If $N$ is even, then $f\in\{0,2\}$ and there is at most one exceptional pair, thus,
$$W_3(A)\leqslant3\frac{N-f}{2}+f+1
\leqslant\frac{3N}{2}+1.$$
Hence,
$$W_3(A)\leqslant3\left\lfloor\frac N2\right\rfloor+1.$$

Suppose that $g\notin m^{\wedge}B$.
For every $A\subseteq B$ with $|A|\leqslant b-m+4$, we have
$$|B\setminus A|\geqslant m-4.$$
Choose $C\subseteq B\setminus A$ with $|C|=m-4$. Then
$$\sum_{x\in C}x+4^{\wedge}A\subseteq m^{\wedge}B, \ \
g-\sum_{x\in C}x\notin4^{\wedge}A.$$
Thus,
$$W_3(A)\leqslant3\left\lfloor\frac N2\right\rfloor+1
\leqslant3b+4.$$

If $m\in\{4,5\}$, choose $A\subseteq B$ with $|A|=b-m+4$.
Since $\mu_A(s)\leqslant\mu_B(s)\leqslant3$ and $b\geqslant2m+1$, we have
$$
\begin{aligned}
W_3(A)
&=\binom{b-m+4}{2}
\geqslant\frac{(b+9)(b+7)}8\\
&=3b+4+\frac{(b-4)^2+15}{8}>3b+4,
\end{aligned}
$$
a contradiction.

Suppose that $m\geqslant6$. Then $b\geqslant13$.
Choose the least positive integer $a$ such that
$$\binom a2\geqslant4b+3.$$
Since
$$
\binom{b-m+4}{2}
\geqslant\frac{(b+9)(b+7)}8
=4b+3+\frac{(b-3)(b-13)}8
\geqslant4b+3,
$$
we have $a\leqslant b-m+4$.
By the minimality of $a$ and $a\leqslant b$, we have
$$4b+3\leqslant\binom a2
<4b+3+a-1\leqslant5b+2<10b.$$

Choose $A$ uniformly among all $a$-element subsets of $B$.
For every nonnegative integer $x$, we have
$$\min\{x,3\}\geqslant x-\frac{x(x-1)}{10},$$
since
$$
\min\{x,3\}-x+\frac{x(x-1)}{10}
=
\begin{cases}
\frac{x(x-1)}{10},&0\leqslant x\leqslant3,\\
\frac{(x-5)(x-6)}{10},&x\geqslant4
\end{cases}
\geqslant0.
$$
For every $X\subseteq B$ with $|X|=j\in\{2,4\}$, we have
$$\mathbb P(X\subseteq A)
=\frac{\binom{b-j}{a-j}}{\binom ba}
=\frac{\binom aj}{\binom bj}.$$
Since distinct pairs of the same sum are disjoint, the union of two such pairs has four elements.
Moreover,
$$\mu_B(s)-1\leqslant\left\lfloor\frac m2\right\rfloor
\leqslant\frac{b-1}{4}, \ \
\frac{(a-2)(a-3)}{(b-2)(b-3)}
\leqslant\frac{a(a-1)}{b(b-1)}.$$
Thus,
$$
\begin{aligned}
\mathbb E\sum_s\mu_A(s)&=\binom a2,\\
\mathbb E\sum_s\mu_A(s)(\mu_A(s)-1)
&=\frac{\binom a4}{\binom b4}
\sum_s\mu_B(s)(\mu_B(s)-1)\\
&\leqslant\frac{(b-1)\binom a2}{4}
\frac{(a-2)(a-3)}{(b-2)(b-3)}\\
&\leqslant\frac{\binom{a}{2}^2}{2b}.
\end{aligned}
$$
Hence,
$$\mathbb E W_3(A)
\geqslant\binom a2-\frac{\binom{a}{2}^2}{20b}.$$
Since $x\mapsto x-x^2/(20b)$ is increasing on $[0,10b]$,
$4b+3\leqslant\binom a2<10b$ and $b\geqslant13$, we have
$$
\begin{aligned}
\mathbb E W_3(A)
&\geqslant4b+3-\frac{(4b+3)^2}{20b}\\
&=3b+4+\frac{4b(b-11)-9}{20b}>3b+4,
\end{aligned}
$$
contrary to $W_3(A)\leqslant3b+4$ for every $a$-element subset $A\subseteq B$.
Thus,
$$m^{\wedge}B=\mathbb Z_N.$$

This completes the proof of Proposition \ref{cyc:lem:uniform-low-pair-density}.
\end{proof}

\begin{thm}\label{cyc:thm:popular-small-triple-seed}
Let $B\subseteq\mathbb Z_N$ be a set of $b\geqslant7$ elements such that $\langle B-B\rangle=\mathbb Z_N$.
If $N\geqslant2b$ and
$$|3^{\wedge}B|\leqslant2b-2, \ \ |\mathcal P(B)|\geqslant b-1,$$
then there exist a subgroup $H\leqslant\mathbb Z_N$ and $x,y\in\mathbb Z_N$ such that
$$|H|=b-1, \ \ x\notin y+H, \ \ B=(y+H)\cup\{x\}.$$
\end{thm}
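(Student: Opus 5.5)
Translating by an element of $B$, I will assume $0\in B$ and $\langle B\rangle=\mathbb Z_N$. This is harmless: $3^{\wedge}B$ and $\mathcal P(B)$ only translate, and the conclusion is translation invariant. Since $N\geqslant2b>2b-2\geqslant|3^{\wedge}B|$, we have $|3^{\wedge}B|<N$.

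\textbf{Step 1 (reduction to the periodic case).} Proposition \ref{cyc:pro:equality-popular-period}(1) shows $B$ is aperiodic. Part (2) then gives
$$B+\mathcal P(B)=3^{\wedge}B,\quad |3^{\wedge}B|=2b-2,\quad |\mathcal P(B)|=b-1,\quad \mathcal P(B)+H=\mathcal P(B),$$
where $H=\operatorname{Stab}(B+\mathcal P(B))$. First I show $|H|\geqslant2$. If $H=\{0\}$, Kneser is tight: $|B+\mathcal P(B)|=|B|+|\mathcal P(B)|-1$ with an aperiodic sum. To exclude this I would play it against the structure of $\mathcal P(B)$. The counting identity $\binom b2=|2^{\wedge}B|+\sum_{s\in\mathcal P(B)}(\mu_B(s)-1)$, together with $\mu_B\leqslant\lfloor b/2\rfloor$, shows $|2^{\wedge}B|$ is large unless many sums have multiplicity close to $b/2$. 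Such near-maximal sums $f$ make $B$ almost symmetric, $f-B\approx B$, as in the proof of Lemma \ref{cyc:lem:aperiodic-popular-endpoint}. There the set $S=(2\cdot F)-F\subseteq\mathcal P(B)$ was shown to be an $H$-coset with $|H|\geqslant2$. I would rerun that stabilizer argument, or invoke Kasneci/Vosper-type tight-Kneser reasoning, to force $\mathcal P(B)$ to be periodic, so $|H|\geqslant2$.

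\textbf{Step 2 (structure from the periodic part).} With $|H|\geqslant2$, part (2) of Proposition \ref{cyc:pro:equality-popular-period} gives an integer $a\geqslant1$ with
$$b=a|H|+1,\qquad |\pi(B)|=a+1,\qquad |B+H|=b-1+|H|.$$
So $B+H$ is $a+1$ cosets, $B$ misses exactly $|H|-1$ points of them, and each coset meets $B$. The goal is $a=1$ with one coset full and the other a singleton. Suppose first some coset contains a single point of $B$. Remove it to get $B'$; then $B'$ is $a$ full cosets with $|B'|=b-1$. If $a\geqslant2$, Lemma \ref{cyc:lem:full-coset-lifting} (with $\delta=0$) puts $\pi^{-1}(Q'+2^{\wedge}Q')$ into $3^{\wedge}B$, where $Q'=\pi(B')$. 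Lemma \ref{cyc:lem:mixed-pair} in $\mathbb Z_N/H$ then gives too many cosets. Here $N\geqslant2b$ is used, since $N/|H|$ is large relative to $a$. This yields $|3^{\wedge}B|\geqslant2b-1$, a contradiction, so $a=1$.

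\textbf{Step 3 (deficient cosets spread out).} If no coset has exactly one point, then every fibre $B_t$ satisfies $|B_t|\geqslant2$. The deficiency $\delta=|H|-1$ exceeds the $|H|-2$ allowed in Proposition \ref{cyc:pro:periodic-input-strict-pair-triple}, so that result does not apply directly. I would instead lower-bound $|3^{\wedge}B|$ coset by coset:
$$\pi(3^{\wedge}B)\supseteq Q+2^{\wedge}Q \quad\text{and}\quad |(B_{t_1}+B_{t_2}+B_{t_3})\cap\pi^{-1}(s)|\geqslant\min\{|H|,\,|B_{t_1}|+|B_{t_2}|+|B_{t_3}|-2\}$$
by Kneser inside $H$-cosets. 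Summing over the $\geqslant\min\{N/|H|,2|Q|\}$ target cosets should exceed $2b-2$.

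The main obstacle is Step 1, ruling out $H$ trivial. My first attempt is to show directly that $\mathcal P(B)$ contains a coset of a nontrivial subgroup, by transplanting the $F$, $S=(2\cdot F)-F$ argument from Lemma \ref{cyc:lem:aperiodic-popular-endpoint}.
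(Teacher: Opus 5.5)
Your proposal has a genuine gap at its central step. Steps 2 and 3 also fail as written.

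\textbf{Step 1.} This is the heart of the theorem, and your proposal does not supply it; the route you suggest cannot work. The $F$, $S=(2\cdot F)-F$ argument of Lemma \ref{cyc:lem:aperiodic-popular-endpoint} runs on the hypothesis $|2^{\wedge}B|\leqslant2b-2$. Through $\binom b2=|2^{\wedge}B|+\sum_{s\in\mathcal P(B)}(\mu_B(s)-1)$, that hypothesis forces $\sum_{s\in\mathcal P(B)}(t-\mu_B(s))\leqslant1$ for $b=2t+1$. The present theorem gives no bound on $|2^{\wedge}B|$. Moreover, the near-maximal multiplicities you want fail even in the extremal configuration. For $B=H\cup\{x\}$ with $|H|=2t$ one has $\mathcal P(B)=H$, but the $t$ sums in $2\cdot H$ have multiplicity $t-1$, so $\sum_{s\in\mathcal P(B)}(t-\mu_B(s))=t$. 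Vosper-type reasoning is unavailable since $N$ need not be prime. Tightness of Kneser alone cannot force periodicity either: progression pairs with a common difference are tight and aperiodic.

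What the paper actually uses is Kemperman's structure theorem for the aperiodic critical pair $(B,\mathcal P(B))$, followed by a case analysis you do not have.
\begin{itemize}
\item A quasi-periodic decomposition with a proper nontrivial $H$ is excluded by first showing $|B_0|\geqslant2$. If $|B_0|=1$, then $|\mathcal P(B)|=|B_1|$ forces $|H|\mid|P_0|$, which is impossible. One then applies Proposition \ref{cyc:pro:periodic-input-strict-pair-triple} to a full $H$-coset inside $B_1$.
\item If $H=\mathbb Z_N$, the pair is elementary. Types III and IV would give $|B+\mathcal P(B)|\geqslant N-1$.
\item Type II, where $B$ is an arithmetic progression, is excluded by computing $|3^{\wedge}B|=\min\{N,3b-8\}>2b-2$.
\end{itemize}

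\textbf{Step 2.} The inclusion $\pi^{-1}(T+2^{\wedge}T)\subseteq3^{\wedge}B$ with $T=\pi(B')$ gives only what Lemma \ref{cyc:lem:mixed-pair} guarantees. That is $2a$ cosets for $a\geqslant4$ (sharp, e.g.\ for $T$ a four-term progression), and fewer for $a\leqslant3$. Since $2a|H|=2b-2$, this is no contradiction. The lemma also needs $0\in T$ and $\langle T\rangle=\mathbb Z_N/H$, which you have not checked.

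You must bring in the singleton $z$.
\begin{itemize}
\item For $|H|\geqslant3$ one has $R=\pi(3^{\wedge}B)=T+T+Q$. Kneser with $\operatorname{Stab}(R)=\{0\}$ gives $2a\geqslant3a-1$, hence $a=1$.
\item For $|H|=2$, the full fibres have empty $3^{\wedge}$, so $R=T+2^{\wedge}Q$. Here Kneser only yields $|2^{\wedge}Q|=|Q|$. You then need the equality case of Lemma \ref{cyc:lem:strict-cyclic-layer}: $|Q|=4$ and $Q+K=Q$ with $|K|=2$. This is followed by Proposition \ref{cyc:pro:periodic-input-strict-pair-triple} for the order-four subgroup $\pi^{-1}(K)$. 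This branch survives the counting and leads to $b=7$.
\end{itemize}

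\textbf{Step 3.} The coset-wise Kneser bounds are unnecessary. Since $3^{\wedge}B=B+\mathcal P(B)$ is $H$-periodic, it suffices to count cosets. However, the case $|Q|=2$ is missing. There $Q+2^{\wedge}Q=\{y,2y\}$ yields only $2|H|=2b-2$ elements. The contradiction needs the extra coset $3u$, which comes from $3^{\wedge}B_u\neq\emptyset$ for a fibre with $|B_u|\geqslant4$.
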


\begin{proof}
For every $z\in B$, we have
$$|3^{\wedge}(B-z)|=|3^{\wedge}B|, \ \ \mathcal P(B-z)=\mathcal P(B)-2z, \langle B-z\rangle=\langle B-B\rangle. $$
By translation, assume that
$$0\in B, \ \ \langle B\rangle=\mathbb Z_N.$$
By Proposition \ref{cyc:pro:equality-popular-period}, we have
$$B+\mathcal P(B)=3^{\wedge}B, \ \
|3^{\wedge}B|=2b-2, \ \ |\mathcal P(B)|=b-1.$$

Suppose that $B+\mathcal P(B)$ is aperiodic.
Then
$$|B+\mathcal P(B)|=|B|+|\mathcal P(B)|-1=2b-2\leqslant N-2.$$
Since $|B|,|\mathcal P(B)|\geqslant2$, by the Kemperman Structure Theorem \cite[Section~2]{KempermanStructure},
there exist a subgroup $H\leqslant\mathbb Z_N$ with $|H|\geqslant 2$ and decompositions
$$B=B_1\mathbin{\dot\cup}B_0, \ \ \mathcal P(B)=P_1\mathbin{\dot\cup}P_0$$
such that
$$B_1+H=B_1, \ \ P_1+H=P_1, \ \ |B_0+H|=|P_0+H|=|H|, $$
and $(B_0, P_0)$ is an elementary pair.

Suppose that $H\subsetneq\mathbb Z_N$.
If $B_1=\emptyset$, then by $0\in B=B_0$ we have
$$B\subseteq H, \ \ \mathbb Z_N=\langle B\rangle\subseteq H\subsetneq\mathbb Z_N,$$
a contradiction. Thus, $B_1\neq\emptyset$.
Since
$$\operatorname{Stab}(B), \ \operatorname{Stab}(\mathcal P(B))\subseteq\operatorname{Stab}(B+\mathcal P(B))=\{0\},$$
we have
$$1\leqslant|B_0|, |P_0|\leqslant|H|-1.$$
If $|B_0|=1$, then
$$|\mathcal P(B)|=|B|-1=|B_1|.$$
Since $|H|$ divides both $|B_1|$ and $|P_1|$, we have
$$|H|\mid|\mathcal P(B)|-|P_1|=|P_0|,$$
contrary to $1\leqslant|P_0|\leqslant|H|-1$.
Thus, $|B_0|\geqslant 2$ and
$$|B+H|-b=|H|-|B_0|\leqslant|H|-2.$$
Taking $x\in B_1$, we have
$$x+H\subseteq B_1\subseteq B.$$
By Proposition \ref{cyc:pro:periodic-input-strict-pair-triple}, we have
$$|3^{\wedge}B|\geqslant\min\{N, 2b-1\}=2b-1>2b-2,$$
a contradiction.

If $H=\mathbb Z_N$, then $B_1=P_1=\emptyset$, thus, $(B, \mathcal P(B))$ is an elementary pair.
Since $|B|,|\mathcal P(B)|\geqslant 2$, type I cannot hold.
In types III and IV, there exist a subgroup $K\leqslant\mathbb Z_N$ and $x\in\mathbb Z_N$ such that
$$B\subseteq x+K, \ \ |B+\mathcal P(B)|\geqslant|K|-1.$$
Since $0\in B$ and $\langle B\rangle=\mathbb Z_N$, we have
$$B\subseteq K, \ \ K=\mathbb Z_N,$$
thus,
$$N-1\leqslant|B+\mathcal P(B)|\leqslant N-2,$$
a contradiction.

Hence, type II holds, and
$$B=y+d\cdot[0, b-1]$$
for some $y, d\in\mathbb Z_N$.
Since $0\in B$, we have
$$y\in\langle d\rangle, \ \ B\subseteq\langle d\rangle, \ \ \mathbb Z_N=\langle B\rangle\subseteq\langle d\rangle.$$
Thus, $\langle d\rangle=\mathbb Z_N$.
By \eqref{eq:unit-interval-restricted-ranks}, we have
$$3^{\wedge}B=3y+d\cdot\pi_N([3, 3b-6]),$$
and
$$|3^{\wedge}B|=|\pi_N([3,3b-6])|=\min\{N,3b-8\}>2b-2,$$
a contradiction. Hence, $B+\mathcal P(B)$ is periodic.

Put $H=\operatorname{Stab}(3^{\wedge}B)$. Then $|H|\geqslant 2$.
Let $\pi: \mathbb Z_N\to\mathbb Z_N/H$ be the quotient map and put
$$Q=\pi(B), \ \ R=\pi(3^{\wedge}B), \ \ B_y=B\cap\pi^{-1}(\{y\}), \ \ y\in Q.$$
By Proposition \ref{cyc:pro:equality-popular-period} (2), there exists an integer $a\geqslant1$ such that
$$b=a|H|+1, \ \ |Q|=a+1, \ \ |R|=2a.$$
Moreover,
$$0\in Q, \ \ \langle Q\rangle=\mathbb Z_N/H, \ \ \operatorname{Stab}(R)=\{0\}, \ \ \frac{N}{|H|}\geqslant 2a+1.$$

Suppose that $|B_y|\geqslant 2$ for every $y\in Q$.
Then
$$Q+2^{\wedge}Q=3^{\wedge}Q\cup\{2u+v: u, v\in Q, \ u\neq v\}\subseteq R.$$
If $a\geqslant 2$, then by Lemma \ref{cyc:lem:mixed-pair}, we have
$$|R|\geqslant\min\left\{\frac{N}{|H|}, 2a+1\right\}=2a+1,$$
a contradiction.

If $a=1$, write $Q=\{0, y\}$.
Since $\langle y\rangle=\mathbb Z_N/H$ and $N/|H|\geqslant 3$, we have
$$\{y, 2y\}\subseteq R, \ \ |\{y, 2y\}|=2.$$
Since $|B_0|+|B_y|=b\geqslant 7$, there exists $u\in\{0, y\}$ such that $|B_u|\geqslant 4$.
Thus,
$$\emptyset\neq 3^{\wedge}B_u, \ \ 3u\in R\setminus\{y, 2y\}, \ \ |R|\geqslant 3,$$
contrary to $|R|=2$.

Hence, $|B_u|=1$ for some $u\in Q$.
Since
$$\sum_{y\in Q}(|H|-|B_y|)
=(a+1)|H|-b
=|H|-1
=|H|-|B_u|,$$
we have
$$|B_y|=|H|, \ \ y\in Q\setminus\{u\}.$$
Write
$$B=\pi^{-1}(T)\mathbin{\dot\cup}\{z\}, \ \
Q=T\mathbin{\dot\cup}\{\pi(z)\}, \ \ |T|=a.$$
Since $|H|\geqslant2$, we have
$$\pi(2^{\wedge}\pi^{-1}(T))=T+T.$$
Moreover,
$$3^{\wedge}B
=3^{\wedge}\pi^{-1}(T)
\cup\bigl(z+2^{\wedge}\pi^{-1}(T)\bigr),$$
thus,
$$R=\pi(3^{\wedge}\pi^{-1}(T))
\cup\bigl(\pi(z)+T+T\bigr).$$

Suppose that $|H|\geqslant3$.
Since $|\pi^{-1}(\{t\})|=|H|\geqslant3$ for every $t\in T$, we have
$$\pi(3^{\wedge}\pi^{-1}(T))=T+T+T.$$
Thus,
$$
\begin{aligned}
R
&=(T+T+T)\cup\bigl(\pi(z)+T+T\bigr)\\
&=T+T+Q.
\end{aligned}
$$
Since
$$\operatorname{Stab}(T+T)\subseteq\operatorname{Stab}(R)=\{0\},$$
by Lemma \ref{thm:Kneser}, we have
$$2a=|R|
\geqslant|T+T|+|Q|-1
\geqslant(2a-1)+(a+1)-1
=3a-1.$$
Together with $a\geqslant1$, we have $a=1$.
Writing $T=\{\pi(y)\}$, we obtain
$$B=(y+H)\cup\{z\}, \ \ z\notin y+H, \ \ |H|=b-1.$$

It remains to consider $|H|=2$.
Since $b=2a+1\geqslant7$, we have $a\geqslant3$.
By Lemma \ref{cyc:lem:full-coset-lifting}, we have
$$3^{\wedge}\pi^{-1}(T)=\pi^{-1}(T+2^{\wedge}T).$$
Thus,
$$
\begin{aligned}
R
&=(T+2^{\wedge}T)\cup\bigl(\pi(z)+T+T\bigr)\\
&=T+\bigl(2^{\wedge}T\cup(\pi(z)+T)\bigr)\\
&=T+2^{\wedge}Q.
\end{aligned}
$$
Since $\operatorname{Stab}(R)=\{0\}$, by Lemma \ref{thm:Kneser}, we have
$$2a=|R|\geqslant a+|2^{\wedge}Q|-1.$$
By Lemma \ref{cyc:lem:strict-cyclic-layer}, we have
$$|Q|\leqslant|2^{\wedge}Q|\leqslant a+1=|Q|,$$
thus,
$$|2^{\wedge}Q|=|Q|.$$

Since
$$|Q|=a+1<2a+1\leqslant\frac{N}{|H|}, \ \ 0\in Q, \ \ \langle Q\rangle=\mathbb Z_N/H,$$
the set $Q$ is not a subgroup coset.
By the equality cases in the proof of Lemma \ref{cyc:lem:strict-cyclic-layer}, there exists a subgroup $K\leqslant\mathbb Z_N/H$ such that
$$|Q|=4, \ \ |K|=2, \ \ Q+K=Q.$$
Put $\widetilde K=\pi^{-1}(K)$. Then
$$|\widetilde K|=4, \ \ \pi^{-1}(Q)+\widetilde K=\pi^{-1}(Q), \ \ |\pi^{-1}(Q)\setminus B|=1.$$
Since $\pi^{-1}(Q)$ is the union of two $\widetilde K$-cosets, there exists $x\in B$ such that
$$x+\widetilde K\subseteq B.$$
Moreover,
$$B+\widetilde K=\pi^{-1}(Q), \ \ |B+\widetilde K|-b=1\leqslant|\widetilde K|-2.$$
By Proposition \ref{cyc:pro:periodic-input-strict-pair-triple}, we have
$$|3^{\wedge}B|\geqslant\min\{N, 2b-1\}=2b-1,$$
contrary to $|3^{\wedge}B|=2b-2$.

This completes the proof of Theorem \ref{cyc:thm:popular-small-triple-seed}.
\end{proof}

\begin{cor}\label{cyc:cor:maximum-pair-triple-inverse}
Let $B\subseteq\mathbb Z_N$ be a set of $b\geqslant 7$ elements such that $\langle B-B\rangle=\mathbb Z_N$ and $N\geqslant 2b$.
If
$$\max\{|2^{\wedge}B|, |3^{\wedge}B|\}\leqslant 2b-2,$$
then there exist a subgroup $H\leqslant\mathbb Z_N$ and $x, y\in\mathbb Z_N$ such that
$$B=(y+H)\cup\{x\}, \ \ |H|=b-1, \ \ x\notin y+H.$$
\end{cor}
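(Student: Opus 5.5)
The plan is to reduce to Theorem \ref{cyc:thm:popular-small-triple-seed}. That theorem has the same conclusion and the same hypotheses $b\geqslant7$, $\langle B-B\rangle=\mathbb Z_N$, $N\geqslant2b$, plus $|3^{\wedge}B|\leqslant2b-2$, which we are given. So the only missing input is the popular-sum bound $|\mathcal P(B)|\geqslant b-1$, and we will derive it from $|2^{\wedge}B|\leqslant2b-2$ via Lemma \ref{cyc:lem:aperiodic-popular-endpoint}(2). That lemma needs $B$ to be aperiodic.

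Aperiodicity will come from Proposition \ref{cyc:pro:equality-popular-period}(1). Its hypotheses are $b\geqslant7$, $\langle B-B\rangle=\mathbb Z_N$, $|3^{\wedge}B|\leqslant2b-2$ and $|3^{\wedge}B|<N$. The last one follows from $|3^{\wedge}B|\leqslant2b-2<2b\leqslant N$. Hence $B$ is aperiodic.

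Since $b\geqslant7$ and $|2^{\wedge}B|\leqslant2b-2$, case (2) of Lemma \ref{cyc:lem:aperiodic-popular-endpoint} applies. The lemma is stated for any finite cyclic group $G$ and needs no translation, so it gives $|\mathcal P(B)|\geqslant b-1$. Theorem \ref{cyc:thm:popular-small-triple-seed} then gives a subgroup $H$ with $|H|=b-1$ and elements $x\notin y+H$ such that $B=(y+H)\cup\{x\}$.

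I expect no genuine obstacle here. The one point to check is that each cited result is applied in its stated normalization: Proposition \ref{cyc:pro:equality-popular-period} and Theorem \ref{cyc:thm:popular-small-triple-seed} are stated under $\langle B-B\rangle=\mathbb Z_N$ and translate internally, while Lemma \ref{cyc:lem:aperiodic-popular-endpoint} needs only aperiodicity and the size conditions. All of these hypotheses hold as verified above.
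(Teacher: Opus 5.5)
Your proposal is correct and follows the paper's proof exactly: Proposition \ref{cyc:pro:equality-popular-period}(1) gives aperiodicity, using $|3^{\wedge}B|\leqslant 2b-2<N$. Lemma \ref{cyc:lem:aperiodic-popular-endpoint}(2) then gives $|\mathcal P(B)|\geqslant b-1$, and Theorem \ref{cyc:thm:popular-small-triple-seed} yields the conclusion.
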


\begin{proof}
Since $|3^{\wedge}B|\leqslant 2b-2<N$, by Proposition \ref{cyc:pro:equality-popular-period}, $B$ is aperiodic.
By Lemma \ref{cyc:lem:aperiodic-popular-endpoint}, we have
$$|\mathcal P(B)|\geqslant b-1.$$
By Theorem \ref{cyc:thm:popular-small-triple-seed}, there exist a subgroup $H\leqslant\mathbb Z_N$ and $x,y\in\mathbb Z_N$ such that
$$B=(y+H)\cup\{x\}, \ \ |H|=b-1, \ \ x\notin y+H.$$

This completes the proof of Corollary \ref{cyc:cor:maximum-pair-triple-inverse}.
\end{proof}

\begin{lem}\label{cyc:lem:uniform-zero-deletion}
Let $0\in B\subseteq\mathbb Z_N$ be a set of $b\geqslant6$ elements such that
$\langle B\rangle=\mathbb Z_N$, $N\geqslant 2b$ and $2\leqslant m\leqslant\left\lfloor\frac{b-1}{2}\right\rfloor$.
If $|U_m(B)|\leqslant 2b-2$, then every set $B\setminus\{z\}$, $z\in B$, either
$$\langle B\setminus\{z\}-B\setminus\{z\}\rangle=\mathbb Z_N$$ or is a subgroup of order $b-1$.
In particular,
$$\langle B\setminus\{0\}-B\setminus\{0\}\rangle=\mathbb Z_N. $$
If $B=H\cup\{x\}$ for a subgroup $H$ of order $b-1$, then
$$U_m(B)=H\cup(x+H), \ \ \operatorname{Stab}(U_m(B))=H.$$
\end{lem}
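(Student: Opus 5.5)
We argue by contradiction. Fix $z\in B$, put $B'=B\setminus\{z\}$ and $K=\langle B'-B'\rangle$, and suppose $K\neq\mathbb Z_N$. The aim is to force $|U_m(B)|\geqslant 2b-1$ unless $B'$ is a subgroup of order $b-1$.

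\emph{Setup.} Since $K=\langle B'-B'\rangle$, we have $B'\subseteq y+K$ for some $y\in\mathbb Z_N$. If also $z\in y+K$, then $B\subseteq y+K$. Because $0\in B$, this gives $y\in K$ and hence $\mathbb Z_N=\langle B\rangle\subseteq K$, a contradiction. So $z\notin y+K$. Moreover $\langle B\rangle=\mathbb Z_N$ forces $Q:=\mathbb Z_N/K$ to be generated by $\pi(y)$ and $\pi(z)$, where $\pi:\mathbb Z_N\to Q$ is the quotient map. By \eqref{cyc:eq:disjoint-layer}, for every $j$ we have
$$j^{\wedge}B=j^{\wedge}B'\cup\bigl(z+(j-1)^{\wedge}B'\bigr).$$
Here $j^{\wedge}B'\subseteq jy+K$ and $z+(j-1)^{\wedge}B'\subseteq z+(j-1)y+K$. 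These two cosets are distinct, because $z-y\notin K$.

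\emph{Counting.} Take $j=m$. Then $m^{\wedge}B'$ and $z+(m-1)^{\wedge}B'$ lie in disjoint cosets. By Lemma \ref{cyc:lem:strict-cyclic-layer}, each has at least $|B'|=b-1$ elements, since $1\leqslant m-1<m\leqslant b-3$ because $b\geqslant6$. Moreover $|m^{\wedge}B'|\geqslant b$ unless $B'$ is a coset of a subgroup; this uses $b-1\geqslant5$ and $2\leqslant m\leqslant (b-1)-2$. Hence $|U_m(B)|\geqslant|m^{\wedge}B|\geqslant 2b-1$ unless $B'=y+K$ with $|K|=b-1$.

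\emph{Remaining case $B'=y+K$, $|K|=b-1$.} If $y\in K$, then $B'=K$ is a subgroup of order $b-1$, as allowed by the statement. If $y\notin K$, then $z\in K$, since otherwise $B\cap K=\emptyset$, contradicting $0\in B$. So $z=0$ and $Q=\langle\pi(y)\rangle$, and $|Q|=N/(b-1)\geqslant 2b/(b-1)>2$, so $|Q|\geqslant3$. Then $m^{\wedge}B'=my+K$ and $(m-1)^{\wedge}B'=(m-1)y+K$, both full cosets by Lemma \ref{cyc:lem:strict-cyclic-layer}, and similarly $(m+1)^{\wedge}B'=(m+1)y+K$ since $m+1\leqslant b-2$. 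The three cosets $(m-1)y+K$, $my+K$ and $(m+1)y+K$ all lie in $U_m(B)$ and are pairwise distinct because $|Q|\geqslant3$. This gives $|U_m(B)|\geqslant3(b-1)>2b-2$, a contradiction. In particular, $B\setminus\{0\}$ does not contain $0$ and so is never a subgroup, which yields the ``in particular'' statement.

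\emph{Structure when $B=H\cup\{x\}$.} By Lemma \ref{cyc:lem:strict-cyclic-layer}, $j^{\wedge}H=H$ for $1\leqslant j\leqslant b-2$. The decomposition above then gives $m^{\wedge}B=(m+1)^{\wedge}B=H\cup(x+H)$, using $1\leqslant m-1$ and $m+1\leqslant b-2$. Hence $U_m(B)=H\cup(x+H)$, and its stabilizer $S$ contains $H$. If $S\supsetneq H$, then $U_m(B)=H\cup(x+H)$ is a union of $S$-cosets and contains $0$, so it equals $S$. Then $S$ is a subgroup of order $2(b-1)$ containing $B$, hence equal to $\mathbb Z_N$. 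This gives $N=2b-2<2b$, a contradiction, so $\operatorname{Stab}(U_m(B))=H$.

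The main obstacle is the case $z=0$, where $B'$ is a coset not through $0$. There the two-coset count alone gives only $2b-2$, and one needs the third coset together with the index bound $|Q|\geqslant3$ coming from $N\geqslant 2b$.
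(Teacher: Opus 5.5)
Your proof is correct and follows essentially the same route as the paper. You use the two-coset splitting $m^{\wedge}B=m^{\wedge}B'\mathbin{\dot\cup}(z+(m-1)^{\wedge}B')$ with Lemma~\ref{cyc:lem:strict-cyclic-layer} to get $2b-1$, the three-coset count $3(b-1)$ in the case $z=0$, and the direct computation $U_m(B)=H\cup(x+H)$. The only cosmetic difference is the stabilizer step. The paper excludes $t\in\operatorname{Stab}(U_m(B))\setminus H$ via $2x\notin H$, while you observe that a strictly larger stabilizer $S$ would make $U_m(B)$ a subgroup containing $B$, contradicting $N\geqslant 2b$; this tacitly uses $|S|\geqslant 2|H|=|U_m(B)|$, which is worth stating.
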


\begin{proof}
Fix $z\in B$ and put $C=B\setminus\{z\}$.
Suppose that $H=\langle C-C\rangle\subsetneq\mathbb Z_N$.
Choose $y\in C$. Then
$$|C|=b-1\geqslant5, \ \ C\subseteq y+H.$$
If $z\in y+H$, then $0\in B\subseteq y+H$ gives
$$y+H=H, \ \
\mathbb Z_N=\langle B\rangle\subseteq H\subsetneq\mathbb Z_N,$$
a contradiction. Thus, $z\notin y+H$.

Suppose that $C$ is not a subgroup coset.
Since
$$1\leqslant m-1<m
\leqslant\left\lfloor\frac{b-1}{2}\right\rfloor
\leqslant b-3,$$
by Lemma \ref{cyc:lem:strict-cyclic-layer}, we have
$$|(m-1)^{\wedge}C|\geqslant b-1, \ \
|m^{\wedge}C|\geqslant b.$$
Since $z\notin y+H$, we have
$$
\begin{gathered}
m^{\wedge}C\subseteq my+H, \ \
z+(m-1)^{\wedge}C\subseteq z+(m-1)y+H,\\
my+H\neq z+(m-1)y+H.
\end{gathered}
$$
Thus,
$$m^{\wedge}B
=m^{\wedge}C\mathbin{\dot\cup}\bigl(z+(m-1)^{\wedge}C\bigr),$$
and
$$
\begin{aligned}
|U_m(B)|
&\geqslant|m^{\wedge}B|\\
&=|m^{\wedge}C|+|(m-1)^{\wedge}C|\\
&\geqslant b+(b-1)=2b-1,
\end{aligned}
$$
contrary to $|U_m(B)|\leqslant2b-2$.

Hence, $C=y+H$ and $|H|=b-1$.
For $j\in\{m-1,m,m+1\}$, we have
$$1\leqslant j\leqslant b-2=|H|-1.$$
By Lemma \ref{cyc:lem:strict-cyclic-layer}, we have
$$j^{\wedge}C=jy+H, \ \ j\in\{m-1,m,m+1\}.$$
Since $N\geqslant2b>2|H|$ and $|H|\mid N$, we have
$$\frac{N}{|H|}\geqslant3.$$

If $0\notin C$, then $z=0$ and
$$\langle y+H\rangle=\mathbb Z_N/H.$$
Thus,
$$U_m(B)
=((m-1)y+H)\mathbin{\dot\cup}(my+H)
\mathbin{\dot\cup}((m+1)y+H),$$
and
$$|U_m(B)|=3|H|=3(b-1)>2b-2,$$
a contradiction.
Hence, $0\in C$ and $C=H$.

Since $B=H\mathbin{\dot\cup}\{z\}$, we have
$$
\begin{aligned}
U_m(B)
&=m^{\wedge}H\cup\bigl(z+(m-1)^{\wedge}H\bigr)
\cup(m+1)^{\wedge}H\cup\bigl(z+m^{\wedge}H\bigr)\\
&=H\cup(z+H).
\end{aligned}
$$
Thus,
$$|U_m(B)|=2|H|=2b-2.$$
Since
$$\langle z+H\rangle=\mathbb Z_N/H, \ \ \frac{N}{|H|}\geqslant3,$$
we have $2z\notin H$.
If $t\in\operatorname{Stab}(U_m(B))\setminus H$, then
$$\{H,z+H\}+(t+H)=\{H,z+H\},$$
thus,
$$t+H=z+H, \ \ 2z\in H,$$
a contradiction.
Together with $H\subseteq\operatorname{Stab}(U_m(B))$, we have
$$\operatorname{Stab}(U_m(B))=H.$$

Since $0\notin B\setminus\{0\}$, the set $B\setminus\{0\}$ is not a subgroup.
Hence,
$$\langle(B\setminus\{0\})-(B\setminus\{0\})\rangle=\mathbb Z_N.$$
If $B=H\cup\{x\}$ as in the last assertion, taking $z=x$ and $C=H$ in the preceding calculation gives
$$U_m(B)=H\cup(x+H), \ \ \operatorname{Stab}(U_m(B))=H.$$

This completes the proof of Lemma \ref{cyc:lem:uniform-zero-deletion}.
\end{proof}

\begin{proof}[Proof of Theorems \ref{thm:r4-coarse-cyclic-all-orders} and \ref{thm:r4-coarse-cyclic-inverse}]
If $B=H\cup\{x\}$ with $|H|=b-1$, then by Lemma
\ref{cyc:lem:uniform-zero-deletion}, we have
$$U_m(B)=H\mathbin{\dot\cup}(x+H), \ \ |U_m(B)|=2|H|=2b-2.$$
This proves the converse assertion of Theorem
\ref{thm:r4-coarse-cyclic-inverse}.

We prove the direct assertion and the necessary equality condition
simultaneously by strong induction on $b$, in the fixed group $\mathbb Z_N$.
Let $0\in B\subseteq\mathbb Z_N$ satisfy
$\langle B\rangle=\mathbb Z_N$ and $|B|=b\geqslant6$.

For $m=2$, by Theorem \ref{cyc:thm:pair-triple}, we have
$$|U_2(B)|\geqslant\min\{N,2b-2\}.$$
If $b\geqslant7$, $N\geqslant2b$ and $|U_2(B)|=2b-2$, then
$$\max\{|2^{\wedge}B|,|3^{\wedge}B|\}\leqslant2b-2.$$
By Corollary \ref{cyc:cor:maximum-pair-triple-inverse}, we have
$$B=(y+H)\cup\{x\}, \ \ |H|=b-1, \ \ x\notin y+H.$$
Since
$$\langle(B\setminus\{x\})-(B\setminus\{x\})\rangle
=H<\mathbb Z_N,$$
by Lemma \ref{cyc:lem:uniform-zero-deletion}, $y+H$ is a subgroup.
Thus,
$$y+H=H, \ \ B=H\cup\{x\}.$$

Let $3\leqslant m\leqslant\left\lfloor\frac{b-1}{2}\right\rfloor$.
If $\mu_B(c)\geqslant\left\lfloor\frac m2\right\rfloor+2$
for some $c\in\mathbb Z_N$, then by Proposition
\ref{cyc:pro:constant-pair-padding}, we have
$$|U_m(B)|\geqslant\max\{|2^{\wedge}B|,|3^{\wedge}B|\}
\geqslant\min\{N,2b-2\}.$$
If also $N\geqslant2b$ and $|U_m(B)|=2b-2$, then
$$\max\{|2^{\wedge}B|,|3^{\wedge}B|\}\leqslant2b-2.$$
By Corollary \ref{cyc:cor:maximum-pair-triple-inverse}
and Lemma \ref{cyc:lem:uniform-zero-deletion}, we have
$$B=H\cup\{x\}, \ \ |H|=b-1.$$

Suppose that
$$\mu_B(c)\leqslant\left\lfloor\frac m2\right\rfloor+1, \ \ c\in\mathbb Z_N. $$
If $m=3$, then by Proposition \ref{cyc:lem:uniform-low-pair-density}, we have
$$|U_3(B)|\geqslant|3^{\wedge}B|
\geqslant\min\{N,2b-1\}.$$
This proves the direct assertion and excludes equality when $N\geqslant 2b$.

The preceding arguments prove the required assertions when $6\leqslant b\leqslant8$.
Suppose that $b\geqslant9$ and both assertions hold for all smaller
cardinalities in $\mathbb Z_N$.
It remains to consider
$$4\leqslant m\leqslant\left\lfloor\frac{b-1}{2}\right\rfloor.$$

If $N\leqslant 2b+3$, then by Proposition \ref{cyc:lem:uniform-low-pair-density}, we have
$$\mathbb Z_N=m^{\wedge}B\subseteq U_m(B)\subseteq\mathbb Z_N,$$
thus,
$$|U_m(B)|=N\geqslant\min\{N,2b-2\}.$$
If also $N\geqslant2b$, then
$$|U_m(B)|=N\geqslant2b>2b-2.$$

Suppose that $N\geqslant2b+4$.
If $|U_m(B)|\geqslant2b-1$, then the direct assertion holds
and equality at $2b-2$ does not occur.
Thus, we may assume that
$$|U_m(B)|\leqslant2b-2.$$
The case $B=H\cup\{x\}$ with $|H|=b-1$ was proved above.
Suppose that $B$ is not of this form.
By Lemma \ref{cyc:lem:uniform-zero-deletion}, we have
$$\langle(B\setminus\{z\})-(B\setminus\{z\})\rangle
=\mathbb Z_N, \ \ z\in B.$$

Suppose that a subgroup $K\leqslant\mathbb Z_N$ satisfies
$$[\mathbb Z_N:K]\geqslant3, \ \
|B\cap K|\geqslant b-3, \ \ |B\setminus K|\geqslant2.$$
Put $C=B\cap K$.
For $j\in\{m-2,m-1,m\}$, we have
$$2\leqslant j\leqslant m
\leqslant\left\lfloor\frac{b-1}{2}\right\rfloor
\leqslant b-5\leqslant|C|-2.$$
By Lemma \ref{cyc:lem:strict-cyclic-layer}, we have
$$U_j(C)\subseteq K, \ \ |U_j(C)|\geqslant|C|.$$
Since $\langle B\rangle=\mathbb Z_N$, we have
$$\left\langle\{x+K:x\in B\setminus K\}\right\rangle
=\mathbb Z_N/K.$$

If $y,z\in B\setminus K$ satisfy $y+K\neq z+K$, then
$$|\{K,y+K,z+K\}|=3.$$
By \eqref{cyc:eq:disjoint-layer}, we have
$$
U_m(C)\mathbin{\dot\cup}
\bigl(y+U_{m-1}(C)\bigr)\mathbin{\dot\cup}
\bigl(z+U_{m-1}(C)\bigr)
\subseteq U_m(B).
$$
Otherwise, choose distinct $y,z\in B\setminus K$. Then
$$y+K=z+K, \ \ \langle y+K\rangle=\mathbb Z_N/K.$$
Since $[\mathbb Z_N:K]\geqslant3$, we have
$$|\{K,y+K,y+z+K\}|=3.$$
By \eqref{cyc:eq:disjoint-layer}, we have
$$
U_m(C)\mathbin{\dot\cup}
\bigl(y+U_{m-1}(C)\bigr)\mathbin{\dot\cup}
\bigl(y+z+U_{m-2}(C)\bigr)
\subseteq U_m(B).
$$
In both cases, we have
$$2b-2\geqslant|U_m(B)|\geqslant3|C|
\geqslant3b-9>2b-2,$$
a contradiction.

Put $A=B\setminus\{0\}$. Then
$$|A|=b-1\geqslant8, \ \ \langle A-A\rangle=\mathbb Z_N.$$
For distinct $e,f\in A$ such that
$\langle B\setminus\{e,f\}\rangle\neq\mathbb Z_N$, put
$$X=B\setminus\{e,f\}, \ \ H=\langle X\rangle<\mathbb Z_N.$$
Since $0\in X$ and
$$\langle(X\cup\{e\})-(X\cup\{e\})\rangle
=\langle(X\cup\{f\})-(X\cup\{f\})\rangle=\mathbb Z_N,$$
we have
$$e,f\notin H, \ \ B\cap H=X, \ \
|B\cap H|=b-2, \ \ B\setminus H=\{e,f\}.$$
By the preceding subgroup exclusion, we have
$$[\mathbb Z_N:H]=2.$$
Since a cyclic group has at most one subgroup of index two, we have
$$
\left|
\left\{\{e,f\}\subseteq A:
e\neq f,\ \langle B\setminus\{e,f\}\rangle\neq\mathbb Z_N
\right\}
\right|\leqslant1.
$$

Put
$$
\mathcal D=
\{e-f:e,f\in A,\ e\neq f,\
\langle B\setminus\{e,f\}\rangle=\mathbb Z_N\}.
$$
Then
$$\mathcal D=-\mathcal D, \ \ 0\notin\mathcal D.$$
If distinct $e,f\in A$ satisfy
$\langle B\setminus\{e,f\}\rangle\neq\mathbb Z_N$,
choose $a\in A\setminus\{e,f\}$.
Since at most one pair has this property, we have
$$\langle B\setminus\{e,a\}\rangle
=\langle B\setminus\{a,f\}\rangle=\mathbb Z_N,$$
thus,
$$e-a,a-f\in\mathcal D, \ \
e-f=(e-a)+(a-f)\in\langle\mathcal D\rangle.$$
Hence,
$$A-A\subseteq\langle\mathcal D\rangle, \ \
\mathbb Z_N=\langle A-A\rangle
\subseteq\langle\mathcal D\rangle\subseteq\mathbb Z_N,$$
and therefore $\langle\mathcal D\rangle=\mathbb Z_N$.

We prove that $|\mathcal D|\geqslant11$.
Suppose that distinct $e,f\in A$ satisfy
$\langle B\setminus\{e,f\}\rangle\neq\mathbb Z_N$,
and put
$$H=\langle B\setminus\{e,f\}\rangle, \ \ A_0=A\cap H.$$
Then
$$|A_0|=b-3, \ \ e\notin H.$$
Since every pair other than $\{e,f\}$ has a generating complement, we have
$$
\bigl((A_0-A_0)\setminus\{0\}\bigr)
\mathbin{\dot\cup}(e-A_0)\subseteq\mathcal D.
$$
Thus,
$$
\begin{aligned}
|\mathcal D|
&\geqslant|A_0-A_0|-1+|A_0|\\
&\geqslant2|A_0|-1=2b-7\geqslant11.
\end{aligned}
$$

Suppose that
$$\langle B\setminus\{e,f\}\rangle=\mathbb Z_N,
\ \ e,f\in A,\ \ e\neq f.$$
Then
$$\mathcal D=(A-A)\setminus\{0\}.$$
If $A-A=\mathbb Z_N$, then
$$|\mathcal D|=N-1\geqslant2b-1\geqslant11.$$
Otherwise, put $K=\operatorname{Stab}(A-A)$. Then $K<\mathbb Z_N$.
If $|A+K|=|K|$, then
$$A-A\subseteq K, \ \
\mathbb Z_N=\langle A-A\rangle\subseteq K<\mathbb Z_N,$$
a contradiction.
Since $|K|\mid|A+K|$, we have
$$|A+K|\geqslant2|K|.$$
Together with $|-A+K|=|A+K|$, by Lemma \ref{thm:Kneser}, we have
$$
\begin{aligned}
|\mathcal D|
&=|A-A|-1\\
&\geqslant2|A+K|-|K|-1\\
&\geqslant\frac32|A+K|-1\\
&\geqslant\frac32(b-1)-1\geqslant11.
\end{aligned}
$$

For distinct $e,f\in A$ such that
$\langle B\setminus\{e,f\}\rangle=\mathbb Z_N$,
put $X=B\setminus\{e,f\}$. Then
$$
0\in X, \ \ \langle X\rangle=\mathbb Z_N, \ \
|X|=b-2\geqslant7, \ \
2\leqslant m-1\leqslant
\left\lfloor\frac{|X|-1}{2}\right\rfloor.
$$
Since $N\geqslant2b>2|X|$, by the direct induction hypothesis, we have
$$|U_{m-1}(X)|\geqslant2|X|-2=2b-6.$$
If equality holds, then by the inverse induction hypothesis, we have
$$X=K\cup\{x\}, \ \ |K|=b-3\geqslant6, \ \
B=K\mathbin{\dot\cup}\{x,e,f\}.$$
Since $N\geqslant2b>2|K|$, we have
$$[\mathbb Z_N:K]\geqslant3, \ \
|B\cap K|=b-3, \ \ |B\setminus K|=3,$$
contrary to the preceding subgroup exclusion.
Thus,
$$|U_{m-1}(X)|\geqslant2b-5.$$
Since $e,f\notin X$, we have
$$e+U_{m-1}(X)\subseteq U_m(B), \ \
f+U_{m-1}(X)\subseteq U_m(B).$$
Together with
$$e+U_{m-1}(X)=(e-f)+(f+U_{m-1}(X))
\subseteq U_m(B)+e-f,$$
we have
$$e+U_{m-1}(X)\subseteq
U_m(B)\cap(U_m(B)+e-f).$$
Hence, for every $d\in\mathcal D$, we have
$$
\begin{aligned}
|(U_m(B)+d)\setminus U_m(B)|
&=|U_m(B)|-|U_m(B)\cap(U_m(B)+d)|\\
&\leqslant(2b-2)-(2b-5)=3.
\end{aligned}
$$

Since
$$13\leqslant2b-5\leqslant|U_m(B)|
\leqslant2b-2\leqslant N-6$$
and
$$\mathcal D=-\mathcal D, \ \ 0\notin\mathcal D, \ \
\langle\mathcal D\rangle=\mathbb Z_N, \ \ |\mathcal D|\geqslant11,$$
by Lemma \ref{lem:olson-boundary}, there exists $d\in\mathcal D$ such that
$$
\begin{aligned}
3
&\geqslant|(U_m(B)+d)\setminus U_m(B)|\\
&\geqslant
\min\left\{
\frac{|U_m(B)|+1}{2},
\frac{N-|U_m(B)|+1}{2},
\frac{|\mathcal D|+2}{4}
\right\}\\
&\geqslant\min\left\{7,\frac72,\frac{13}{4}\right\}
=\frac{13}{4}>3,
\end{aligned}
$$
a contradiction.

This completes the proofs of Theorems
\ref{thm:r4-coarse-cyclic-all-orders} and
\ref{thm:r4-coarse-cyclic-inverse}.
\end{proof}

\begin{thm}\label{thm:strict-critical-cyclic-surplus}
Let $p\geqslant 2$, $N\geqslant\max\{6, 2p\}$ and $L\geqslant 2N+2$.
Let $J\subseteq[1, L-1]$ be a set of $N$ integers such that $B=\{0\}\cup\pi_L(J)$ generates $\mathbb Z_L$.
If $|p^{\wedge}J|<2p(N-p)-N+2$, then
$$|p^{\wedge}B\cup(p+1)^{\wedge}B|\geqslant2N+1.$$
\end{thm}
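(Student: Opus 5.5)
The plan is to reduce the statement to the cyclic direct and inverse results, Theorems \ref{thm:r4-coarse-cyclic-all-orders} and \ref{thm:r4-coarse-cyclic-inverse}, applied to $B=\{0\}\cup\pi_L(J)\subseteq\mathbb Z_L$. Then we use the criticality hypothesis on $|p^{\wedge}J|$ to exclude the extremal structure.

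\textbf{Setting up the cyclic results.} Since $J\subseteq[1,L-1]$, the map $\pi_L$ is injective on $\{0\}\cup J$. Hence $b:=|B|=N+1\geqslant7$, $0\in B$ and $\langle B\rangle=\mathbb Z_L$. The hypothesis $N\geqslant 2p$ gives
$$2\leqslant p\leqslant\left\lfloor\frac N2\right\rfloor=\left\lfloor\frac{b-1}{2}\right\rfloor,$$
so $m=p$ is in the admissible range. Since $L\geqslant 2N+2=2b$, Theorem \ref{thm:r4-coarse-cyclic-all-orders} gives
$$|U_p(B)|\geqslant\min\{L,2b-2\}=2N.$$
It therefore suffices to rule out $|U_p(B)|=2N=2b-2$. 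In that case Theorem \ref{thm:r4-coarse-cyclic-inverse} applies, because $b\geqslant7$ and $L\geqslant 2b$. It yields $B=H\cup\{x\}$ with $H\leqslant\mathbb Z_L$ a subgroup of order $b-1=N$ and $x\notin H$.

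\textbf{Identifying $J$ in the equality case.} Put $d=L/N$; then $H=\pi_L(d\mathbb Z)$ and $d\geqslant2$ because $L\geqslant 2N+2$. Since $0\in H$ and $J\subseteq[1,L-1]$, the elements of $J$ lying over $H\setminus\{0\}$ are exactly the multiples of $d$ in $[1,L-1]$. Therefore
$$J=d\cdot[1,N-1]\mathbin{\dot\cup}\{x'\}, \qquad d\nmid x'.$$
Write $I=d\cdot[1,N-1]$. Then $p^{\wedge}J\supseteq p^{\wedge}I\cup\bigl(x'+(p-1)^{\wedge}I\bigr)$. The two pieces lie in different residue classes modulo $d$, since $x'\not\equiv0\pmod d$, so they are disjoint.

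\textbf{Counting to contradict criticality.} Since $I$ is an arithmetic progression of $N-1$ terms and $1\leqslant p-1<p\leqslant N-2$, Theorem \ref{thm:nathanson-restricted-minimum} (or directly \eqref{eq:unit-interval-restricted-ranks}) gives $|p^{\wedge}I|=p(N-1-p)+1$ and $|(p-1)^{\wedge}I|=(p-1)(N-p)+1$. Hence
$$|p^{\wedge}J|\geqslant p(N-1-p)+1+(p-1)(N-p)+1=2p(N-p)-N+2.$$
This contradicts the hypothesis $|p^{\wedge}J|<2p(N-p)-N+2$. So equality is impossible and $|p^{\wedge}B\cup(p+1)^{\wedge}B|=|U_p(B)|\geqslant2N+1$.

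\textbf{Main obstacle.} The heavy lifting is already contained in the two cyclic theorems. The only delicate point is the lifting step: we must check that the inverse structure forces $0\in H$ (it does, since $x\notin H$ while $0\in B$). We must also check that the lift of $H\setminus\{0\}$ to $[1,L-1]$ is the full progression $d\cdot[1,N-1]$; this holds because $\pi_L$ is a bijection from $[1,L-1]$ onto $\mathbb Z_L\setminus\{0\}$.
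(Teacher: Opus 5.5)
Your proposal is correct and follows the paper's proof essentially step for step. The direct cyclic theorem gives $|U_p(B)|\geqslant 2N$; in the equality case the inverse theorem forces $B=H\cup\{x\}$ with $|H|=N$; lifting gives $J=d\cdot[1,N-1]\mathbin{\dot\cup}\{x'\}$ with $d\nmid x'$, hence $|p^{\wedge}J|\geqslant 2p(N-p)-N+2$, contradicting the hypothesis. One small correction to your closing remark: $0\in H$ is automatic because $H$ is a subgroup, and what the lifting step actually uses is $x\neq 0$, which follows from $x\notin H$.
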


\begin{proof}
Since $J\subseteq[1,L-1]$, the map $\pi_L$ is injective on $J$
and $0\notin\pi_L(J)$. Thus,
$$0\in B, \ \ |B|=N+1\geqslant7, \ \
\langle B\rangle=\mathbb Z_L.$$
Since
$$2\leqslant p\leqslant
\left\lfloor\frac N2\right\rfloor
=\left\lfloor\frac{|B|-1}{2}\right\rfloor,
\ \ L\geqslant2N+2=2|B|,$$
by Theorem \ref{thm:r4-coarse-cyclic-all-orders}, we have
$$|U_p(B)|\geqslant2|B|-2=2N.$$

Suppose that $|U_p(B)|=2N$.
By Theorem \ref{thm:r4-coarse-cyclic-inverse}, we have
$$B=H\mathbin{\dot\cup}\{x\}, \ \ |H|=N, \ \ x\notin H.$$
Put $d=L/N$. Then
$$d\in\mathbb Z, \ \ H=\pi_L(d\cdot[0,N-1]), \ \
\pi_L(J)=(H\setminus\{0\})\mathbin{\dot\cup}\{x\}.$$
Since $\pi_L$ is injective on $[1,L-1]$, we have
$$J=C\mathbin{\dot\cup}\{x'\}, \ \
C=d\cdot[1,N-1], \ \ \pi_L(x')=x, \ \ d\nmid x'.$$
Thus,
$$p^{\wedge}C\subseteq d\mathbb Z, \ \
x'+(p-1)^{\wedge}C\subseteq x'+d\mathbb Z$$
and
$$p^{\wedge}J
=p^{\wedge}C\mathbin{\dot\cup}
\bigl(x'+(p-1)^{\wedge}C\bigr).$$
Since $1\leqslant p-1<p\leqslant N-1$, by
\eqref{eq:unit-interval-restricted-ranks}, we have
$$
\begin{aligned}
|p^{\wedge}J|
&=|p^{\wedge}C|+|(p-1)^{\wedge}C|\\
&=p(N-p-1)+1+(p-1)(N-p)+1\\
&=2p(N-p)-N+2,
\end{aligned}
$$
contrary to $|p^{\wedge}J|<2p(N-p)-N+2$.
Hence,
$$|p^{\wedge}B\cup(p+1)^{\wedge}B|
=|U_p(B)|\geqslant2N+1.$$

This completes the proof of Theorem \ref{thm:strict-critical-cyclic-surplus}.
\end{proof}

\section{Dense integer sets and their projections}\label{sec:dense-integer-sets}
\begin{lem}\label{lem:dense-strong-endpoint-deletion}
Let $J\subseteq[0, L]$ be a set of $n$ integers such that $0, L\in J$ and $B=J\setminus\{0, L\}$.
Put $g=L-n+1$.

(1) If $J$ is primitive, $n\geqslant7$ and $3\leqslant m\leqslant n-3$, then
$$e_m(J)+D_m(J)\geqslant\min\{g,n-3\}.$$

(2) If $n\geqslant7$ and $L\leqslant2n-4$, then for every $3\leqslant m\leqslant n-3$, we have
$$\pi_L(m^{\wedge}J)=\mathbb Z_L,$$
$$e_m(J)+D_m(J)\geqslant g+e_{m-1}(B)+D_{m-1}(B)\geqslant g,
\ \ e_m(J)\leqslant e_{m-1}(B).$$

(3) If $n\geqslant4$ and $L\leqslant2n-5$, the conclusions of (2) hold for every $2\leqslant m\leqslant n-2$. Moreover,
$$D_2(J)=D_{n-2}(J)\geqslant0.$$
\end{lem}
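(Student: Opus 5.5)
The plan is to apply Lemma \ref{lem:endpoint-lifting} and Corollary \ref{lem:exact-endpoint-transfer} with $E=J$, inner set $B=J\setminus\{0,L\}$ (so $N=n-2$) and $p=m-1$. Put $C=\{0\}\cup\pi_L(B)$, so $|C|=n-1$, and write $v=|U_{m-1}(C)|$ and $\rho\geqslant0$ as in Corollary \ref{lem:exact-endpoint-transfer}. Adding the two identities of that corollary gives
$$e_m(J)+D_m(J)=e_{m-1}(B)+D_{m-1}(B)+v+\rho-(n-1),\qquad e_m(J)=e_{m-1}(B)+L-v-\rho .$$
Moreover $e_{m-1}(B)+D_{m-1}(B)=|(m-1)^{\wedge}B|-(m-1)(n-1-m)-1\geqslant0$ by Theorem \ref{thm:nathanson-restricted-minimum}. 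So everything reduces to lower bounds for $v$. If $v=L$, which is equivalent to $\pi_L(m^{\wedge}J)=U_{m-1}(C)=\mathbb Z_L$, then the sum is $\geqslant g+e_{m-1}(B)+D_{m-1}(B)\geqslant g$ and $e_m(J)=e_{m-1}(B)-\rho\leqslant e_{m-1}(B)$. This is exactly (2) and the first part of (3).

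\emph{Part (1).} If $J$ is primitive, Lemma \ref{lem:endpoint-lifting} gives $\langle C\rangle=\mathbb Z_L$, and $|C|=n-1\geqslant6$. For $2\leqslant m-1\leqslant\lfloor (n-2)/2\rfloor$, Theorem \ref{thm:r4-coarse-cyclic-all-orders} yields $v\geqslant\min\{L,2n-4\}$. The remaining orders $m-1\leqslant n-4$ follow from the complementation $U_j(C)=\sum_{c\in C}c-U_{|C|-j-1}(C)$, which maps $j=m-1\in[2,n-4]$ to $n-1-m\in[2,n-4]$. Hence the sum is at least $\min\{L,2n-4\}-(n-1)=\min\{g,n-3\}$.

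\emph{Part (2).} Here $L\leqslant2n-4$ gives $|C|=n-1>L/2$. A proper subgroup of $\mathbb Z_L$ has at most $L/2$ elements, so $C\ni0$ generates $\mathbb Z_L$ automatically, and primitivity is not needed. The argument of (1) then gives $v\geqslant\min\{L,2n-4\}=L$, hence $v=L$, and the reduction above finishes.

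\emph{Part (3).} The cyclic theorem is unavailable for small $n$, and $m-1$ now ranges over $[1,n-3]=[1,|C|-2]$. By complementation it suffices to show $U_j(C)=\mathbb Z_L$ for $1\leqslant j\leqslant(|C|-1)/2$, using the stronger density $|C|\geqslant(L+3)/2$. For $j=1$, every $g$ has at least $2|C|-L\geqslant3$ representations $g=x+y$ with $x,y\in C$. At most two of these have $2x=g$, since $\mathbb Z_L$ is cyclic, so $g\in2^{\wedge}C$. For larger $j$, I would fix $g$ and look for a $(j-1)$- or $j$-subset $S\subseteq C$ with $g-\sum S\in 2^{\wedge}(C\setminus S)$, counting pair-representations of $g-\sum S$ as $S$ varies and using that distinct pairs with a given sum are disjoint (Lemma \ref{lem:disjoint-pairs}). \textbf{This is the step I expect to be the main obstacle}, because $C\setminus S$ loses density as $S$ grows. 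If the counting is too weak, I would fall back on Kneser's theorem (Lemma \ref{thm:Kneser}) together with Lemma \ref{cyc:lem:strict-cyclic-layer} applied to $U_j(C)$.

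\emph{The identity $D_2(J)=D_{n-2}(J)\geqslant0$.} Since $(n-2)^{\wedge}J=\sum_{x\in J}x-2^{\wedge}J$, both the size and the diameter agree, and $2(n-2)=(n-2)\cdot2$, so $D_2(J)=D_{n-2}(J)$. For the sign, the transfer formula with $p=1$ gives $D_2(J)=D_1(B)+2v+2\rho-L-n+1$, where $D_1(B)=n-3-l(B)\geqslant n-1-L$ and $v=L$. Substituting yields $D_2(J)\geqslant0$.
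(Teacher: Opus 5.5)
Your proofs of (1), (2), the case $j=1$ of (3), and the sign of $D_2(J)$ are correct. They match the paper's argument. The paper reduces to $m\leqslant\lfloor n/2\rfloor$ via the symmetry $m\mapsto n-m$ of $e_m$ and $D_m$, while you complement $U_j(C)$; the two reductions are equivalent.

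\textbf{The gap is in (3).} As written, you establish $U_j(C)=\mathbb Z_L$ only for $j=1$. You leave $j\geqslant2$ open as the ``main obstacle'' and sketch a counting or Kneser argument without carrying it out. So (3) is not proved. The missing observation is that no new argument is needed for larger $j$.

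\emph{Cases already covered by (2).} If $n\geqslant7$ and $3\leqslant m\leqslant n-3$, then $L\leqslant2n-5\leqslant2n-4$. This is exactly part (2), which you have already proved via the cyclic theorem.

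\emph{The remaining cases.} After your complementation, what remains is $j=m-1\leqslant(|C|-1)/2$ with either $m=2$ or $n\leqslant6$. When $n\leqslant 6$ we have $|C|=n-1\leqslant5$, so $j\leqslant2$. Concretely, the only leftover cases are $m=2$ and $(n,m)=(6,3)$, which is exactly how the paper organizes it.

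\emph{How to finish.} For $j\in\{1,2\}$,
$$U_j(C)=j^{\wedge}C\cup(j+1)^{\wedge}C\supseteq2^{\wedge}C=\mathbb Z_L,$$
and $2^{\wedge}C=\mathbb Z_L$ is precisely what your pair-counting argument shows (using $2|C|-L\geqslant3$).

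With this, (3) is complete. The density argument for large $j$ that you were worried about never has to be carried out. Your concern was justified: $C\setminus S$ does lose density as $S$ grows.
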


\begin{proof}
Without loss of generality, we assume that
$$2\leqslant m\leqslant\left\lfloor\frac n2\right\rfloor.$$
Moreover, $L\geqslant n-1\geqslant3$.
Put $C=\{0\}\cup\pi_L(B)$. Then
$$|C|=n-1, \ \ \pi_L(m^{\wedge}J)=U_{m-1}(C).$$
If $J$ is primitive, then $\langle C\rangle=\mathbb Z_L$.
Under either density hypothesis, if $\langle C\rangle\neq\mathbb Z_L$, then
$$n-1=|C|\leqslant|\langle C\rangle|
\leqslant\frac L2\leqslant n-2,$$
a contradiction. Thus, $\langle C\rangle=\mathbb Z_L$.

By Corollary \ref{lem:exact-endpoint-transfer}, we have
$$|m^{\wedge}J|-|(m-1)^{\wedge}B|\geqslant|U_{m-1}(C)|$$
and
$$
\begin{aligned}
e_m(J)
&=e_{m-1}(B)+L-|m^{\wedge}J|+|(m-1)^{\wedge}B|\\
&\leqslant e_{m-1}(B)+L-|U_{m-1}(C)|,\\
e_m(J)+D_m(J)
&=e_{m-1}(B)+D_{m-1}(B)\\
&\qquad+|m^{\wedge}J|-|(m-1)^{\wedge}B|-n+1\\
&\geqslant e_{m-1}(B)+D_{m-1}(B)+|U_{m-1}(C)|-n+1.
\end{aligned}
$$
By Theorem \ref{thm:nathanson-restricted-minimum}, we have
$$e_{m-1}(B)+D_{m-1}(B)\geqslant0.$$

If $n\geqslant7$ and $m\geqslant3$, then $0\in C$, $|C|\geqslant6$ and
$$2\leqslant m-1\leqslant\left\lfloor\frac{n-2}{2}\right\rfloor
=\left\lfloor\frac{|C|-1}{2}\right\rfloor.$$
By Theorem \ref{thm:r4-coarse-cyclic-all-orders}, we have
\begin{equation}\label{e1}
|U_{m-1}(C)|\geqslant\min\{L,2n-4\}.
\end{equation}
Thus,
$$
\begin{aligned}
e_m(J)+D_m(J)
&\geqslant|U_{m-1}(C)|-n+1\\
&\geqslant\min\{L,2n-4\}-n+1\\
&=\min\{L-n+1,n-3\}\\
&=\min\{g,n-3\}.
\end{aligned}
$$
This proves (1).
Under the hypotheses of (2), \eqref{e1} gives
$$U_{m-1}(C)=\mathbb Z_L.$$

Under the hypotheses of (3), the remaining cases are
$m=2$ or $(n,m)=(6,3)$.
For every $z\in\mathbb Z_L$, we have
$$|C\cap(z-C)|\geqslant2|C|-L\geqslant3, \ \
|\{c\in\mathbb Z_L:2c=z\}|\leqslant2.$$
Thus,
$$
\begin{aligned}
\left|\bigl(C\cap(z-C)\bigr)
\setminus\{c\in\mathbb Z_L:2c=z\}\right|
&\geqslant|C\cap(z-C)|-2\\
&\geqslant1.
\end{aligned}
$$
Hence, $z\in2^{\wedge}C$ and $2^{\wedge}C=\mathbb Z_L$.
Since $m-1\in\{1,2\}$, we have
$$\mathbb Z_L=2^{\wedge}C
\subseteq U_{m-1}(C)\subseteq\mathbb Z_L.$$

Under either density hypothesis, we have
$$\pi_L(m^{\wedge}J)=U_{m-1}(C)=\mathbb Z_L.$$
Substituting $|U_{m-1}(C)|=L$ into the preceding inequalities, we have
$$
\begin{aligned}
e_m(J)&\leqslant e_{m-1}(B),\\
e_m(J)+D_m(J)
&\geqslant g+e_{m-1}(B)+D_{m-1}(B)\geqslant g.
\end{aligned}
$$
For $m=2$, since $l(B)\leqslant L-2$, we have
$$D_2(J)\geqslant g-e_2(J)\geqslant g-e_1(B)=L-l(B)-2\geqslant 0. $$
Together with $D_{n-2}(J)=D_2(J)$, this proves (3).

This completes the proof of Lemma \ref{lem:dense-strong-endpoint-deletion}.
\end{proof}

\begin{thm}\label{thm:dense-all-orders}
Let $A\subseteq[0, L]$ be a set of $k\geqslant8$ integers such that $0, L\in A$ and $3\leqslant h\leqslant k-3$.
If $L\leqslant2k-4$, then
$$|h^{\wedge}A|\geqslant L+h(k-h)-k+2.$$
\end{thm}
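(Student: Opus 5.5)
The plan is to reduce the statement to Lemma \ref{lem:dense-strong-endpoint-deletion}(2), via the identity relating $|h^{\wedge}A|$ to $e_h(A)+D_h(A)$ from the Preliminaries. No primitivity is assumed in the theorem, and none will be needed.

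First I rewrite the target inequality. For $X=A$, $n=k$ and $r=h$, the definitions give
$$|h^{\wedge}A|=h(k-h)+1+e_h(A)+D_h(A).$$
The claimed bound $|h^{\wedge}A|\geqslant L+h(k-h)-k+2$ is therefore equivalent to
$$e_h(A)+D_h(A)\geqslant L-k+1=g,$$
where $g=L-n+1$ is the quantity appearing in Lemma \ref{lem:dense-strong-endpoint-deletion} with $J=A$ and $n=k$.

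Next I check the hypotheses of Lemma \ref{lem:dense-strong-endpoint-deletion}(2):
\begin{itemize}
\item $J=A\subseteq[0,L]$ with $0,L\in A$;
\item $n=k\geqslant8\geqslant7$;
\item $L\leqslant 2k-4=2n-4$;
\item $m=h$ satisfies $3\leqslant m\leqslant n-3$.
\end{itemize}
Part (2) does not assume $J$ primitive. Its proof obtains $\langle C\rangle=\mathbb Z_L$ for $C=\{0\}\cup\pi_L(A\setminus\{0,L\})$ from density alone: a proper subgroup would have order at most $L/2\leqslant n-2<|C|$. Hence the lemma applies and gives
$$e_h(A)+D_h(A)\geqslant g+e_{h-1}(B)+D_{h-1}(B)\geqslant g,$$
where $B=A\setminus\{0,L\}$. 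Substituting this into the identity above yields the theorem.

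The only point needing care is that part (2) really covers the full range $3\leqslant h\leqslant k-3$ without extra assumptions. Its proof first reduces to $m\leqslant\lfloor n/2\rfloor$ by complementation, which is valid for arbitrary integer sets. It then invokes Theorem \ref{thm:r4-coarse-cyclic-all-orders} with $|C|=n-1\geqslant6$ and $2\leqslant m-1\leqslant\lfloor(|C|-1)/2\rfloor$. Both conditions hold here, since $k\geqslant8$ leaves room to spare. So I do not expect a genuine obstacle: the substance lies in Theorem \ref{thm:r4-coarse-cyclic-all-orders}, and the theorem is a direct corollary of it.
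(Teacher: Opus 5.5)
Your proposal is correct and is essentially the paper's proof. The paper likewise takes $e_h(A)+D_h(A)\geqslant L-k+1$ directly from Lemma \ref{lem:dense-strong-endpoint-deletion} (2) and substitutes it into $|h^{\wedge}A|=h(k-h)+1+e_h(A)+D_h(A)$; your remark that primitivity is not needed is also accurate, since part (2) derives $\langle C\rangle=\mathbb Z_L$ from the density condition $L\leqslant 2k-4$ alone.
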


\begin{proof}
By Lemma \ref{lem:dense-strong-endpoint-deletion} (2), we have
$$e_h(A)+D_h(A)\geqslant L-k+1,$$
thus,
$$|h^{\wedge}A|
=h(k-h)+1+e_h(A)+D_h(A)
\geqslant L+h(k-h)-k+2.$$

This completes the proof of Theorem \ref{thm:dense-all-orders}.
\end{proof}

\begin{lem}\label{lem:dense-cores}
Let $E\subseteq[0, l]$ be a set of $n$ integers such that $0, l\in E$ and $g=l-n+1$.
If $0\leqslant g\leqslant n-4$, then
$$[2g+1, 2l-2g-1]\subseteq2^{\wedge}E, \ \
[2g+3, 3l-2g-3]\subseteq3^{\wedge}E.$$
\end{lem}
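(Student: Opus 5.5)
The plan is to prove both inclusions by counting pairs of elements with a prescribed sum, with the missing points of $[0,l]$ as the only obstruction. Put $M=[0,l]\setminus E$, so $|M|=g$.

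\emph{Twofold inclusion.} Fix $z$ with $2g+1\leqslant z\leqslant l$. The unordered pairs $\{x,z-x\}$ with $0\leqslant x<z-x\leqslant z$ are pairwise disjoint, and there are $\lfloor (z+1)/2\rfloor\geqslant g+1$ of them. Each point of $M$ lies in at most one such pair. Hence at most $g$ pairs are destroyed, some pair lies in $E$, and $z\in 2^{\wedge}E$. The reflection $E\mapsto l-E$ preserves $n$ and $g$ and sends $z$ to $2l-z$. It therefore covers $l\leqslant z\leqslant 2l-2g-1$, which gives $[2g+1,2l-2g-1]\subseteq 2^{\wedge}E$. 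The hypothesis $g\leqslant n-4$ only guarantees that this interval is nonempty, since $l=n-1+g\geqslant 2g+3$.

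\emph{Threefold inclusion.} For $z$ in the target range I would choose $x\in E$ and show that $z-x\in 2^{\wedge}(E\setminus\{x\})$. The same disjoint-pair count applies, but one extra pair may be lost, namely the pair through $x$. So I need $g+2$ admissible pairs with sum $w=z-x$ inside $[0,l]$. This count holds for every $w\in[2g+3,2l-2g-3]$, so
$$x+[2g+3,\,2l-2g-3]\subseteq 3^{\wedge}E \quad\text{for every } x\in E.$$
Taking $x=0$ gives the lower end $[2g+3,2l-2g-3]$. Taking $x=l$ gives the upper end $[l+2g+3,3l-2g-3]$.

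\emph{Main obstacle.} The difficulty is filling the middle, that is, showing that the union of the translates $x+[2g+3,2l-2g-3]$ over $x\in E$ is the whole interval. Consecutive translates overlap or abut whenever the gap between consecutive elements of $E$ is at most the interval length plus one, namely $2l-4g-5$. The gaps of $E$ are at most $g+1$, but $2l-4g-5$ may be as small as $1$ when $g=n-4$. So the naive covering fails, and I expect this step to be the hardest.

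To repair it, I would first try a sharper, position-dependent count. For the window of $w$ and the chosen $x$, only those points of $M$ that actually hit pairs with sum $w$ can destroy a pair. Moreover, the pair through $x$ is lost only when $x$ lies in that window. I would then play this off against the gap structure. If $E$ has a gap of length $\gamma$ ending at $x$, then $\gamma-1$ of the $g$ missing points are concentrated there. These points destroy at most $\gamma-1$ pairs for sums $w$ whose pair-window meets the gap, so fewer missing points remain to destroy pairs for the shifted sums. This should enlarge the valid range of $w$ for the translate through $x$ by about $\gamma$, which is enough to bridge the gap.

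Concretely, I would take $x$ to be the largest element of $E$ with $x\leqslant z-(2g+3)$, or symmetrically the smallest element with $x\geqslant z-(2l-2g-3)$. I would then verify directly that the number of points of $M$ meeting pairs with sum $z-x$ is at most the number of available pairs minus $2$. That verification is where I expect to spend most of the effort.
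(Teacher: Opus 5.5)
Your twofold inclusion is correct and matches the paper's argument. So is your claim $x+[2g+3,2l-2g-3]\subseteq 3^{\wedge}E$ for every $x\in E$; with $x=0$ and $x=l$ it contains the paper's pieces $[2g+3,l]$ and $[2l,3l-2g-3]$.

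The gap is the middle of the threefold inclusion. The integers strictly between $2l-2g-3$ and $l+2g+3$ form a nonempty range whenever $l\leqslant 4g+4$, and you only cover them by a repair you do not carry out. The repair as stated is also not reliable. Take $E=[0,3]\cup[7,9]$, so $n=7$, $l=9$, $g=3$, and let $z=15$. Your first choice is $x=3$, giving $w=12$. The only pairs in $[0,9]$ with sum $12$ are $\{3,9\},\{4,8\},\{5,7\}$, and they are killed by $x$ and by the missing points $4,5$. Hence $12\notin 2^{\wedge}(E\setminus\{3\})$, and the inequality you planned to verify fails ($2$ missing points hit pairs, but only $3-2=1$ is allowed). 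The symmetric choice $x=7$ happens to work here, but nothing in the proposal shows that one of the two choices always does.

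The missing idea is to keep $x\in\{0,l\}$, use all of $2^{\wedge}E$ rather than its interval core, and argue modulo $l$.
\begin{itemize}
\item Since $0\equiv l$, the set $C=\pi_l(E)$ has $n-1$ elements. So $|C\cap(u-C)|\geqslant 2(n-1)-l=n-1-g\geqslant 3$ for every $u\in\mathbb Z_l$, while $2c=u$ has at most two solutions. Hence $\pi_l\bigl(2^{\wedge}(E\setminus\{l\})\bigr)=\mathbb Z_l$.
\item This is where $g\leqslant n-4$ does real work; for the twofold part it only ensures nonemptiness, as you say.
\item For $t\in[l+1,2l-1]$, choose distinct $e,e'\in E\setminus\{l\}$ with $e+e'\equiv t\pmod l$. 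Since $e+e'\in[1,2l-3]$, either $e+e'=t-l$, giving $t=l+e+e'\in3^{\wedge}E$, or $e+e'=t$. In the latter case $e,e'\neq0$, so $t=0+e+e'\in3^{\wedge}E$.
\end{itemize}
So $[l+1,2l-1]\subseteq3^{\wedge}E$, and together with your two translates this covers $[2g+3,3l-2g-3]$. The paper does exactly this, obtaining $\pi_l(2^{\wedge}E)=\mathbb Z_l$ from Lemma~\ref{lem:dense-strong-endpoint-deletion}~(3).
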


\begin{proof}
Since $g\leqslant n-4$, we have
$$l=n+g-1\geqslant2g+3.$$
Fix $r\in\{2,3\}$ and $2g+2r-3\leqslant u\leqslant l$.
The pairs
$$\{a,u-a\}, \ \
r-2\leqslant a\leqslant\left\lfloor\frac{u-1}{2}\right\rfloor,$$
are pairwise disjoint subsets of $[0, l]$.

Suppose that $u\notin r^{\wedge}E$. Then
$$|\{a,u-a\}\setminus E|\geqslant1.$$
Thus,
$$
\begin{aligned}
g
&=|[0,l]\setminus E|
\geqslant\left|
\bigcup_{a=r-2}^{\lfloor(u-1)/2\rfloor}
\bigl(\{a,u-a\}\setminus E\bigr)\right|\\
&=\sum_{a=r-2}^{\lfloor(u-1)/2\rfloor}
|\{a,u-a\}\setminus E|
\geqslant\left\lfloor\frac{u-1}{2}\right\rfloor-r+3
\geqslant g+1,
\end{aligned}
$$
a contradiction. Hence,
$$[2g+2r-3, l]\subseteq r^{\wedge}E.$$

Since $l-E$ satisfies the same hypotheses and
$r^{\wedge}(l-E)=rl-r^{\wedge}E$, we have
$$[(r-1)l, rl-2g-2r+3]\subseteq r^{\wedge}E.$$
Taking $r=2$ and $r=3$, respectively, we have
$$[2g+1, 2l-2g-1]\subseteq2^{\wedge}E,$$
$$[2g+3, l]\cup[2l, 3l-2g-3]\subseteq3^{\wedge}E.$$

Since $n\geqslant4$ and $l\leqslant2n-5$, by
Lemma \ref{lem:dense-strong-endpoint-deletion} (3), we have
$$\pi_l(2^{\wedge}E)=\mathbb Z_l.$$
Together with $2^{\wedge}E\subseteq[1, 2l-1]$, we have
$$[l+1, 2l-1]=(l+(2^{\wedge}E\cap[1, l-1]))\cup(2^{\wedge}E\cap[l+1, 2l-1]). $$
Since $E\subseteq[0, l]$ and $0, l\in E$, we have
$$
\begin{aligned}
l+(2^{\wedge}E\cap[1, l-1])
&\subseteq l+2^{\wedge}(E\setminus\{l\})
\subseteq3^{\wedge}E,\\
2^{\wedge}E\cap[l+1, 2l-1]
&\subseteq0+2^{\wedge}(E\setminus\{0\})
\subseteq3^{\wedge}E,
\end{aligned}
$$
thus,
$$[l+1, 2l-1]\subseteq3^{\wedge}E.$$
Hence,
$$[2g+3, 3l-2g-3]\subseteq3^{\wedge}E.$$

This completes the proof of Lemma \ref{lem:dense-cores}.
\end{proof}

\begin{lem}
\label{lem:dense-fourth-core}
Let $n\geqslant5$, $0\leqslant g\leqslant n-4$, $l=n+g-1$ and
$$E=\{a_0,a_1,\ldots,a_{n-1}\}\subseteq\mathbb Z, \ \
0=a_0<a_1<\cdots<a_{n-1}=l.$$
For $3\leqslant r\leqslant n-2$, put
$$Q_r=\left[
\sum_{i=0}^{r-3}a_i+2g+2r-3,
\sum_{i=n-r+2}^{n-1}a_i+2l-2g-2r+3
\right].$$
If $g\leqslant n-r-1$, then
$$Q_r\subseteq r^{\wedge}E, \ \
|Q_r|\geqslant r(n-r)+1-g.$$
\end{lem}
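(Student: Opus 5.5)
The plan is to induct on $r$, with the base case $r=3$ supplied by Lemma \ref{lem:dense-cores}. When $r=3$ we have $\sum_{i=0}^{0}a_i=0$ and $\sum_{i=n-1}^{n-1}a_i=l$, so
$$Q_3=[2g+3,\,3l-2g-3].$$
This lies in $3^{\wedge}E$ by Lemma \ref{lem:dense-cores}, whose hypotheses $g\leqslant n-4$ and $n\geqslant4$ hold. For $r\geqslant4$ I will deduce $Q_r\subseteq r^{\wedge}E$ from the case $r-1$ applied to two auxiliary sets, obtained by deleting an endpoint and renormalizing.

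\emph{Lower deletion.} Put $E^-=(E\setminus\{0\})-a_1\subseteq[0,l-a_1]$. This set has $n-1$ elements and contains both endpoints, and its gap parameter is
$$g^-=(l-a_1)-(n-1)+1=g-a_1+1\leqslant g.$$
So $g^-\leqslant (n-1)-(r-1)-1$, and the hypotheses of the lemma hold for $(E^-,r-1)$. Since $0+(r-1)^{\wedge}(E\setminus\{0\})\subseteq r^{\wedge}E$, the inductive hypothesis gives
$$(r-1)a_1+Q_{r-1}(E^-)\subseteq r^{\wedge}E.$$
The elements of $E^-$ are $a_{j+1}-a_1$. Tracking indices, the left endpoint of the shifted interval is
$$\sum_{i=1}^{r-3}a_i+2a_1+2g^-+2r-5=\sum_{i=0}^{r-3}a_i+2g+2r-3,$$
which is exactly $\min Q_r$. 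Its right endpoint is $\sum_{i=n-r+3}^{n-1}a_i+2a_1+2l-2g-2r+3$.

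\emph{Upper deletion.} Apply the same construction to the reflected set $l-E$, using $r^{\wedge}(l-E)=rl-r^{\wedge}E$. This yields an interval in $r^{\wedge}E$ whose right endpoint is exactly $\max Q_r$ and whose left endpoint is $\sum_{i=0}^{r-3}a_i+2(l-a_{n-2})+2g+2r-3$ after reflecting back.

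The step I expect to be the main obstacle is showing that these two intervals overlap or abut, so that their union is all of $Q_r$. Concretely, I need
$$\sum_{i=0}^{r-3}a_i+2(l-a_{n-2})+2g+2r-3\leqslant\sum_{i=n-r+3}^{n-1}a_i+2a_1+2l-2g-2r+4.$$
I would compare the sums termwise, using $a_{n-1-j}-a_j\geqslant n-1-2j$ for the paired indices. The gaps $a_1$ and $l-a_{n-2}$ are each at most $g+1$, and this slack will have to absorb the leftover terms. This is presumably where the hypothesis $g\leqslant n-r-1$ enters. If the crude estimate falls short, I would split into the cases $a_1=1$ and $a_1\geqslant2$ (and symmetrically for $l-a_{n-2}$). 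When $a_1\geqslant 2$ the deleted set has strictly smaller $g^-$, and the extra room from that should close the gap.

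\emph{Size bound.} Write $|Q_r|=\max Q_r-\min Q_r+1$. Pair the top $r-2$ elements with the bottom $r-2$ elements. For $j=0$ we have $a_{n-1}-a_0=l=n-1+g$. For $1\leqslant j\leqslant r-3$ we have $a_{n-1-j}-a_j\geqslant n-1-2j$. Substituting these and $l=n+g-1$, then simplifying, should give
$$|Q_r|\geqslant (r-2)(n-r+2)+g+2l-4g-4r+7=r(n-r)+1-g.$$
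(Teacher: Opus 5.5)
\paragraph{Review of the proposal.} Your induction on $r$ is a workable route: remove $\min E$ or $\max E$, put it into the sum, apply the case $r-1$, and use Lemma~\ref{lem:dense-cores} as the base. Several pieces are correct: the base case, the lower-deletion interval $[\min Q_r,\ \max Q_r-a_{n-r+2}+2a_1]$, and the size bound, which is the paper's argument verbatim.

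The gap is the overlap step. It is the only substantive part of the proof, you leave it open, and the inequality you set up for it rests on a wrong endpoint. Deleting $l$ gives $l+Q_{r-1}(E\setminus\{l\})\subseteq r^{\wedge}E$. Here $E\setminus\{l\}\subseteq[0,a_{n-2}]$ has gap parameter $g+1-(l-a_{n-2})$, so the left endpoint of this interval is
\[
\sum_{i=0}^{r-4}a_i+2a_{n-2}-l+2g+2r-3=\min Q_r+(l-a_{r-3})-2(l-a_{n-2}),
\]
not $\min Q_r+2(l-a_{n-2})$. For example, take $E=\{0,1,4,5,6,7,8\}$ and $r=4$, so $n=7$ and $g=2$. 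The two intervals are $[10,17]$ and $[15,22]$, whereas your formula puts the start of the second at $12$. Since the true endpoint can exceed yours, proving your displayed inequality would not establish the covering.

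\paragraph{The repair.} What is actually needed is that the right end of the lower interval plus one is at least the left end of the upper interval. This is equivalent to
\[
\sum_{j=0}^{r-4}(a_{n-1-j}-a_j)+3l-2(a_{n-2}-a_1)-4g-4r+7\geqslant0 .
\]
It holds without any case split. Use $a_{n-1}-a_0=l$, $a_{n-1-j}-a_j\geqslant n-1-2j$ for $1\leqslant j\leqslant r-4$, and $a_{n-2}-a_1\leqslant l-2$. These bound the left side below by $2n-2g-4r+9+(r-4)(n-r+2)$. Then $g\leqslant n-r-1$ and $n-r+2\geqslant4$ make this at least $2r-5>0$. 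The end gaps $a_1$ and $l-a_{n-2}$ enter with a favourable sign, so the split on $a_1$ you anticipated is unnecessary. The extremal case $a_1=1$, $a_{n-2}=l-1$ is attained by the example above, where the two intervals share exactly $3=2r-5$ points. With this repair your proof is complete.

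\paragraph{Comparison with the paper.} The paper does not induct. It takes the $r-2$ windows $E_i=\{a_i,\ldots,a_{i+n-r+2}\}$, applies Lemma~\ref{lem:dense-cores} to each, shifts by the sum of the $r-3$ omitted elements, and checks that consecutive shifted intervals meet. Unwinding your induction produces exactly these shifted window cores, so both proofs cover $Q_r$ by the same intervals. The only difference is where the overlap is verified: between adjacent windows in the paper, or between the two halves at each level of your recursion.
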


\begin{proof}
For $1\leqslant k\leqslant n-1$, by
$a_{n-k+i-1}-a_i\geqslant n-k-1$, we have
$$l(k^{\wedge}E)
=l+\sum_{i=1}^{k-1}(a_{n-k+i-1}-a_i)
\geqslant k(n-k)+g.$$
Write $Q_r=[u,v]$. Taking $k=r-2$, we have
$$v-u+1=l((r-2)^{\wedge}E)+2l-4g-4r+7
\geqslant r(n-r)+1-g>0,$$
thus,
$$|Q_r|\geqslant r(n-r)+1-g.$$

Put $N=n-r+3$.
For $0\leqslant i\leqslant r-3$, put
$$E_i=\{a_i,\ldots,a_{i+N-1}\}, \ \
s_i=\sum_{a\in E\setminus E_i}a.$$
Then
$$|E_i|=N\geqslant5, \ \ |E\setminus E_i|=r-3$$
and
$$0\leqslant l(E_i)-N+1\leqslant g
\leqslant n-r-1=N-4.$$
Put
$$[u_i,v_i]
=s_i+[a_i+2a_{i+N-1}-2N+5,\,
2a_i+a_{i+N-1}+2N-5].$$
By Lemma \ref{lem:dense-cores} applied to $E_i-a_i$, we have
$$[u_i,v_i]\subseteq s_i+3^{\wedge}E_i
\subseteq r^{\wedge}E.$$

For $0\leqslant i<r-3$, since
$s_{i+1}-s_i=a_i-a_{i+N}$ and
$$
\begin{aligned}
a_{i+N-1}-a_{i+1}
&\leqslant N-2+g,\\
(a_{i+1}-a_i)+(a_{i+N}-a_{i+N-1})
&\leqslant g+2,
\end{aligned}
$$
we have
$$
\begin{aligned}
v_{i+1}-u_i+1
&=4N-9-2(a_{i+N-1}-a_{i+1})\\
&\geqslant2N-5-2g\geqslant3,\\
v_i-u_{i+1}+1
&=4N-9-(a_{i+1}-a_i)-(a_{i+N}-a_{i+N-1})\\
&\geqslant4N-11-g>0.
\end{aligned}
$$
Thus,
$$[u_i,v_i]\cap[u_{i+1},v_{i+1}]\neq\emptyset.$$
Since
$$
\begin{aligned}
u_{r-3}
&=\sum_{j=0}^{r-3}a_j+2g+2r-3=u,\\
v_0
&=\sum_{j=n-r+2}^{n-1}a_j+2l-2g-2r+3=v,
\end{aligned}
$$
we have
$$Q_r=[u,v]\subseteq\bigcup_{i=0}^{r-3}[u_i,v_i]
\subseteq r^{\wedge}E.$$

This completes the proof of Lemma \ref{lem:dense-fourth-core}.
\end{proof}

\begin{cor}\label{cor:two-fringe-hole-bound}
Let $E$ be a set of $n\geqslant7$ integers and put $g=l(E)-n+1$.
If $0\leqslant g<\frac n2$ and $3\leqslant r\leqslant\frac n2$, then
$$e_r(E)\leqslant2g.$$
\end{cor}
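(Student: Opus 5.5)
Throughout, write $E=\{a_0<a_1<\cdots<a_{n-1}\}$. Since $e_r$ is invariant under translation, assume $a_0=0$ and put $l=a_{n-1}=n+g-1$. The plan is to sandwich $r^{\wedge}E$ between its extreme sums and the long interval $Q_r$ of Lemma \ref{lem:dense-fourth-core}, and to count the holes of $r^{\wedge}E$ only in the two short fringes left over.

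\textbf{Step 1: the hypotheses of Lemma \ref{lem:dense-fourth-core} hold.} We have $n\geqslant7$, $g\geqslant0$ and $3\leqslant r\leqslant n/2\leqslant n-2$. The condition $g\leqslant n-4$ follows from $g<n/2$: for $n=7$ it gives $g\leqslant3$, and for $n\geqslant8$ we have $n/2\leqslant n-4$. The condition $g\leqslant n-r-1$ follows from integrality:
\begin{itemize}
\item if $n$ is odd, then $g\leqslant\frac{n-1}{2}$ and $r\leqslant\frac{n-1}{2}$;
\item if $n$ is even, then $g\leqslant\frac n2-1$ and $r\leqslant\frac n2$.
\end{itemize}
In both cases $g\leqslant n-r-1$. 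Hence $Q_r=[u,v]\subseteq r^{\wedge}E$.

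\textbf{Step 2: reduction to the fringes.} We have $\min r^{\wedge}E=\sum_{i=0}^{r-1}a_i$ and $\max r^{\wedge}E=\sum_{i=n-r}^{n-1}a_i$. Every element of $[\min r^{\wedge}E,\max r^{\wedge}E]\setminus r^{\wedge}E$ lies in one of two fringes:
\begin{itemize}
\item the left fringe $[\min r^{\wedge}E,u-1]$, of length $2g+2r-3-a_{r-2}-a_{r-1}$;
\item the right fringe $[v+1,\max r^{\wedge}E]$, of symmetric length.
\end{itemize}
Thus $e_r(E)$ equals the number of holes of $r^{\wedge}E$ in the left fringe plus the number in the right fringe.

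\textbf{Step 3: the left fringe.} Write $s=\sum_{i=0}^{r-3}a_i$ and $E'=E\setminus\{a_0,\dots,a_{r-3}\}$. An element $s+x$ of the left fringe lies in $r^{\wedge}E$ whenever $x\in2^{\wedge}E'$. I would then repeat the disjoint-pairs argument from the proof of Lemma \ref{lem:dense-cores}. If $x\notin2^{\wedge}E'$, then each of the pairwise disjoint pairs $\{a,x-a\}$ with $r-2\leqslant a<x/2$ contains a hole of $E$. All these pairs lie below $x\leqslant 2g+2r-4$.

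Let $g_1$ be the number of holes of $E$ in the initial segment $[0,g+r-1]$. The aim is
$$\#\{\text{holes of } r^{\wedge}E \text{ in the left fringe}\}\leqslant 2g_1.$$
The idea is that each hole of $E$ at position $t$ can only obstruct sums $x$ in a window of size about $2$ beyond the trivially forced ones. Equivalently, compare the fringe with $2^{\wedge}$ of the interval $[r-2,\,g+r-1]\cap E$ and use a pair-counting bound of the form $e_2\leqslant 2\cdot\#\text{holes}$.

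\textbf{Step 4: the right fringe and the total.} Applying Step 3 to $l-E$ shows that the right fringe has at most $2g_2$ holes, where $g_2$ is the number of holes of $E$ in $[l-g-r+1,l]$. The two windows $[0,g+r-1]$ and $[l-g-r+1,l]$ are disjoint, since $2(g+r)\leqslant l=n+g-1$ is equivalent to $g+2r\leqslant n-1$. Granted this, $g_1+g_2\leqslant g$ and so $e_r(E)\leqslant2g$.

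\textbf{Main obstacle.} Two points need care.

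First, the window disjointness is not automatic. The inequality $g+2r\leqslant n-1$ can fail when $r$ is near $n/2$ and $g$ is near $n/2$. In that case I would shrink the windows to the actual fringe extents, which are governed by $2g+2r-3-a_{r-2}-a_{r-1}\leqslant 2g-g'$ where $g'$ counts holes below $a_{r-1}$. Alternatively, when the fringes would overlap, bound $e_r(E)$ directly by the two fringe lengths: each length is at most $2g$ minus twice the number of holes of $E$ already below $a_{r-1}$.

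Second, the sharp per-hole accounting in Step 3 is where the constant $2$ must be obtained. This is the step I expect to be the main obstacle.
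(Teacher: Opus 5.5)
Your Steps 1 and 2 match the paper. The integrality check gives $g\leqslant n-r-1$ and $g\leqslant n-4$, so $Q_r\subseteq r^{\wedge}E$ by Lemma \ref{lem:dense-fourth-core}, and every hole of $r^{\wedge}E$ lies in one of the two fringes.

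The gap is in Steps 3--4. The bound of $2g_1$ holes on the left fringe is only heuristic, as you acknowledge. Even granting it, adding the two sides requires the windows $[0,g+r-1]$ and $[l-g-r+1,l]=[n-r,l]$ to be disjoint, i.e.\ $g+2r\leqslant n$. This fails, for example, when $n$ is even, $r=n/2$ and $g\geqslant1$. That is exactly the regime in which the corollary is used in the proof of Proposition \ref{pro:positive-slack-span-transition}. Your fallback does not repair this. If $a_{r-2}=r-2$, $a_{r-1}=r-1$, $a_{n-r}=l-r+1$ and $a_{n-r+1}=l-r+2$, then both fringes have length exactly $2g$, so bounding by fringe lengths gives only $4g$.

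The missing idea is to fix the $r-1$ smallest elements rather than $r-2$, and not to split the holes of $E$ between the two sides at all.

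\emph{Left side.} Put $H=[0,l]\setminus E$ and $s=\sum_{i=0}^{r-2}a_i$. Then $s+(E\cap[a_{r-1},l])\subseteq r^{\wedge}E$. The interval $s+[a_{r-1},l]$ starts at $\min r^{\wedge}E$ and reaches $Q_r$, because $\min Q_r-s=2g+2r-3-a_{r-2}\leqslant 2g+r-1\leqslant l$. Here you use $a_{r-2}\geqslant r-2$ and $g+r\leqslant n$. So every hole of $r^{\wedge}E$ to the left of $Q_r$ lies in $s+H$.

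\emph{Right side.} Put $t=\sum_{i=n-r+1}^{n-1}a_i$. Then $\max Q_r-t=2l-2g-2r+3-a_{n-r+1}\geqslant l-2g-r+1\geqslant 0$. So every hole to the right of $Q_r$ lies in $t+H$.

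Hence $e_r(E)\leqslant|s+H|+|t+H|=2g$. The constant $2$ comes from two translates of the whole hole set $H$; a single hole of $E$ may be charged on both sides. It does not come from a per-hole factor on one-sided windows. This is why neither the sharp accounting of your Step 3 nor the window disjointness of Step 4 is needed.
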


\begin{proof}
By translation, we may assume that
$$E=\{a_0,\ldots,a_{n-1}\}, \ \
0=a_0<\cdots<a_{n-1}=l.$$
Put $H=[0,l]\setminus E$. Then $|H|=g$.
Since $g<\frac n2\leqslant n-r$ and $n\geqslant7$, we have
$$g\leqslant n-r-1, \ \
g\leqslant\left\lfloor\frac{n-1}{2}\right\rfloor
\leqslant n-4.$$
By Lemma \ref{lem:dense-fourth-core}, we have
$$\emptyset\neq Q_r\subseteq r^{\wedge}E.$$

Put
$$s=\sum_{i=0}^{r-2}a_i, \ \
t=\sum_{i=n-r+1}^{n-1}a_i.$$
Then
$$\min r^{\wedge}E=s+a_{r-1}, \ \
\max r^{\wedge}E=t+a_{n-r}.$$
Since $a_{r-2}\geqslant r-2$ and $a_{n-r+1}\leqslant l-r+2$, we have
$$
\begin{aligned}
\min Q_r-s
&=2g+2r-3-a_{r-2}
\leqslant2g+r-1\leqslant l,\\
\max Q_r-t
&=2l-2g-2r+3-a_{n-r+1}
\geqslant l-2g-r+1\geqslant0.
\end{aligned}
$$
Thus,
$$[\min r^{\wedge}E,\max r^{\wedge}E]
\subseteq\bigl(s+[a_{r-1},l]\bigr)
\cup Q_r\cup\bigl(t+[0,a_{n-r}]\bigr).$$
Together with $Q_r\subseteq r^{\wedge}E$ and
$$\bigl(s+(E\cap[a_{r-1},l])\bigr)
\cup\bigl(t+(E\cap[0,a_{n-r}])\bigr)
\subseteq r^{\wedge}E,$$
we have
$$
\begin{aligned}
&[\min r^{\wedge}E,\max r^{\wedge}E]\setminus r^{\wedge}E\\
&\subseteq
\bigl(s+([a_{r-1},l]\setminus E)\bigr)
\cup\bigl(t+([0,a_{n-r}]\setminus E)\bigr)\\
&\subseteq(s+H)\cup(t+H),
\end{aligned}
$$
and hence
$$e_r(E)\leqslant|s+H|+|t+H|=2g.$$

This completes the proof of Corollary \ref{cor:two-fringe-hole-bound}.
\end{proof}

\begin{lem}\label{lem:dense-low-adjacent-projection}
Let $m\geqslant 2$, $n\geqslant\max\{5, 2m\}$ and $L\geqslant 2$ be integers.
Let
$$I=\{x_0, \ldots, x_{n-1}\}\subseteq [1, L-1], \ \ x_0<\cdots<x_{n-1}.$$
Put $g=l(I)-n+1$.
If $g\leqslant n-m-2$, then
$$\left|\pi_L((m-1)^{\wedge}I\cup m^{\wedge}I\cup(m+1)^{\wedge}I)\right|\geqslant \min\{L, (m+1)(n-m+1)-g\}. $$
\end{lem}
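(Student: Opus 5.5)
Normalize by translating: replace $I$ by $E=I-x_0=\{a_0,\ldots,a_{n-1}\}$ with $0=a_0<\cdots<a_{n-1}=l$, where $l=l(I)=n+g-1$. Translation by $x_0$ shifts each $j^{\wedge}I$ by $jx_0$, so the three layers $(m-1)^{\wedge}I$, $m^{\wedge}I$, $(m+1)^{\wedge}I$ are translated by different amounts. We therefore cannot translate the union wholesale. We keep the three layers separate and only use translation inside each layer. The target count $(m+1)(n-m+1)-g$ matches the interval bound of Lemma \ref{lem:dense-fourth-core} with $r=m+1$:
$$|Q_{m+1}|\geqslant (m+1)(n-m-1)+1-g.$$
This falls short of the target by $2(m+1)-1=2m+1$. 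The plan is to start from a long interval in $(m+1)^{\wedge}I$ and then recover the missing $2m+1$ residues from the other two layers, or else show that the residues already wrap around.

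First, the main interval. Since $g\leqslant n-m-2=n-(m+1)-1$ and $3\leqslant m+1\leqslant n-2$ (this uses $n\geqslant 2m$ and $n\geqslant5$; the case $m=2$, $r=3$ is included), Lemma \ref{lem:dense-fourth-core} applied to $E$ gives an interval $x_0(m+1)+Q_{m+1}\subseteq(m+1)^{\wedge}I$. If $|Q_{m+1}|\geqslant L$, then its image under $\pi_L$ is all of $\mathbb Z_L$ and we are done. So assume $|Q_{m+1}|<L$. Then $\pi_L$ is injective on this interval, and its image is a cyclic arc $\mathcal A$ of length $|Q_{m+1}|$.

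Second, extend the arc. I would show that the whole integer interval $[\min(m+1)^{\wedge}I,\max(m+1)^{\wedge}I]$ is covered, up to fringe holes, by three pieces: the interval $Q_{m+1}$, the set $s+(E\cap[a_m,l])$, and the set $t+(E\cap[0,a_{n-m-1}])$, exactly as in Corollary \ref{cor:two-fringe-hole-bound}. This bounds the number of missing points of $(m+1)^{\wedge}I$ by $2g$. The bound is not directly sufficient, so I would instead add contributions from the neighbouring layers.

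The element $\max (m+1)^{\wedge}I - a_{n-m-1}+ \cdots$ corresponds to dropping the smallest summand in an $(m+1)$-sum, which lands in $m^{\wedge}I$ in a translated position. Concretely, $m^{\wedge}I$ contains the sums of the top $m$ elements minus small shifts. Similarly $(m-1)^{\wedge}I$ contains the top $(m-1)$-sums. Modulo $L$, these sit next to the ends of the arc $\mathcal A$. Using $x_i\leqslant x_0+i+g$, I would exhibit at least $m+1$ elements of $m^{\wedge}I$ and $m$ elements of $(m-1)^{\wedge}I$, together with the fringe points of $(m+1)^{\wedge}I$ outside $Q_{m+1}$ (namely the $s+E$ and $t+E$ parts, about $2(n-m)-\dots$ points), whose residues lie outside $\mathcal A$ and are pairwise distinct. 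If instead some of them hit $\mathcal A$ or each other, then the arc together with those points wraps around and covers all of $\mathbb Z_L$, giving the bound $L$.

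The main obstacle is this residue-disjointness bookkeeping. The three layers are offset by $x_0$ modulo $L$, and $x_0$ is arbitrary, so the extra points can collide modulo $L$ in uncontrolled ways. What I would try first is to avoid using the other layers at all. The fringe parts $s+(E\cap[a_m,l])$ and $t+(E\cap[0,a_{n-m-1}])$ of $(m+1)^{\wedge}I$ already supply at least
$$|E\cap(a_m,l]|-\text{(overlap)}+\ldots\;\geqslant\;(n-m-1)-g'$$
integers beyond each end of $Q_{m+1}$. Since all of these lie in one contiguous integer range of length $l((m+1)^{\wedge}I)+1<$ roughly $L$ or else wrap, the count $l((m+1)^{\wedge}I)+1-e_{m+1}(I)\geqslant(m+1)(n-m-1)+g+1-2g$ follows. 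If that computation falls short by $2m$ or so, I would then add one shifted copy: $m^{\wedge}I+x_{n-1}$, or equivalently the residues of the $m$-layer adjacent to the top end. I would handle collisions by noting that a collision forces the union to contain a full period, hence to cover $\mathbb Z_L$.
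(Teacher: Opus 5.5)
\textbf{There is a genuine gap: the proposal never produces the missing $2m+1$ residues.} You correctly locate the deficit. The interval of Lemma \ref{lem:dense-fourth-core} with $r=m+1$ has at least $(m+1)(n-m-1)+1-g$ elements, which is exactly $2m+1$ short of the target. But the mechanisms you suggest for closing it do not work.

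\emph{Staying inside $(m+1)^{\wedge}I$ cannot succeed.} Take $I=[1,n]$ and $L>(m+1)(n-m+1)$. Then $(m+1)^{\wedge}I$ is an interval with only $(m+1)(n-m-1)+1$ elements. The union of the three layers is $\bigl[\binom{m}{2},\frac{(m+1)(2n-m)}{2}\bigr]$, of size exactly $(m+1)(n-m+1)$. The extra points are the elements of the lower layers lying below $\min(m+1)^{\wedge}I$. Your fringe count via Corollary \ref{cor:two-fringe-hole-bound} only reproduces $(m+1)(n-m-1)+1-g$, and it needs $m+1\leqslant n/2$ anyway.

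\emph{Adding $m^{\wedge}I+x_{n-1}$ gains nothing.} The set $x_{n-1}+m^{\wedge}(I\setminus\{x_{n-1}\})$ already lies in $(m+1)^{\wedge}I$. A shift by $x_{n-1}\not\equiv0\pmod L$ changes residues, so it is not ``equivalent'' to using the $m$-layer.

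\emph{The collision principle is false here.} ``A collision forces the union to contain a full period'' holds only when the union is a single integer interval. In this setting consecutive layers are offset by roughly $x_0$, which is arbitrary. For example, take $I=[x_0,x_0+n-1]$ with $x_0\geqslant n$ large and $L=2x_0$. Then $(m-1)^{\wedge}I$ and $(m+1)^{\wedge}I$ overlap heavily modulo $L$. Yet the three layers together have a number of residues bounded independently of $x_0$, so their union is far from $\mathbb Z_L$.

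\textbf{The missing idea is the explicit positional comparison the paper carries out.}
\begin{itemize}
\item For $r\in\{m-1,m,m+1\}$ with $r\geqslant2$, Lemmas \ref{lem:dense-cores} and \ref{lem:dense-fourth-core} applied to $I-x_0$ give explicit intervals $J_r\subseteq r^{\wedge}I$ of length at least $2m+1$.
\item Their end offsets are $a=\min J_{m+1}-\min J_m=x_{m-2}+2$ and $b=\max J_{m+1}-\max J_m=x_{n-m+1}-2$.
\item The right object is the translate $K+L$ of an interval $K$ from the lower layers with $\max K=\max J_m$. It ends $L-b$ beyond $\max J_{m+1}$ and starts $\min J_{m+1}-\min K$ before $\min J_{m+1}+L$.
\item Hence $K+L$ contributes at least $\min\{L-|J_{m+1}|,\ \min J_{m+1}-\min K,\ L-b,\ |K|\}$ residues outside $\pi_L(J_{m+1})$.
\end{itemize}
Forcing the last three terms to be at least $2m+1$ needs four ingredients:
\begin{itemize}
\item the reflection $I\mapsto L-I$, so that one may assume $a\leqslant L-b$;
\item the enlargement $K=J_{m-1}\cup J_m$ when $a<2m+1$ and $m\geqslant3$;
\item when $m=2$, the use of $1^{\wedge}I=I$ together with Lemma \ref{lem:dense-cores} on $\{0\}\cup I$;
\item when $L-b<2m+1$ as well, the observation that $L$ is then so small that $K$ alone, or Lemma \ref{lem:dense-strong-endpoint-deletion} (3), already covers $\mathbb Z_L$.
\end{itemize}
You identify exactly this bookkeeping as your unresolved ``main obstacle''. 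As written, the proposal is a plan rather than a proof.
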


\begin{proof}
For $r\in\{m-1, m, m+1\}$ with $r\geqslant2$, put
$$J_r=\left[\sum_{i=0}^{r-3}x_i+2x_0+2g+2r-3, \sum_{i=n-r+2}^{n-1}x_i+2x_{n-1}-2g-2r+3\right],$$
where an empty sum equals zero.
Since $m+1\leqslant n-2$ and
$$g\leqslant n-m-2\leqslant n-r-1, \ \ g\leqslant n-4,$$
by Lemmas \ref{lem:dense-cores} and \ref{lem:dense-fourth-core},
applied to $I-x_0$, we have
$$J_r\subseteq r^{\wedge}I$$
and
$$|J_r|\geqslant r(n-r)+1-g, \ \ r\geqslant 3.$$
If $r=2$, then
$$|J_2|=2n-2g-3\geqslant 2m+1.$$
If $r\geqslant3$, then $r\leqslant m+1\leqslant n-2$, thus,
$$r(n-r)=2(n-2)+(r-2)(n-r-2)\geqslant 2(n-2).$$
Since $0\leqslant g\leqslant n-m-2$, we have
$$|J_r|\geqslant r(n-r)+1-g\geqslant 2n-g-3\geqslant 2n-2g-3\geqslant 2m+1.$$
Hence,
$$|J_r|\geqslant2m+1.$$

Put
$$S=(m-1)^{\wedge}I\cup m^{\wedge}I\cup(m+1)^{\wedge}I,$$
$$a=\min J_{m+1}-\min J_m=x_{m-2}+2,\ \ b=\max J_{m+1}-\max J_m=x_{n-m+1}-2.$$
Put $I'=L-I$. Then
$$I'\subseteq[1,L-1],\ \ |I'|=n,\ \ l(I')=l(I).$$
By the definition of $J_r$, we have
$$J_m(I')=mL-J_m,\ \  J_{m+1}(I')=(m+1)L-J_{m+1},$$
thus,
$$
\begin{aligned}
\min J_{m+1}(I')-\min J_m(I')&=L-b,\\
L-\max J_{m+1}(I')+\max J_m(I')&=a.
\end{aligned}
$$
Moreover,
$$\pi_L\bigl((m-1)^{\wedge}I'\cup m^{\wedge}I'
\cup(m+1)^{\wedge}I'\bigr)=-\pi_L(S).$$
Thus, by reflection, we may assume that $a\leqslant L-b$.

If $a\geqslant2m+1$, put $K=J_m$.
If $a<2m+1$ and $m\geqslant3$, then
$$
\begin{aligned}
m&\leqslant\min J_m-\min J_{m-1}
=x_{m-3}+2\leqslant a-1<|J_{m-1}|,\\
\max J_m-\max J_{m-1}
&=x_{n-m+2}-2\geqslant b+1>0.
\end{aligned}
$$
Thus,
$$K=J_{m-1}\cup J_m=[\min J_{m-1},\max J_m]\subseteq S.$$
Moreover,
$$\min J_{m+1}-\min K=a+x_{m-3}+2\geqslant2m+1.$$
If also $L-b<2m+1$, then
$$
\begin{aligned}
|K|&=|J_{m-1}|+x_{n-m+2}-2\\
&\geqslant2m+1+b+1>L,
\end{aligned}
$$
and $\pi_L(S)=\mathbb Z_L$.

If $a<2m+1$ and $m=2$, then $x_0\in\{1,2\}$.
Put $F=\{0\}\cup I$. Since $|F|=n+1$ and
$$0\leqslant l(F)-|F|+1=g+x_0-1\leqslant n-3,$$
by Lemma \ref{lem:dense-cores}, we have
$$[2x_0+2g-1,2n-1]\subseteq2^{\wedge}F=I\cup2^{\wedge}I.$$
Since
$$2x_0+2g+1\leqslant2n-1\leqslant2x_0+2n-3,$$
we have
$$K=[\min J_2-2,\max J_2]\subseteq S,\ \
\min J_3-\min K=a+2\geqslant5.$$
If also $L-b<5$, then $L-x_{n-1}\leqslant2$, and
$$L=l(I)+x_0+L-x_{n-1}\leqslant n+g+3\leqslant2n-1.$$
By Lemmas \ref{lem:dense-strong-endpoint-deletion} (3) and
\ref{lem:endpoint-lifting}, applied to $\{0,L\}\cup I$, we have
$$\pi_L(S)=\pi_L(3^{\wedge}(\{0,L\}\cup I))=\mathbb Z_L.$$

In the remaining cases,
$$K\subseteq S,\ \  \max K=\max J_m,\ \  |K|\geqslant 2m+1,$$
$$\min J_{m+1}-\min K\geqslant 2m+1,\ \  L-b\geqslant 2m+1.$$
If $|J_{m+1}|\geqslant L$, then $\pi_L(S)=\mathbb Z_L$.
Otherwise, put $R=K-\min J_{m+1}+L$. Then
$$
\begin{aligned}
|R\cap[|J_{m+1}|,L-1]|
&=\min\{\max R,L-1\}-\max\{\min R,|J_{m+1}|\}+1\\
&=\min\{L-|J_{m+1}|,\min J_{m+1}-\min K,L-b,|K|\}\\
&\geqslant\min\{L-|J_{m+1}|,2m+1\}>0.
\end{aligned}
$$
Since $J_{m+1}-\min J_{m+1}=[0,|J_{m+1}|-1]$ and
$\pi_L(R)\subseteq\pi_L(S-\min J_{m+1})$, we have
$$
\begin{aligned}
|\pi_L(S)|
&=|\pi_L(S-\min J_{m+1})|\\
&\geqslant\left|\pi_L\bigl([0,|J_{m+1}|-1]
\cup(R\cap[|J_{m+1}|,L-1])\bigr)\right|\\
&=|J_{m+1}|+|R\cap[|J_{m+1}|,L-1]|\\
&\geqslant|J_{m+1}|+\min\{L-|J_{m+1}|,2m+1\}\\
&=\min\{L,|J_{m+1}|+2m+1\}\\
&\geqslant\min\{L,(m+1)(n-m+1)-g\}.
\end{aligned}
$$

This completes the proof of Lemma \ref{lem:dense-low-adjacent-projection}.
\end{proof}

\begin{thm}\label{thm:r3-uniform-dense-small-excess}
Let $m\geqslant 3$, $n\geqslant\max\{7, 2m\}$ and $L\geqslant 2$ be integers.
Let $I\subseteq[1, L-1]$ be a primitive set of $n$ integers and put $g=l(I)-n+1$.
If $e_m(I)+D_m(I)<\frac{n}{2}$, then
$$0\leqslant g\leqslant e_m(I)+D_m(I)$$
and
$$\left|\pi_L\bigl((m-1)^{\wedge}I\cup m^{\wedge}I\cup(m+1)^{\wedge}I\bigr)\right|\geqslant\min\{L, (m+1)(n-m+1)-g\}, $$
$$
\begin{aligned}
&\left|\pi_L\bigl((m-1)^{\wedge}I\cup m^{\wedge}I\cup(m+1)^{\wedge}I\bigr)\right|
+e_m(I)+D_m(I)\\
&\geqslant\min\{L, (m+1)(n-m+1)\}.
\end{aligned}
$$
\end{thm}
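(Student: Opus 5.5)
The plan is to obtain the inequality $g\leqslant e_m(I)+D_m(I)$ from the endpoint-deletion machinery of Lemma \ref{lem:dense-strong-endpoint-deletion}. The projection bounds should then follow from Lemma \ref{lem:dense-low-adjacent-projection}, except for a small boundary range of $g$ that needs a separate argument.

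\emph{Step 1: $0\leqslant g\leqslant e_m(I)+D_m(I)$.} The inequality $g\geqslant0$ is trivial, since $l(I)\geqslant|I|-1$. Translate $I$ so that its minimum is $0$, and put $J=I-\min I\subseteq[0,l(I)]$. Then $J$ is primitive, $|J|=n\geqslant7$, and $0,l(I)\in J$. The quantities $e_m$ and $D_m$ are translation invariant. Since $m\geqslant3$ and $n\geqslant 2m$, we have $3\leqslant m\leqslant n-3$. Lemma \ref{lem:dense-strong-endpoint-deletion} (1), applied to $J$ with diameter $l(I)$, gives
$$e_m(I)+D_m(I)\geqslant\min\{g,n-3\}.$$
Because $e_m(I)+D_m(I)<\frac n2\leqslant n-3$ for $n\geqslant7$, the minimum cannot be $n-3$. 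Hence $g\leqslant e_m(I)+D_m(I)<\frac n2$. In particular $g\leqslant\lfloor\frac{n-1}{2}\rfloor\leqslant n-4$.

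\emph{Step 2: the projection bound when $g\leqslant n-m-2$.} In this range the hypotheses of Lemma \ref{lem:dense-low-adjacent-projection} hold: $m\geqslant2$, $n\geqslant\max\{5,2m\}$ and $I\subseteq[1,L-1]$. That lemma gives the first displayed inequality directly. The second inequality follows by adding $e_m(I)+D_m(I)\geqslant g$ from Step 1, using
$$\min\{L,X-g\}+g\geqslant\min\{L,X\}.$$

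\emph{Step 3: the boundary range $n-m-1\leqslant g<\frac n2$.} Since $g\leqslant\lfloor\frac{n-1}2\rfloor$, this range is nonempty only when $n\in\{2m,2m+1\}$, and then $g=n-m-1$. I expect this to be the main obstacle, because Lemma \ref{lem:dense-fourth-core} is no longer available at level $r=m+1$. However, it still applies at levels $m-1$ and $m$, since $g\leqslant n-m-1$. So the cores $J_{m-1},J_m\subseteq(m-1)^{\wedge}I\cup m^{\wedge}I$ from the proof of Lemma \ref{lem:dense-low-adjacent-projection} remain available, with $|J_m|\geqslant m(n-m)+1-g$.

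I would first try to replace the missing $J_{m+1}$ by the set
$$x_{n-1}+J_m\bigl(I\setminus\{x_{n-1}\}\bigr)\subseteq(m+1)^{\wedge}I,$$
together with its mirror image coming from $x_0$. The set $I\setminus\{x_{n-1}\}$ has $n-1$ elements and excess at most $g$. The condition $g\leqslant(n-1)-m-1$ fails only by one, and it can be restored by also deleting an endpoint gap. This should produce an interval in $(m+1)^{\wedge}I$ of length at least $(m+1)(n-m+1)-g-O(m)$. The interval-packing argument of Lemma \ref{lem:dense-low-adjacent-projection}, namely a long interval plus a shifted copy of $K$ of length at least $2m+1$, then recovers the same bound $\min\{L,(m+1)(n-m+1)-g\}$.

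If the shortfall there is not absorbable, I would fall back on proving only the second (weaker) inequality in this range. That needs the additional slack $e_m(I)+D_m(I)-g$. By Corollary \ref{cor:two-fringe-hole-bound}, $e_m(I)\leqslant 2g$, and the identity $l(m^{\wedge}I)=m(n-m)+2e_m(I)+D_m(I)$ controls this slack. To close the gap I would combine it with the cyclic bound $|U_m(\{0\}\cup\pi_L(I))|\geqslant\min\{L,2n\}$ from Theorem \ref{thm:r4-coarse-cyclic-all-orders}, via Lemma \ref{lem:endpoint-lifting}.
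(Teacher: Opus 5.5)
Your Steps 1 and 2 match the paper's argument. Your observation that the boundary range forces $n\in\{2m,2m+1\}$ and $g=n-m-1$ is also correct. The gap is Step 3, which is not a proof.

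\textbf{Why Step 3 fails as written.}
\begin{itemize}
\item The set $x_{n-1}+J_m(I\setminus\{x_{n-1}\})$ gives an interval about $g$ shorter than the $J_{m+1}$ that Lemma \ref{lem:dense-low-adjacent-projection} uses, since $(m+1)(n-m-1)-m(n-m-1)=n-m-1=g$. Deleting a further endpoint gap to restore $g'\leqslant(n-1)-m-1$ makes it shorter still.
\item The packing step only contributes the $2m+1$ that the lemma already budgets. So nothing absorbs a shortfall of order $m$.
\item The fallback does not rescue the statement. The first displayed inequality is part of the theorem, so proving only the second is not enough.
\item The cyclic bound $\min\{L,2n\}$ from Theorem \ref{thm:r4-coarse-cyclic-all-orders} is far too weak here. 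The target is of order $m^2$ when $n\in\{2m,2m+1\}$. Already for $(m,n)=(3,7)$ you need $17$ while $2n=14$.
\end{itemize}

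\textbf{The missing idea: a rigidity argument.} You never use the strict hypothesis $e_m(I)+D_m(I)<\frac n2$ beyond Step 1, and in the boundary range it is decisive.
\begin{itemize}
\item Since $2g\geqslant 2n-2m-2\geqslant n-2$, you get $0\leqslant e_m(I)+D_m(I)-g<\frac n2-g\leqslant1$. Hence $e_m(I)+D_m(I)=g$.
\item Because $l(I)=n+g-1\leqslant2n-5$, Lemma \ref{lem:dense-strong-endpoint-deletion}(3) applies to $I-x_0$. For $B=I\setminus\{x_0,x_{n-1}\}$ it gives $e_{m-1}(B)+D_{m-1}(B)\leqslant e_m(I)+D_m(I)-g=0$.
\item So $|(m-1)^{\wedge}B|$ attains Nathanson's minimum. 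By the inverse part of Theorem \ref{thm:nathanson-restricted-minimum}, $B$ is an arithmetic progression. Since $l(B)\leqslant2n-7<2(n-3)$, it is an interval.
\item The same lemma gives $e_r(I)\leqslant e_{r-1}(B)=0$ for $r\in\{m-1,m,m+1\}$. So each $r^{\wedge}I$ is an interval.
\item Each $r^{\wedge}I$ therefore contains $r^{\wedge}I'$, where $I'=[x_1-1,x_{n-2}+1]\subseteq[1,L-1]$ is an interval of $n$ elements with excess $0$.
\item Lemma \ref{lem:dense-low-adjacent-projection} applied to $I'$ then gives
\[
\bigl|\pi_L\bigl((m-1)^{\wedge}I\cup m^{\wedge}I\cup(m+1)^{\wedge}I\bigr)\bigr|\geqslant\min\{L,(m+1)(n-m+1)\}.
\]
\end{itemize}
This bound is stronger than both required inequalities. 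What settles the boundary case is this rigidity, not a refined core construction.
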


\begin{proof}
Since $3\leqslant m\leqslant n-3$, by Lemma
\ref{lem:dense-strong-endpoint-deletion} (1) applied to $I-\min I$, we have
$$e_m(I)+D_m(I)\geqslant\min\{g, n-3\}.$$
Since $e_m(I)+D_m(I)<\frac{n}{2}\leqslant n-3$ and $g$ is an integer, we have
$$0\leqslant g\leqslant e_m(I)+D_m(I)<\frac{n}{2}, \ \ g\leqslant n-4,$$
thus,
$$l(I)=n+g-1\leqslant2n-5.$$

Put $S=(m-1)^{\wedge}I\cup m^{\wedge}I\cup(m+1)^{\wedge}I$.
If $g\leqslant n-m-2$, then by Lemma \ref{lem:dense-low-adjacent-projection}, we have
$$|\pi_L(S)|\geqslant\min\{L, (m+1)(n-m+1)-g\}.$$

Suppose that $g\geqslant n-m-1$.
Since $n\geqslant2m$, we have
$$2g\geqslant2n-2m-2\geqslant n-2,$$
thus,
$$0\leqslant e_m(I)+D_m(I)-g<\frac{n}{2}-g\leqslant1.$$
Since $e_m(I)+D_m(I)-g$ is an integer, we have
$$e_m(I)+D_m(I)=g.$$

Write
$$I=\{x_0, \ldots, x_{n-1}\}, \ \ x_0<\cdots<x_{n-1},$$
and put $B=I\setminus\{x_0, x_{n-1}\}$.
By Lemma \ref{lem:dense-strong-endpoint-deletion} (3) applied to $I-x_0$, we have
$$0\leqslant e_{m-1}(B)+D_{m-1}(B)
\leqslant e_m(I)+D_m(I)-g=0.$$
Thus,
$$|(m-1)^{\wedge}B|=(m-1)(n-m-1)+1.$$
Since $|B|=n-2\geqslant5$ and $2\leqslant m-1\leqslant|B|-2$,
by Theorem \ref{thm:nathanson-restricted-minimum}, we have
$$B=\{x_1, x_1+d(B), \ldots, x_1+(n-3)d(B)\}.$$
Since
$$(n-3)d(B)=l(B)\leqslant l(I)-2=n+g-3\leqslant2n-7<2(n-3),$$
we have
$$d(B)=1, \ \ B=[x_1, x_{n-2}].$$

For $r\in\{m-1, m, m+1\}$, we have $2\leqslant r\leqslant n-2$.
Since $l(I)\leqslant2n-5$, by Lemma
\ref{lem:dense-strong-endpoint-deletion} (3), we have
$$0\leqslant e_r(I)\leqslant e_{r-1}(B)=0,$$
thus,
$$r^{\wedge}I=[\min r^{\wedge}I, \max r^{\wedge}I].$$

Put $I'=[x_1-1, x_{n-2}+1]$. Then
$$I'\subseteq[1, L-1], \ \ |I'|=n, \ \ l(I')-n+1=0.$$
For every $r\in\{m-1, m, m+1\}$, we have
$$
\begin{aligned}
r^{\wedge}I'
&\subseteq[\min r^{\wedge}I+x_1-x_0-1,
\max r^{\wedge}I+x_{n-2}+1-x_{n-1}]\\
&\subseteq[\min r^{\wedge}I, \max r^{\wedge}I]=r^{\wedge}I.
\end{aligned}
$$
By Lemma \ref{lem:dense-low-adjacent-projection} applied to $I'$, we have
$$
\begin{aligned}
|\pi_L(S)|
&\geqslant
\left|\pi_L\bigl((m-1)^{\wedge}I'\cup m^{\wedge}I'
\cup(m+1)^{\wedge}I'\bigr)\right|\\
&\geqslant\min\{L, (m+1)(n-m+1)\}.
\end{aligned}
$$
Thus, the first inequality holds in every case.
Since $e_m(I)+D_m(I)\geqslant g\geqslant0$, we have
$$
\begin{aligned}
|\pi_L(S)|+e_m(I)+D_m(I)
&\geqslant\min\{L, (m+1)(n-m+1)-g\}+e_m(I)+D_m(I)\\
&\geqslant\min\{L, (m+1)(n-m+1)\}.
\end{aligned}
$$

This completes the proof of Theorem \ref{thm:r3-uniform-dense-small-excess}.
\end{proof}

\section{Adding the smallest and largest elements}\label{sec:cardinality-transition}\label{sec:reserve-transition}
\begin{lem}\label{lem:stratified-incomplete-projection}
Let $p\geqslant 3$ and $N\geqslant\max\{7, 2p\}$. Let
$$Y\subseteq[1, L-1], \ \ |Y|=N, \ \ X=\{0, L\}\cup Y.$$
Suppose that $Y$ and $X$ are primitive and critical at orders $p$ and $p+1$, respectively, and
$$D_p(Y)\geqslant0, \ \ D_{p+1}(X)<\frac{N+1}{2}, \ \ a_{p+1}(X)\geqslant1.$$
Put
$$
\begin{aligned}
v&=\left|\pi_L\bigl((p-1)^{\wedge}Y\cup p^{\wedge}Y\cup(p+1)^{\wedge}Y\bigr)\right|,\\
\rho&=|(p+1)^{\wedge}X|-|p^{\wedge}Y|-v\geqslant0.
\end{aligned}
$$
If $v<L$, then $L\geqslant2N+2$ and
$$
\begin{aligned}
a_p(Y)&\geqslant a_{p+1}(X)+\rho+1,\\
v&=2N+a_p(Y)-a_{p+1}(X)-\rho,\\
L&=3N+D_p(Y)+2a_p(Y)-2a_{p+1}(X)-D_{p+1}(X)-1.
\end{aligned}
$$
Moreover,
$$e_p(Y)+D_p(Y)\geqslant\frac{N}{2}, \ \ D_p(Y)<\frac{N-1}{2},$$
$$e_p(Y)\geqslant N+D_p(Y)+a_p(Y)-a_{p+1}(X)-D_{p+1}(X)-1>2D_p(Y).$$
If also $2L\geqslant3v$, then
$$4D_p(Y)-2D_{p+1}(X)+a_p(Y)+4\rho-1\leqslant N.$$
\end{lem}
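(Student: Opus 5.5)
The plan is to derive everything from the exact transfer identities of Corollary \ref{lem:exact-endpoint-transfer} with $J=Y$, $E=X$ and $C=\{0\}\cup\pi_L(Y)$. Lemma \ref{lem:endpoint-lifting} identifies $v$ with $|U_p(C)|$, and, since $X$ is primitive, $C$ generates $\mathbb Z_L$. Since $|X|=N+2$ and $(p+1)(N-p+1)=p(N-p)+N+1$, the definitions give
$$a_p(Y)=p(N-p)-N-e_p(Y)-D_p(Y),\ \ a_{p+1}(X)=p(N-p)-1-e_{p+1}(X)-D_{p+1}(X).$$
Substituting $e_{p+1}(X)=e_p(Y)+L-v-\rho$ and $D_{p+1}(X)=D_p(Y)+2v+2\rho-L-N-1$ yields the exact identity $a_{p+1}(X)-a_p(Y)=2N-v-\rho$. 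This is the stated formula for $v$. Feeding it back into the formula for $D_{p+1}(X)$ and solving for $L$ gives the stated formula for $L$. Both are pure bookkeeping and need no hypothesis beyond the definitions.

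Next I will prove $L\geqslant 2N+2$. We have $|C|=N+1\geqslant 8$ and $2\leqslant p\leqslant\lfloor N/2\rfloor$, so Theorem \ref{thm:r4-coarse-cyclic-all-orders} gives $v\geqslant\min\{L,2N\}$. Since $v<L$, this forces $v\geqslant 2N$ and hence $L\geqslant 2N+1$. If $L=2N+1$, then $v=2N$, so $a_p(Y)-a_{p+1}(X)=\rho$. The formula for $L$ then gives $D_{p+1}(X)=N-2+D_p(Y)+2\rho\geqslant N-2$, and this contradicts $D_{p+1}(X)<\frac{N+1}{2}$ because $N\geqslant 7$.

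With $L\geqslant 2N+2$ in hand, Theorem \ref{thm:strict-critical-cyclic-surplus} applies. Its hypotheses hold: $Y$ is critical at order $p$, i.e. $|p^{\wedge}Y|<2p(N-p)-N+2$, and $C$ generates $\mathbb Z_L$. It gives $v\geqslant 2N+1$, which by the identity for $v$ is exactly $a_p(Y)\geqslant a_{p+1}(X)+\rho+1$.

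For the ``moreover'' part, I will first argue by contradiction to get $e_p(Y)+D_p(Y)\geqslant N/2$. If $e_p(Y)+D_p(Y)<N/2$, apply Theorem \ref{thm:r3-uniform-dense-small-excess} with $I=Y$, $m=p$, $n=N$; this is allowed since $Y$ is primitive, $p\geqslant3$ and $N\geqslant\max\{7,2p\}$. Since $v<L$, it gives
$$v\geqslant p(N-p)+N+1-e_p(Y)-D_p(Y)=2N+1+a_p(Y).$$
But $v\leqslant 2N+a_p(Y)-1$ by the previous paragraph, a contradiction. The lower bound on $e_p(Y)$ is just $e_{p+1}(X)\geqslant 0$, i.e. $e_p(Y)\geqslant L-v-\rho$, rewritten with the formulas for $L$ and $v$. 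Using $a_p(Y)-a_{p+1}(X)\geqslant\rho+1\geqslant 1$ and $D_{p+1}(X)<\frac{N+1}{2}$, this yields $e_p(Y)>D_p(Y)+\frac{N-1}{2}$. That exceeds $2D_p(Y)$ as soon as $D_p(Y)<\frac{N-1}{2}$.

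The main obstacle is the bound $D_p(Y)<\frac{N-1}{2}$, since the transfer identities only bound $D_p(Y)$ from below. I would first try to combine the criticality $a_{p+1}(X)\geqslant1$ of $X$, written as $e_p(Y)+L-v-\rho+D_{p+1}(X)\leqslant p(N-p)-2$, with $e_p(Y)+D_p(Y)=p(N-p)-N-a_p(Y)$ and the expression for $L$. If that does not suffice, I would bound $l(p^{\wedge}Y)$ from above in terms of $v$ and $L$: each residue class meets $p^{\wedge}Y$ in a set of diameter controlled by $l(p^{\wedge}Y)<L+\cdots$. The last inequality under $2L\geqslant 3v$ should come from substituting both formulas into $2L-3v\geqslant0$. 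This only gives
$$2D_p(Y)-2D_{p+1}(X)+a_p(Y)-a_{p+1}(X)+3\rho-2\geqslant0,$$
so it still needs an upper bound on $a_p(Y)+2D_p(Y)$. I expect that bound to come from the same source as $D_p(Y)<\frac{N-1}{2}$, together with $a_{p+1}(X)\geqslant1$.
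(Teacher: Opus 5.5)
\textbf{Verdict.} The first half of your proposal is correct and essentially matches the paper. The ``moreover'' part is not proved: one step contains a sign error, and two of the claims are left without an argument.

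\textbf{What is correct.} These steps are essentially the paper's argument:
the bookkeeping identities for $v$ and $L$;
the proof of $L\geqslant2N+2$;
the bound $a_p(Y)\geqslant a_{p+1}(X)+\rho+1$ via Theorem~\ref{thm:strict-critical-cyclic-surplus};
the contradiction giving $e_p(Y)+D_p(Y)\geqslant\frac N2$.

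\textbf{The sign error.} From $e_{p+1}(X)=e_p(Y)+L-v-\rho\geqslant0$ you get $e_p(Y)\geqslant v+\rho-L$, not $e_p(Y)\geqslant L-v-\rho$. Since $L-v-\rho=N+D_p(Y)+a_p(Y)-a_{p+1}(X)-D_{p+1}(X)-1>\frac{N-1}{2}>0$, the bound you actually obtain is vacuous.

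The claimed inequality is equivalent to $e_{p+1}(X)\leqslant2e_p(Y)$. This is an \emph{upper} bound on the holes of $(p+1)^{\wedge}X$, and it needs a structural input. The paper gets it from the inequality you had just proved:
\begin{itemize}
\item $2(e_p(Y)+D_p(Y))\geqslant N$ and $a_{p+1}(X)\geqslant1$ give $|(p+1)^{\wedge}X|-2|p^{\wedge}Y|=N-1-a_{p+1}(X)-2(e_p(Y)+D_p(Y))<0$.
\item Since $p^{\wedge}Y\cup(L+p^{\wedge}Y)\subseteq(p+1)^{\wedge}X$, the two copies must overlap, which forces $l(p^{\wedge}Y)\geqslant L$.
\item Hence the span of $(p+1)^{\wedge}X$ (Lemma~\ref{lem:endpoint-lifting}) is the union of the spans of $p^{\wedge}Y$ and $L+p^{\wedge}Y$. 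Every hole of $(p+1)^{\wedge}X$ is then a hole of one of the two copies.
\end{itemize}

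\textbf{The missing claims.} You give no argument for $D_p(Y)<\frac{N-1}{2}$ or for the inequality under $2L\geqslant3v$. Your first idea cannot work. The transfer identities, together with $a_{p+1}(X)\geqslant1$, $\rho\geqslant0$ and $a_p(Y)\geqslant a_{p+1}(X)+\rho+1$, are formally consistent with $D_p(Y)\geqslant\frac{N-1}{2}$. For example, take $p=4$, $N=10$, $a_{p+1}(X)=1$, $a_p(Y)=2$, $\rho=D_{p+1}(X)=0$, $D_p(Y)=10$; this also violates the $2L\geqslant3v$ inequality.

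The missing tool is a covering of $T=p^{\wedge}Y-\min p^{\wedge}Y$ by lifts of the $v$-element projected set $U$.
\begin{itemize}
\item Write $l(T)+1=kL+z$ with $k\geqslant1$ (again using $l(p^{\wedge}Y)\geqslant L$), and put $c=|U\cap[0,z-1]|$. Then $|T|\leqslant kv+c$, hence $|T|-2e_p(Y)\leqslant k(3v-2L)+3c-2z$.
\item When $2L\geqslant3v$ this gives $|T|-2e_p(Y)\leqslant4v-2L$. Combining it with the identity $|T|-2e_p(Y)=v+\rho+2D_p(Y)+1+a_{p+1}(X)-N$ yields the last claim.
\end{itemize}

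For $D_p(Y)<\frac{N-1}{2}$ the paper argues by contradiction.
\begin{itemize}
\item Assuming $D_p(Y)\geqslant\frac{N-1}{2}$ forces $D_p(Y)\geqslant D_{p+1}(X)$, $L-v\geqslant N$ and $3v-2L\leqslant1$.
\item The covering then gives $(v-c)+2(z-c)\leqslant k-a_{p+1}(X)$ and $e_p(Y)\geqslant k(L-v)$.
\item The points $L-y_i$ ($i<p$) and $y_i$ ($i\geqslant N-p$) all lie in $U$, normalised by $-\min p^{\wedge}Y$. An extremal-sum estimate for $p$-subsets of $U$ gives $l(T)\leqslant p(N-p)+(2(L-v)+1)(k-a_{p+1}(X))$.
\item Comparing with $l(T)=p(N-p)+2e_p(Y)+D_p(Y)$ and using $k\leqslant p-1$ yields $2N\leqslant p-2$, a contradiction.
\end{itemize}

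Without these ingredients the ``moreover'' part, including $e_p(Y)>2D_p(Y)$, is not established.
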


\begin{proof}
Put $C=\{0\}\cup\pi_L(Y)$. Since $X$ is primitive, we have
$$|C|=N+1, \ \ \langle C\rangle=\mathbb Z_L.$$
Since $1\leqslant p\leqslant N-1$, by Lemma \ref{lem:endpoint-lifting} and Corollary \ref{lem:exact-endpoint-transfer}, we have
$$
\begin{aligned}
v&=|p^{\wedge}C\cup(p+1)^{\wedge}C|,\\
D_{p+1}(X)&=D_p(Y)+2(v+\rho)-L-N-1,\\
e_{p+1}(X)&=e_p(Y)+L-v-\rho.
\end{aligned}
$$
Since $N+1\geqslant8$ and $2\leqslant p\leqslant\left\lfloor\frac{N}{2}\right\rfloor$, by Theorem \ref{thm:r4-coarse-cyclic-all-orders}, we have
$$v\geqslant\min\{L, 2N\}.$$
Together with $v<L$, we have $v\geqslant2N$ and $L\geqslant2N+1$.
If $L=2N+1$, then
$$D_{p+1}(X)\geqslant D_p(Y)+4N-L-N-1\geqslant N-2\geqslant\frac{N+1}{2},$$
a contradiction. Thus,
$$L\geqslant 2N+2. $$
Since $Y$ is critical, by Theorem \ref{thm:strict-critical-cyclic-surplus}, we have $v\geqslant 2N+1$.
Since
$$
\begin{aligned}
|p^{\wedge}Y|&=2p(N-p)-N+1-a_p(Y),\\
|(p+1)^{\wedge}X|&=2p(N-p)+N+1-a_{p+1}(X),
\end{aligned}
$$
we have
$$
\begin{aligned}
v&=|(p+1)^{\wedge}X|-|p^{\wedge}Y|-\rho
=2N+a_p(Y)-a_{p+1}(X)-\rho,\\
a_p(Y)&=v-2N+a_{p+1}(X)+\rho\geqslant a_{p+1}(X)+\rho+1,\\
L&=D_p(Y)+2(v+\rho)-N-1-D_{p+1}(X)\\
&=3N+D_p(Y)+2a_p(Y)-2a_{p+1}(X)-D_{p+1}(X)-1.
\end{aligned}
$$

Suppose that $e_p(Y)+D_p(Y)<\frac{N}{2}$.
By Theorem \ref{thm:r3-uniform-dense-small-excess}, we have
$$l(Y)-N+1\leqslant e_p(Y)+D_p(Y), \ \ v\geqslant\min\{L, p(N-p)+2N-l(Y)\}.$$
Since $v<L$, we have $v\geqslant p(N-p)+2N-l(Y)$. Thus,
$$
\begin{aligned}
1&\leqslant a_{p+1}(X)+\rho\\
&=p(N-p)+N-e_p(Y)-D_p(Y)-v\\
&\leqslant l(Y)-N-e_p(Y)-D_p(Y)\leqslant-1,
\end{aligned}
$$
a contradiction. Hence, $e_p(Y)+D_p(Y)\geqslant\frac{N}{2}$.

Since $2(e_p(Y)+D_p(Y))\geqslant N$ and $a_{p+1}(X)\geqslant1$, we have
$$
\begin{aligned}
|(p+1)^{\wedge}X|-2|p^{\wedge}Y|
&=N-1-a_{p+1}(X)-2(e_p(Y)+D_p(Y))\\
&\leqslant-1-a_{p+1}(X)<0.
\end{aligned}
$$
Put $u=\min p^{\wedge}Y$ and $T=p^{\wedge}Y-u$. Then
$$u+\bigl(T\cup(L+T)\bigr)\subseteq(p+1)^{\wedge}X, \ \ |T\cup(L+T)|\leqslant|(p+1)^{\wedge}X|<2|T|.$$
Thus, $T\cap(L+T)\neq\emptyset$. Since $\min T=0$, we have
$$\emptyset\neq T\cap(L+T)\subseteq[L, l(T)], \ \ l(T)\geqslant L.$$
Since $[0, L+l(T)]=[0, l(T)]\cup(L+[0, l(T)])$, by Lemma \ref{lem:endpoint-lifting}, we have
$$
\begin{aligned}
e_{p+1}(X)
&=\left|[0, L+l(T)]\setminus\bigl((p+1)^{\wedge}X-u\bigr)\right|\\
&\leqslant\left|\bigl([0, l(T)]\setminus T\bigr)
\cup\bigl(L+([0, l(T)]\setminus T)\bigr)\right|\\
&\leqslant2|[0, l(T)]\setminus T|=2e_p(Y).
\end{aligned}
$$
Together with $e_{p+1}(X)=e_p(Y)+L-v-\rho$, we have
$$e_p(Y)\geqslant L-v-\rho=N+D_p(Y)+a_p(Y)-a_{p+1}(X)-D_{p+1}(X)-1.$$

Put
$$U=\pi_L\bigl((p-1)^{\wedge}Y\cup p^{\wedge}Y\cup(p+1)^{\wedge}Y\bigr)-\pi_L(u).$$
Then $\pi_L(T)\subseteq U$ and $|U|=v$.
Identify $U$ with its representatives in $[0, L-1]$.
Since $l(T)\geqslant L$, write
$$l(T)+1=kL+z, \ \ k\geqslant1, \ \ 0\leqslant z<L, \ \ c=|U\cap[0, z-1]|.$$
Then
$$T\subseteq\left(\bigcup_{j=0}^{k-1}(jL+U)\right)
\cup\bigl(kL+(U\cap[0, z-1])\bigr),$$
thus,
$$|T|\leqslant\sum_{j=0}^{k-1}|jL+U|
+|kL+(U\cap[0, z-1])|=kv+c, \ \ 0\leqslant c\leqslant\min\{v, z\}.$$
Since $e_p(Y)=l(T)+1-|T|$, we have
$$|T|-2e_p(Y)=3|T|-2(l(T)+1)\leqslant k(3v-2L)+3c-2z.$$
If $2L\geqslant3v$, then $k\geqslant1$ and $3c-2z\leqslant c\leqslant v$ give
$$|T|-2e_p(Y)\leqslant(3v-2L)+v=4v-2L.$$
Since
$$
\begin{aligned}
p(N-p)&=|T|-e_p(Y)-D_p(Y)-1,\\
|T|+v+\rho&=2p(N-p)+N+1-a_{p+1}(X),
\end{aligned}
$$
we have
$$|T|-2e_p(Y)=v+\rho+2D_p(Y)+1+a_{p+1}(X)-N.$$
Together with
$$2L-3v=2D_p(Y)-2D_{p+1}(X)+a_p(Y)-a_{p+1}(X)+3\rho-2,$$
we have
\begin{equation}\label{eq:incomplete-projection-capacity}
2L\geqslant3v \Longrightarrow
4D_p(Y)-2D_{p+1}(X)+a_p(Y)+4\rho-1\leqslant N.
\end{equation}

Suppose that $D_p(Y)\geqslant\frac{N-1}{2}$.
Since $D_p(Y), D_{p+1}(X)\in\mathbb Z$ and
$D_{p+1}(X)<\frac{N+1}{2}$, we have
$D_p(Y)\geqslant D_{p+1}(X)$.
Since $a_p(Y)\geqslant a_{p+1}(X)+\rho+1$, we have
$$
\begin{aligned}
L-v
&=N+D_p(Y)+a_p(Y)-a_{p+1}(X)-D_{p+1}(X)+\rho-1
\geqslant N,\\
3v-2L
&=2(D_{p+1}(X)-D_p(Y))-a_p(Y)+a_{p+1}(X)-3\rho+2
\leqslant1.
\end{aligned}
$$
Thus,
$$v+\rho+2D_p(Y)+1+a_{p+1}(X)-N
=|T|-2e_p(Y)\leqslant k+3c-2z,$$
and
$$
\begin{aligned}
(v-c)+2(z-c)
&\leqslant k+N-\rho-2D_p(Y)-1-a_{p+1}(X)
\leqslant k-a_{p+1}(X),\\
e_p(Y)
&=kL+z-|T|\geqslant k(L-v)+z-c\geqslant k(L-v).
\end{aligned}
$$

Write $Y=\{y_0, \ldots, y_{N-1}\}$, where $y_0<\cdots<y_{N-1}$.
Since $N\geqslant2p$ and $u=\sum_{i=0}^{p-1}y_i$, we have
$$u-y_i\in(p-1)^{\wedge}Y \ (0\leqslant i<p), \ \
u+y_i\in(p+1)^{\wedge}Y \ (N-p\leqslant i<N).$$
Thus,
$$\{L-y_i:0\leqslant i<p\}, \ \
\{y_i:N-p\leqslant i<N\}\subseteq U.$$
For every $A\subseteq U$ with $|A|=p$, since
$$|U\cap[0,z-1]|=c, \ \ |U\cap[z,L-1]|=v-c, \ \ |U\setminus A|=v-p,$$
we have
$$
\begin{aligned}
\sum A
&=\sum U-\sum(U\setminus A)\\
&\leqslant\sum_{i=z-c}^{z-1}i+\sum_{i=L-v+c}^{L-1}i-\sum_{i=0}^{v-p-1}i\\
&=\frac{c(2z-c-1)+(v-c)(2L-v+c-1)-(v-p)(v-p-1)}2\\
&=\frac{p(2v-p-1)}2+c(z-c)+(L-v)(v-c).
\end{aligned}
$$
Since
$$2v-L-N-1=D_{p+1}(X)-D_p(Y)-2\rho\leqslant0, \ \ c\leqslant v\leqslant2(L-v)+1,$$
we have
$$
\begin{aligned}
l(T)
&=\sum_{i=0}^{p-1}(L-y_i)+\sum_{i=N-p}^{N-1}y_i-pL\\
&\leqslant p(2v-L-p-1)+2c(z-c)+2(L-v)(v-c)\\
&\leqslant p(N-p)+(2(L-v)+1)\bigl((v-c)+2(z-c)\bigr)\\
&\leqslant p(N-p)+(2(L-v)+1)(k-a_{p+1}(X)).
\end{aligned}
$$
Since $Y\subseteq[1,L-1]$, we have
$$kL\leqslant l(T)+1\leqslant p(L-2)+1<pL, \ \ k\leqslant p-1.$$
Together with
$$l(T)=p(N-p)+2e_p(Y)+D_p(Y)
\geqslant p(N-p)+2k(L-v)+D_p(Y),$$
we have
$$2N\leqslant2a_{p+1}(X)(L-v)+D_p(Y)
\leqslant k-a_{p+1}(X)\leqslant p-2<N,$$
a contradiction.
Hence, $D_p(Y)<\frac{N-1}{2}$.
Since $D_{p+1}(X)<\frac{N+1}{2}$ and
$a_p(Y)\geqslant a_{p+1}(X)+1$, we have
$$
\begin{aligned}
e_p(Y)
&\geqslant N+D_p(Y)+a_p(Y)-a_{p+1}(X)-D_{p+1}(X)-1\\
&\geqslant N+D_p(Y)-D_{p+1}(X)>2D_p(Y).
\end{aligned}
$$

This completes the proof of Lemma \ref{lem:stratified-incomplete-projection}.
\end{proof}

\begin{lem}\label{lem:two-block-completion}
Let $r\geqslant4$ and $0\leqslant\lambda\leqslant r-2$ be integers and put
$$P=[0, r-1], \ \ C=[0, r-2]\cup\{r-1+\lambda\}.$$
Let $t\in\mathbb Z$ and
$$Q=t+C \ \hbox{or}\ Q=t+r-1-C, \ \ \min Q\geqslant r.$$
Put $X=P\cup Q$. Then
$$|r^{\wedge}X|\geqslant(r-2)\min\{t, 2r\}+2r+1+\lambda.$$
\end{lem}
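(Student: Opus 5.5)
We decompose $r^{\wedge}X$ by how many summands come from each block. By \eqref{cyc:eq:disjoint-layer},
$$r^{\wedge}X=\bigcup_{j=0}^{r}\bigl((r-j)^{\wedge}P+j^{\wedge}Q\bigr),$$
and we set $S_j=(r-j)^{\wedge}P+j^{\wedge}Q$. By \eqref{eq:unit-interval-restricted-ranks}, each $(r-j)^{\wedge}P$ is an interval of length $j(r-j)+1$, namely $[\binom{r-j}{2},\binom{r-j}{2}+j(r-j)]$. The set $j^{\wedge}C$ is nearly an interval: it is $j^{\wedge}[0,r-2]$ together with $r-1+\lambda+(j-1)^{\wedge}[0,r-2]$, and the reflected case $t+r-1-C$ is handled by symmetry. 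So each $S_j$ is a sum of an interval with a set consisting of one or two intervals, and is again at most two intervals whose lengths can be computed exactly. The plan is to exhibit explicit disjoint pieces of $r^{\wedge}X$, ordered by their position on the line, and add up their sizes.

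The key parameter is $t=\min Q-\min C\ge r$. The minimum of $S_j$ is $jt+\binom{r-j}{2}+\binom{j}{2}$, so consecutive layers shift by roughly $t$.

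If $t\ge 2r$ (more precisely, large enough that the layers $S_j$ for $1\le j\le r-1$ are pairwise disjoint), we count
$$|S_0|+|S_r|+\sum_{j=1}^{r-1}|S_j|.$$
Here $|S_0|=|S_r|=1$, and $|S_j|\ge j(r-j)+|j^{\wedge}C|$, where $|j^{\wedge}C|\ge j(r-1-j)+1$ plus an extra amount that comes from $\lambda$. This total should greatly exceed the target $(r-2)2r+2r+1+\lambda$. The cleaner route is to show that each middle layer contributes at least $2r$ new elements, that $\lambda$ appears as extra length in one layer (say $j=1$, where $S_1=(r-1)^{\wedge}P+C$ has length about $2r-2+\lambda$), and that the endpoint layers supply the remaining $+1$ terms.

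If $r\le t<2r$, consecutive layers overlap. We then bound the union by the length of its convex hull minus the gaps. Each $S_j$ is an interval of length at least $t$ once one accounts for its overlap with $S_{j+1}$, so the union from $S_1$ to $S_{r-1}$ covers an interval of length at least $(r-2)t$ plus the lengths of the end pieces. We obtain the $(r-2)t$ contribution by showing that $\min S_{j+1}-\min S_j\ge t$ while $S_j$ is an interval reaching past $\min S_{j+1}$. The final layers give the $2r+1+\lambda$ term.

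The main obstacle is the bookkeeping of the single outlier $r-1+\lambda$ in $C$. It makes $j^{\wedge}Q$ a union of two intervals, and when $\lambda$ is close to $r-2$ a gap may open inside $S_j$. We need to check that $(r-j)^{\wedge}P$, which has length $j(r-j)+1\ge r$ for $1\le j\le r-1$, is long enough to fill such gaps; the condition $\lambda\le r-2$ should make this work. We would first carry out the case $Q=t+C$ carefully with explicit endpoints. We would then deduce the reflected case by applying the map $x\mapsto (\max X)-x$, or by redoing the same endpoint computation, since $t+r-1-C$ has its outlier at the bottom rather than the top.
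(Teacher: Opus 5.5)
\textbf{There is a genuine gap, in counting the union.} Your layer decomposition $S_j=(r-j)^{\wedge}P+j^{\wedge}Q$ matches the paper's first step. So does your check that the outlier $r-1+\lambda$ opens no hole in $S_j$. Carried out, it shows that every $S_j$ with $1\leqslant j\leqslant r-1$ is an interval with $2j(r-j)+1+\lambda$ elements, for both shapes of $Q$.

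Your two cases, ``the middle layers are pairwise disjoint'' and ``$r\leqslant t<2r$'', do not exhaust the possibilities. Pairwise disjointness needs $t$ of order $r^2/2$, not $t\geqslant 2r$. For instance, $r=6$, $\lambda=0$, $t=15$ gives $S_1\cap S_2=\emptyset$ while $S_2,S_3,S_4$ overlap. This intermediate range is exactly where the truncation $\min\{t,2r\}$ matters, and the uniform bounds you propose for it are false:
\begin{itemize}
\item The number of elements of $S_j$ outside $S_1\cup\cdots\cup S_{j-1}$ is $\min\{2j(r-j)+1+\lambda,\,t+r-2j+1\}$. For $t=2r$ and $j=r-1$ this equals $r+3<2r$, so not every middle layer contributes $2r$ new elements.
\item $\min S_{j+1}-\min S_j=t-r+2j+1$, which is smaller than $t$ whenever $2j+1<r$. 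Only the telescoped sum $\min S_{r-1}-\min S_1=(r-2)t$ is correct.
\end{itemize}

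\textbf{The missing idea is to do the bookkeeping in aggregate rather than layer by layer.} The paper's route is as follows.
\begin{itemize}
\item $\min S_j$ and $\max S_j$ both increase with $j$, by $t-r+2j-1$ and $t+r-2j+1$ respectively.
\item Hence $S_j\setminus\bigcup_{i<j}S_i=S_j\setminus S_{j-1}$, and its size is $\min\{|S_j|,\,t+r-2j+1\}$.
\item This size is at least $\min\{t,2r\}+r-2j+1$, because $2j(r-j)+1+\lambda-(3r-2j+1)=2(j-2)(r-j-1)+r-4+\lambda\geqslant0$.
\item The terms $r-2j+1$ sum to $0$ over $2\leqslant j\leqslant r-1$.
\item Add $|S_1|=2r-1+\lambda$ and the two singletons $S_0,S_r$, which are separated from the other layers because $\min Q\geqslant r$.
\end{itemize}
This gives the bound for all $t$ at once, with no case split. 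Alternatively, your hull-minus-gaps idea can be rescued by bounding each gap between consecutive layers by $\max\{0,t-3r+4-\lambda\}$.

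\textbf{The global reflection does not work.} The map $x\mapsto\max X-x$ does not reduce the second shape of $Q$ to the first: it sends $P\cup(t+r-1-C)$ to $C\cup(t+P)$. What works is the layerwise identity $S_j=jt+r(r-1)-\bigl((r-j)^{\wedge}P+j^{\wedge}C\bigr)$, which follows from $r-1-P=P$.
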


\begin{proof}
For $1\leqslant j\leqslant r-1$, since
$C=[0, r-2]\cup\{r-1+\lambda\}$, by
\eqref{eq:unit-interval-restricted-ranks}, we have
$$
\begin{aligned}
(r-j)^{\wedge}P+j^{\wedge}C
&=\bigl((r-j)^{\wedge}P+j^{\wedge}[0, r-2]\bigr)\\
&\qquad\cup\bigl((r-j)^{\wedge}P+r-1+\lambda
+(j-1)^{\wedge}[0, r-2]\bigr)\\
&=\frac{r(r-1)}2-j(r-j)\\
&\qquad+\bigl([0, 2j(r-j)-j]
\cup[r-j+\lambda, 2j(r-j)+\lambda]\bigr).
\end{aligned}
$$
Since
$$r-j+\lambda-(2j(r-j)-j)=r+\lambda-2j(r-j)\leqslant\lambda-r+2\leqslant 0,$$
we have
$$(r-j)^{\wedge}P+j^{\wedge}C=\left[\frac{r(r-1)}2-j(r-j), \frac{r(r-1)}2+j(r-j)+\lambda\right].$$

For $0\leqslant j\leqslant r$, put
$$T_j=(r-j)^{\wedge}P+j^{\wedge}Q.$$
Since $r-1-P=P$, for $1\leqslant j\leqslant r-1$, we have
$$T_j=jt+((r-j)^{\wedge}P+j^{\wedge}C) \text{ or } T_j=jt+r(r-1)-((r-j)^{\wedge}P+j^{\wedge}C). $$
Thus,
$$T_j=[\min T_j, \max T_j], \ \ |T_j|=2j(r-j)+1+\lambda.$$

Since $\min Q\geqslant r$, we have
$$T_0=\left\{\sum P\right\}, \ \
T_r=\left\{\sum Q\right\},$$
$$\min T_1-\max T_0
=\min T_r-\max T_{r-1}
=\min Q-r+1\geqslant1.$$
Since $t\geqslant\min Q\geqslant r$, for
$2\leqslant j\leqslant r-1$, in both cases we have
$$
\begin{aligned}
\min T_j-\min T_{j-1}&=t-r+2j-1>0,\\
\max T_j-\max T_{j-1}&=t+r-2j+1>0.
\end{aligned}
$$
Hence,
$$
\begin{aligned}
\left|T_j\setminus\bigcup_{i=1}^{j-1}T_i\right|
&=|T_j\setminus T_{j-1}|\\
&=\min\{|T_j|, \max T_j-\max T_{j-1}\}\\
&=\min\{2j(r-j)+1+\lambda, t+r-2j+1\}.
\end{aligned}
$$
Since
$$2j(r-j)+1+\lambda-(3r-2j+1)
=2(j-2)(r-j-1)+r-4+\lambda\geqslant0,$$
we have
$$
\begin{aligned}
\left|T_j\setminus\bigcup_{i=1}^{j-1}T_i\right|
&\geqslant\min\{3r-2j+1, t+r-2j+1\}\\
&=\min\{t, 2r\}+r-2j+1.
\end{aligned}
$$
Since $P\cap Q=\emptyset$ and
$r^{\wedge}X=\bigcup_{j=0}^{r}T_j$, we have
$$
\begin{aligned}
|r^{\wedge}X|
&=2+|T_1|
+\sum_{j=2}^{r-1}\left|T_j\setminus\bigcup_{i=1}^{j-1}T_i\right|\\
&\geqslant2+(2r-1+\lambda)
+\sum_{j=2}^{r-1}\bigl(\min\{t, 2r\}+r-2j+1\bigr)\\
&=(r-2)\min\{t, 2r\}+2r+1+\lambda.
\end{aligned}
$$

This completes the proof of Lemma \ref{lem:two-block-completion}.
\end{proof}

\begin{lem}\label{lem:inner-cardinality-barrier}
Let $p\geqslant 2$ and $N\geqslant\max\{5, 2p\}$.
Let $X\subseteq[0, L]$ be a primitive set of $N+2$ integers such that $0, L\in X$.
Put $Y=X\setminus\{0, L\}$.
Suppose that
$$|p^{\wedge}Y|\geqslant2p(N-p)-N+1, \ \ a_{p+1}(X)\geqslant1.$$
Then
$$D_{p+1}(X)\geqslant0.$$
If $N\geqslant7$ and $4D_{p+1}(X)<(a_{p+1}(X)-1)(N+1)$, then
$$D_{p+1}(X)\geqslant L-N-1.$$
\end{lem}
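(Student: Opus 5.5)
The plan is to reduce everything to the endpoint transfer of Corollary \ref{lem:exact-endpoint-transfer}, after first showing that the cyclic projection is full. Put $C=\{0\}\cup\pi_L(Y)$, $v=|U_p(C)|$ and $\rho=|(p+1)^{\wedge}X|-|p^{\wedge}Y|-v\geqslant0$. Since $|p^{\wedge}Y|=2p(N-p)-N+1-a_p(Y)$ and $|(p+1)^{\wedge}X|=2p(N-p)+N+1-a_{p+1}(X)$, we get
$$v+\rho=2N+a_p(Y)-a_{p+1}(X).$$
The first hypothesis says $a_p(Y)\leqslant0$, and the second says $a_{p+1}(X)\geqslant1$, so $v\leqslant 2N-1$.

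Next we compare with the cyclic lower bound. Because $X$ is primitive, Lemma \ref{lem:endpoint-lifting} gives $\langle C\rangle=\mathbb Z_L$ and $|C|=N+1\geqslant6$. Moreover $2\leqslant p\leqslant\lfloor N/2\rfloor$, so Theorem \ref{thm:r4-coarse-cyclic-all-orders} yields $v\geqslant\min\{L,2N\}$. Combined with $v\leqslant2N-1$, this forces $v=L$. Hence $L\leqslant 2N-1=2(N+2)-5$, and
$$\rho=2N+a_p(Y)-a_{p+1}(X)-L.$$

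With $v=L$ in hand, we can apply Lemma \ref{lem:dense-strong-endpoint-deletion} (3) to $X$ at order $p+1$, with $n=N+2$, $g=L-N-1\geqslant0$ and $2\leqslant p+1\leqslant n-2$. It gives
$$e_{p+1}(X)\leqslant e_p(Y),\qquad e_{p+1}(X)+D_{p+1}(X)\geqslant g+e_p(Y)+D_p(Y).$$
Equivalently, by Corollary \ref{lem:exact-endpoint-transfer},
$$D_{p+1}(X)=D_p(Y)+L+2\rho-N-1=g+D_p(Y)+2\rho.$$
Both assertions therefore reduce to lower bounds on $D_p(Y)+2\rho$:
\begin{itemize}
\item the first assertion needs $D_p(Y)+2\rho\geqslant -g$;
\item the second needs $D_p(Y)+2\rho\geqslant0$ under the hypothesis $4D_{p+1}(X)<(a_{p+1}(X)-1)(N+1)$.
\end{itemize}

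For the first assertion, I would write $D_p(Y)=2|p^{\wedge}Y|-l(p^{\wedge}Y)-p(N-p)-2$. Inserting $|p^{\wedge}Y|\geqslant 2p(N-p)-N+1$ gives
$$D_p(Y)\geqslant 3p(N-p)-2N-l(p^{\wedge}Y).$$
I then bound $l(p^{\wedge}Y)$ using $l(Y)\leqslant L-2$ and $Y$ being dense: $|Y|=N$ inside an interval of length at most $N+g-2$, which makes $p^{\wedge}Y$ contained in an interval of length at most $p(N-p)+pg$ roughly. For small $g$ this comparison, together with the $2\rho$ term, should give $D_p(Y)+2\rho+g\geqslant0$. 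In the delicate regime I would instead use Lemma \ref{lem:dense-cores} and Lemma \ref{lem:dense-fourth-core} applied to $Y$: they show that $p^{\wedge}Y$ contains a long interval $Q_p$, so that $|p^{\wedge}Y|$ is close to $l(p^{\wedge}Y)$. This gives $D_p(Y)\geqslant |p^{\wedge}Y|-p(N-p)-1-2g\geqslant -N-2g+\dots$, and is then balanced against $g$ and $\rho$.

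For the second assertion, I expect the main obstacle to be showing $D_p(Y)+2\rho\geqslant0$ when $N\geqslant7$. The route I would try first is by contradiction. Assume $D_p(Y)+2\rho<0$. Then $e_p(Y)$ must be large relative to the deficit, and Corollary \ref{cor:two-fringe-hole-bound} caps $e_p(Y)\leqslant 2g_Y$, where $g_Y=l(Y)-N+1$. Next use the identity
$$a_{p+1}(X)=p(N-p)+N-1-e_{p+1}(X)-D_{p+1}(X).$$
Together with the relation between $e_{p+1}(X)$ and $e_p(Y)$ from Corollary \ref{lem:exact-endpoint-transfer}, namely $e_{p+1}(X)=e_p(Y)-\rho$ when $v=L$, this expresses $a_{p+1}(X)$ through $e_p(Y)$, $g$ and $\rho$. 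Substituting into $4D_{p+1}(X)<(a_{p+1}(X)-1)(N+1)$ should force $g$, and hence the hole structure of $Y$, to be so constrained that Theorem \ref{thm:nathanson-restricted-minimum} (the inverse case, with $Y$ an arithmetic progression of difference $1$) applies. In that case $D_p(Y)\geqslant0$ can be checked directly from \eqref{eq:unit-interval-restricted-ranks}.
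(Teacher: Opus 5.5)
Your opening reduction is correct and matches the paper's first step. The bound $v\leqslant v+\rho=2N+a_p(Y)-a_{p+1}(X)\leqslant 2N-1$, together with Theorem \ref{thm:r4-coarse-cyclic-all-orders}, forces $v=L$. Hence $u:=2N-a_{p+1}(X)-L=\rho-a_p(Y)\geqslant0$, and Corollary \ref{lem:exact-endpoint-transfer} gives $D_{p+1}(X)=g+D_p(Y)+2\rho$. Your span bound $l(p^{\wedge}Y)\leqslant pL-p(p+1)$ is also exactly the paper's.

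However, you then treat $g$ as free, whereas $g=N-1-a_{p+1}(X)-u$ is determined. Substituting it, and putting $\lambda=pL-p(p+1)-l(p^{\wedge}Y)\geqslant0$, gives the exact identities
\begin{align*}
D_p(Y)+2\rho&=p(a_{p+1}(X)-3)+(p-2)(N-2p)+(p+2)u+\lambda,\\
D_{p+1}(X)&=(p-1)(a_{p+1}(X)+N-2p)-p-1+(p+1)u+\lambda.
\end{align*}
These settle both assertions except in boundary cases.

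For the first assertion, the boundary cases are:
\begin{itemize}
\item $p=2$, $N=5$, $a_3(X)=1$, $u=\lambda=0$;
\item $p\geqslant3$, $N=2p$, $a_{p+1}(X)=1$, $u=0$, $\lambda\leqslant1$.
\end{itemize}
For the second assertion, $a_{p+1}(X)=1$ forces $D_{p+1}(X)<0$, which is the first case. The cases $a_{p+1}(X)\geqslant3$ or $u\geqslant1$ give $D_p(Y)+2\rho\geqslant0$ directly. For $a_{p+1}(X)=2$, $u=0$, the hypothesis $4D_{p+1}(X)<N+1$ excludes $N\geqslant2p+1$ and forces $\lambda\leqslant1$. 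So only $a_{p+1}(X)=2$, $u=0$, $N=2p$, $\lambda\leqslant1$ remains.

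The genuine gap is these boundary cases, and the tools you name cannot reach them. Since $u=0$ and $\lambda\leqslant1$, the $p+1$ smallest and $p+1$ largest elements of $X$ are packed against the endpoints. Up to reflection, $X=[0,p]\cup\bigl(L-([0,p-1]\cup\{p+\lambda\})\bigr)$ with $L=4p-a_{p+1}(X)$. For $p=2$, instead $X=[0,2]\cup[7,9]\cup\{x\}$ with $x\in[3,6]$.

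In these cases $l(Y)=L-2$, so $g_Y=l(Y)-N+1=N-1-a_{p+1}(X)\geqslant N-3$. This is the maximally sparse regime, not a ``small $g$'' or dense one. Consequently none of your density tools applies:
\begin{itemize}
\item Lemma \ref{lem:dense-cores} needs $g\leqslant n-4$;
\item Lemma \ref{lem:dense-fourth-core} needs $g\leqslant n-r-1$;
\item Corollary \ref{cor:two-fringe-hole-bound} needs $g<n/2$ and $r\geqslant3$.
\end{itemize}
Your final step for the second assertion also cannot occur. The inverse part of Theorem \ref{thm:nathanson-restricted-minimum} needs $|p^{\wedge}Y|=p(N-p)+1$, but your hypothesis gives $|p^{\wedge}Y|\geqslant2p(N-p)-N+1>p(N-p)+1$. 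Moreover, the extremal $Y$ is a two-block set, not an interval.

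The missing ingredient is a direct count excluding the two-block configuration. Lemma \ref{lem:two-block-completion}, with $r=p+1$ and $t=L-p\geqslant2(p+1)$, gives $|(p+1)^{\wedge}X|\geqslant2p(p+1)+1+\lambda$. This contradicts $|(p+1)^{\wedge}X|=2p(p+1)+1-a_{p+1}(X)$. For $p=2$, $N=5$, $L=9$, an explicit computation of $3^{\wedge}X$ gives $|3^{\wedge}X|\geqslant18>17$.

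A minor slip: the correct identity is $a_{p+1}(X)=p(N-p)-1-e_{p+1}(X)-D_{p+1}(X)$; yours has a spurious $+N$.
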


\begin{proof}
Put $B=\{0\}\cup\pi_L(Y)$. Since $X$ is primitive, we have
$$|B|=N+1, \ \ \langle B\rangle=\mathbb Z_L.$$
Since $N+1\geqslant6$ and
$2\leqslant p\leqslant\left\lfloor\frac{N}{2}\right\rfloor\leqslant N-1$,
by Lemma \ref{lem:endpoint-lifting} and Theorem \ref{thm:r4-coarse-cyclic-all-orders}, we have
$$
\begin{aligned}
\min\{L, 2N\}
&\leqslant|(p+1)^{\wedge}X|-|p^{\wedge}Y|\\
&\leqslant\bigl(2p(N-p)+N+1-a_{p+1}(X)\bigr)
-\bigl(2p(N-p)-N+1\bigr)\\
&=2N-a_{p+1}(X)<2N.
\end{aligned}
$$
Thus, $L\leqslant2N-a_{p+1}(X)$. Put
$$u=2N-a_{p+1}(X)-L\geqslant0.$$

Write $X=\{x_0, \ldots, x_{N+1}\}$, where
$0=x_0<\cdots<x_{N+1}=L$.
Put
$$P=\{x_i:0\leqslant i\leqslant p\}, \ \
C=\{L-x_{N+1-i}:0\leqslant i\leqslant p\},$$
$$\lambda=\sum P+\sum C-p(p+1).$$
Since $x_i\geqslant i$ and $L-x_{N+1-i}\geqslant i$,
we have $\lambda\geqslant0$ and
$$l((p+1)^{\wedge}X)
=\left((p+1)L-\sum C\right)-\sum P
=(p+1)L-p(p+1)-\lambda.$$
Since
$$L=2N-a_{p+1}(X)-u, \ \
|(p+1)^{\wedge}X|
=2(p+1)(N-p+1)-N-1-a_{p+1}(X)$$
and
$$D_{p+1}(X)
=2|(p+1)^{\wedge}X|-l((p+1)^{\wedge}X)
-(p+1)(N-p+1)-2,$$
we have
$$
\begin{aligned}
D_{p+1}(X)-L+N+1
&=p(a_{p+1}(X)-3)+(p-2)(N-2p)+(p+2)u+\lambda,\\
D_{p+1}(X)
&=(p-1)(a_{p+1}(X)+N-2p)-p-1+(p+1)u+\lambda.
\end{aligned}
$$

Suppose that $p=2$ and $D_3(X)<0$.
Since $a_3(X)\geqslant1$, $N\geqslant5$ and $u,\lambda\geqslant0$, we have
$$0>D_3(X)=a_3(X)+N-7+3u+\lambda\geqslant-1,$$
thus,
$$a_3(X)=1, \ \ N=5, \ \ u=\lambda=0, \ \ L=9.$$
Since $\lambda=0$, we have $P=C=[0, 2]$, thus,
$$X=E\cup\{x\}, \ \ E=[0, 2]\cup[7, 9], \ \ x\in[3, 6].$$
Since $9-E=E$, by reflection we may assume that $x\in\{3, 4\}$.
By \eqref{eq:unit-interval-restricted-ranks}, we have
$$
\begin{aligned}
3^{\wedge}E&=\{3\}\cup[8, 12]\cup[15, 19]\cup\{24\},\\
2^{\wedge}E&=[1, 3]\cup[7, 11]\cup[15, 17].
\end{aligned}
$$
Since $x\in\{3, 4\}$, we have
$$\bigl(x+[1, 3]\bigr)\cup\{13, 14, 20\}
\subseteq(x+2^{\wedge}E)\setminus3^{\wedge}E,$$
thus,
$$
\begin{aligned}
|3^{\wedge}X|
&=|3^{\wedge}E|+|(x+2^{\wedge}E)\setminus3^{\wedge}E|\\
&\geqslant12+\left|\bigl(x+[1, 3]\bigr)\cup\{13, 14, 20\}\right|
=18>17=|3^{\wedge}X|,
\end{aligned}
$$
a contradiction. Hence, $D_3(X)\geqslant0$ when $p=2$.
If $p\geqslant3$ and $a_{p+1}(X)\geqslant2$, then
$$
\begin{aligned}
D_{p+1}(X)
&=(p-1)(a_{p+1}(X)+N-2p)-p-1+(p+1)u+\lambda\\
&\geqslant2(p-1)-p-1=p-3\geqslant0.
\end{aligned}
$$
If $a_{p+1}(X)=1$ and $D_{p+1}(X)<0$, then $p\geqslant3$ and
$$0>D_{p+1}(X)=-2+(p-1)(N-2p)+(p+1)u+\lambda.$$
Since $N-2p,u,\lambda$ are nonnegative integers, we have
$$N=2p, \ \ u=0, \ \ \lambda\leqslant1.$$

For the second assertion, suppose that $N\geqslant7$ and
$4D_{p+1}(X)<(a_{p+1}(X)-1)(N+1)$.
If $a_{p+1}(X)=1$, then $D_{p+1}(X)<0$, which is the preceding case.
If $a_{p+1}(X)\geqslant3$, then
$$D_{p+1}(X)-L+N+1
=p(a_{p+1}(X)-3)+(p-2)(N-2p)+(p+2)u+\lambda\geqslant0.$$
If $a_{p+1}(X)=2$ and $u\geqslant1$, then
$$
\begin{aligned}
D_{p+1}(X)-L+N+1
&=-p+(p-2)(N-2p)+(p+2)u+\lambda\\
&\geqslant-p+(p+2)=2>0.
\end{aligned}
$$
It remains to consider $a_{p+1}(X)=2$ and $u=0$.
Since $D_{p+1}(X)=p-3+(p-1)(N-2p)+\lambda$, we have
$$4D_{p+1}(X)-(N+1)
=3N-21+4(p-2)(N-2p-1)+4\lambda.$$
If $N\geqslant2p+1$, then $N\geqslant7$ and $p\geqslant2$ give
$$4D_{p+1}(X)-(N+1)\geqslant0,$$
contrary to $4D_{p+1}(X)<N+1$. Thus,
$$N=2p\geqslant8, \ \ p\geqslant4, \ \
\lambda<\frac{13-2p}{4}\leqslant\frac54.$$
Since $\lambda\in\mathbb Z$, we have $\lambda\leqslant1$.

In both remaining cases, we have
$$
\begin{gathered}
a_{p+1}(X)\in\{1, 2\}, \ \
p\geqslant a_{p+1}(X)+2, \ \
N=2p, \ \ \lambda\leqslant1,\\
L=4p-a_{p+1}(X), \ \
|(p+1)^{\wedge}X|=2p(p+1)+1-a_{p+1}(X).
\end{gathered}
$$
Since
$$
\begin{gathered}
0=x_0\leqslant x_1-1\leqslant\cdots\leqslant x_p-p,\\
0=L-x_{N+1}\leqslant L-x_N-1\leqslant\cdots
\leqslant L-x_{N+1-p}-p,\\
\sum_{i=0}^{p}(x_i-i)
+\sum_{i=0}^{p}(L-x_{N+1-i}-i)=\lambda\leqslant1,
\end{gathered}
$$
we have
$$x_i=L-x_{N+1-i}=i, \ \ 0\leqslant i\leqslant p-1,$$
$$x_p-p+(L-x_{N+1-p}-p)=\lambda.$$
Since reflection $X\mapsto L-X$ interchanges $P$ and $C$, we may assume that
$$P=[0, p], \ \ C=[0, p-1]\cup\{p+\lambda\}.$$
Since $N=2p$, we have
$$X=P\cup(L-C), \ \ P\cap(L-C)=\emptyset.$$
Since
$$
\begin{gathered}
0\leqslant\lambda\leqslant1\leqslant p-1,\\
L-p=3p-a_{p+1}(X)\geqslant2(p+1),\\
\min(L-C)=L-p-\lambda\geqslant2p+1\geqslant p+1,
\end{gathered}
$$
by Lemma \ref{lem:two-block-completion}, with $r=p+1$ and $t=L-p$, we have
$$|(p+1)^{\wedge}X|\geqslant2p(p+1)+1+\lambda
>2p(p+1)+1-a_{p+1}(X),$$
a contradiction.

This completes the proof of Lemma \ref{lem:inner-cardinality-barrier}.
\end{proof}

\section{Sums of three distinct integers}\label{sec:threefold-sums}

\begin{lem}
\label{lem:six-small-pair-inverse}
Let $0\in B\subseteq\mathbb Z_L$ be a set of five elements such that $\langle B\rangle=\mathbb Z_L$. Then
$$|2^{\wedge}B|\geqslant\min\{L, 7\}. $$
Let $F$ be a primitive set of six integers. Then
$$|2^{\wedge}F|\geqslant 4+\min\{l(F), 7\}, \ \ |3^{\wedge}F|\geqslant 5+\min\{l(F), 7\}. $$
Moreover, if $|2^{\wedge}F|\leqslant 10$, then
$$l(F)=|2^{\wedge}F|-4\in\{5, 6\}. $$
\end{lem}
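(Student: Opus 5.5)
The proof has three parts: a cyclic-group bound, an integer bound for $2^{\wedge}F$, and the $3^{\wedge}F$ bound obtained by lifting through Lemma \ref{lem:endpoint-lifting}.

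\emph{The cyclic bound.} Write $b=5$ and assume $|2^{\wedge}B|\leqslant 6<L$. By Lemma \ref{cyc:lem:strict-cyclic-layer} we already have $|2^{\wedge}B|\geqslant 6$ unless $B$ is a coset of a subgroup. A coset containing $0$ is a subgroup, and then $B=\mathbb Z_L$ with $L=5$, where $|2^{\wedge}B|=5=\min\{L,7\}$. So suppose $|2^{\wedge}B|=6$. We count pairs: since $\binom52=10=6+\sum_{s\in\mathcal P(B)}(\mu_B(s)-1)$ and $\mu_B\leqslant 2$, we get $|\mathcal P(B)|=4$. Every popular sum $s$ splits $B$ as $\{x,s-x\}\cup\{y,s-y\}\cup\{u_s\}$, so $s-(B\setminus\{u_s\})=B\setminus\{u_s\}$. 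With four such symmetries of a five-element set, we reuse the argument of Lemma \ref{cyc:lem:aperiodic-popular-endpoint} (the case $b=2t+1$, $t=2$). Alternatively, we first handle periodic $B$: the stabilizer order divides $5$, so only the trivial case $B=\mathbb Z_5$ arises. We then run Kneser on $2^{\wedge}B\supseteq$ the aperiodic configuration. The aim is to force $B$ to be an arithmetic progression $d\cdot[0,4]$ up to translation, which gives $|2^{\wedge}B|=\min\{L,7\}$ and a contradiction. I expect this small-case structure step to be the main obstacle, and a finite check of the possible $\mu$-patterns may be unavoidable.

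\emph{The integer bound for $2^{\wedge}F$.} Normalize $F=\{0,L\}\cup J$ with $|J|=4$ and $l=l(F)$. By Lemma \ref{lem:endpoint-lifting} with $p=1$,
$$|2^{\wedge}F|\geqslant |J|+|U_1(C)|,\qquad C=\{0\}\cup\pi_L(J),\ |C|=5,$$
and $U_1(C)=C\cup 2^{\wedge}C\supseteq 2^{\wedge}C$. The cyclic part then gives $|2^{\wedge}F|\geqslant 4+\min\{l,7\}$. For the equality statement, suppose $|2^{\wedge}F|\leqslant 10$. If $l\geqslant 7$ the bound already gives $11$, a contradiction, so $l\leqslant 6$. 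Since $l\geqslant 5$ always holds, we have $l\in\{5,6\}$. In this dense range Theorem \ref{thm:freiman-lev-twofold}-type reasoning, or directly Lemma \ref{lem:dense-strong-endpoint-deletion}(3) (where $l\leqslant 2n-5=7$), gives $|2^{\wedge}F|\geqslant l+n-2=l+4$. What remains is to show $|2^{\wedge}F|\leqslant l+4$. Combined with the hypothesis, it suffices to rule out $|2^{\wedge}F|$ strictly between $l+4$ and $10$, which only matters for $l=5$ with $|2^{\wedge}F|=10$. Here $F=[0,6]\setminus\{x\}$ or $F=[0,5]$, and for $F=[0,5]$ we have $2^{\wedge}F=[1,9]$, which has size $9=l+4$. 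For $l=6$, both sides equal $10$. So I would state the conclusion as $l(F)=|2^{\wedge}F|-4$ after checking the handful of sets with $l=6$ (seven choices of the omitted point).

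\emph{The bound for $3^{\wedge}F$.} We use Lemma \ref{lem:endpoint-lifting} with $p=2$:
$$|3^{\wedge}F|\geqslant |2^{\wedge}J|+|U_2(C)|\geqslant 5+|U_2(C)|,$$
since $|2^{\wedge}J|\geqslant 2\cdot2+1=5$ by Theorem \ref{thm:nathanson-restricted-minimum}. Then $|U_2(C)|\geqslant|2^{\wedge}C|\geqslant\min\{l,7\}$ by the cyclic part, which finishes the proof.
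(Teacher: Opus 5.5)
Your treatment of $F$ matches the paper's. You use Lemma~\ref{lem:endpoint-lifting} with $p=1$ and $p=2$, Theorem~\ref{thm:nathanson-restricted-minimum} for $|2^{\wedge}J|\geqslant5$, and the squeeze that forces $l(F)\in\{5,6\}$. There are two small slips. For $l(F)=5$ the only possible set is $[0,5]$, since the sets $[0,6]\setminus\{x\}$ have span $6$. For $l(F)=6$ no case check is needed, because $10\geqslant|2^{\wedge}F|\geqslant4+6$ already forces equality.

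All of this rests on the cyclic bound $|2^{\wedge}B|\geqslant\min\{L,7\}$, and that is where the genuine gap is. You correctly reduce, via Lemma~\ref{cyc:lem:strict-cyclic-layer}, to the case $L\geqslant7$, $|2^{\wedge}B|=6$, $|\mathcal P(B)|=4$, but you never derive a contradiction there. Reusing Lemma~\ref{cyc:lem:aperiodic-popular-endpoint} cannot help. That lemma is stated for $b\geqslant6$, and it only proves $|\mathcal P(B)|\geqslant b-1$ by refuting $|\mathcal P(B)|\leqslant2t-1$; you already have $|\mathcal P(B)|=4=b-1$. Forcing $B=d\cdot[0,4]$ would be an inverse theorem you do not prove. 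The proposed Kneser step is never specified, and $2^{\wedge}B$ is not itself of the form $X+Y$.

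The missing idea is to pass to $3^{\wedge}B$ and apply Kneser to $B+\mathcal P(B)$. Since $|B|=5$, complementation gives $3^{\wedge}B=\sum_{x\in B}x-2^{\wedge}B$, so $|3^{\wedge}B|=|2^{\wedge}B|\leqslant6<L$. Lemma~\ref{lem:disjoint-pairs} gives $B+\mathcal P(B)\subseteq3^{\wedge}B$.

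The paper then observes that $B$ is aperiodic: its stabilizer order divides $5$, and $B\neq\mathbb Z_L$. With $|\mathcal P(B)|\geqslant b-1$, Proposition~\ref{cyc:pro:equality-popular-period}(2) forces $|3^{\wedge}B|=2b-2=8$, a contradiction.

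Alternatively, argue directly with Lemma~\ref{thm:Kneser}; this route does not even need aperiodicity. Put $H=\operatorname{Stab}(B+\mathcal P(B))$. If $B\subseteq H$, then $0\in B$ and $\langle B\rangle=\mathbb Z_L$ give $B+\mathcal P(B)=\mathbb Z_L$, which has size $L\geqslant7$. Otherwise $|B+H|\geqslant\max\{5,2|H|\}$ and $|\mathcal P(B)+H|\geqslant\max\{4,|H|\}$, both multiples of $|H|$. Checking $|H|=1,2,3,4,5$ and $|H|\geqslant6$ gives $|B+\mathcal P(B)|\geqslant8>6$ in every case.
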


\begin{proof}
If $L\leqslant 6$, then for every $z\in\mathbb Z_L$, we have
$$|B\cap(z-B)|\geqslant10-L\geqslant4, \ \ |\{x\in\mathbb Z_L:2x=z\}|\leqslant2.$$
Thus, there exists $x\in B\cap(z-B)$ such that $x\neq z-x$, and
$$z=x+(z-x)\in2^{\wedge}B.$$
Hence, $2^{\wedge}B=\mathbb Z_L$.

Suppose that $L\geqslant7$ and $|2^{\wedge}B|\leqslant6$.
If $B$ is periodic, then by $|\operatorname{Stab}(B)|\mid|B|=5$,
$0\in B$ and $\langle B\rangle=\mathbb Z_L$, we have
$$|\operatorname{Stab}(B)|=5, \ \ B=\operatorname{Stab}(B), \ \ \mathbb Z_L=\langle B\rangle=B,$$
contrary to $L\geqslant7$. Thus, $B$ is aperiodic.
By Lemma \ref{lem:disjoint-pairs}, we have
$$\mu_B(s)\leqslant\left\lfloor\frac{|B|}{2}\right\rfloor=2, \ \ s\in\mathbb Z_L.$$
Thus, $\mu_B(s)=1$ for $s\in2^{\wedge}B\setminus\mathcal P(B)$ and $\mu_B(s)=2$ for $s\in\mathcal P(B)$.
Hence,
$$
\begin{aligned}
10=\binom{|B|}{2}
&=\sum_{s\in2^{\wedge}B}\mu_B(s)\\
&=|2^{\wedge}B\setminus\mathcal P(B)|+2|\mathcal P(B)|\\
&=|2^{\wedge}B|+|\mathcal P(B)|.
\end{aligned}
$$
Since $3^{\wedge}B=\sum_{x\in B}x-2^{\wedge}B$, we have
$$|\mathcal P(B)|\geqslant4, \ \ |3^{\wedge}B|=|2^{\wedge}B|\leqslant6<L.$$
By Proposition \ref{cyc:pro:equality-popular-period} (2), we have
$$8=|3^{\wedge}B|\leqslant6,$$
a contradiction.

By translation, we may assume that $F\subseteq[0,L]$, where $0,L\in F$ and $L=l(F)\geqslant5$.
Put
$$I=F\setminus\{0,L\}, \ \ B=\pi_L(F\setminus\{L\}).$$
Since $F$ is primitive and $|I|=4$, by Lemma \ref{lem:endpoint-lifting}, we have
$$|B|=5, \ \ \langle B\rangle=\mathbb Z_L.$$
By Lemma \ref{lem:endpoint-lifting}, with $p\in\{1,2\}$, and Theorem \ref{thm:nathanson-restricted-minimum}, we have
$$
\begin{aligned}
|2^{\wedge}F|
&\geqslant4+|B\cup2^{\wedge}B|\geqslant4+\min\{L,7\},\\
|3^{\wedge}F|
&\geqslant|2^{\wedge}I|+|2^{\wedge}B\cup3^{\wedge}B|\geqslant5+\min\{L,7\}.
\end{aligned}
$$

If $|2^{\wedge}F|\leqslant10$, then $5=|F|-1\leqslant L\leqslant6$.
For $L=5$, we have
$$F=[0,5], \ \ 2^{\wedge}F=[1,9], \ \ |2^{\wedge}F|=9.$$
For $L=6$, we have
$$10\geqslant|2^{\wedge}F|\geqslant4+L=10.$$
Thus,
$$L=|2^{\wedge}F|-4\in\{5,6\}.$$

This completes the proof of Lemma \ref{lem:six-small-pair-inverse}.
\end{proof}

\begin{lem}
\label{lem:six-point-mixed-triple-mechanism}
Let $J$ be a primitive set of six integers such that $\min J=0$, $\max J=\ell$ and $W=l(3^{\wedge}J)$. If
$$8\leqslant\ell\leqslant 10, \ \ |3^{\wedge}J|=12, \ \ W>\ell+8,$$
then
$$J=[0,2]\cup[\ell-2,\ell].$$
\end{lem}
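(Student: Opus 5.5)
We work with $J\subseteq[0,\ell]$, $0,\ell\in J$, and write $J=\{0,\ell\}\cup I$ with $I=\{y_1<y_2<y_3<y_4\}\subseteq[1,\ell-1]$.

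\textbf{Step 1: splitting $3^{\wedge}J$.} By Lemma \ref{lem:endpoint-lifting} with $p=2$,
$$3^{\wedge}J=3^{\wedge}I\cup2^{\wedge}I\cup(\ell+2^{\wedge}I)\cup(\ell+1^{\wedge}I),$$
and
$$12=|3^{\wedge}J|\geqslant|2^{\wedge}I|+|U_2(C)|,\qquad C=\{0\}\cup\pi_\ell(I),\ |C|=5,\ \langle C\rangle=\mathbb Z_\ell.$$
The same lemma also gives
$$W=\ell+l(2^{\wedge}I)=\ell+y_3+y_4-y_1-y_2.$$
So the hypothesis $W>\ell+8$ reads $(y_4-y_2)+(y_3-y_1)\geqslant9$.

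\textbf{Step 2: bounding the two terms.}
\begin{itemize}
\item By Theorem \ref{thm:nathanson-restricted-minimum}, $|2^{\wedge}I|\geqslant5$, with equality only when $I$ is an arithmetic progression.
\item By the first part of Lemma \ref{lem:six-small-pair-inverse}, $|U_2(C)|\geqslant|2^{\wedge}C|\geqslant\min\{\ell,7\}=7$, since $\ell\geqslant8$.
\end{itemize}
Together these give $|2^{\wedge}I|+|U_2(C)|\geqslant12$, so every inequality is tight:
$$|2^{\wedge}I|=5,\qquad |U_2(C)|=7,\qquad \rho=0.$$

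\textbf{Step 3: the shape of $I$.} From $|2^{\wedge}I|=5$, the set $I$ is an arithmetic progression $y_1+d\cdot[0,3]$. The width condition of Step 1 becomes $4d\geqslant9$, hence $d\geqslant3$. On the other hand $3d=y_4-y_1\leqslant\ell-2\leqslant8$, so $d\leqslant2$ for $\ell\leqslant8$ (impossible). For $\ell\in\{9,10\}$ the only possibility is $d=3$ with $y_1=1$, or, when $\ell=10$, $y_1\in\{1,2\}$.

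This conflicts with the claimed conclusion $J=[0,2]\cup[\ell-2,\ell]$, whose inner set $\{1,2,\ell-2,\ell-1\}$ is not an arithmetic progression. So the tightness argument must be wrong somewhere: either Lemma \ref{lem:endpoint-lifting} does not apply as used, or the $U_2$ bound is off. The likely culprit is the count. Lemma \ref{lem:endpoint-lifting} gives $|(p+1)^{\wedge}E|\geqslant|p^{\wedge}J|+|U_p(C)|$ only in the form $|3^{\wedge}J|\geqslant|2^{\wedge}I|+|U_2(C)|$. Checking the claimed extremal set: $I=\{1,2,\ell-2,\ell-1\}$ gives $|2^{\wedge}I|=6$, while $U_2(C)$ with $C=\{0,1,2,-2,-1\}$ has $2^{\wedge}C\cup3^{\wedge}C$ of size $7$. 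That totals $13>12$. So the gap must come from $\rho$ being counted differently, or $|3^{\wedge}J|$ is simply smaller than the bound predicts. I would therefore recheck the parametrisation before proceeding.

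\textbf{Fallback plan: direct case analysis.} Work from the width condition
$$(y_4-y_2)+(y_3-y_1)\geqslant9,$$
together with $|3^{\wedge}J|=12$. Two bounds drive the analysis:
\begin{itemize}
\item By Lemma \ref{lem:six-small-pair-inverse}, $|3^{\wedge}J|\geqslant5+\min\{\ell,7\}=12$, so the bound is tight.
\item Any sum landing outside the windows forces extra elements.
\end{itemize}
Concretely, count the disjoint blocks
$$3^{\wedge}I\cup2^{\wedge}I \quad(\text{sums in }[3,3\ell-3]\text{ lying below }\ell+y_1+y_2)$$
and
$$\ell+2^{\wedge}I,\qquad \ell+I.$$
A large spread of $2^{\wedge}I$ pushes the low and high families apart. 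This forces $I$ to cluster near both endpoints, with $y_1,y_2$ small and $y_3,y_4$ near $\ell$. One then enumerates the few remaining configurations with $\ell\leqslant10$.

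\textbf{Main obstacle.} The hardest step is exhausting the finitely many configurations with $8\leqslant\ell\leqslant10$ efficiently. I would first reduce by the reflection $J\mapsto\ell-J$, and then check the remaining configurations by hand.
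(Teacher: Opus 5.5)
Your Steps 1 and 2 are correct and match how the paper begins. Lemma \ref{lem:endpoint-lifting}, Theorem \ref{thm:nathanson-restricted-minimum} and Lemma \ref{lem:six-small-pair-inverse} give $12\geqslant|2^{\wedge}I|+|U_2(C)|\geqslant5+7$, so $|2^{\wedge}I|=5$, and $W=\ell+l(2^{\wedge}I)$.

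The proof breaks in Step 3. The inverse half of Theorem \ref{thm:nathanson-restricted-minimum} requires $k\geqslant5$, but here $|I|=4$. For $y_1<y_2<y_3<y_4$ one has $y_1+y_2<y_1+y_3<\min\{y_1+y_4,y_2+y_3\}$ and $\max\{y_1+y_4,y_2+y_3\}<y_2+y_4<y_3+y_4$. So $|2^{\wedge}I|=5$ says only that $y_1+y_4=y_2+y_3$, not that $I$ is an arithmetic progression.

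Your sanity check is also miscomputed: $2^{\wedge}\{1,2,\ell-2,\ell-1\}=\{3,\ell-1,\ell,\ell+1,2\ell-3\}$ has five elements. So $5+7=12$ is consistent with the claimed extremal set. Even within the AP branch, $3d\leqslant\ell-2\leqslant8$ gives $d\leqslant2$ for every $\ell\leqslant10$.

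Having diagnosed a contradiction that does not exist, you drop a correct line of argument. Your fallback's decisive step, ``enumerate the remaining configurations'', is never carried out, so as written there is no proof.

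The missing idea is how to use $y_1+y_4=y_2+y_3$. It gives $l(2^{\wedge}I)=2(y_3-y_1)$, so $W-\ell$ is even. With $W-\ell>8$ and $\ell\leqslant10$ this yields $W\geqslant\ell+10\geqslant2\ell$.

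Now let $P$ and $Q$ be the three smallest and three largest elements of $J$, and put $R=Q-P\subseteq[1,\ell]$, so $\max R=\ell$. Using $2^{\wedge}P=\sum P-P$ and $2^{\wedge}Q=\sum Q-Q$, you get $3^{\wedge}J-\sum P=\{0\}\cup R\cup(W-R)\cup\{W\}$ with $W=\sum Q-\sum P$.

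If $W=2\ell$, then $R\cap(W-R)=\{\ell\}$ and $|3^{\wedge}J|=2|R|+1$ is odd, which is impossible. So $W>2\ell$, the four pieces are disjoint, and $|R|=5=|P|+|\ell-Q|-1$.

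Apply the inverse theorem for $|A+B|=|A|+|B|-1$ \cite[Theorem 1.5]{Nathanson1996} to $P+(\ell-Q)=\ell-R$. It gives $P=\ell-Q=a\cdot[0,2]$, hence $W=3\ell-6a$. Then $W>\ell+8$ with $\ell\leqslant10$ forces $a=1$, i.e.\ $J=[0,2]\cup[\ell-2,\ell]$.

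This is the paper's argument. A finite enumeration could in principle replace it, but only if you actually perform it.
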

\begin{proof}
Put
$$I=J\setminus\{0,\ell\}, \ \ B=\pi_\ell(J\setminus\{\ell\}).$$
Since $J$ is primitive and $|I|=4$, by Lemma \ref{lem:endpoint-lifting}, with $p=2$, we have
$$|B|=5, \ \ \langle B\rangle=\mathbb Z_\ell.$$
By Lemmas \ref{lem:endpoint-lifting} and \ref{lem:six-small-pair-inverse}, and Theorem \ref{thm:nathanson-restricted-minimum}, we have
$$12\geqslant|2^{\wedge}I|+|2^{\wedge}B\cup3^{\wedge}B|\geqslant5+7=12.$$
Thus, $|2^{\wedge}I|=5$.
Write $I=\{x_0,x_1,x_2,x_3\}$, where $x_0<x_1<x_2<x_3$.
Since
$$
\begin{aligned}
x_0+x_1&<x_0+x_2<\min\{x_0+x_3,x_1+x_2\},\\
\max\{x_0+x_3,x_1+x_2\}&<x_1+x_3<x_2+x_3,
\end{aligned}
$$
we have $x_0+x_3=x_1+x_2$. Hence,
$$W-\ell=l(2^{\wedge}I)=x_2+x_3-x_0-x_1=2(x_2-x_0).$$
Since $W-\ell>8$ is even and $\ell\leqslant10$, we have
$$W-\ell\geqslant10\geqslant\ell, \ \ W\geqslant2\ell.$$

Write $J=P\cup Q$, where $|P|=|Q|=3$ and $\max P<\min Q$, and put $R=Q-P$.
Then $W=\sum Q-\sum P$ and
$$
\begin{aligned}
3^{\wedge}J-\sum P
&=\bigcup_{j=0}^3\left((3-j)^{\wedge}P+j^{\wedge}Q-\sum P\right)\\
&=\{0\}\cup R\cup(W-R)\cup\{W\}.
\end{aligned}
$$
Since $0<\min R\leqslant\max R=\ell<W$, we have
$$\{0,W\}\cap(R\cup(W-R))=\emptyset.$$
If $W=2\ell$, then
$$R\cap(W-R)=\{\ell\}, \ \ 12=2+2|R|-1=2|R|+1,$$
a contradiction. Thus, $W>2\ell$ and
$$\max R=\ell<W-\ell=\min(W-R), \ \ 12=2+2|R|, \ \ |R|=5.$$

Since $|P|=|\ell-Q|=3$, $\min P=\min(\ell-Q)=0$ and
$$|P+(\ell-Q)|=|\ell-R|=5=|P|+|\ell-Q|-1,$$
by \cite[Theorem 1.5]{Nathanson1996}, there exists a positive integer $a$ such that
$$P=\ell-Q=a\cdot[0,2].$$
Thus,
$$J=a\cdot[0,2]\cup(\ell-2a+a\cdot[0,2]), \ \ W=\sum Q-\sum P=3\ell-6a.$$
Since $W>\ell+8$ and $\ell\leqslant10$, we have
$$3a+4<\ell\leqslant10, \ \ 1\leqslant a<2,$$
thus, $a=1$ and $J=[0,2]\cup[\ell-2,\ell]$.

This completes the proof of Lemma \ref{lem:six-point-mixed-triple-mechanism}.
\end{proof}

\begin{lem}\label{lem:seminar-eight-weak-defect}
Let $X\subseteq[0, L]$ be a primitive set of eight integers such that $0, L\in X$.
If $|4^{\wedge}X|\leqslant24$, then
$$D_4(X)\geqslant0.$$
\end{lem}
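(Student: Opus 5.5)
We write $X=\{0,L\}\cup Y$ with $Y=X\setminus\{0,L\}\subseteq[1,L-1]$, so that $|Y|=6$, and put $C=\{0\}\cup\pi_L(Y)$. We apply Lemma \ref{lem:endpoint-lifting} and Corollary \ref{lem:exact-endpoint-transfer} with $N=6$ and $p=3$, and set $v=|U_3(C)|$. This gives
$$|4^{\wedge}X|=|3^{\wedge}Y|+v+\rho,\qquad D_4(X)=D_3(Y)+2v+2\rho-L-7 .$$
Since $X$ is primitive, $C$ generates $\mathbb Z_L$ and $|C|=7$. Theorem \ref{thm:r4-coarse-cyclic-all-orders} applies with $b=7$ and $m=3$, and gives $v\geqslant\min\{L,12\}$. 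Theorem \ref{thm:nathanson-restricted-minimum} gives $|3^{\wedge}Y|\geqslant10$. The hypothesis $|4^{\wedge}X|\leqslant24$ then forces $|3^{\wedge}Y|+\min\{L,12\}\leqslant24$, which leaves only a few configurations. So the task is to show $D_3(Y)\geqslant L+7-2v-2\rho$ in each of them.

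\emph{Case $v=L$.} The target becomes $D_3(Y)\geqslant 7-L-2\rho$. Here we control $D_3(Y)=2|3^{\wedge}Y|-l(3^{\wedge}Y)-11$ through the six-point lemmas. By complementation $3^{\wedge}Y=\sum Y-3^{\wedge}Y$, and Lemma \ref{lem:six-small-pair-inverse} gives $|3^{\wedge}Y|\geqslant5+\min\{l(Y),7\}$ after normalizing $Y$ by $d(Y)$. We also use the trivial bound $l(3^{\wedge}Y)=3\,l(Y)-(\text{gaps})$ together with $l(Y)\leqslant L-2$. If $|3^{\wedge}Y|$ is large relative to $l(3^{\wedge}Y)$, the inequality is immediate. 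The critical subcase is $|3^{\wedge}Y|=12$ with $l(3^{\wedge}Y)$ large. There we invoke Lemma \ref{lem:six-point-mixed-triple-mechanism}, which forces $Y$ (translated) to be $[0,2]\cup[\ell-2,\ell]$. For that explicit two-block shape we compute $4^{\wedge}X$ directly, using \eqref{eq:unit-interval-restricted-ranks} or Lemma \ref{lem:two-block-completion}, and check that either $|4^{\wedge}X|>24$ or $D_4(X)\geqslant0$. If $d(Y)>1$, then Proposition \ref{pro:one-residue-class}-type parity counting gives $|4^{\wedge}X|$ large, contradicting $|4^{\wedge}X|\leqslant 24$.

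\emph{Case $v<L$.} Then $v\geqslant12$, hence $L\geqslant13$ and $|3^{\wedge}Y|\leqslant12$. We would like to use Theorem \ref{thm:strict-critical-cyclic-surplus} to get $v\geqslant13$. It requires $L\geqslant2N+2=14$ and $|3^{\wedge}Y|<2\cdot3\cdot3-6+2=14$, which holds here; the boundary value $L=13$ must be treated directly. For $L\geqslant14$ this yields $v\geqslant13$ and hence $|3^{\wedge}Y|\leqslant11$. Now $|3^{\wedge}Y|\in\{10,11\}$ makes $Y$ an arithmetic progression or nearly one, by the equality case in Theorem \ref{thm:nathanson-restricted-minimum} and a short extension. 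When $Y$ is an arithmetic progression of difference $d$, we have $D_3(Y)=9-9d$. Then $D_4(X)=9-9d+2v+2\rho-L-7$, and we must bound $L$ from above by roughly $2v$. For this we use $|4^{\wedge}X|\leqslant24$: it gives $v\leqslant14-\rho$ when $|3^{\wedge}Y|=10$. We also use that $\pi_L(Y)$ is a progression generating $\mathbb Z_L$, so $v$ is explicitly computable as the number of residues of $d\cdot[\,\cdot\,]$-intervals.

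The main obstacle is this last inequality when $d\geqslant2$ or $L$ is large. Here $v$ counts residues of a union of three intervals of multiples of $d$ shifted by $0$, and $L$ can be as large as about $2v$. I would first try to show that $d\geqslant2$ forces $v\geqslant L$ or $|4^{\wedge}X|>24$ by a direct residue count. If that fails, I would fall back on an explicit finite enumeration of the few admissible $(d,L,\min Y)$: these are bounded because $|4^{\wedge}X|\leqslant24$ caps both $v$ and $L$.
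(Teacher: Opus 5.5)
Your $v=L$ branch ends the same way the paper's proof does. For $|3^{\wedge}Y|=12$ you get $8\leqslant l(Y)\leqslant10$ and $l(3^{\wedge}Y)>l(Y)+8$. Lemma \ref{lem:six-point-mixed-triple-mechanism} then gives the two-block shape, and Lemma \ref{lem:two-block-completion} finishes.

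One loose end remains in that branch. The comparison is not ``immediate'' when $|3^{\wedge}Y|=13$: there the failure of $D_4(X)\geqslant0$ forces $L=11$, $\rho=0$, $l(Y)=9$ and $l(3^{\wedge}Y)\geqslant20$. You would have to check by hand that the three six-point sets with these parameters all have $|3^{\wedge}Y|\in\{12,14\}$. The paper disposes of every case $|3^{\wedge}Y|\geqslant13$ at once with Lemma \ref{lem:inner-cardinality-barrier} at $(p,N)=(3,6)$, since $a_4(X)=25-|4^{\wedge}X|\geqslant1$.

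The genuine gap is the $v<L$ branch, which you leave open. Two of your obstacles are not real.
\begin{itemize}
\item The case $d(Y)>1$ is excluded for every $L$ by Proposition \ref{pro:one-residue-class} with $(k,h)=(8,4)$, which gives $|4^{\wedge}X|\geqslant25$ directly. No adapted parity count is needed.
\item For $L=13$, which is prime, Lemma \ref{lem:group-restricted} gives $v\geqslant|4^{\wedge}C|\geqslant\min\{13,4\cdot3+1\}=13=L$, so $v<L$ cannot occur.
\end{itemize}

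What is actually missing is a lower bound on $v$, uniform in $L$ and $\min Y$, when $Y$ is a translate of $[0,5]$ or of $[0,6]$ with one interior point removed. Your fallback enumeration assumes that $|4^{\wedge}X|\leqslant24$ bounds $L$ and $\min Y$. It only bounds $v$, and bounding $L$ is exactly what such a projection estimate would do, so the plan is circular as written.

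The paper supplies this estimate through Lemma \ref{lem:dense-low-adjacent-projection} with $m=3$, $n=6$ and $g=l(Y)-5\leqslant1$, which gives $v\geqslant\min\{L,21-l(Y)\}$. Lemma \ref{lem:six-small-pair-inverse} gives $|3^{\wedge}Y|\geqslant l(Y)+5$. Together, $v<L$ would force $|4^{\wedge}X|\geqslant26$. So the case $v<L$ is empty, and trying to prove $L\leqslant 2v+2\rho+2$ there aims at the wrong target. This is why the paper never splits on $v$ versus $L$: it splits on $|3^{\wedge}Y|$ and $l(Y)$, and $v=L$ falls out automatically once $L\leqslant12$.
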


\begin{proof}
Suppose that $D_4(X)<0$.
Write
$$X=\{0,L\}\cup(u+J), \ \ \min J=0, \ \ \max J=\ell, \ \
u,v\geqslant1, \ \ L=u+\ell+v,$$
and put $Y=u+J$.
If $d(J)>1$, then by Proposition \ref{pro:one-residue-class},
with $(k,h)=(8,4)$, we have
$$24\geqslant|4^{\wedge}X|\geqslant2\cdot4(8-4)-8+1=25,$$
a contradiction. Thus, $J$ is primitive.

If $|3^{\wedge}J|\geqslant13$, then
$$|3^{\wedge}Y|=|3^{\wedge}J|
\geqslant13=2\cdot3(6-3)-6+1, \ \
a_4(X)=25-|4^{\wedge}X|\geqslant1.$$
By Lemma \ref{lem:inner-cardinality-barrier},
with $(p,N)=(3,6)$, we have $D_4(X)\geqslant0$,
a contradiction. Hence, $|3^{\wedge}J|\leqslant12$.

Put
$$B=\{0\}\cup\pi_L(Y), \ \
U=\pi_L(2^{\wedge}Y\cup3^{\wedge}Y\cup4^{\wedge}Y).$$
Since $X$ is primitive and $|Y|=6$,
by Lemma \ref{lem:endpoint-lifting}, with $p=3$, we have
$$|B|=7, \ \ \langle B\rangle=\mathbb Z_L, \ \
U=3^{\wedge}B\cup4^{\wedge}B.$$
By Lemma \ref{lem:endpoint-lifting} and
Theorem \ref{thm:r4-coarse-cyclic-all-orders}, we have
$$|4^{\wedge}X|\geqslant|3^{\wedge}J|+|U|
\geqslant|3^{\wedge}J|+\min\{L,12\},$$
$$D_4(X)=2|4^{\wedge}X|-L-l(3^{\wedge}J)-18.$$
Since $3^{\wedge}J\subseteq[3,3\ell-3]$, we have
$$l(3^{\wedge}J)\leqslant3\ell-6.$$

If $\ell\leqslant6$, then $|J|=6$ gives $5\leqslant\ell\leqslant6$.
By Lemma \ref{lem:six-small-pair-inverse}, we have
$$|3^{\wedge}J|\geqslant\ell+5.$$
Since $0\leqslant l(Y)-6+1=\ell-5\leqslant1=6-3-2$,
by Lemma \ref{lem:dense-low-adjacent-projection},
with $m=3$, $n=6$ and $I=Y$, we have
$$|U|\geqslant\min\{L,21-\ell\}.$$
Thus,
$$
\begin{aligned}
24\geqslant|4^{\wedge}X|
&\geqslant\ell+5+\min\{L,21-\ell\}\\
&=\min\{L+\ell+5,26\}.
\end{aligned}
$$
Hence,
$$L+\ell+5\leqslant24<26, \ \
|4^{\wedge}X|\geqslant L+\ell+5.$$
Since $L\geqslant\ell+2$, we have
$$
\begin{aligned}
D_4(X)
&\geqslant2(L+\ell+5)-L-(3\ell-6)-18\\
&=L-\ell-2\geqslant0,
\end{aligned}
$$
a contradiction. Thus, $\ell\geqslant7$.
By Lemma \ref{lem:six-small-pair-inverse}, we have
$$12\geqslant|3^{\wedge}J|\geqslant5+\min\{\ell,7\}=12,$$
thus, $|3^{\wedge}J|=12$.

If $L=13$, then by Lemma \ref{lem:group-restricted}, we have
$$|U|\geqslant|4^{\wedge}B|
\geqslant\min\{13,4(7-4)+1\}=13.$$
If $L\geqslant14=2\cdot6+2$, then by
$$|3^{\wedge}Y|=12<2\cdot3(6-3)-6+2=14$$
and Theorem \ref{thm:strict-critical-cyclic-surplus},
with $(p,N)=(3,6)$, we have $|U|\geqslant13$.
In both cases, we have
$$24\geqslant|4^{\wedge}X|
\geqslant|3^{\wedge}J|+|U|\geqslant12+13=25,$$
a contradiction. Thus, $L\leqslant12$ and
$$|4^{\wedge}X|\geqslant L+12, \ \
0>D_4(X)\geqslant L+6-l(3^{\wedge}J).$$
Hence,
$$3\ell-6\geqslant l(3^{\wedge}J)>L+6\geqslant\ell+8, \ \
\ell\leqslant L-2\leqslant10,$$
thus, $8\leqslant\ell\leqslant10$.
By Lemma \ref{lem:six-point-mixed-triple-mechanism}, we have
$$J=[0,2]\cup[\ell-2,\ell], \ \
l(3^{\wedge}J)=3\ell-6.$$

Since $3\ell-6>L+6$ and $L\leqslant12$, we have
$$3(u+v)=3(L-\ell)<2L-12\leqslant12,$$
thus,
$$2\leqslant u+v\leqslant3.$$
Since $\ell-J=J$, we have
$$L-X=\{0,L\}\cup(v+J), \ \ D_4(L-X)=D_4(X).$$
By reflection, we may assume that $u\leqslant v$. Then
$$u=1, \ \ v\in\{1,2\}, \ \ L=\ell+v+1$$
and
$$X=[0,3]\cup\bigl(\ell-1+\{0,1,2,v+2\}\bigr).$$
By Lemma \ref{lem:two-block-completion}, with $r=4$,
$\lambda=v-1\in[0,1]$ and $t=\ell-1\geqslant7$, we have
$$24\geqslant|4^{\wedge}X|
\geqslant2\min\{\ell-1,8\}+v+8.$$
Thus,
$$\min\{\ell-1,8\}\leqslant\frac{16-v}{2}<8, \ \
\ell\leqslant8.$$
Together with $\ell\geqslant8$, we have
$$\ell=8, \ \ |4^{\wedge}X|\geqslant2\ell+v+6.$$
Hence,
$$
\begin{aligned}
D_4(X)
&=2|4^{\wedge}X|-L-l(3^{\wedge}J)-18\\
&\geqslant2(2\ell+v+6)-(\ell+v+1)-(3\ell-6)-18\\
&=v-1\geqslant0,
\end{aligned}
$$
a contradiction.

This completes the proof of Lemma \ref{lem:seminar-eight-weak-defect}.
\end{proof}

\begin{cor}\label{cor:seven-below-eighteen}
If $E\subseteq\mathbb Z$ is primitive, $|E|=7$ and $|3^{\wedge}E|\leqslant 17$, then
$$D_3(E)\geqslant0.$$
\end{cor}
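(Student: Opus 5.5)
The plan is to derive this from Lemma \ref{lem:inner-cardinality-barrier} with $p=2$ and $N=5$. Translate $E$ so that $E\subseteq[0,L]$ with $0,L\in E$, and put $Y=E\setminus\{0,L\}$, so $|Y|=5=N$. Since $a_3(E)=2\cdot3\cdot4-7+1-|3^{\wedge}E|=18-|3^{\wedge}E|$, the hypothesis $|3^{\wedge}E|\leqslant17$ is exactly $a_3(E)\geqslant1$. The only other hypothesis to check is $|2^{\wedge}Y|\geqslant2\cdot2\cdot3-5+1=8$; once it holds, the lemma gives $D_3(E)\geqslant0$ directly.

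If $|2^{\wedge}Y|\leqslant7$, then Theorem \ref{thm:nathanson-restricted-minimum} (with $k=5$, $h=2$) forces $|2^{\wedge}Y|=7$ and $Y$ to be an arithmetic progression. Let $d$ be its common difference.
\begin{itemize}
\item If $d>1$, then $d(Y)>1$, and Proposition \ref{pro:one-residue-class} with $(k,h)=(7,3)$ gives $|3^{\wedge}E|\geqslant2\cdot3\cdot4-7+1=18$, a contradiction.
\item So $d=1$ and $Y=[u,u+4]$ with $1\leqslant u$ and $u+4\leqslant L-1$, hence $L\geqslant6$.
\end{itemize}

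For this interval case I will estimate $D_3(E)$ directly using Lemma \ref{lem:endpoint-lifting} with $p=2$ and $J=Y$. Put $C=\{0\}\cup\pi_L(Y)$; since $E$ is primitive, $C$ has six elements and generates $\mathbb Z_L$. The lemma gives
$$|3^{\wedge}E|\geqslant|2^{\wedge}Y|+|U_2(C)|, \ \ l(3^{\wedge}E)=L+l(2^{\wedge}Y)=L+6.$$
Hence $D_3(E)=2|3^{\wedge}E|-L-20$. Theorem \ref{thm:r4-coarse-cyclic-all-orders} applies with $b=6$ and $m=2=\lfloor5/2\rfloor$, giving $|U_2(C)|\geqslant\min\{L,10\}$, so $|3^{\wedge}E|\geqslant7+\min\{L,10\}$. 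I then split on $L$:
\begin{itemize}
\item If $6\leqslant L\leqslant10$, then $D_3(E)\geqslant2(7+L)-L-20=L-6\geqslant0$.
\item If $L\geqslant11$, the bound gives $|3^{\wedge}E|\geqslant17$, so $|3^{\wedge}E|=17$ and $D_3(E)=14-L$. This is nonnegative when $L=11$.
\item If $L\geqslant12=2N+2$, apply Theorem \ref{thm:strict-critical-cyclic-surplus} with $(p,N)=(2,5)$; its hypothesis holds because $|2^{\wedge}Y|=7<9=2\cdot2\cdot3-5+2$. It yields $|U_2(C)|\geqslant11$, hence $|3^{\wedge}E|\geqslant18$, contradicting $|3^{\wedge}E|\leqslant17$.
\end{itemize}

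The main obstacle is this last case $L\geqslant12$. The coarse cyclic bound only gives $|U_2(C)|\geqslant10$, which is too weak for large $L$. The strict surplus theorem is what closes the gap, and it applies only because the interval $Y$ is critical at order $2$. The other steps are routine checks of hypotheses.
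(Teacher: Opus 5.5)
Your reduction to the interval case $Y=[u,u+4]$ matches the paper's, and your subcases $6\leqslant L\leqslant10$ and $L=11$ are correct. The gap is the subcase $L\geqslant12$.

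Theorem~\ref{thm:strict-critical-cyclic-surplus} assumes $N\geqslant\max\{6,2p\}$, but here $N=|Y|=5$. This hypothesis is not cosmetic. The theorem's proof settles the equality case $|U_p(B)|=2N$ through Theorem~\ref{thm:r4-coarse-cyclic-inverse}, which needs $|B|=N+1\geqslant7$. For $N=5$ the conclusion is actually false. Take $L=12$ and $J=\{1,4,5,8,9\}$. Then $|2^{\wedge}J|=8<9$ and $B=\{0,1,4,5,8,9\}$ generates $\mathbb Z_{12}$. Yet $2^{\wedge}B\cup3^{\wedge}B=\mathbb Z_{12}\setminus\{7,11\}$ has only $10$ elements. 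Without this step, your bounds give only $|3^{\wedge}E|=17$ and $D_3(E)=14-L$ for $L\geqslant12$. That is negative once $L\geqslant15$, so the proof is incomplete.

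What is missing is an argument that uses the interval structure of $Y$ rather than a general cyclic estimate. The paper applies Lemma~\ref{lem:dense-low-adjacent-projection} with $m=2$, $n=5$, $g=0$, $I=Y$. This gives $|U_2(C)|=|\pi_L(Y\cup2^{\wedge}Y\cup3^{\wedge}Y)|\geqslant\min\{L,12\}$. Combined with Lemma~\ref{lem:endpoint-lifting}, it yields $17\geqslant|3^{\wedge}E|\geqslant7+\min\{L,12\}$, hence $L\leqslant10$. Your computation $D_3(E)\geqslant L-6\geqslant0$ then finishes the proof. The paper phrases this last step via Corollary~\ref{lem:exact-endpoint-transfer} as $D_3(E)\geqslant D_2(Y)+L-6$ with $D_2(Y)=0$. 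With this bound, neither the appeal to Theorem~\ref{thm:r4-coarse-cyclic-all-orders} nor the separate case $L=11$ is needed. Alternatively, you could check by hand that $\pi_L([u,u+4]\cup[2u+1,2u+7]\cup[3u+3,3u+9])$ has at least $\min\{L,12\}$ elements, but some such argument is required.
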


\begin{proof}
By translation, we may assume that $\min E=0$ and $\max E=\ell$.
Put $F=E\setminus\{0, \ell\}$. Then
$$|F|=5, \ \ F\subseteq[1, \ell-1].$$
If $|2^{\wedge}F|\geqslant8=2\cdot2(5-2)-5+1$, then by
$a_3(E)=18-|3^{\wedge}E|\geqslant1$ and
Lemma \ref{lem:inner-cardinality-barrier}, with $(p,N)=(2,5)$,
we have $D_3(E)\geqslant0$.

Suppose that $|2^{\wedge}F|\leqslant7$.
By Theorem \ref{thm:nathanson-restricted-minimum}, we have
$$|2^{\wedge}F|=7, \ \
F=\min F+d(F)\cdot[0, 4], \ \ d(F)\geqslant1.$$
If $d(F)\geqslant2$, then by Proposition \ref{pro:one-residue-class}, we have
$$17\geqslant|3^{\wedge}E|\geqslant18,$$
a contradiction. Thus, $d(F)=1$ and
$$F=\min F+[0, 4], \ \ l(F)-|F|+1=4-5+1=0.$$
By Lemma \ref{lem:dense-low-adjacent-projection},
with $m=2$, $n=5$, $g=0$ and $I=F$,
and Lemma \ref{lem:endpoint-lifting}, we have
$$
\begin{aligned}
17&\geqslant|3^{\wedge}E|\\
&\geqslant7+\left|\pi_\ell(F\cup2^{\wedge}F\cup3^{\wedge}F)\right|\\
&\geqslant7+\min\{\ell, 12\}.
\end{aligned}
$$
Thus,
$$\ell\leqslant10, \ \
\pi_\ell(F\cup2^{\wedge}F\cup3^{\wedge}F)=\mathbb Z_\ell.$$
Since $2^{\wedge}F=2\min F+[1, 7]$, we have
$$D_2(F)=2\cdot7-6-2(5-2)-2=0.$$
By $\ell\geqslant|E|-1=6$ and
Corollary \ref{lem:exact-endpoint-transfer}, we have
$$D_3(E)\geqslant D_2(F)+\ell-6=\ell-6\geqslant0.$$

This completes the proof of Corollary \ref{cor:seven-below-eighteen}.
\end{proof}

\begin{cor}\label{cor:seminar-seven-pair-elementary-floor}
Let $E\subseteq[0, w]$ be a primitive set of seven integers such that $0, w\in E$.
Then
$$|2^{\wedge}E|\geqslant5+\min\{w, 8\}.$$
\end{cor}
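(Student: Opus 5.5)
The plan follows the template of Lemma \ref{lem:six-small-pair-inverse}: strip the endpoints and lift via Lemma \ref{lem:endpoint-lifting} with $p=1$. Put
$$I=E\setminus\{0,w\},\qquad B=\pi_w(E\setminus\{w\})=\{0\}\cup\pi_w(I).$$
Then $|I|=5$, and $|B|=6$. Since $E$ is primitive, $\langle B\rangle=\mathbb Z_w$. Lemma \ref{lem:endpoint-lifting} with $p=1$ gives
$$|2^{\wedge}E|\geqslant|I|+|U_1(B)|=5+|B\cup2^{\wedge}B|.$$
So it suffices to show
$$|B\cup 2^{\wedge}B|\geqslant\min\{w,8\}$$
for a six-element set $0\in B\subseteq\mathbb Z_w$ generating $\mathbb Z_w$. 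Note that $w\geqslant6$.

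\textbf{Small moduli.} First I would handle $w\leqslant 8$ by a covering argument like the one in Lemma \ref{lem:six-small-pair-inverse}, aiming to show $B\cup 2^{\wedge}B=\mathbb Z_w$. For $z\in\mathbb Z_w$ we have $|B\cap(z-B)|\geqslant 12-w\geqslant 4$, while at most two $x$ satisfy $2x=z$. Hence some $x\in B\cap(z-B)$ has $x\neq z-x$, so $z\in 2^{\wedge}B$. This disposes of $w\leqslant 8$.

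\textbf{Large moduli, $w\geqslant 9$.} Here I need $|B\cup 2^{\wedge}B|\geqslant 8$, and it is enough to show $|2^{\wedge}B|\geqslant 8$. Suppose instead $|2^{\wedge}B|\leqslant 7$.

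If $B$ is aperiodic, then $|2^{\wedge}B|\leqslant 9$ and $b=6$, so case (1) of Lemma \ref{cyc:lem:aperiodic-popular-endpoint} gives $|\mathcal P(B)|\geqslant 5$. Since $\mu_B\leqslant 3$, Lemma \ref{lem:disjoint-pairs} gives
$$15=|2^{\wedge}B|+\sum_{s\in\mathcal P(B)}(\mu_B(s)-1).$$
I would then use complementation, $3^{\wedge}B=\sum B-3^{\wedge}B$, together with the bound $|3^{\wedge}B|\leqslant|B|\cdot|2^{\wedge}B|$, to get $|3^{\wedge}B|$ small. If that yields $|3^{\wedge}B|\leqslant 9<w$, Proposition \ref{cyc:pro:equality-popular-period}(2) forces $|3^{\wedge}B|=10$, a contradiction. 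This bound on $|3^{\wedge}B|$ is not automatic. I expect it is the main obstacle and would handle it by direct case analysis of the pair structure.

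A cleaner fallback replaces this step by a Kneser argument. Lemma \ref{cyc:lem:strict-cyclic-layer} gives $|2^{\wedge}B|\geqslant 7$ for aperiodic $B$. Equality $|2^{\wedge}B|=7$ together with $|\mathcal P(B)|\geqslant 5$ would make the pair sums concentrate on $\mu=3$ values, each of which is a symmetry centre $f$ with $f-B=B$. At most one such $f$ exists for aperiodic $B$, as in the proof of Lemma \ref{cyc:lem:aperiodic-popular-endpoint}. Counting then gives
$$15\leqslant 7+2+(|\mathcal P(B)|-1)\cdot 1,$$
so $|\mathcal P(B)|\geqslant 7$. Adding, $|2^{\wedge}B|+|\mathcal P(B)|\geqslant 14>|2^{\wedge}B\cup\ldots|$, which I would push to a contradiction.

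If $B$ is periodic, its stabiliser $H$ has $|H|\in\{2,3\}$, since $|H|=6$ would give $w=6$. Then:
\begin{itemize}
\item For $|H|=3$, write $B=H\cup(x+H)$. Then $2^{\wedge}B\supseteq H\cup(x+H)\cup(2x+H)$, which has $9$ elements once $w/3\geqslant3$.
\item For $|H|=2$, $B$ is a union of three $H$-cosets, and $2^{\wedge}B\supseteq\pi^{-1}(2^{\wedge}T)\cup B$ with $|T|=3$ and $w/2\geqslant5$. This gives at least $2\cdot\min\{w/2,5\}\geqslant 8$ elements, in the spirit of Theorem \ref{cyc:thm:pair-triple}.
\end{itemize}
In both periodic cases $|B\cup2^{\wedge}B|\geqslant 8$, which completes the proof.
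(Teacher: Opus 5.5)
\textbf{The aperiodic case is not proved.} Your reduction is sound as far as it goes. Lemma~\ref{lem:endpoint-lifting} with $p=1$ gives $|2^{\wedge}E|\geqslant 5+|B\cup 2^{\wedge}B|$, the covering argument settles $w\leqslant 8$, and the case $|\operatorname{Stab}(B)|=3$ is correct. The remaining case, $B$ aperiodic with $w\geqslant 9$, is the heart of the matter, and your proposal does not settle it.

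Your first route cannot work. The bound $|3^{\wedge}B|\leqslant|B|\,|2^{\wedge}B|$ only gives $42$, and $3^{\wedge}B=\sum B-3^{\wedge}B$ is a symmetry, not a size bound. So you never reach the hypothesis $|3^{\wedge}B|\leqslant 2b-2$ that Proposition~\ref{cyc:pro:equality-popular-period} needs.

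Your fallback correctly deduces three facts from $|2^{\wedge}B|=7$: $\mathcal P(B)=2^{\wedge}B$; there is exactly one $f$ with $\mu_B(f)=3$, so $B=f-B$; and the other six sums have $\mu_B=2$. But ``adding $|2^{\wedge}B|+|\mathcal P(B)|\geqslant 14$'' proves nothing, because $\mathcal P(B)\subseteq 2^{\wedge}B$.

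What is missing is a genuine inverse statement: an aperiodic $6$-set with $|2^{\wedge}B|=7$ lies in a coset of a subgroup of order $7$, so $\langle B-B\rangle=\mathbb Z_w$ forces $w=7$. This is true, but it is the entire content of the case. One way to prove it:
\begin{itemize}
\item Write $B=\{x_i,f-x_i\}_{i=1}^{3}$. The twelve pairs other than the three with sum $f$ split into six couples of disjoint pairs with equal sums.
\item The involution $x\mapsto f-x$ permutes the couples. A couple fixed by it would have sum $f+w/2$; there is at most one such, so by parity there are none.
\item Each orbit of couples gives one linear relation. After renaming these are $2x_1=x_2+x_3$ together with either
$x_1+2x_2=f+x_3,\ x_1+x_2+2x_3=2f$ or $x_1+2x_2+x_3=2f,\ x_1+2x_3=f+x_2$.
\item Both systems give $7(x_2-x_1)=0$ and $B-B\subseteq\langle x_2-x_1\rangle$.
\end{itemize}

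\textbf{The periodic case $|H|=2$ is argued incorrectly.} The inclusion $B\subseteq 2^{\wedge}B$ need not hold. What you actually have is $B\cup 2^{\wedge}B\supseteq\pi^{-1}(T\cup 2^{\wedge}T)$, and $T\cup 2^{\wedge}T=\{0,u,v,u+v\}$ has only $3$ elements when $T=\{0,u,-u\}$. That gives $6$, not $8$. The bound $\min\{w/2,5\}$ you quote comes from Lemma~\ref{cyc:lem:mixed-pair}, which is about $T+2^{\wedge}T$ (that is, $3^{\wedge}B$), not about $T\cup 2^{\wedge}T$.

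This subcase is exactly the extremal configuration. For $E=\{0,1,4,5,6,9,10\}$ you get $B=\{0,1,4,5,6,9\}\subseteq\mathbb Z_{10}$, $H=\{0,5\}$ and $T=\{0,\pm1\}$. Here $|B\cup 2^{\wedge}B|=8$ and $|2^{\wedge}E|=13$. To reach $8$ you must also use $2^{\wedge}(x+H)=\{2x+w/2\}$. The images $\pm 2u$ of these elements avoid $T$ and are distinct, because $u$ has order $w/2\geqslant 5$.

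\textbf{How the paper proceeds.} The paper avoids inverse theory for $2^{\wedge}$ in $\mathbb Z_w$ altogether. For $w\leqslant 9$ it quotes Lemma~\ref{lem:dense-strong-endpoint-deletion}(3). For $w\geqslant 10$ it deletes $w$, shows $d(E\setminus\{w\})=1$ and that the largest element below $w$ is at least $7$, and gets $|2^{\wedge}(E\setminus\{w\})|\geqslant 11$ from Lemma~\ref{lem:six-small-pair-inverse}. It then adds the two new sums $w+d$ and $w+e$, where $d<e$ are the two largest elements below $w$.
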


\begin{lem}\label{lem:seminar-seven-hexagon-from-cyclic-equality}
Let $E\subseteq[0, w]$ be a primitive set of seven integers such that $0, w\in E$.
If $w\geqslant9$ and $|2^{\wedge}E|\leqslant13$, then there exist positive integers $p,c$ with $\gcd(p,c)=1$ such that
$$E=\{0, p, p+c, 2p+c, 3p+c, 3p+2c, 4p+2c\}.$$
\end{lem}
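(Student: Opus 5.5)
The plan is to reduce to the cyclic inverse theorem through the endpoint-lifting lemma with $p=1$, and then recover $E$ from the lifted structure.

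\textbf{Setup and the cyclic bound.} Put $I=E\setminus\{0,w\}$, so $|I|=5$, and $B=\{0\}\cup\pi_w(I)\subseteq\mathbb Z_w$, so $|B|=6$ and $\langle B\rangle=\mathbb Z_w$ because $E$ is primitive. Lemma \ref{lem:endpoint-lifting} with $p=1$ gives
$$|2^{\wedge}E|\geqslant|I|+|B\cup2^{\wedge}B|=5+|B\cup 2^{\wedge}B|.$$
Hence $|B\cup2^{\wedge}B|\leqslant 8<w$.

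\textbf{Structure of $B$.} Note that $B\cup 2^{\wedge}B=U_1(B)$. I would show that $|U_1(B)|\leqslant 8$ with $w\geqslant 9$ forces $B$ to be a very rigid configuration. The Kneser route runs as follows.
\begin{itemize}
\item Since $0\in B$, we have $U_1(B)=2^{\wedge}B\cup(0+B)$, and $B\cup2^{\wedge}B\supseteq 2^{\wedge}B$. By Lemma \ref{cyc:lem:strict-cyclic-layer}, $|2^{\wedge}B|\geqslant 7$ unless $B$ is a coset, which is excluded because $w\geqslant 9>6$ and $B$ generates.
\item So $|2^{\wedge}B|\in\{7,8\}$. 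Counting with Lemma \ref{lem:disjoint-pairs}, $15=|2^{\wedge}B|+\sum(\mu_B-1)$, so $\mathcal P(B)$ is large. The counting argument of Lemma \ref{cyc:lem:aperiodic-popular-endpoint}(1) shows $|\mathcal P(B)|\geqslant 5$ in the aperiodic case.
\item The periodic case is handled as in Theorem \ref{cyc:thm:pair-triple}. There $|H|\in\{2,3\}$, and one checks directly that $|B\cup 2^{\wedge}B|\geqslant 9$.
\item In the aperiodic case I then apply Proposition \ref{cyc:pro:equality-popular-period}(2). Complementation gives $|3^{\wedge}B|=|2^{\wedge}B|$, so the proposition yields $|3^{\wedge}B|=10$, a contradiction.
\end{itemize}
The upshot is that $B$ must be periodic, or the count must push us to the exceptional equality shape. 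I expect the genuine survivor to be $B$ a union of two cosets of a subgroup of order $3$. This matches the hexagon: modulo $w=4p+2c$, the set $\{0,p,p+c,2p+c,3p+c,3p+2c\}$ is $\{0,p+c,2p+2c\}\cup\{p,2p+c,3p+2c\}$, i.e.\ $K\cup(p+K)$ with $K=\langle (w/3)\rangle$ when $3\mid w$. This requires $w=3(p+c)$ and hence $p=2c$, which is inconsistent. So the target must instead be read off more carefully, and this identification is the main obstacle.

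\textbf{Concrete handling of the obstacle.} I would first try the direct route. Enumerate equality in $|U_1(B)|$ by the cases above, and combine it with the exact transfer: equality must hold throughout, i.e.\ $(2^{\wedge}E)\setminus I$ projects injectively onto $U_1(B)$, and $|U_1(B)|=|2^{\wedge}E|-5\leqslant 8$. Then use Corollary \ref{cor:seminar-seven-pair-elementary-floor} to get $|2^{\wedge}E|=13$ exactly, and $|U_1(B)|=8$.

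The aperiodic case with $|2^{\wedge}B|\leqslant 8$ and $|\mathcal P(B)|$ large, with $B$ of size $6$ in $\mathbb Z_w$, $w\geqslant 9$, should force $B$ to be symmetric about many centers. This is the hexagon pattern: the differences of $E$ alternate $p,c,p,p,c,p$, which is a symmetric sequence, giving $E=w-E$.

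So the key steps are:
\begin{enumerate}
\item Show $E=w-E$. Equality $|2^{\wedge}E|=13$ is invariant under reflection, and the popular sum $w$ has $\mu_E(w)=3$.
\item Show that the three pairs summing to $w$, together with a second popular sum, force the gap sequence to be $(p,c,p,p,c,p)$. This is done by comparing the sums $x_i+x_j$ in order: a chain of $13=2\cdot 7-1$ consecutive distinct sums leaves exactly $8$ coincidences, which must be among the $21$ pairs.
\item Conclude $\gcd(p,c)=1$ from the primitivity of $E$.
\end{enumerate}
I would carry out step 2 by listing the forced coincidences $x_0+x_3=x_1+x_2$, and so on, as in the proof of Lemma \ref{lem:six-point-mixed-triple-mechanism}.
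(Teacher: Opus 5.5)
Your reduction to $B=\{0\}\cup\pi_w(E\setminus\{0,w\})$ with $|B\cup2^{\wedge}B|\leqslant8$ is fine. The classification of such $B$ that you then sketch is wrong, and wrong precisely in the case that produces the answer. In the periodic case you assert $|B\cup2^{\wedge}B|\geqslant9$, but this fails for $|H|=2$. Take $H=\{0,w/2\}$ and $B=H\cup(x+H)\cup(-x+H)$, where $x+H$ generates $\mathbb Z_w/H$, a group of order $w/2\geqslant5$. Then $2^{\wedge}B=B\cup\{2x+w/2,\,-2x+w/2\}$, so $|B\cup2^{\wedge}B|=8$. This is exactly the hexagon: modulo $w=4p+2c$ one has $\{0,p,p+c,2p+c,3p+c,3p+2c\}=H\cup(p+H)\cup(-p+H)$ with $H=\{0,2p+c\}$. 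Your guessed survivor, $B=H\cup(x+H)$ with $|H|=3$, in fact gives $|B\cup2^{\wedge}B|=9$, which is why you ran into an inconsistency. A correct periodic analysis shows that for $|H|=2$ one has $|B\cup2^{\wedge}B|\geqslant9$ unless $\pi(B)=\{0,u,-u\}$. In that case $E$ is forced, because $E\setminus\{0,w\}$ is the set of representatives of $B\setminus\{0\}$ in $[1,w-1]$.

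The aperiodic case, however, is not handled. For $|B|=6$, complementation gives $2^{\wedge}B=\sum_{x\in B}x-4^{\wedge}B$ and maps $3^{\wedge}B$ to itself. The identity $|3^{\wedge}B|=|2^{\wedge}B|$ is the five-element fact used in Lemma \ref{lem:six-small-pair-inverse}, not a six-element one. So you have no bound $|3^{\wedge}B|\leqslant10$, Proposition \ref{cyc:pro:equality-popular-period}(2) cannot be invoked, and nothing excludes an aperiodic $B$.

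Your fallback plan does not close the gap either:
\begin{itemize}
\item Invariance of $|2^{\wedge}E|$ under reflection does not give $E=w-E$.
\item No reason is offered for $\mu_E(w)=3$.
\item Equality in $|2^{\wedge}E|\geqslant5+|B\cup2^{\wedge}B|$ is not forced; you only get $|B\cup2^{\wedge}B|\leqslant8$.
\item Knowing there are $8$ coincidences among the $21$ pair sums does not by itself pin down the gaps.
\end{itemize}

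For comparison, the paper never leaves $\mathbb Z$. Writing $E=\{0,a,b,c,d,e,w\}$ in increasing order:
\begin{itemize}
\item It rules out $w=9$ by Lemma \ref{lem:dense-strong-endpoint-deletion}(3).
\item It uses Lemma \ref{lem:six-small-pair-inverse} to get $d(E\setminus\{w\})=1$ and $e\geqslant7$, and symmetrically $d(E\setminus\{0\})=1$ and $w-a\geqslant7$.
\item This forces $|2^{\wedge}(E\setminus\{w\})|=|2^{\wedge}(E\setminus\{0\})|=11$, with $2^{\wedge}E$ adding exactly $\{w+d,w+e\}$, resp.\ $\{a,b\}$.
\item The resulting inclusions $\{c,d,e,w,w+a,w+b,w+c\}\subseteq2^{\wedge}\{a,b,c,d,e\}$ yield $c=a+b$, $d=2a+b$, $e=a+2b$ and $w=2a+2b$.
\end{itemize}
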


\begin{proof}[Proof of Corollary \ref{cor:seminar-seven-pair-elementary-floor} and Lemma \ref{lem:seminar-seven-hexagon-from-cyclic-equality}]
Write
$$E=\{0,a,b,c,d,e,w\}, \ \ 0<a<b<c<d<e<w.$$
If $w\leqslant9=2|E|-5$, then by
Lemma \ref{lem:dense-strong-endpoint-deletion} (3), we have
$$|2^{\wedge}E|\geqslant w+5\geqslant5+\min\{w,8\}.$$
For $w=9$, this gives $|2^{\wedge}E|\geqslant14>13$.
If $w\geqslant10$ and $|2^{\wedge}E|\geqslant14$, then
$$|2^{\wedge}E|\geqslant5+\min\{w,8\}.$$
Thus, we may assume that $w\geqslant10$ and $|2^{\wedge}E|\leqslant13$.

Put $F=E\setminus\{w\}$.
If $d(F)>1$, then by $d(E)=1$, we have
$$F\subseteq d(F)\mathbb Z, \ \ w\notin d(F)\mathbb Z.$$
Thus,
$$2^{\wedge}F\subseteq d(F)\mathbb Z, \ \
w+F\subseteq w+d(F)\mathbb Z, \ \
2^{\wedge}F\cap(w+F)=\emptyset.$$
By Theorem \ref{thm:nathanson-restricted-minimum}, we have
$$13\geqslant|2^{\wedge}E|
=|2^{\wedge}F|+|w+F|\geqslant9+6=15,$$
a contradiction. Hence, $d(F)=1$.

If $e\leqslant6$, then
$$5\leqslant e\leqslant6, \ \ d\leqslant5, \ \
2^{\wedge}F\cap(w+F)\subseteq[w,d+e]\subseteq[10,e+5].$$
By Lemma \ref{lem:six-small-pair-inverse}, we have
$$
\begin{aligned}
13\geqslant|2^{\wedge}E|
&=|2^{\wedge}F|+|w+F|-|2^{\wedge}F\cap(w+F)|\\
&\geqslant(e+4)+6-(e-4)=14,
\end{aligned}
$$
a contradiction. Thus, $e\geqslant7$.
Since
$$2^{\wedge}(w-E)=2w-2^{\wedge}E, \ \
w-(E\setminus\{0\})=(w-E)\setminus\{w\},$$
applying the preceding argument to $w-E$, we have
$$d(E\setminus\{0\})=1, \ \ w-a\geqslant7.$$
By Lemma \ref{lem:six-small-pair-inverse}, we have
$$|2^{\wedge}F|\geqslant11, \ \
|2^{\wedge}(E\setminus\{0\})|\geqslant11.$$

Since
$$w+d>d+e=\max2^{\wedge}F, \ \
b<a+b=\min2^{\wedge}(E\setminus\{0\}),$$
we have
$$
\begin{aligned}
\{w+d,w+e\}
&\subseteq2^{\wedge}E\setminus2^{\wedge}F,\\
\{a,b\}
&\subseteq2^{\wedge}E\setminus2^{\wedge}(E\setminus\{0\}).
\end{aligned}
$$
Thus,
$$
\begin{aligned}
13\geqslant|2^{\wedge}E|
&\geqslant|2^{\wedge}F|+2\geqslant13,\\
13\geqslant|2^{\wedge}E|
&\geqslant|2^{\wedge}(E\setminus\{0\})|+2\geqslant13.
\end{aligned}
$$
Hence,
$$|2^{\wedge}E|=13, \ \
|2^{\wedge}F|=|2^{\wedge}(E\setminus\{0\})|=11,$$
$$
\begin{aligned}
2^{\wedge}E\setminus2^{\wedge}F&=\{w+d,w+e\},\\
2^{\wedge}E\setminus2^{\wedge}(E\setminus\{0\})&=\{a,b\}.
\end{aligned}
$$
This proves Corollary \ref{cor:seminar-seven-pair-elementary-floor}.

Put $J=\{a,b,c,d,e\}$.
Since
$$2^{\wedge}F=J\cup2^{\wedge}J, \ \
2^{\wedge}(E\setminus\{0\})=2^{\wedge}J\cup(w+J)$$
and $e<w<w+a$, we have
$$
\begin{aligned}
\{c,d,e,w\}
&\subseteq2^{\wedge}(E\setminus\{0\})\setminus(w+J)
\subseteq2^{\wedge}J,\\
\{w+a,w+b,w+c\}
&\subseteq2^{\wedge}F\setminus J\subseteq2^{\wedge}J.
\end{aligned}
$$
Since $c\in2^{\wedge}J$ and $J\cap[1,c-1]=\{a,b\}$, we have
$$c\in2^{\wedge}\{a,b\}=\{a+b\}, \ \ c=a+b.$$
Applying this equality to $w-E$, we have
$$w-c=(w-e)+(w-d), \ \ w=d+e-c.$$

Since $d\in2^{\wedge}J$ and $d>c=a+b$, we have
$$d\in\{a+c,b+c\}, \ \ d\geqslant a+c.$$
Together with $w+b=d+e-a<d+e$ and $w+b\in2^{\wedge}J$, we have
$$
\begin{aligned}
c
&\leqslant d-a=w+b-e\\
&\leqslant\max\bigl(2^{\wedge}J\setminus\{d+e\}\bigr)-e
=(c+e)-e=c.
\end{aligned}
$$
Thus,
$$d=a+c=2a+b, \ \ w=a+e.$$

Suppose that $e>a+2b$.
Since
$$e+2a=w+a\in2^{\wedge}J, \ \ e+2a>3a+2b=c+d=\max2^{\wedge}\{a, b, c, d\},$$
we have
$$e+2a\in e+\{a, b, c, d\}, \ \ 2a\in\{a, b, c, d\}.$$
Since $a<2a<c<d$, we have $b=2a$.
Together with $e\in2^{\wedge}J$, we have
$$e\in2^{\wedge}\{a, 2a, 3a, 4a\}\subseteq a\cdot[3, 7], \ \ E\subseteq a\mathbb Z, \ \ w=a+e\leqslant 8a.$$
By $d(E)=1$, we have $a=1$ and $10\leqslant w\leqslant8$,
a contradiction. Thus, $e\leqslant a+2b$.
Applying this inequality to $w-E$, we have
$$e=w-a\leqslant(w-e)+2(w-d)=2e-a-2b,$$
thus,
$$e\geqslant a+2b, \ \ e=a+2b.$$
Hence,
$$E=\{0,a,b,a+b,2a+b,a+2b,2a+2b\}.$$
Since $d(E)=1$, we have
$$\gcd(a, b)=\gcd(a, b-a)=1, \ \ b-a>0.$$
This is the required form with parameters $a$ and $b-a$.

This completes the proofs of Corollary \ref{cor:seminar-seven-pair-elementary-floor} and Lemma \ref{lem:seminar-seven-hexagon-from-cyclic-equality}.
\end{proof}

\begin{cor}\label{cor:seminar-hexagon-arbitrary-span-completion}
Let $p,c,a,b$ be positive integers with $\gcd(p,c)=1$.
Put $h=2p+c$ and $L=2h+a+b$.
Suppose that $2h\geqslant10$ and let
$$E=\{0,p,p+c,2p+c,3p+c,3p+2c,4p+2c\}, \ \
X=\{0,L\}\cup(a+E).$$
Then
$$|3^{\wedge}X|\geqslant27.$$
If $|3^{\wedge}X|\leqslant28$, then $2h=10$ and $L\leqslant16$.
\end{cor}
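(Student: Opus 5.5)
The plan is to reduce everything to Lemma \ref{lem:endpoint-lifting} with $J=a+E$, $N=7$, $p=2$, and then add explicit integer information about $E$. Put $C=\{0\}\cup\pi_L(J)$, so $|C|=8$. I would first check that $X$ is primitive. Indeed $d(E)=\gcd(p,c)=1$, so $E-E$ already generates $\mathbb Z$. Hence $\langle C\rangle=\mathbb Z_L$, and Lemma \ref{lem:endpoint-lifting} gives
$$|3^{\wedge}X|\geqslant|2^{\wedge}J|+|U_2(C)|.$$
Next I would compute $2^{\wedge}E$ directly from the listed elements. The set $E$ is symmetric, $2h-E=E$, and its pair sums should form exactly the $13$ values
$$\{p,\ p+c,\ 2p+c,\dots\}\cup\{2h\}\cup\bigl(4h-\{p,\ p+c,\dots\}\bigr).$$
This is the extremal case of Corollary \ref{cor:seminar-seven-pair-elementary-floor}, so I expect $|2^{\wedge}J|=13$. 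By Theorem \ref{cyc:thm:pair-triple} applied to $C$ (here $b=8$), $|U_2(C)|\geqslant\min\{L,14\}$. Since $L=2h+a+b\geqslant 2h+2\geqslant12$, this already gives $|3^{\wedge}X|\geqslant27$ whenever $L\geqslant14$.

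The small cases $L\in\{12,13\}$ force $2h=10$ and $a+b\leqslant3$. Then $(p,c)\in\{(1,3),(2,1)\}$ and $(a,b)\in\{(1,1),(1,2),(2,1)\}$. Up to the reflection $X\mapsto L-X$, which preserves the hypotheses because $E$ is symmetric, only a handful of explicit sets remain. For these I would compute $3^{\wedge}X$ directly, or use the decomposition
$$3^{\wedge}X=3^{\wedge}J\cup2^{\wedge}J\cup(L+2^{\wedge}J)\cup(L+J),$$
and check that $|3^{\wedge}X|\geqslant27$.

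For the second assertion, assume $|3^{\wedge}X|\leqslant28$, so $|U_2(C)|\leqslant15$. I would separate the two cases $2h\geqslant12$ and $L\geqslant17$, and in each produce at least two elements of $3^{\wedge}X$ that the bound $13+|U_2(C)|$ does not count. The first source of such elements is the term $\rho=|3^{\wedge}X|-|2^{\wedge}J|-|U_2(C)|$ of Corollary \ref{lem:exact-endpoint-transfer}. By the proof of Lemma \ref{lem:endpoint-lifting}, $\rho$ counts residues with more than one representative in $3^{\wedge}X\setminus2^{\wedge}J$. Such residues occur as soon as $3^{\wedge}J$ protrudes beyond the window $[\min 2^{\wedge}J+L,\ \max2^{\wedge}J+L]$, or overlaps $L+J$ only partially. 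Concretely, $\min3^{\wedge}J=3a+2p+c$ and $\max 3^{\wedge}J=3a+6h-(2p+c)$, and I would compare these with $L+\min 2^{\wedge}J=L+2a+p$ and with $L+\max J$. The second source is the inverse Theorem \ref{thm:r4-coarse-cyclic-inverse}. When $L\geqslant16=2|C|$, the equality $|U_2(C)|=14$ forces $C=H\cup\{x\}$ with $|H|=7$. Then $\pi_L(J)$ would contain six nonzero elements of a subgroup of order $7$, and this is incompatible with the shape of $E$ once $h\geqslant5$ and $\gcd(p,c)=1$. So $|U_2(C)|\geqslant15$ whenever $L\geqslant16$.

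The main obstacle is this second assertion, specifically the intermediate ranges. There $L$ is only moderately larger than $2h$, so $3^{\wedge}J$, $L+2^{\wedge}J$ and $L+J$ overlap in a way that depends on $a,b,p,c$ jointly. I would first try to show that for $2h\geqslant12$ the top four elements of $3^{\wedge}J$, which are $3a+6h-\{2p+c,\ 3p+c,\ 3p+2c,\ 4p+2c\}$, are not all congruent modulo $L$ to elements already counted. That would give $\rho\geqslant2$ when combined with $|U_2(C)|\geqslant14$. For $2h=10$ and $L\geqslant17$, I would combine $|U_2(C)|\geqslant15$ from the inverse theorem with one extra collision coming from $3^{\wedge}J$. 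If these symbolic arguments stall, I would instead enumerate the residual parameter ranges, which are finite once the cheap bounds are applied.
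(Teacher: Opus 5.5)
Your argument for the first assertion works. With $J=a+E$ one has $|2^{\wedge}J|=13$; this uses $c\neq p$, which follows from $\gcd(p,c)=1$ and $2h\geqslant10$. Theorem~\ref{cyc:thm:pair-triple} then gives $|U_2(C)|\geqslant\min\{L,14\}$. The four sets left over for $L\in\{12,13\}$ check out by direct computation, giving $27,27,27,28$. Your use of Theorem~\ref{thm:r4-coarse-cyclic-inverse} is also correct: $E$ contains no six-term progression when $c\neq p$, so $|U_2(C)|\geqslant15$ and $|3^{\wedge}X|\geqslant28$ for $L\geqslant16$. That is one short of what the second assertion needs, and your plan for the missing unit does not work.

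\textbf{The gap.} You try to extract the surplus from $\rho$, i.e.\ from collisions modulo $L$ inside $3^{\wedge}X\setminus2^{\wedge}J\subseteq(3^{\wedge}J\setminus2^{\wedge}J)\cup(L+2^{\wedge}J)\cup(L+J)$, while using only the generic bounds $|U_2(C)|\geqslant14$ or $15$. This cannot work uniformly, because $\rho=0$ whenever the blocks are spread out.
\begin{itemize}
\item For $(p,c)=(2,1)$ and $a=b=100$, so $L=210$, all these elements lie in $[305,325]\cup[412,428]$. No two are congruent modulo $L$, so your ``one extra collision'' does not exist.
\item Likewise $(p,c)=(1,4)$, $a=b=100$ gives $2h=12$ and $\rho=0$, not the hoped-for $\rho\geqslant2$.
\end{itemize}
The conclusion is true in these cases, but only because $|U_2(C)|$ is large, and your cyclic tools bound $|U_2(C)|$ by nothing better than $15$.

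Your remark that the top elements of $3^{\wedge}J$ are not congruent to ``elements already counted'' does not repair this. A residue with a single representative raises $|U_2(C)|$, not $\rho$. Moreover, $|U_2(C)|\geqslant14$ is an abstract bound, so you cannot add specific residues to it without knowing they lie outside the fourteen it guarantees. Nor is the fallback enumeration finite: $L\geqslant17$ leaves $p,c,a,b$ unbounded.

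\textbf{What is missing} is an explicit count that is uniform in the parameters. The paper obtains it without any cyclic input.
\begin{itemize}
\item It centers $F=E-h=\{0,\pm p,\pm(p+c),\pm h\}$ and computes $|3^{\wedge}F|=19$.
\item It shows $|3^{\wedge}X|\geqslant19+\nu(a)+\nu(b)$, where $\nu(t)=\bigl|\bigl((2^{\wedge}F+h+t)\setminus3^{\wedge}F\bigr)\cap\mathbb Z_{>0}\bigr|$ counts the new sums through one endpoint; the other endpoint is handled by $F=-F$.
\item It proves $\nu(t)\geqslant4$, with equality only if $h=5$ and $t\leqslant2$, and $\nu(t)\geqslant6$ once $h+t\geqslant2h$.
\end{itemize}
These facts give both assertions at once.
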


\begin{proof}
If $c=p$ or $c=2p$, then by $\gcd(p,c)=1$, we have
$$(p,c)\in\{(1,1),(1,2)\}, \ \ 2h\in\{6,8\},$$
contrary to $2h\geqslant10$. Thus, $c\notin\{p,2p\}$.

Put
$$F=E-h=\{0,\pm p,\pm(p+c),\pm h\}.$$
Since $h=p+(p+c)$, we have
$$
\begin{aligned}
2^{\wedge}F
&=(F\setminus\{0\})\cup2^{\wedge}\{\pm p,\pm(p+c),\pm h\}\\
&=\{0,\pm p,\pm c,\pm(p+c),\pm h,\pm(h+p),\pm(h+p+c)\},
\end{aligned}
$$
$$
\begin{aligned}
3^{\wedge}F
&=2^{\wedge}(F\setminus\{0\})\cup(F\setminus\{0\})\\
&\qquad\cup\{\varepsilon_1p+\varepsilon_2(p+c)+\varepsilon_3h:
\varepsilon_1,\varepsilon_2,\varepsilon_3\in\{-1,1\}\}\\
&=2^{\wedge}F\cup\{\pm2p,\pm(2p+2c),\pm2h\}.
\end{aligned}
$$
Since
$$
\begin{gathered}
0<p<p+c<h<h+p<h+p+c<2h,\\
0<c<p+c, \ \ p<2p<h<2p+2c<h+p+c,
\end{gathered}
$$
the only possible coincidences among
$p,c,p+c,h,h+p,h+p+c,2p,2p+2c,2h$ are
$$c=p, \ \ c=2p, \ \ p+c=2p, \ \ h+p=2p+2c.$$
Each gives $c=p$ or $c=2p$, a contradiction. Hence,
$$|3^{\wedge}F|=19, \ \ |2^{\wedge}F\cap\mathbb Z_{>0}|=6, \ \
\max3^{\wedge}F=2h,$$
and
$$3^{\wedge}F\cap[h+1,2h]=h+\{p,c,p+c,h\}.$$

For every positive integer $t$, put
$$\nu(t)=\left|\bigl((2^{\wedge}F+h+t)\setminus3^{\wedge}F\bigr)
\cap\mathbb Z_{>0}\right|.$$
Since
$$2h+t+\{0,p,p+c\}
\subseteq\bigl((2^{\wedge}F+h+t)\setminus3^{\wedge}F\bigr)
\cap\mathbb Z_{>0}$$
and
$$0<h+t\leqslant h+t+p+c<2h+t,$$
we have
$$\nu(t)\geqslant3+
\left|(t+\{0,p,c,p+c\})\setminus\{p,c,p+c,h\}\right|.$$
If $t+p+c\in\{p,c,p+c,h\}$, then
$$t+p+c=h, \ \ t=p, \ \ t+p=2p\notin\{p,c,p+c,h\}.$$
Thus,
$$\left|(t+\{0,p,c,p+c\})\setminus\{p,c,p+c,h\}\right|\geqslant1,
\ \ \nu(t)\geqslant4.$$

Suppose that $\nu(t)=4$. Then
$$\left|(t+\{0,p,c,p+c\})\cap\{p,c,p+c,h\}\right|=3.$$
If $t\neq p$, then $t+p+c\notin\{p,c,p+c,h\}$, thus,
$$\{t,t+p,t+c\}\subseteq\{p,c,p+c,h\}.$$
Since $t+p$ and $t+c$ are distinct and greater than $t$, we have
$$t\in\{p,c\}, \ \ t=c, \ \ 2c\in\{p,c,p+c,h\}.$$
Together with $c\notin\{p,2p\}$ and $\gcd(p,c)=1$, we have
$$p=2c, \ \ (p,c,t)=(2,1,1), \ \ h=5.$$

If $t=p$, then $h+2p\notin3^{\wedge}F$.
Since
$$3p=h+p-c\in2^{\wedge}F+h+p, \ \ 0<3p<h+2p<2h+p$$
and $\nu(p)=4$, we have $3p\in3^{\wedge}F$.
Since
$$\{3p\}\cap\{p,2p,h+p,h+p+c,2h\}=\emptyset,$$
we have
$$3p\in\{c,p+c,h,2p+2c\}.$$
The equalities $3p=p+c$ and $3p=h$ give $c=2p$ and $c=p$,
respectively, a contradiction. Thus, $c=3p$ or $p=2c$.
By $\gcd(p,c)=1$, we have
$$(p,c)\in\{(1,3),(2,1)\}, \ \ h=5, \ \ t=p\leqslant2.$$
Hence,
$$\nu(t)=4\Longrightarrow h=5\text{ and }t\leqslant2.$$

Since $X-(a+h)=F\cup\{-h-a,h+b\}$, we have
$$
\begin{aligned}
&3^{\wedge}F
\mathbin{\dot\cup}
\left(\bigl((2^{\wedge}F-h-a)\setminus3^{\wedge}F\bigr)
\cap\mathbb Z_{<0}\right)\\
&\mathbin{\dot\cup}
\left(\bigl((2^{\wedge}F+h+b)\setminus3^{\wedge}F\bigr)
\cap\mathbb Z_{>0}\right)\\
&\subseteq3^{\wedge}X-3(a+h).
\end{aligned}
$$
Since $F=-F$, we have
$$2^{\wedge}F=-2^{\wedge}F, \ \ 3^{\wedge}F=-3^{\wedge}F,$$
thus,
$$
\begin{aligned}
&\left|\bigl((2^{\wedge}F-h-a)\setminus3^{\wedge}F\bigr)
\cap\mathbb Z_{<0}\right|\\
&=\left|\bigl((2^{\wedge}F+h+a)\setminus3^{\wedge}F\bigr)
\cap\mathbb Z_{>0}\right|=\nu(a).
\end{aligned}
$$
Hence,
$$|3^{\wedge}X|\geqslant|3^{\wedge}F|+\nu(a)+\nu(b)
=19+\nu(a)+\nu(b)\geqslant27.$$

If $|3^{\wedge}X|\leqslant28$, then
$$\nu(a)+\nu(b)\leqslant9, \ \ \nu(a),\nu(b)\geqslant4, \ \
\min\{\nu(a),\nu(b)\}=4.$$
Thus,
$$h=5, \ \ \min\{a,b\}\leqslant2, \ \ \nu(a),\nu(b)\leqslant5.$$
If $t\in\{a,b\}$ and $t\geqslant5$, then $h+t\geqslant2h$ and
$$h+t+(2^{\wedge}F\cap\mathbb Z_{>0})
\subseteq\bigl((2^{\wedge}F+h+t)\setminus3^{\wedge}F\bigr)
\cap\mathbb Z_{>0}.$$
Thus,
$$5\geqslant\nu(t)\geqslant|2^{\wedge}F\cap\mathbb Z_{>0}|=6,$$
a contradiction. Hence,
$$2h=10, \ \ \max\{a,b\}\leqslant4, \ \
L=10+\min\{a,b\}+\max\{a,b\}\leqslant10+2+4=16.$$

This completes the proof of Corollary \ref{cor:seminar-hexagon-arbitrary-span-completion}.
\end{proof}

\begin{thm}
\label{thm:threefold-all-cardinalities}
Let $X$ be a primitive set of $n\geqslant8$ integers. Then
$$|3^{\wedge}X|\geqslant2n-7+\min\{l(X),3n-9\}.$$
If $X$ is critical at order three and $a_3(X)\geqslant1$, then
$$D_3(X)\geqslant0.$$
If $n\geqslant9$, $X$ is critical at order three,
$a_3(X)\geqslant1$, $D_3(X)<\frac{n}{2}$ and
$$4D_3(X)<(a_3(X)-1)(n-1),$$
then
$$e_3(X)\leqslant 2D_3(X).$$
\end{thm}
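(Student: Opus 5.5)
Normalize $X\subseteq[0,L]$ with $0,L\in X$ and $L=l(X)$, and write $Y=X\setminus\{0,L\}$, so that $|Y|=n-2\geqslant6$. The proof runs by induction on $n$. Base case $n=8$: Lemma \ref{lem:seminar-eight-weak-defect} controls order four, but for order three we use Corollary \ref{cor:seven-below-eighteen} on seven-element sets, Lemma \ref{lem:seminar-seven-hexagon-from-cyclic-equality} and Corollary \ref{cor:seminar-hexagon-arbitrary-span-completion}. The inductive step applies Lemma \ref{lem:endpoint-lifting} with $p=2$, $J=Y$, $E=X$:
$$|3^{\wedge}X|\geqslant|2^{\wedge}Y|+|U_2(C)|,\qquad C=\{0\}\cup\pi_L(Y),\ |C|=n-1\geqslant7.$$

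\textbf{First inequality.} Split on $d(Y)$. If $d(Y)>1$, Proposition \ref{pro:one-residue-class} with $h=3$ gives $|3^{\wedge}X|\geqslant 6(n-3)-n+1=5n-17\geqslant 2n-7+(3n-9)$ directly. Otherwise $Y$ is primitive, and two ingredients combine:
\begin{itemize}
\item Theorem \ref{cyc:thm:pair-triple} gives $|U_2(C)|\geqslant\min\{L,2n-4\}$.
\item Theorem \ref{thm:freiman-lev-twofold} (valid since $n-2>7$ once $n\geqslant10$) gives $|2^{\wedge}Y|\geqslant\min\{l(Y)+n-4,\,3n-13\}$. For $n\in\{8,9\}$, use Corollary \ref{cor:seminar-seven-pair-elementary-floor} and Lemma \ref{lem:six-small-pair-inverse} instead.
\end{itemize}
The target bound does not follow from these directly.

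\begin{itemize}
\item When $L\leqslant 2n-4$, Lemma \ref{lem:dense-strong-endpoint-deletion}(2) already gives $e_3+D_3\geqslant L-n+1$, hence $|3^{\wedge}X|\geqslant L+2n-7$.
\item When $L$ is large, the projection bound $2n-4$ from Theorem \ref{cyc:thm:pair-triple} is too weak. Here Theorem \ref{thm:strict-critical-cyclic-surplus} upgrades it to $2n-3$ whenever $Y$ is critical at order $2$. Theorem \ref{thm:r4-coarse-cyclic-inverse} handles the equality structure $C=H\cup\{x\}$, which forces $Y$ to lie in two residue classes, a case already treated.
\end{itemize}
The remaining deficit comes from $|2^{\wedge}Y|$ versus $3n-13$, and the two regimes of $l(Y)$ are balanced against $L\geqslant l(Y)+2$.

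\textbf{Second assertion: $D_3(X)\geqslant0$.} If $Y$ is noncritical at order $2$, i.e. $|2^{\wedge}Y|\geqslant 4(n-4)-(n-2)+1$, then Lemma \ref{lem:inner-cardinality-barrier} with $p=2$, $N=n-2$ gives $D_3(X)\geqslant0$ at once. If $Y$ is critical at order $2$, first obtain $D_2(Y)\geqslant0$ from Lemma \ref{lem:dense-strong-endpoint-deletion}(3) in the dense range, and from the $2$-fold analogue (Theorem \ref{thm:freiman-lev-twofold}) otherwise. Then feed it into Lemma \ref{lem:stratified-incomplete-projection}, or directly into Corollary \ref{lem:exact-endpoint-transfer}:
$$D_3(X)=D_2(Y)+2v+2\rho-L-n+1 .$$
If $v=L$, this gives $D_3(X)\geqslant D_2(Y)+L-n+1\geqslant0$. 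If $v<L$, Lemma \ref{lem:stratified-incomplete-projection} supplies $L=3N+\cdots$, which makes the identity nonnegative once $a_3(X)\geqslant1$. Note that Lemma \ref{lem:stratified-incomplete-projection} requires $p\geqslant3$, so for $p=2$ its argument must be redone directly, using Theorem \ref{thm:strict-critical-cyclic-surplus} (which allows $p\geqslant2$).

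\textbf{Third assertion: $e_3(X)\leqslant2D_3(X)$.} Use the hypothesis $4D_3<(a_3-1)(n-1)$ exactly as in the second part of Lemma \ref{lem:inner-cardinality-barrier}. That lemma either gives $D_3(X)\geqslant L-n+1$ or puts us in the projection-incomplete regime. In the first case, $e_3(X)=l(3^{\wedge}X)+1-|3^{\wedge}X|$ combined with $|3^{\wedge}X|\geqslant L+2n-7$ yields $e_3\leqslant 2D_3$ after substituting $l(3^{\wedge}X)=3(n-3)+2e_3+D_3$. In the second case, invoke Lemma \ref{lem:stratified-incomplete-projection}; its conclusion $e_p(Y)>2D_p(Y)$ and the bound $D_{p+1}(X)<\frac{N+1}{2}$ are the tools, transferred through $e_3(X)=e_2(Y)+L-v-\rho$.

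\textbf{Main obstacle.} The hardest step is the large-$L$ case of the first inequality at the boundary $n=8,9$, where Theorem \ref{thm:freiman-lev-twofold} is unavailable. There I would fall back on the explicit seven- and six-point inverse results: Lemma \ref{lem:seminar-seven-hexagon-from-cyclic-equality} reduces the extremal $Y$ to the hexagon configuration, and Corollary \ref{cor:seminar-hexagon-arbitrary-span-completion} then finishes that configuration.
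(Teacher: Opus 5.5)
Your proposal has a genuine gap in the regime where $Y$ is dense but $L$ is large, i.e.\ $l(Y)\leqslant 2n-9$ and $L\geqslant 2n-3$.

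\textbf{The main gap.} In that regime $|2^{\wedge}Y|$ can be as small as $2n-7$. For example, with $Y=[1,n-2]$ and $L$ large, $|3^{\wedge}X|=5n-16$ exactly. So to reach $2n-7+\min\{L,3n-9\}$ you need $|U|$ close to $3n-9$. Your projection bounds are $2n-4$ from Theorem \ref{cyc:thm:pair-triple}, upgraded to $2n-3$ by Theorem \ref{thm:strict-critical-cyclic-surplus}. These fall short by roughly $n-6$. ``Balancing the two regimes of $l(Y)$ against $L\geqslant l(Y)+2$'' cannot close this, because $L\geqslant l(Y)+2$ puts no upper bound on $L$.

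\textbf{The missing ingredient.} It is Lemma \ref{lem:dense-low-adjacent-projection} with $m=2$, which gives $|U|\geqslant\min\{L,4n-l(Y)-12\}$. The paper first reduces to $|3^{\wedge}X|\leqslant 5n-17$; this is harmless, since the last two assertions assume criticality. Then this projection bound, together with $|2^{\wedge}Y|\geqslant n+l(Y)-4$ and $D_2(Y)\geqslant0$ from Lemma \ref{lem:dense-strong-endpoint-deletion}(3), forces $U=\mathbb Z_L$. That one fact gives two things:
\begin{itemize}
\item the cardinality bound $|3^{\wedge}X|\geqslant n+l(Y)-4+L\geqslant 2n-7+L$;
\item via Corollary \ref{lem:exact-endpoint-transfer}, the bound $D_3(X)\geqslant D_2(Y)+L-n+1\geqslant L-n+1\geqslant 0$.
\end{itemize}
In the complementary regime $l(Y)\geqslant 2n-8$, the two-fold bounds give $|2^{\wedge}Y|\geqslant 3n-13$, except for the $n=9$ hexagon. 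So Lemma \ref{lem:inner-cardinality-barrier} applies directly there.

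Consequently the branch ``$Y$ critical at order $2$ with $v<L$'' never has to be treated. Your treatment of it is not an argument: you invoke an unproved $p=2$ version of Lemma \ref{lem:stratified-incomplete-projection} that supposedly ``makes the identity nonnegative once $a_3(X)\geqslant1$''.

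\textbf{Smaller steps that need repair.}
\begin{itemize}
\item For $d(Y)>1$, the inequality $5n-17\geqslant 5n-16$ is false. You need the equality clause of Proposition \ref{pro:one-residue-class} (equality forces $n=6$) or its ``Moreover'' bound.
\item In the third assertion, substituting the identity $l(3^{\wedge}X)=3(n-3)+2e_3(X)+D_3(X)$ is circular. What is needed is an upper bound on the span, e.g.\ $l(3^{\wedge}X)\leqslant 3L-6$. Combined with $|3^{\wedge}X|\geqslant L+2n-7$, this gives $e_3(X)\leqslant 2(L-n+1)\leqslant 2D_3(X)$. The paper uses Corollary \ref{cor:two-fringe-hole-bound} here instead.
\item Lemma \ref{lem:inner-cardinality-barrier} has no ``or'' alternative. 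Under its hypotheses it yields $D_3(X)\geqslant L-n+1$ outright.
\item The announced induction on $n$ is never used.
\item Corollary \ref{cor:seven-below-eighteen} concerns seven-element sets, so it cannot serve as the $n=8$ base case.
\end{itemize}
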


\begin{proof}
If $|3^{\wedge}X|\geqslant 5n-16$, then
$$|3^{\wedge}X|\geqslant 5n-16\geqslant 2n-7+\min\{l(X), 3n-9\}.$$
Since the other two assertions assume that $X$ is critical at order three,
we may assume that
$$|3^{\wedge}X|\leqslant 5n-17.$$
By translation, we may assume that $\min X=0$ and $\max X=L$.
Put $Y=X\setminus\{0, L\}$. Then
$$Y\subseteq[1, L-1], \ \ |Y|=n-2\geqslant 6, \ \ l(Y)\geqslant n-3.$$

For the last assertion, if $D_3(X)\geqslant L-n+1$, then
$$0\leqslant L-n+1\leqslant D_3(X)<\frac{n}{2}.$$
By Corollary \ref{cor:two-fringe-hole-bound}, we have
$$e_3(X)\leqslant2(L-n+1)\leqslant 2D_3(X).$$

If $d(Y)>1$, then by Proposition \ref{pro:one-residue-class}, we have
$$|3^{\wedge}X|\geqslant2\cdot3(n-3)-n+1=5n-17.$$
Equality requires $n=2\cdot3=6$, contrary to $n\geqslant 8$.
Thus,
$$5n-17\geqslant |3^{\wedge}X|\geqslant 5n-16,$$
a contradiction. Hence, $Y$ is primitive.

Put
$$B=\{0\}\cup\pi_L(Y), \ \ U=\pi_L(Y\cup2^{\wedge}Y\cup3^{\wedge}Y).$$
Since $X$ is primitive, we have
$$|B|=n-1\geqslant 7, \ \ \langle B\rangle=\mathbb Z_L.$$
By Lemma \ref{lem:endpoint-lifting}, with $p=2$, we have
$$
\begin{gathered}
U=2^{\wedge}B\cup3^{\wedge}B,\\
|3^{\wedge}X|-|2^{\wedge}Y|\geqslant|U|, \ \
l(3^{\wedge}X)=L+l(2^{\wedge}Y).
\end{gathered}
$$
By Theorem \ref{thm:r4-coarse-cyclic-all-orders} and Corollary \ref{lem:exact-endpoint-transfer}, we have
$$|U|\geqslant\min\{L, 2n-4\},$$
$$D_3(X)=D_2(Y)+2\bigl(|3^{\wedge}X|-|2^{\wedge}Y|\bigr)-L-n+1.$$

Suppose that $l(Y)\leqslant2n-9$. Then
$$0\leqslant l(Y)-n+3\leqslant n-6.$$
Since $|Y|=n-2\geqslant 6$, by
Lemma \ref{lem:dense-strong-endpoint-deletion} (3), applied to $Y-\min Y$, we have
$$
\begin{gathered}
|2^{\wedge}Y|
\geqslant2(n-4)+1+(l(Y)-n+3)=n+l(Y)-4,\\
D_2(Y)\geqslant0.
\end{gathered}
$$
By Lemma \ref{lem:dense-low-adjacent-projection},
with $m=2$ and $I=Y$, we have
$$
\begin{aligned}
|U|
&\geqslant\min\{L,3(n-3)-(l(Y)-n+3)\}\\
&=\min\{L,4n-l(Y)-12\}.
\end{aligned}
$$
If $|U|<L$, then $|U|\geqslant4n-l(Y)-12$, thus,
$$
\begin{aligned}
5n-17
&\geqslant|3^{\wedge}X|\geqslant|2^{\wedge}Y|+|U|\\
&\geqslant(n+l(Y)-4)+(4n-l(Y)-12)=5n-16,
\end{aligned}
$$
a contradiction. Hence, $U=\mathbb Z_L$.
Since $l(Y)\geqslant n-3$, we have
$$
\begin{aligned}
|3^{\wedge}X|
&\geqslant|2^{\wedge}Y|+L\geqslant n+l(Y)-4+L\\
&\geqslant 2n-7+L\geqslant 2n-7+\min\{L, 3n-9\}.
\end{aligned}
$$
Moreover,
$$
\begin{aligned}
D_3(X)
&=D_2(Y)+2\bigl(|3^{\wedge}X|-|2^{\wedge}Y|\bigr)-L-n+1\\
&\geqslant D_2(Y)+L-n+1\\
&\geqslant L-n+1\geqslant0.
\end{aligned}
$$
This proves all three assertions in this case.

Suppose that $l(Y)\geqslant2n-8$.
By Lemma \ref{lem:six-small-pair-inverse},
Corollary \ref{cor:seminar-seven-pair-elementary-floor} and
Theorem \ref{thm:freiman-lev-twofold}, applied to $Y-\min Y$, we have
$$
\begin{aligned}
|2^{\wedge}Y|&\geqslant3n-13,
&&n=8\text{ or }n\geqslant10,\\
|2^{\wedge}Y|&\geqslant13,
&&n=9.
\end{aligned}
$$
If $n=9$ and $|2^{\wedge}Y|=13$, then by
Lemma \ref{lem:seminar-seven-hexagon-from-cyclic-equality}, we have
$$Y=\min Y+\{0,p,p+c,2p+c,3p+c,3p+2c,4p+2c\},$$
where
$$
\begin{gathered}
p,c,\min Y,L-l(Y)-\min Y>0, \ \ \gcd(p,c)=1,\\
l(Y)=4p+2c\geqslant10.
\end{gathered}
$$
Since $|3^{\wedge}X|\leqslant5n-17=28$, by
Corollary \ref{cor:seminar-hexagon-arbitrary-span-completion},
with endpoint gaps $\min Y$ and $L-l(Y)-\min Y$, we have
$$|3^{\wedge}X|\geqslant27, \ \ l(Y)=10, \ \ L\leqslant16.$$
Since $l(2^{\wedge}Y)\leqslant2l(Y)-2=18$, we have
$$|3^{\wedge}X|\geqslant27\geqslant11+L
=2n-7+\min\{L,3n-9\},$$
$$
\begin{aligned}
D_3(X)
&=2|3^{\wedge}X|-L-l(2^{\wedge}Y)-20\\
&\geqslant54-16-18-20=0.
\end{aligned}
$$
Moreover,
$$
\begin{gathered}
a_3(X)=28-|3^{\wedge}X|\leqslant1,\\
(a_3(X)-1)(n-1)\leqslant0\leqslant4D_3(X).
\end{gathered}
$$
Thus, the hypotheses of the last assertion cannot hold.

In all remaining cases, we have
$$|2^{\wedge}Y|\geqslant3n-13
=2\cdot2(n-4)-(n-2)+1.$$
If $a_3(X)\geqslant1$, then by
Lemma \ref{lem:inner-cardinality-barrier},
with $(p,N)=(2,n-2)$, we have
$$D_3(X)\geqslant0.$$
Under the hypotheses of the last assertion, we have
$$n-2\geqslant7, \ \ 4D_3(X)<(a_3(X)-1)(n-1).$$
By the same lemma, we have
$$D_3(X)\geqslant L-n+1.$$

It remains to prove the cardinality assertion.
Suppose that $L\geqslant2n-2$.
Since $|U|\geqslant2n-4$, we have
$$
\begin{aligned}
5n-17
&\geqslant|3^{\wedge}X|\geqslant|2^{\wedge}Y|+|U|\\
&\geqslant(3n-13)+(2n-4)=5n-17.
\end{aligned}
$$
Thus,
$$
\begin{gathered}
|3^{\wedge}X|=|2^{\wedge}Y|+|U|=5n-17,\\
|2^{\wedge}Y|=3n-13, \ \ |U|=2n-4.
\end{gathered}
$$
Since
$$|2^{\wedge}Y|=3n-13<3n-12
=2\cdot2(n-4)-(n-2)+2,$$
by Theorem \ref{thm:strict-critical-cyclic-surplus},
with $(p,N)=(2,n-2)$ and $L\geqslant2n-2$, we have
$$|U|\geqslant2n-3,$$
a contradiction. Hence, $L\leqslant2n-3$.
Since $n\geqslant8$ and $L\leqslant2n-3\leqslant3n-9$, we have
$$
\begin{aligned}
|3^{\wedge}X|
&\geqslant3n-13+\min\{L,2n-4\}\\
&\geqslant3n-14+L\geqslant2n-7+L\\
&=2n-7+\min\{L,3n-9\}.
\end{aligned}
$$

This completes the proof of Theorem \ref{thm:threefold-all-cardinalities}.
\end{proof}

\section{Proof of main theorem}
\begin{pro}\label{pro:positive-slack-span-transition}
Let $X$ be a primitive set of $n\geqslant 7$ integers and $3\leqslant r\leqslant \frac{n}{2}$.
If $|r^{\wedge}X|\leqslant 2r(n-r)-n$, then
$$l(r^{\wedge}X)\leqslant 2|r^{\wedge}X|-r(n-r)-2.$$
\end{pro}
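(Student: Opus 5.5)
The inequality to be proved is exactly $D_r(X)\geqslant0$, and the hypothesis $|r^{\wedge}X|\leqslant2r(n-r)-n$ is exactly $a_r(X)\geqslant1$. So the claim is: a primitive set that is critical at order $r$ with positive slack has nonnegative defect. The plan is to peel off the two endpoints and argue by induction on $r$ (and hence on $n$), using Lemma~\ref{lem:inner-cardinality-barrier} and Lemma~\ref{lem:stratified-incomplete-projection} as the two branches.

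Translate so that $X\subseteq[0,L]$ with $0,L\in X$, and put $Y=X\setminus\{0,L\}$, $N=n-2$, $p=r-1$. Then $p\geqslant2$ and $N\geqslant2p$ because $r\leqslant n/2$.

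\textbf{Base case $r=3$.} For $n\geqslant8$ this is the second assertion of Theorem~\ref{thm:threefold-all-cardinalities}. For $n=7$ it is Corollary~\ref{cor:seven-below-eighteen}, since $2\cdot3\cdot4-7=17$.

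\textbf{Small case $(n,r)=(8,4)$.} Here $|4^{\wedge}X|\leqslant24$, so Lemma~\ref{lem:seminar-eight-weak-defect} applies directly. Hence we may assume $r\geqslant4$ and $N\geqslant7$.

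\textbf{Case A: $|p^{\wedge}Y|\geqslant2p(N-p)-N+1$.} Since $a_{p+1}(X)=a_r(X)\geqslant1$, the first assertion of Lemma~\ref{lem:inner-cardinality-barrier} gives $D_r(X)\geqslant0$ immediately.

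\textbf{Case B: $Y$ is critical at order $p$, i.e.\ $a_p(Y)\geqslant0$.}

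First, $Y$ is primitive. Otherwise $d(Y)>1$, and Proposition~\ref{pro:one-residue-class} (valid since $3\leqslant r\leqslant n-3$) gives $|r^{\wedge}X|\geqslant2r(n-r)-n+1$, contradicting the hypothesis.

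Next, if $a_p(Y)\geqslant1$, the induction hypothesis applies to $Y$ at order $p\geqslant3$, because $|Y|=N\geqslant7$ and $p\leqslant N/2$. This gives $D_p(Y)\geqslant0$. The boundary situation $a_p(Y)=0$ must be handled separately. There I would try to show directly that $D_p(Y)\geqslant0$ still holds, using Theorem~\ref{thm:r3-uniform-dense-small-excess} when $e_p(Y)+D_p(Y)<N/2$. Failing that, I would run the argument below without that input and see whether it survives.

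Now write $v=|U_p(C)|$ with $C=\{0\}\cup\pi_L(Y)$, and use Corollary~\ref{lem:exact-endpoint-transfer}:
$$D_r(X)=D_p(Y)+2v+2\rho-L-N-1.$$
If $v=L$, then $D_r(X)\geqslant L-N-1=L-n+1\geqslant0$. So assume $v<L$ and, for contradiction, $D_r(X)<0$. Then $D_r(X)<(N+1)/2$, and all the hypotheses of Lemma~\ref{lem:stratified-incomplete-projection} hold. That lemma yields:
\begin{itemize}
\item $L\geqslant2N+2$;
\item $a_p(Y)\geqslant a_r(X)+\rho+1$;
\item $e_p(Y)+D_p(Y)\geqslant N/2$;
\item $e_p(Y)\geqslant N+D_p(Y)+a_p(Y)-a_r(X)-D_r(X)-1$;
\item the capacity inequality, conditional on $2L\geqslant3v$.
\end{itemize}

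To reach a contradiction, I would substitute the lower bound on $e_p(Y)$ into the identity $a_p(Y)=p(N-p)-N-e_p(Y)-D_p(Y)$. I would then combine the result with $D_r(X)\leqslant-1$, $D_p(Y)\geqslant0$ and the formula for $L$. One case split is whether $2L\geqslant3v$; when it holds, the capacity inequality
$$4D_p(Y)-2D_r(X)+a_p(Y)+4\rho-1\leqslant N$$
is available. The other split is whether $D_r(X)\geqslant L-N-1$. When the condition $4D_r(X)<(a_r(X)-1)(N+1)$ holds, the second part of Lemma~\ref{lem:inner-cardinality-barrier} may supply this directly, since $D_r(X)<0$ and $a_r(X)\geqslant2$.

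\textbf{Main obstacle.} The hard step is this final numerical contradiction in the incomplete-projection branch. The identities from Corollary~\ref{lem:exact-endpoint-transfer} are circular, so the contradiction has to come from the inequalities of Lemma~\ref{lem:stratified-incomplete-projection}. If they are too weak, I would go back into that lemma's proof: the covering of $T=p^{\wedge}Y-\min p^{\wedge}Y$ by translates $jL+U$ bounds $|T|$ by $kv+c$. I would try to sharpen it using $D_r(X)<0$ in place of the weaker hypothesis $D_r(X)<(N+1)/2$.
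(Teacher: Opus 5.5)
\textbf{There is a genuine gap: the final contradiction in Case B is missing, and it cannot be obtained the way you propose.}

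Your reduction to ``$a_r(X)\geqslant1\Rightarrow D_r(X)\geqslant0$'' is correct, and so are the base cases, the case $(n,r)=(8,4)$ and Case A. Case A already covers $a_p(Y)=0$, so that boundary needs no separate treatment.

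The problem is that the outputs of Lemma \ref{lem:stratified-incomplete-projection}, the transfer identities of Corollary \ref{lem:exact-endpoint-transfer}, and $D_r(X)\leqslant-1$ are mutually consistent. Take $p=3$, $N=20$, $a_p(Y)=2$, $a_r(X)=1$, $\rho=0$, $D_p(Y)=0$, $D_r(X)=-1$. These determine $e_p(Y)=29$, $v=41$, $L=62$ and $e_r(X)=50$. Then every relation you list holds:
\begin{itemize}
\item $L\geqslant2N+2$ and $a_p(Y)\geqslant a_r(X)+\rho+1$;
\item $e_p(Y)+D_p(Y)\geqslant N/2$;
\item $e_p(Y)\geqslant N+D_p(Y)+a_p(Y)-a_r(X)-D_r(X)-1>2D_p(Y)$;
\item $2L\geqslant3v$ together with the capacity inequality.
\end{itemize}
So no combination of these inequalities produces a contradiction.

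Your other proposed split does not help. Both assertions of Lemma \ref{lem:inner-cardinality-barrier} assume $|p^{\wedge}Y|\geqslant2p(N-p)-N+1$, that is, $a_p(Y)\leqslant0$, so neither is available in Case B. The underlying issue is that a bare induction on ``$D\geqslant0$'' gives no control over the hole count $e_p(Y)$. That is exactly the quantity Lemma \ref{lem:stratified-incomplete-projection} forces to be large.

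\textbf{The missing idea is to strengthen the induction hypothesis.} The paper proves, simultaneously with $D_r\geqslant0$, the following assertion. If $X$ is primitive, $n\geqslant\max\{9,2r\}$, $a_r(X)\geqslant1$, $D_r(X)<\frac{n-1}{2}$ and $4D_r(X)<(a_r(X)-1)(n-1)$, then $e_r(X)\leqslant2D_r(X)$. Its base case $r=3$ is the third assertion of Theorem \ref{thm:threefold-all-cardinalities}.

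For $n\geqslant9$, a set with $D_r(X)<0$ meets these conditions, which would give $0\leqslant e_r(X)\leqslant2D_r(X)<0$. So only the strengthened assertion needs proof. Assume it fails at order $r$.
\begin{itemize}
\item Corollary \ref{cor:two-fringe-hole-bound} forces $D_r(X)<L-n+1$.
\item The second part of Lemma \ref{lem:inner-cardinality-barrier} then rules out $a_{r-1}(Y)\leqslant0$.
\item The basic hypothesis gives $D_{r-1}(Y)\geqslant0$, so the transfer identity rules out $v=L$.
\item Lemma \ref{lem:stratified-incomplete-projection} then yields $a_{r-1}(Y)\geqslant2$, $D_{r-1}(Y)<\frac{n-3}{2}$ and $e_{r-1}(Y)>2D_{r-1}(Y)$.
\item The case $n\leqslant10$ is disposed of by a direct count.
\item For $n\geqslant11$, the capacity inequality shows $4D_{r-1}(Y)<(a_{r-1}(Y)-1)(n-3)$.
\end{itemize}
Now the strengthened hypothesis at order $r-1$, applied to $Y$ (which has $|Y|=n-2\geqslant\max\{9,2(r-1)\}$), gives $e_{r-1}(Y)\leqslant2D_{r-1}(Y)$. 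This is the contradiction your argument lacks.
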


\begin{proof}
By the definitions of $a_r(X)$ and $D_r(X)$, the proposition is equivalent to
$$a_r(X)\geqslant1\Longrightarrow D_r(X)\geqslant0.$$
We prove this assertion for all $n\geqslant\max\{7,2r\}$ by induction on $r$,
simultaneously with the following assertion:
if $X$ is primitive, $n=|X|\geqslant\max\{9,2r\}$ and
$$a_r(X)\geqslant1, \ \ D_r(X)<\frac{n-1}{2}, \ \
4D_r(X)<(a_r(X)-1)(n-1),$$
then
$$e_r(X)\leqslant2D_r(X).$$

For $r=3$ and $a_3(X)\geqslant1$, by Corollary
\ref{cor:seven-below-eighteen} when $n=7$ and Theorem
\ref{thm:threefold-all-cardinalities} when $n\geqslant8$, we have
$$D_3(X)\geqslant0.$$
Since $\frac{n-1}{2}<\frac{n}{2}$, the additional assertion follows from Theorem \ref{thm:threefold-all-cardinalities}.

Suppose that $r\geqslant4$ and both assertions hold at order $r-1$.
Assume that $a_r(X)\geqslant1$.
If $n<9$, then
$$(r,n)=(4,8), \ \
|4^{\wedge}X|\leqslant2\cdot4(8-4)-8=24.$$
By Lemma \ref{lem:seminar-eight-weak-defect} applied to $X-\min X$,
we have
$$D_4(X)\geqslant0.$$

Suppose that $n\geqslant9$.
If $D_r(X)<0$, then
$$D_r(X)<\frac{n-1}{2}, \ \
4D_r(X)<0\leqslant(a_r(X)-1)(n-1).$$
The additional assertion would give
$$0\leqslant e_r(X)\leqslant2D_r(X)<0,$$
a contradiction. Thus, it suffices to prove the additional assertion.

Suppose that the additional assertion fails.
By translation, we may assume that $\min X=0$ and $\max X=L$.
Put $Y=X\setminus\{0,L\}$. Then
$$Y\subseteq[1,L-1], \ \
|Y|=n-2\geqslant\max\{7,2(r-1)\}$$
and
$$
\begin{gathered}
a_r(X)\geqslant1, \ \ D_r(X)<\frac{n-1}{2},\\
4D_r(X)<(a_r(X)-1)(n-1), \ \ e_r(X)>2D_r(X).
\end{gathered}
$$

If $D_r(X)\geqslant L-n+1$, then
$$0\leqslant L-n+1\leqslant D_r(X)<\frac{n-1}{2}<\frac n2.$$
By Corollary \ref{cor:two-fringe-hole-bound}, we have
$$e_r(X)\leqslant2(L-n+1)\leqslant2D_r(X),$$
a contradiction. Hence,
$$D_r(X)<L-n+1.$$

If $d(Y)>1$, then by Proposition \ref{pro:one-residue-class}, we have
$$a_r(X)=2r(n-r)-n+1-|r^{\wedge}X|\leqslant0,$$
a contradiction. Thus, $Y$ is primitive.

If $a_{r-1}(Y)\leqslant0$, then
$$|(r-1)^{\wedge}Y|
\geqslant2(r-1)(n-r-1)-n+3.$$
Since $n-2\geqslant7$ and
$$4D_r(X)<(a_r(X)-1)(n-1),$$
by Lemma \ref{lem:inner-cardinality-barrier},
with $(p,N)=(r-1,n-2)$, we have
$$D_r(X)\geqslant L-n+1,$$
a contradiction. Hence,
$$a_{r-1}(Y)\geqslant1.$$
By the induction hypothesis at order $r-1$, we have
$$D_{r-1}(Y)\geqslant0.$$

Put
$$
\begin{aligned}
v&=\left|\pi_L\bigl((r-2)^{\wedge}Y
\cup(r-1)^{\wedge}Y\cup r^{\wedge}Y\bigr)\right|,\\
\rho&=|r^{\wedge}X|-|(r-1)^{\wedge}Y|-v.
\end{aligned}
$$
By Lemma \ref{lem:endpoint-lifting}, we have $\rho\geqslant0$.
If $v=L$, then by Corollary \ref{lem:exact-endpoint-transfer}, we have
$$D_r(X)=D_{r-1}(Y)+L-n+1+2\rho\geqslant L-n+1,$$
a contradiction. Thus, $v<L$.

By Lemma \ref{lem:stratified-incomplete-projection},
with $(p,N)=(r-1,n-2)$, we have
$$a_{r-1}(Y)\geqslant a_r(X)+\rho+1\geqslant2, \ \
D_{r-1}(Y)<\frac{n-3}{2}$$
and
$$
\begin{aligned}
e_{r-1}(Y)
&\geqslant n-3+D_{r-1}(Y)+a_{r-1}(Y)-a_r(X)-D_r(X)\\
&>2D_{r-1}(Y).
\end{aligned}
$$

If $n\leqslant10$, then
$$(r,n)\in\{(4,9),(4,10),(5,10)\}, \ \
(r-1)(n-r-1)\leqslant2n-4\leqslant16.$$
Since
$$e_{r-1}(Y)+D_{r-1}(Y)
=(r-1)(n-r-1)-n+2-a_{r-1}(Y),$$
we have
$$
\begin{aligned}
D_r(X)
&\geqslant2D_{r-1}(Y)+2n-4-(r-1)(n-r-1)\\
&\qquad+2a_{r-1}(Y)-a_r(X)-1\\
&\geqslant a_r(X)+1>0.
\end{aligned}
$$
Thus,
$$
\begin{aligned}
0&<e_r(X)-2D_r(X)\\
&=(r-1)(n-r-1)-1-a_r(X)-3D_r(X)\\
&\leqslant(r-1)(n-r-1)-4a_r(X)-4.
\end{aligned}
$$
It follows that
$$4a_r(X)+4<(r-1)(n-r-1)\leqslant2n-4\leqslant16,$$
thus,
$$1\leqslant a_r(X)\leqslant2.$$
Since $n\leqslant10$, we have
$$4D_r(X)\geqslant4(a_r(X)+1)>(a_r(X)-1)(n-1),$$
a contradiction. Hence, $n\geqslant11$.

Suppose that
$$4D_{r-1}(Y)\geqslant(a_{r-1}(Y)-1)(n-3).$$
Since $D_{r-1}(Y)<(n-3)/2$ and $a_{r-1}(Y)\geqslant2$, we have
$$(a_{r-1}(Y)-1)(n-3)
\leqslant4D_{r-1}(Y)<2(n-3),$$
thus,
$$a_{r-1}(Y)=2.$$
Together with
$$a_{r-1}(Y)\geqslant a_r(X)+\rho+1, \ \
a_r(X)\geqslant1, \ \ \rho\geqslant0,$$
we have
$$a_r(X)=1, \ \ \rho=0, \ \
4D_r(X)<(a_r(X)-1)(n-1)=0.$$
Since $D_{r-1}(Y)\geqslant0$ and $D_r(X)\in\mathbb Z$,
by Lemma \ref{lem:stratified-incomplete-projection}, we have
$$2L-3v=2(D_{r-1}(Y)-D_r(X))-1\geqslant1.$$
By \eqref{eq:incomplete-projection-capacity}, we have
$$4D_{r-1}(Y)\leqslant n+2D_r(X)-3
\leqslant n-5<n-3,$$
contrary to
$$4D_{r-1}(Y)\geqslant(a_{r-1}(Y)-1)(n-3)=n-3.$$
Thus,
$$4D_{r-1}(Y)<(a_{r-1}(Y)-1)(n-3).$$

Since $Y$ is primitive and
$$
\begin{gathered}
|Y|=n-2\geqslant\max\{9,2(r-1)\}, \ \ a_{r-1}(Y)\geqslant2,\\
D_{r-1}(Y)<\frac{n-3}{2}, \ \
4D_{r-1}(Y)<(a_{r-1}(Y)-1)(n-3),
\end{gathered}
$$
by the additional induction assertion at order $r-1$, we have
$$e_{r-1}(Y)\leqslant2D_{r-1}(Y),$$
contrary to $e_{r-1}(Y)>2D_{r-1}(Y)$.

This completes the proof of Proposition \ref{pro:positive-slack-span-transition}.
\end{proof}

\begin{pro}\label{pro:span-to-cardinality}
Let $p\geqslant 3$ and $N\geqslant\max\{7, 2p\}$.
Let $A\subseteq[0, L]$ be a primitive set of $N+2$ integers such that $0, L\in A$.
Then
$$
\begin{aligned}
|(p+1)^{\wedge}A|
&=|(N-p+1)^{\wedge}A|\\
&\geqslant p(N-p)+1+\min\{L,(p+1)(N-p+1)\}.
\end{aligned}
$$
\end{pro}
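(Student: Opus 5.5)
The equality $|(p+1)^{\wedge}A|=|(N-p+1)^{\wedge}A|$ is complementation: $(N+2-(p+1))^{\wedge}A=\sum_{a\in A}a-(p+1)^{\wedge}A$. For the inequality, put $Y=A\setminus\{0,L\}$, so $|Y|=N$, $Y\subseteq[1,L-1]$, and write $n=N+2$, $r=p+1$. Note that $n-r=N-p+1$ and $r(n-r)=(p+1)(N-p+1)=p(N-p)+N+1$. The target is therefore
$$|r^{\wedge}A|\geqslant r(n-r)-n+2+\min\{L,r(n-r)\},$$
which is exactly the form of Theorem \ref{thm:main} at order $r$ with $4\leqslant r\leqslant n/2$. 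I split into the noncritical and critical cases for $A$ at order $r$.

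\emph{Noncritical case.} If $a_r(A)\leqslant0$, then $|r^{\wedge}A|\geqslant2r(n-r)-n+1$. Since $\min\{L,r(n-r)\}\leqslant r(n-r)$, this is at least $r(n-r)-n+1+\min\{L,r(n-r)\}$. This falls one short of the target, so the boundary $a_r(A)=0$ needs separate treatment. For that boundary I would also apply Proposition \ref{pro:one-residue-class} when $d(Y)>1$, and otherwise run the critical argument below, which only uses $a_r(A)\geqslant0$ together with $D_r(A)\geqslant0$ and $e_r(A)\geqslant0$.

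\emph{Critical case} ($a_r(A)\geqslant1$). Proposition \ref{pro:positive-slack-span-transition} applies, since $A$ is primitive, $n\geqslant9\geqslant7$ and $3\leqslant r\leqslant n/2$, and gives $D_r(A)\geqslant0$. Then
$$|r^{\wedge}A|=r(n-r)+1+e_r(A)+D_r(A).$$
Because $l(r^{\wedge}A)=r(n-r)+2e_r(A)+D_r(A)$, we also have
$$|r^{\wedge}A|=\tfrac12\bigl(l(r^{\wedge}A)+r(n-r)+2+D_r(A)\bigr).$$
Lemma \ref{lem:endpoint-lifting} gives $l(r^{\wedge}A)=L+l(p^{\wedge}Y)$, and trivially $l(p^{\wedge}Y)\geqslant p(N-p)$. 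Combining these, $2|r^{\wedge}A|\geqslant L+p(N-p)+r(n-r)+2$, which is not yet sufficient. The sharper route is Corollary \ref{lem:exact-endpoint-transfer}:
$$|r^{\wedge}A|=|p^{\wedge}Y|+v+\rho,\qquad v=|U_p(C)|\geqslant\min\{L,2N\}$$
by Theorem \ref{thm:r4-coarse-cyclic-all-orders}. When $L\leqslant 2N$ this gives $|r^{\wedge}A|\geqslant p(N-p)+1+L$, which is the claim. When $v=L$ the same holds directly.

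\emph{The obstacle.} The remaining situation is $L>2N$ with $v<L$. There the target reads $|r^{\wedge}A|\geqslant p(N-p)+1+\min\{L,r(n-r)\}$, and one needs $|p^{\wedge}Y|+v$ to exceed $p(N-p)+1+2N$ by the deficit. This is where I would invoke Lemma \ref{lem:stratified-incomplete-projection}, which requires $Y$ to be critical at order $p$ with $D_p(Y)\geqslant0$. If $Y$ is noncritical, then $|p^{\wedge}Y|\geqslant2p(N-p)-N+2$, and together with $v\geqslant2N$ this gives $|r^{\wedge}A|\geqslant2p(N-p)+N+2=p(N-p)+1+r(n-r)$, so we are done. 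If $Y$ is critical, Proposition \ref{pro:positive-slack-span-transition} gives $D_p(Y)\geqslant0$. Lemma \ref{lem:stratified-incomplete-projection} then yields $L=3N+D_p(Y)+2a_p(Y)-2a_r(A)-D_r(A)-1$, and I would convert this identity, together with $D_r(A)\geqslant0$, into $|r^{\wedge}A|-p(N-p)-1\geqslant\min\{L,r(n-r)\}$.

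I expect this final algebraic reconciliation, bounding $L$ against $r(n-r)$ via the $a$ and $D$ parameters, to be the main difficulty. If it resists, I would fall back on the span formula: $|r^{\wedge}A|\geqslant\frac12(l(r^{\wedge}A)+r(n-r)+2)$ with $l(r^{\wedge}A)\geqslant L+p(N-p)+2e_p(Y)$, using $e_p(Y)>2D_p(Y)$ from the same lemma.
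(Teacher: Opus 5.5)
Your reduction is sound up to the hard case. The complementation identity is correct, as are the case $v=L$ (which, via Theorem~\ref{thm:r4-coarse-cyclic-all-orders}, covers $L\leqslant 2N$) and the case $a_p(Y)<0$. The one case that carries the content of the proposition is $v<L$ (so $L>2N$) with $a_p(Y)\geqslant0$, and you leave it as an unperformed ``algebraic reconciliation''. The tools you line up do not close it:
\begin{itemize}
\item Proposition~\ref{pro:positive-slack-span-transition} requires $a_p(Y)\geqslant1$, so it gives nothing when $a_p(Y)=0$.
\item Lemma~\ref{lem:stratified-incomplete-projection} also requires $Y$ primitive and $D_{p+1}(A)<\frac{N+1}{2}$, and you establish neither.
\item Even granting that lemma's output, the identity $L=3N+D_p(Y)+2a_p(Y)-2a_{p+1}(A)-D_{p+1}(A)-1$ does not by itself give $L\leqslant p(N-p)+N-a_{p+1}(A)$. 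That inequality is exactly what the target amounts to once $a_{p+1}(A)\geqslant0$.
\item Your span fallback only yields $|(p+1)^{\wedge}A|\geqslant p(N-p)+1+\frac12\bigl(L+N+1+2e_p(Y)\bigr)$. This reaches the target only under an extra relation between $L$ and $e_p(Y)$ that you do not have.
\end{itemize}
Your handling of the boundary $a_{p+1}(A)=0$ is circular as well. The ``critical argument'' uses $D_{p+1}(A)\geqslant0$, which Proposition~\ref{pro:positive-slack-span-transition} supplies only when $a_{p+1}(A)\geqslant1$.

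The missing idea is that the criticality of $A$ and the quantity $D_{p+1}(A)$ are irrelevant. Everything happens at order $p$ for $Y$, and the finish is an elementary two-translate count.
\begin{enumerate}
\item Argue by contradiction, handling $d(Y)>1$ by Proposition~\ref{pro:one-residue-class}.
\item Since $|(p+1)^{\wedge}A|\geqslant|p^{\wedge}Y|+v$, failure of the bound gives $v+e_p(Y)+D_p(Y)<\min\{L,(p+1)(N-p+1)\}$. Theorem~\ref{thm:r3-uniform-dense-small-excess} then forces $e_p(Y)+D_p(Y)\geqslant\frac N2$.
\item Since $e_p(Y)+D_p(Y)\geqslant0$, we also get $v<L$, hence $v\geqslant2N$. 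This gives $L\geqslant2N+2$ and $a_p(Y)\geqslant0$.
\item Theorem~\ref{thm:strict-critical-cyclic-surplus} now gives $v\geqslant2N+1$, which upgrades to $a_p(Y)\geqslant1$. Proposition~\ref{pro:positive-slack-span-transition}, applied to $Y$ rather than $A$, then gives $D_p(Y)\geqslant0$.
\item Finally, $p^{\wedge}Y\cup(L+p^{\wedge}Y)\subseteq(p+1)^{\wedge}A$, and the overlap lies in $[\min p^{\wedge}Y+L,\max p^{\wedge}Y]$. Hence
$$|(p+1)^{\wedge}A|\geqslant2|p^{\wedge}Y|-\max\{0,l(p^{\wedge}Y)-L+1\}=p(N-p)+1+\min\bigl\{L+D_p(Y),\,p(N-p)+1+2(e_p(Y)+D_p(Y))\bigr\}.$$
This is at least the target because $D_p(Y)\geqslant0$ and $2(e_p(Y)+D_p(Y))\geqslant N$.
\end{enumerate}
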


\begin{proof}
Put $J=A\setminus\{0,L\}$. Then
$$J\subseteq[1,L-1], \ \ |J|=N.$$
If $d(J)>1$, then by Proposition \ref{pro:one-residue-class},
with $k=N+2$ and $h=p+1$, we have
$$
\begin{aligned}
|(p+1)^{\wedge}A|
&\geqslant(p+1)(N-p+1)-N
+\min\{L,(p+1)(N-p+1)\}\\
&=p(N-p)+1+\min\{L,(p+1)(N-p+1)\}.
\end{aligned}
$$

Suppose that $d(J)=1$ and
$$|(p+1)^{\wedge}A|
<p(N-p)+1+\min\{L,(p+1)(N-p+1)\}.$$
By Theorem \ref{thm:nathanson-restricted-minimum}, we have
$$e_p(J)+D_p(J)=|p^{\wedge}J|-p(N-p)-1\geqslant0.$$
Put
$$
\begin{aligned}
B&=\{0\}\cup\pi_L(J),\\
U&=\pi_L\bigl((p-1)^{\wedge}J\cup p^{\wedge}J
\cup(p+1)^{\wedge}J\bigr).
\end{aligned}
$$
Since $A$ is primitive and $1\leqslant p\leqslant N-1$,
by Lemma \ref{lem:endpoint-lifting}, we have
$$|B|=N+1, \ \ \langle B\rangle=\mathbb Z_L, \ \
U=p^{\wedge}B\cup(p+1)^{\wedge}B$$
and
$$
\begin{aligned}
|(p+1)^{\wedge}A|
&\geqslant|p^{\wedge}J|+|U|\\
&=p(N-p)+1+e_p(J)+D_p(J)+|U|.
\end{aligned}
$$
Thus,
$$|U|+e_p(J)+D_p(J)<\min\{L,(p+1)(N-p+1)\}.$$

If $e_p(J)+D_p(J)<\frac N2$, then by
Theorem \ref{thm:r3-uniform-dense-small-excess}, we have
$$|U|+e_p(J)+D_p(J)\geqslant\min\{L,(p+1)(N-p+1)\},$$
a contradiction. Hence,
$$e_p(J)+D_p(J)\geqslant\frac N2.$$

Since $3\leqslant p\leqslant\left\lfloor\frac N2\right\rfloor$,
by Theorem \ref{thm:r4-coarse-cyclic-all-orders}, we have
$$|U|\geqslant\min\{L,2N\}.$$
Together with
$$|U|+e_p(J)+D_p(J)<L, \ \ e_p(J)+D_p(J)\geqslant0,$$
we have
$$|U|<L, \ \ |U|\geqslant2N.$$
Thus,
$$L>|U|+e_p(J)+D_p(J)\geqslant2N+\frac N2\geqslant2N+2$$
and
$$
\begin{aligned}
e_p(J)+D_p(J)
&<(p+1)(N-p+1)-2N\\
&=p(N-p)-N+1.
\end{aligned}
$$
Since $e_p(J)+D_p(J)\in\mathbb Z$, we have
$$
\begin{aligned}
|p^{\wedge}J|
&=p(N-p)+1+e_p(J)+D_p(J)\\
&\leqslant2p(N-p)-N+1.
\end{aligned}
$$
By Theorem \ref{thm:strict-critical-cyclic-surplus}, we have
$$|U|\geqslant2N+1.$$
Since $|U|+e_p(J)+D_p(J)<(p+1)(N-p+1)$, we have
$$
\begin{aligned}
e_p(J)+D_p(J)
&<(p+1)(N-p+1)-|U|\\
&\leqslant p(N-p)-N,
\end{aligned}
$$
thus,
$$a_p(J)=p(N-p)-N-e_p(J)-D_p(J)\geqslant1.$$
Hence, $J$ is critical.
By Proposition \ref{pro:positive-slack-span-transition}, we have
$$D_p(J)\geqslant0.$$

Since $0,L\in A$ and $J\subseteq[1,L-1]$, we have
$$
\begin{aligned}
p^{\wedge}J\cup(L+p^{\wedge}J)
&\subseteq(p+1)^{\wedge}A,\\
p^{\wedge}J\cap(L+p^{\wedge}J)
&\subseteq[\min p^{\wedge}J+L,\max p^{\wedge}J].
\end{aligned}
$$
Since $D_p(J)\geqslant0$ and $2(e_p(J)+D_p(J))\geqslant N$, we have
$$
\begin{aligned}
|(p+1)^{\wedge}A|
&\geqslant\left|p^{\wedge}J\cup(L+p^{\wedge}J)\right|\\
&=2|p^{\wedge}J|
-\left|p^{\wedge}J\cap(L+p^{\wedge}J)\right|\\
&\geqslant2|p^{\wedge}J|
-\max\{0,l(p^{\wedge}J)-L+1\}\\
&=p(N-p)+1\\
&\qquad+\min\{L+D_p(J),p(N-p)+1+2(e_p(J)+D_p(J))\}\\
&\geqslant p(N-p)+1+\min\{L,(p+1)(N-p+1)\},
\end{aligned}
$$
a contradiction.

Since $|A|=N+2$, we have
$$(p+1)^{\wedge}A=\sum_{a\in A}a-(N-p+1)^{\wedge}A,$$
thus,
$$|(p+1)^{\wedge}A|=|(N-p+1)^{\wedge}A|.$$

This completes the proof of Proposition \ref{pro:span-to-cardinality}.
\end{proof}

\begin{proof}[Proof of Theorem \ref{thm:main}]
Since $0\in A$ and $\gcd(A)=1$, $A$ is primitive and $l(A)=L$. For $1\leqslant h\leqslant k-1$, we have
$$h^{\wedge}A=\sum_{a\in A}a-(k-h)^{\wedge}A, $$
thus,
$$|h^{\wedge}A|=|(k-h)^{\wedge}A|, $$
it suffices to consider
$$1\leqslant h\leqslant \frac{k}{2}. $$

For $h=1$, by $L\geqslant k-1$ we have
$$|1^{\wedge}A|=k=1+\min\{L, k-1\}.$$
For $h=2$, by Theorem \ref{thm:freiman-lev-twofold} we have
$$|2^{\wedge}A|\geqslant k-2+\min\{L, 2k-5\}.$$
For $h=3$, by Theorem \ref{thm:threefold-all-cardinalities} we have
$$|3^{\wedge}A|\geqslant 2k-7+\min\{L, 3k-9\}.$$

Let $4\leqslant h\leqslant \frac{k}{2}$ and put $p=h-1$, $N=k-2$. Then
$$p\geqslant 3, \ \ N\geqslant\max\{7, 2p\}, \ \ |A|=N+2. $$
By Proposition \ref{pro:span-to-cardinality} we have
$$
\begin{aligned}
|h^{\wedge}A|
&\geqslant p(N-p)+1+\min\{L, (p+1)(N-p+1)\}\\
&=h(k-h)-k+2+\min\{L, h(k-h)\}.
\end{aligned}
$$

This completes the proof of Theorem \ref{thm:main}.
\end{proof}


\section*{References}

\begin{thebibliography}{99}


\bibitem{DSH1994} J.A. Dias da Silva and Y.O. Hamidoune, {\it Cyclic spaces for Grassmann derivatives and additive theory}, Bull. London Math. Soc. 26 (1994), 140--146.

\bibitem{Freiman} G.A. Freiman, {\it The addition of finite sets. I}, Izv. Vys\v{s}. U\v{c}ebn. Zaved. Matematika (1959), no. 6 (13), 202--213.

\bibitem{GaoHamidoune} W. Gao and Y.O. Hamidoune, {\it On additive bases}, Acta Arith. 88 (1999), 233--237.

\bibitem{GGH} B. Girard, S. Griffiths and Y.O. Hamidoune, {\it $k$-sums in abelian groups}, Combin. Probab. Comput. 21 (2012), 582--596.

\bibitem{KempermanStructure} D.J. Grynkiewicz, {\it A step beyond Kemperman's structure theorem}, Mathematika 55 (2009), 67--114.

\bibitem{Kneser} M. Kneser, {\it Absch\"atzung der asymptotischen Dichte von Summenmengen}, Math. Z. 58 (1953), 459--484.

\bibitem{Lev2} V.F. Lev, {\it Restricted set addition in groups, I. The classical setting}, J. London Math. Soc. 62 (2000), 27--40.

\bibitem{Lev} V.F. Lev, {\it Structure theorem for multiple addition and the Frobenius problem}, J. Number Theory 58 (1996), 79--88.

\bibitem{LevThree} V.F. Lev, {\it Three-fold restricted set addition in groups}, European J. Combin. 23 (2002), 613--617.

\bibitem{TW} M. Tang and W.H. Wang, {\it Some remarks on sumsets and restricted sumsets}, Bull. Korean Math. Soc. 56 (2019), 667--673.

\bibitem{Nathanson1996} M.B. Nathanson, {\it Additive number theory: Inverse problems and the geometry of sumsets}, Graduate Texts in Math. 165, Springer-Verlag, New York, 1996.

\bibitem{Nathanson1} M.B. Nathanson, {\it Inverse theorems for subset sums}, Trans. Amer. Math. Soc. 347 (1995), 1409--1418.

\bibitem{Olson} J.E. Olson, {\it Sums of sets of group elements}, Acta Arith. 28 (1975), 147--156.

\bibitem{Wang2026} Y.J. Wang and M. Tang, {\it A proof of the Freiman--Lev conjecture}, preprint, \mbox{arXiv:2608.04760v2} (25 August 2026).

\bibitem{Wang} Y.J. Wang and M. Tang, {\it On the Freiman--Lev conjecture}, Contrib. Discrete Math. 20(1) (2025), 42--59.

\bibitem{WangRestricted} Y.J. Wang and M. Tang, {\it Freiman--Lev conjecture (in Chinese)}, Sci. Sin. Math. 56 (2026) (English version: {\it Restricted sumsets in $\mathbb Z$}, arXiv: 2402.01471v3).

\end{thebibliography}
\end{document}